\newtheorem{Def}{Definition}[section]
\newtheorem{Lemma}[Def]{Lemma}
\newtheorem{Cor}[Def]{Corollary}
\newtheorem{Theorem}[Def]{Theorem}
\newtheorem{Prop}[Def]{Proposition}
\newtheorem{Remark}[Def]{Remark}
\DeclareMathOperator{\supp}{supp}
\DeclareMathOperator{\dist}{dist}
\DeclareMathOperator{\loc}{loc}
\DeclareMathOperator{\sgn}{sgn}
\numberwithin{equation}{section}
\let\Re=\relax
\DeclareMathOperator{\Re}{Re}
\let\div=\relax
\DeclareMathOperator{\div}{div}
\newcommand{\ovB}{\ensuremath{\overline{B}}}
\newcommand{\R}{\ensuremath{\mathbb{R}}}
\newcommand{\C}{\ensuremath{\mathbb{C}}}
\newcommand{\N}{\ensuremath{\mathbb{N}}}
\newcommand{\Z}{\ensuremath{\mathbb{Z}}}
\newcommand{\Eins}{\ensuremath{\mathds{1}}}
\newcommand{\Pb}{\ensuremath{\mathbb{P}}}
\newcommand{\Qb}{\ensuremath{\mathbb{Q}}}
\newcommand{\calD}{\ensuremath{\mathcal{D}}}
\newcommand{\calM}{\ensuremath{\mathcal{M}}}
\newcommand{\calN}{\ensuremath{\mathcal{N}}}
\newcommand{\calR}{\ensuremath{\mathcal{R}}}
\newcommand{\calQ}{\ensuremath{\mathcal{Q}}}
\newcommand{\calL}{\ensuremath{\mathcal{L}}}
\newcommand{\skp}[1]{\langle #1 \rangle}
\newcommand{\eps}{\ensuremath{\varepsilon}}
\definecolor{gr}{rgb}   {0.,   0.8,   0. } 
\definecolor{bl}{rgb}   {0.,   0.5,   1. } 
\definecolor{mg}{rgb}   {0.7,  0.,    0.7}
\title[Conical square function estimates for Dirac operators]{Conical square function estimates and functional calculi for perturbed Hodge-Dirac operators in $L^p$ }
\author{Dorothee Frey}
\author{Alan McIntosh}
\author{Pierre Portal}
\begin{document}

\begin{abstract}
Perturbed Hodge-Dirac operators and their holomorphic functional calculi, as investigated in the papers by Axelsson, Keith and the second author, provided insight into the solution of the Kato square-root problem for elliptic operators in $L^2$ spaces, and allowed for an extension of these estimates to other systems with applications to non-smooth boundary value problems.  In this paper, we determine conditions under which such operators satisfy conical square function estimates in a range of $L^p$ spaces, thus allowing us to apply the theory of Hardy spaces associated with an operator, to prove that they have a bounded holomorphic functional calculus in those $L^p$ spaces.  We also obtain functional calculi results for restrictions to certain subspaces, for a larger range of $p$.   This provides  a framework for obtaining $L^p$ results on perturbed Hodge Laplacians, generalising known Riesz transform bounds for an elliptic operator $L$  with bounded measurable coefficients, one Sobolev exponent below the Hodge exponent, and  $L^p$ bounds on the square-root of $L$ by the gradient, two Sobolev exponents below the Hodge exponent.  
Our proof shows that the heart of the harmonic analysis in $L^2$  extends to $L^p$ for all $p \in (1,\infty)$, while the restrictions in $p$ come from the operator-theoretic part of the $L^2$ proof.  In the course of our work, we obtain some results of independent interest about singular integral operators on tent spaces, and about the relationship between  conical and vertical square functions.\\

{\bf Mathematics Subject Classification (2010):} 47A60, 47F05, 42B30, 42B37\\
\end{abstract}

\date{ February 27, 2015 }

\maketitle

\tableofcontents

\section{Introduction}

In \cite{AKM}, Axelsson, Keith, and the second author introduced a general framework to study various harmonic analytic problems, such as boundedness of Riesz transforms or the construction of solutions to boundary value problems, through the holomorphic functional calculus of certain first order differential operators that generalise the Hodge-Dirac operator $d+d^{*}$ (where $d$ is the exterior derivative) of Riemannian geometry. By proving that such Hodge-Dirac operators have a bounded holomorphic functional calculus in $L^2$, they recover, in particular, the solution of Kato's square root problem 
obtained by Auscher, Hofmann, Lacey, McIntosh and Tchamitchian in \cite{ahlmt}. Their results also provide the harmonic analytic foundation to new approaches to problems in PDE (see e.g. \cite{AAMc, aah,aa}) and geometry (see e.g. \cite{AMR}).
\\

The main result in \cite{AKM} is of a perturbative nature. Informally speaking, it states that the functional calculus of the standard Hodge-Dirac operator in $L^2$ is stable under perturbation by rough coefficients. It is natural, and important in applications, to know whether or not such a result also holds in $L^p$ for $p \in (1,\infty)$.
There are two main approaches to this question. The first one uses the extrapolation method pioneered by Blunck and Kunstmann in \cite{bk}, and developed by Auscher in \cite{AuscherMemoirs} to show that the relevant $L^2$ bounds remain valid in certain intervals $(p_{-},p_{+})$ about $2$ which depend on the operator involved. This approach has been mostly developed to study second order differential operators, but has also been adapted to first order operators by Ajiev \cite{aj} and by Auscher and Stahlhut in  \cite{as1,as2}.  
The other approach to $L^p$ estimates for the holomorphic functional calculus of Hodge-Dirac operators consists in adapting the entire machinery of \cite{AKM} to $L^p$. This was done in the series of papers \cite{HMcP1,HMcP2,HMc} by the second and third authors, together with Hyt\"onen, using ideas from (UMD) Banach space valued harmonic analysis. \\

At the technical level, all these results are fundamentally perturbation results for square function estimates. In $L^2$, the heart of \cite{AKM} is an estimate of the form
$$
(\int \limits _{0} ^{\infty} \int \limits _{\R^{n}} |t\Pi_{B}(I+t^{2}{\Pi_{B}}^2)^{-1}u(x)|^{2} \frac{dx dt}{t})^{\frac{1}{2}} 
\lesssim (\int \limits _{0} ^{\infty} \int \limits _{\R^{n}} |t\Pi(I+t^{2}\Pi^2)^{-1}u(x)|^{2} \frac{dx dt}{t})^{\frac{1}{2}} \quad \forall u \in \calR(\Gamma),
$$ 
where $\Pi = \Gamma + \Gamma^{*}$ is a first order differential (Hodge-Dirac) operator with constant coefficients, and $\Pi_{B} =  \Gamma + B_{1}\Gamma^{*}B_{2}$ is a perturbation by $L^\infty$ coefficients $B_{1},B_{2}$. See Section \ref{sec:prelim} for precise definitions.
In $L^p$, the papers \cite{HMcP1,HMcP2,HMc} establish analogues of the form
$$
\|(\int \limits _{0} ^{\infty} |t\Pi_{B}(I+t^{2}{\Pi_{B}}^2)^{-1}u(.)|^{2} \frac{dt}{t})^{\frac{1}{2}}\|_{p} 
\lesssim \|(\int \limits _{0} ^{\infty} |t\Pi(I+t^{2}\Pi^2)^{-1}u(.)|^{2} \frac{dt}{t})^{\frac{1}{2}}\|_{p}  \quad \forall u \in \calR(\Gamma).
$$ 
While these (vertical) $L^p$ square function estimates are traditionally used to establish the boundedness of the holomorphic functional calculus (see e.g. \cite{CDMcY}),
the same result could also be obtained using the
conical $L^p$ square function estimates:
$$
\|(t,x)\mapsto (t\Pi_{B}(I+t^{2}{\Pi_{B}}^2)^{-1})^{M}u(x)\|_{T^{p,2}}
\lesssim \|(t,x)\mapsto (t\Pi(I+t^{2}\Pi^2)^{-1})^{M}u(x)\|_{T^{p,2}} \quad \forall u \in \calR(\Gamma),
$$ 
where $M$ is a suitably large integer and $T^{p,2}$ is one of Coifman-Meyer-Stein's tent spaces (see \cite{cms} and Section \ref{sec:prelim} for precise definitions).
This fact has been noticed in the development of a Hardy space theory associated with bisectorial operators (starting with \cite{AMR,DY,hm}, see also \cite[Theorem 7.10]{HvNP}).\\

In this paper, we prove such conical $L^p$ square function estimates for the Hodge-Dirac operators introduced in \cite{AKM}. 
This allows us to strengthen the results from \cite{HMcP1,HMcP2,HMc} (in the scalar-valued setting) by eliminating the R-boundedness assumptions.  Instead of relying on  probabilistic/dyadic methods, we use the more flexible theory of Hardy spaces associated with operators, and
recent results about integral operators on tent spaces. 
Our proof then exhibits an interesting phenomenon. As in \cite{AKM} and other papers on functional calculus of Hodge-Dirac operators or Kato square root estimates,
we consider separately the ``high frequency" part of the estimate (involving $\|(t,x)\mapsto (t\Pi_{B}(I+t^{2}{\Pi_{B}}^2)^{-1})^{M}(I+t^{2}\Pi^2)^{-M})u(x)\|_{T^{p,2}}$),
and the ``low frequency" part (involving $\|(t,x)\mapsto (t\Pi_{B}(I+t^{2}{\Pi_{B}}^2)^{-1})^{M}(I-(I+t^{2}\Pi^2)^{-M})u(x)\|_{T^{p,2}}$).
In $L^2$, the proof of the high frequency estimate is purely operator theoretic, while the low frequency requires the techniques from real analysis used in the solution of the Kato square root problem.
In the approach to the $L^p$ case given in \cite{HMcP1,HMcP2,HMc}, the same is true, but both the high and the low frequency estimate use an extra assumption: the R-bisectoriality of $\Pi_{B}$ in $L^p$. With the approach through conical square function given here, we obtain the low frequency estimate for all $p \in (1,\infty)$ without any assumption on the $L^p$ behaviour of the operator $\Pi_{B}$. Restrictions in $p$, and appropriate assumptions (which are necessary, as can be seen in \cite{AuscherMemoirs}), are needed for the high frequency part. We believe that this will be helpful in future projects, as the theory moves away from the Euclidean setting (see e.g. the work of Morris \cite{Mor}, Bandara and the second author \cite{lashi}). Dealing with a specific Hodge-Dirac operator in a geometric context, one can hope to prove sharp high frequency estimates using methods specific to the context at hand, and combine them with the harmonic analytic machinery developed here to get the full square function estimates, and hence the functional calculus result.\\

Another feature of the approach given here is that we obtain, from $L^p$ assumptions, not just functional calculus results in $L^p$, but also functional calculus results on some subspaces of $L^q$ for certain $q <p$. In particular, we obtain Riesz transform estimates for $q \in (p_{*},2]$, and reverse Riesz transform estimates for $q \in (p_{**},2]$. Here $p_\ast$ and $p_{\ast\ast}$ denote the first and second Sobolev exponents below $p$.
This can also be  relevant in geometric settings, where one expects the results to depend not only on  the geometry, but on the  different levels of forms.\\

The paper is organised as follows. In Section \ref{sec:prelim}, we give the relevant definitions and recall the main results from the theories that this paper builds upon.
In Section \ref{sec:res}, we state our main results - relevant high and low frequency square function estimates - and establish their functional calculus consequences as corollaries in Section \ref{sec:cons}. In Section \ref{sec:low}, we prove  low frequency estimates by developing $L^p$ conical square function versions of the tools used in \cite{AKM}.
In Section \ref{sec:high1}, we prove  high frequency estimates for $p \in (\max(1,2_{*}),2]$. In this range, the proof is straightforward, and does not require any $L^p$ assumption. In dimensions $1$ and $2$ this already gives the result for all $p \in (1,\infty)$.
In Section \ref{sec:high2} we establish the relevant $L^p$-$L^2$ off-diagonal bounds for the resolvents of our Hodge-Dirac operator. 
 In Section \ref{sec:tech2}, we use  them to bound a conical square function by a vertical square function 
related to the functional calculus of our Hodge-Dirac operator.
In Section \ref{sec:high}, we use these off-diagonal bounds to prove the high frequency estimates. This uses singular integral operator theory on tent spaces, and, in particular,  Schur-type extrapolation results established in Section \ref{sec:tech1}.  We believe the latter results are of independent interest. \\

\subsection{Acknowledgments}
All three authors gratefully acknowledge support from the Australian Research Council 
through the Discovery Project  DP120103692. This work is a key outcome of DP120103692. Frey and McIntosh also acknowledge support from ARC DP110102488.
Portal is further supported by the ARC through the Future Fellowship FT130100607.
 The authors  thank  Pascal Auscher and Sylvie Monniaux for stimulating discussions, and Pascal Auscher in particular  for keeping us aware of the progress of his student Sebastian Stahlhut on related questions. There is a connection between the results in \cite{as2} by Auscher and Stahlhut and our results, though the approaches are rather different because Auscher and Stahlhut rely on the results from \cite{HMcP1,HMcP2,HMc} through \cite{as1}, while one of our aims is to give an alternative approach to \cite{HMcP1,HMcP2,HMc}. We remark that Auscher and Stahlhult  apply their results to develop an extensive theory of a priori estimates for related non-smooth boundary value problems.
  We are also grateful to the anonymous referee for giving us the opportunity to correct a mistake in the original manuscript. 

\section{Preliminaries}
\label{sec:prelim}

\subsection{Notation}
Throughout the paper $n$ and $N$ denote two fixed positive natural numbers.
We express inequalities ``up to a constant" between two positive quantities $a,b$ with the notation $a \lesssim b$. By this we mean that there exists a constant $C>0$, independent of all relevant quantities in the statement, such that $a \leq Cb$. If $a\lesssim b$ and $b \lesssim a$, we write $a \approx b$.\\
We denote $\R^{*}=\R\setminus \{0\}$.
For a Banach space $X$, we write $\calL(X)$ for the set of all  bounded linear operators on $X$. \\
 For $p \in (1,\infty)$ and an unbounded linear operator $A$ on $L^p(\R^n;\C^N)$, we denote by $\calD_p(A),\calR_p(A), \calN_p(A)$ its domain, range and null space, respectively.\\
We use upper and lower stars to denote Sobolev exponents: For $p \in [1,\infty)$, we denote $p_{*} = \frac{np}{n+p}$ and $p^{*}=\frac{np}{n-p}$, with the convention $p^{*}=\infty$ for $p\geq n$.

For a ball (resp. cube) $B \subseteq \R^n$ with radius (resp. side length) $r>0$ and given $\alpha>0$, we write $\alpha B$ for the ball (resp. the cube) with the same centre and radius (resp. side length) $\alpha r$. 
We define dyadic shells by $S_1(B):=4B$ and $S_j(B):=2^{j+1}B \setminus 2^jB$ for $j \geq 2$.

\subsection{Holomorphic functional calculus}

This paper deals with the holomorphic functional calculus of certain bisectorial first order differential operators.
The fundamental results concerning this calculus have been developed in \cite{m,CDMcY, AMcN,kw}.  References for this theory include the lecture notes \cite{ADM} and \cite{kuw}, and the book \cite{markusbook}.

\begin{Def}
Let $0 \leq \omega <\mu<\frac{\pi}{2}$. Define  {\em closed and open sectors and double sectors} in the complex plane by
\begin{align*}
	S_{\omega +} &:= \{z \in \C\,:\, |\arg z| \leq \omega \} \cup \{0\}, \qquad 
	S_{\omega -} := - S_{\omega +}, \\
	S_{\mu +}^o &:= \{z \in \C \,:\, z \neq 0, \, |\arg z| <\mu\}, \qquad 
	S_{\mu -}^o := - S_{\mu +}^o, \\ 
	S_\omega &:= S_{\omega +} \cup S_{\omega -}, \qquad
	S_\mu^o := S_{\mu +}^o \cup S_{\mu -}^o.	
\end{align*}
\end{Def}

Denote by $H(S_\mu^o)$ the space of all holomorphic functions on $S_\mu^o$. Let further
\begin{align*}
	H^\infty(S_\mu^o) & := \{\psi \in H(S_\mu^o) \,:\, \|\psi\|_{L^\infty(S_\mu^o)} <\infty\},\\
	\Psi_\alpha^\beta(S_\mu^o) &:= \{\psi \in H(S_\mu^o) \,:\, \exists C>0: |\psi(z)| \leq C |z|^\alpha(1+|z|^{\alpha+\beta})^{-1} \ \forall z \in S_\mu^o\}
\end{align*}
for every $\alpha, \beta>0$, and set $\Psi(S_\mu^o):=\bigcup_{\alpha,\beta>0}\Psi_\alpha^\beta(S_\mu^o)$. We say that $\psi \in \Psi(S_\mu^o)$ is \emph{non-degenerate} if neither of the restrictions $\psi|_{S_{\mu \pm}^o}$ vanishes identically.\\

\begin{Def} Let $0 \leq \omega <\frac{\pi}{2}$.
A closed operator $D$ acting on a Banach space $X$ is called $\omega${\em-bisectorial}  if $\sigma(D) \subset S_{\omega}$, and for all $\theta \in (\omega,\frac{\pi}{2})$ there exists $C_\theta>0$ such that
$$
\|\lambda(\lambda I-D)^{-1}\|_{\calL(X)} \leq C_\theta \quad \forall \lambda \in \C \setminus S_{\theta}.
$$
\end{Def}

We  say that $D$ is {\em bisectorial} if it is $\omega$-bisectorial for some $\omega \in [0, \frac{\pi}{2})$.

For $D$ bisectorial with angle $\omega \in [0,\frac{\pi}{2})$ and $\psi \in \Psi(S_\mu^o)$ for $\mu \in (\omega,\frac{\pi}{2})$, we  define $\psi(D)$ through the Cauchy integral
$$
\psi(D) = \frac{1}{2\pi i} \int \limits _{\gamma} \psi(z)(zI-D)^{-1} dz,
$$
where $\gamma$ denotes the boundary of $S_{\theta}$ for some $\theta \in (\omega,\mu)$, oriented counter-clockwise.

\begin{Def} Let $0 \leq \omega <\frac{\pi}{2}$ and $\mu \in (\omega, \frac{\pi}{2})$.
An  $\omega$-bisectorial  operator $D$, acting on a Banach space $X$, is said to have a {\em bounded $H^{\infty}$ functional calculus with angle} $\mu$ if
there exists $C>0$ such that for all $\psi \in \Psi(S_{\mu}^{0})$
$$
\|\psi(D)\|_{\calL(X)} \leq C \|\psi\|_{\infty}.
$$
\end{Def}

For such an operator, the functional calculus extends to a bounded algebra homomorphism from $H^{\infty}(S^o_{\mu})$ to $\calL(X)$. More precisely, for all bounded functions $f: S_\mu^o \cup \{0\} \to \C$ which are holomorphic on $S_\mu^o$, one can define a bounded operator $f(D)$ by
\[
	f(D)u = f(0) \Pb_{\calN(D)} u + \lim_{n \to \infty} \psi_n (D) u, \qquad u \in X,
\]
where $\Pb_{\calN(D)}$ denotes the bounded projection onto $\calN(D)$ with null space $\overline{\calR(D)}$, and the functions $\psi_n \in \Psi(S_\mu^o)$ are uniformly bounded and tend locally uniformly to $f$ on $S_\mu^o$; see \cite{ADM,CDMcY}.
The definition is independent of the choice of the approximating sequence $(\psi_n)_{n \in \N}$.

\subsection{Off-diagonal bounds}
The operator theoretic property that captures the relevant aspect of the differential nature of our operators, 
 is the following notion of off-diagonal bounds. This notion plays a central role in many current developments of singular integral operator theory. We refer to \cite{AuscherMemoirs} for more information and references.

\begin{Def}
Let $p \in [1,2]$.
A family of operators $\{U_t \;;\; t \in \R^{*}\} \subset \calL(L^{2}(\R^{n};\C^{N}))$ is said to have $L^{p}$-$L^{2}$ off-diagonal bounds of order $M>0$ if
there exists $C_M>0$ such that for all $t\in \R^{*}$, all Borel sets $E,F \subseteq \R^n$ and all $u \in L^p(\R^n;\C^N)$ with $\supp u \subseteq F$, we have
\begin{equation} \label{eq:off-diag-est}
	\|U_t u\|_{L^2(E)} \leq C_M |t|^{-n(\frac{1}{p}-\frac{1}{2})}\left(1+\frac{\dist(E,F)}{|t|}\right)^{-M} \|u\|_{L^p(F)},
\end{equation}
where $\dist(E,F)=\inf\{|x-y|; \ x \in E, y \in F\}$.
\end{Def}

We also use the following variant, where,  given a ball $B$,  $$\dot W^{1,p}_{\overline B}(\R^n;\C^N)=\{u \in \dot W^{1,p}(\R^n;\C^N) ; \supp(u) \subset \ovB\}  $$ with norm $\|\nabla u\|_p$.

\begin{Def}
Let $p \in [1,2]$.
A family of operators $\{U_t \;;\; t \in \R^{*}\} \subset \calL(L^{2}(\R^{n};\C^{N}))$ is said to have $\dot{W}^{1,p}$-$L^{2}$ off-diagonal bounds of order $M>0$ on balls, if
there exists $C_M>0$ such that for all $t \in \R^{*}$, all balls $B$ of radius $|t|$, all $j \in \N$, and all $u \in {W}_{\ovB} ^{1,p}(\R^n;\C^N)$ we have
\begin{equation} \label{eq:off-diag-est}
	\|U_t u\|_{L^2(S_{j}(B))} \leq C_M |t|^{-n(\frac{1}{p}-\frac{1}{2})}2^{-jM} \|\nabla u\|_{L^p},
\end{equation}
\end{Def}

The following properties of off-diagonal bounds with respect to composition and interpolation are essentially known (see \cite{am2}).
We nonetheless include some proofs.

\begin{Lemma} 
\label{lem:ODcomp} 
Let $p \in (1,2]$.
Let $\{T_t \;;\; t \in \R^{*}\} \subset \calL(L^{2}(\R^{n};\C^{N}))$ have $L^{2}$-$L^{2}$ off-diagonal bounds of every order,
$\{V_t \;;\; t \in \R^{*}\} \subset \calL(L^{2}(\R^{n};\C^{N}))$ have $L^{2}$-$L^{2}$ off-diagonal bounds of order M>0, 
  $\{U_t \;;\; t \in \R^{*}\} \subset \calL(L^{2}(\R^{n};\C^{N}))$ have  $\dot{W}^{1,2}$-$L^{2}$ off-diagonal bounds  of every order on balls, 
    $\{Z_t \;;\; t \in \R^{*}\} \subset \calL(L^{2}(\R^{n};\C^{N}))$ have $\dot{W}^{1,p}$-$L^{2}$ off-diagonal bounds  of every order on balls, 
and $\{S_t \;;\; t \in \R^{*}\} \subset \calL(L^{2}(\R^{n};\C^{N}))$ have $L^{p}$-$L^{2}$ off-diagonal bounds of order $M$,
Then
\begin{enumerate}
\item If $\underset{t \in \R^{*}}{\sup} \,\||t|^{n(\frac{1}{p}-\frac{1}{2})}T_{t}\|_{\calL(L^{p},L^2)}<\infty$, then for all $q \in (p,2]$,
$\{T_t \;;\; t \in \R^{*}\} $ has $L^{q}$-$L^{2}$ off-diagonal bounds of every order. 
\item
 
If $\underset{t \in \R^{*}}{\sup} \ \underset{B=B(x,|t|)}{\sup} \,\||t|^{n(\frac{1}{p}-\frac{1}{2})}U_{t}\|_{\calL(\dot{W}^{1,p}_{\overline{B}},L^2)}<\infty$,  then for all $q \in (p,2]$,
$\{U_t \;;\; t \in \R^{*}\} $ has $\dot{W}^{1,q}$-$L^{2}$ off-diagonal bounds of every order on balls. 

\item If $\{T_t \;;\; t \in \R^{*}\} $ has $L^{p}$-$L^{q}$ off-diagonal bounds of every order for some $q \in [p,2]$, then $\underset{t \in \R^{*}}{\sup}\,\|T_{t}\|_{\calL(L^{p})}<\infty$.
\item  $\{V_t S_{t}\;;\; t \in \R^{*}\}$ has $L^{p}$-$L^{2}$ off-diagonal bounds of order $M$,  and $\{T_t Z_{t}\;;\; t \in \R^{*}\}$ has $\dot{W}^{1,p}$-$L^{2}$ off-diagonal bounds of every order on balls. 
\item For all $t \in \R^{*}$, $T_t$ extends to an operator $T_t: L^\infty(\R^n;\C^N) \to L^2_{\loc}(\R^n;\C^N)$ with
$$
	\|T_t u\|_{L^2(B(x_0,t))} \lesssim |t|^{\frac{n}{2}} \|u\|_{\infty} \quad \forall u \in L^\infty(\R^n;\C^N), \; x_0 \in \R^n.
$$
\end{enumerate}
\end{Lemma}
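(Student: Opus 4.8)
\emph{Proof strategy.} The five items fall into two groups: (1)--(2) are interpolation statements, while (3)--(5) follow from a grid or dyadic decomposition of $\R^n$ combined with a Schur-type summation of the off-diagonal decay, and use no interpolation. I take the two groups in turn.

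For (1), the plan is to interpolate, for fixed $t \in \R^*$ and Borel sets $E,F$ with $d := \dist(E,F)$, the truncated operator $\mathbf 1_E T_t \mathbf 1_F$ --- consistently defined on $L^p(\R^n;\C^N) + L^2(\R^n;\C^N)$ by density on $L^p \cap L^2$. It satisfies $\|\mathbf 1_E T_t \mathbf 1_F\|_{\calL(L^2)} \lesssim_M (1+d/|t|)^{-M}$ for every $M$ (the $L^2$--$L^2$ off-diagonal bounds of every order) and $\|\mathbf 1_E T_t \mathbf 1_F\|_{\calL(L^p,L^2)} \lesssim |t|^{-n(1/p-1/2)}$ (the hypothesis, carrying no decay). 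Riesz--Thorin interpolation at $\theta \in (0,1)$ with $1/q = (1-\theta)/p + \theta/2$ then gives $\|\mathbf 1_E T_t \mathbf 1_F\|_{\calL(L^q,L^2)} \lesssim_M |t|^{-n(1/p-1/2)(1-\theta)} (1+d/|t|)^{-M\theta}$; since $n(1/q-1/2) = (1-\theta)\,n(1/p-1/2)$ and $M\theta$ ranges over $(0,\infty)$ as $M$ does, this is exactly the desired $L^q$--$L^2$ off-diagonal bound of every order, with constants uniform in $t,E,F$. For (2) I would run the identical argument on the operators $u \mapsto \mathbf 1_{S_j(B)} U_t u$, replacing $L^p,L^2$ by $\dot W^{1,p}_{\overline{B}},\dot W^{1,2}_{\overline{B}}$ and invoking the complex interpolation identity $[\dot W^{1,p}_{\overline{B}}, \dot W^{1,2}_{\overline{B}}]_\theta = \dot W^{1,q}_{\overline{B}}$. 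The one point needing care is that this identity holds with interpolation constant independent of $t$ and of the ball $B$ of radius $|t|$: I would reduce to that by the dilation $x \mapsto x/|t|$ (under which $\|\nabla\,\cdot\,\|_{L^r}$ scales by a power of $|t|$ consistent with the exponents above), bringing matters to functions supported in a fixed unit ball, where the Poincar\'e inequality equates the homogeneous and inhomogeneous norms and the interpolation identity is classical.

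For (3), cover $\R^n$ by a grid $\{Q_k\}_{k \in \Z^n}$ of cubes of side length $|t|$; for $u \in L^p \cap L^2$ write $u = \sum_j u \mathbf 1_{Q_j}$ and bound, using H\"older on $Q_k$ (legitimate as $q \ge p$) together with the $L^p$--$L^q$ off-diagonal estimate,
\[
\|T_t u\|_{L^p(Q_k)} \le \sum_j |Q_k|^{1/p-1/q}\,\|T_t(u\mathbf 1_{Q_j})\|_{L^q(Q_k)} \lesssim_M \sum_j \bigl(1 + \tfrac{\dist(Q_k,Q_j)}{|t|}\bigr)^{-M} \|u\|_{L^p(Q_j)},
\]
where $|Q_k|^{1/p-1/q} = |t|^{n(1/p-1/q)}$ cancels the prefactor $|t|^{-n(1/p-1/q)}$. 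Taking $p$-th powers, summing in $k$, and applying Young's inequality on $\ell^p(\Z^n)$ with the kernel $(1+c|k|)^{-M} \in \ell^1(\Z^n)$ (for $M > n$) gives $\sup_t \|T_t\|_{\calL(L^p)} < \infty$. For (4) and $V_t S_t$: given $E,F$ with $d = \dist(E,F)$ and $\supp u \subset F$, I would set $G := \{x : \dist(x,F) < d/2\}$, so $\dist(E,G) \ge d/2$ and $\dist(G^c,F) \ge d/2$, and split $V_t S_t u = V_t(\mathbf 1_G S_t u) + V_t(\mathbf 1_{G^c} S_t u)$, bounding the first term by the $L^2$--$L^2$ decay of $V_t$ from $E$ to $G$ together with the $L^p$--$L^2$ bound for $S_t$ with $E = \R^n$, and the second by the $L^2$-boundedness of $V_t$ together with the $L^p$--$L^2$ decay of $S_t$ from $G^c$ to $F$; each contribution is $\lesssim |t|^{-n(1/p-1/2)}(1+d/|t|)^{-M}\|u\|_{L^p(F)}$. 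For $T_t Z_t$: fix a ball $B$ of radius $|t|$ and $u \in W^{1,p}_{\overline{B}}$, decompose $Z_t u = \sum_{i \ge 1} \mathbf 1_{S_i(B)} Z_t u$, and for each $j$ estimate $\|T_t(\mathbf 1_{S_i(B)} Z_t u)\|_{L^2(S_j(B))}$ by combining the $L^2$--$L^2$ decay of $T_t$ between the shells $S_i(B)$ and $S_j(B)$ (of size $2^{-\max(i,j)M'}$ when $|i-j| \ge 2$, and $\lesssim 1$ otherwise) with $\|Z_t u\|_{L^2(S_i(B))} \lesssim |t|^{-n(1/p-1/2)} 2^{-iM''}\|\nabla u\|_p$; summing the resulting geometric series in $i$ leaves $|t|^{-n(1/p-1/2)} 2^{-j\min(M',M'')}\|\nabla u\|_p$, with $M',M''$ arbitrary. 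For (5), given $u \in L^\infty$ and $B = B(x_0,|t|)$, write $u = \sum_{j \ge 1} u\mathbf 1_{S_j(B)}$ with $\|u\mathbf 1_{S_j(B)}\|_2 \lesssim \|u\|_\infty (2^j|t|)^{n/2}$; since $\dist(B,S_j(B)) \gtrsim 2^j|t|$ for $j \ge 2$, the $L^2$--$L^2$ off-diagonal bound of order $M > n/2$ gives $\|T_t(u\mathbf 1_{S_j(B)})\|_{L^2(B)} \lesssim \|u\|_\infty |t|^{n/2} 2^{j(n/2-M)}$, so the series converges in $L^2(B)$ and defines $T_t u \in L^2_{\loc}$ --- consistently across balls, by applying the same estimate to differences --- with the stated bound.

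I expect the main obstacle to be bookkeeping rather than conceptual: the one genuinely delicate point is obtaining the Sobolev interpolation in (2) with constants uniform in $t$ and in $B$ (handled by the rescaling to a unit ball above), while in (3)--(5) the only care needed is in checking that the extensions of $T_t$ from $L^2 \cap L^p$, respectively $L^2 \cap L^\infty$, are well defined and mutually consistent. All of the off-diagonal summations are routine once the decompositions are set up.
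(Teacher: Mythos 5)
Your proposal is correct and follows essentially the same route as the paper: interpolation of the truncated operators $\Eins_E T_t \Eins_F$ (resp.\ $\Eins_{S_j(B)}U_t$) for (1)--(2), the intermediate set $G=\{x:\dist(x,F)<d/2\}$ for $V_tS_t$ and the shell decomposition for $T_tZ_t$ in (4), and a shell/grid summation for (5). The only differences are cosmetic: you use Riesz--Thorin where the paper invokes Stein interpolation on an analytic family (equivalent here), and you write out direct arguments for (3) and (5) where the paper cites \cite[Lemma 3.3]{AuscherMemoirs} and \cite[Corollary 5.3]{AKM}.
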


\begin{proof}
For (1),  use Stein's interpolation \cite[Theorem 1]{Stein} for the analytic family of operators
$\{|t|^{n(\frac{1}{p}-\frac{1}{2})z}(1+\frac{\dist(E,F)}{|t|})^{M'(1-z)}\Eins_{E}T_{t}\Eins_{F} \;;\; z \in S\}$, where $S = \{z \in \C \;;\; Re(z) \in [0,1]\}$ and $M' \in \N$.
For $q \in [p,2]$, this gives $L^q$-$L^2$ off-diagonal bounds of order $\max(0,M'(1-\frac{\frac{1}{q}-\frac{1}{2}}{\frac{1}{p}-\frac{1}{2}}))$, which implies the result by choosing $M'$ large enough.  A similar proof gives (2), this time using the fact that the spaces $\dot W^{1,p}_{\ovB}(\R^n;\C^N)$ interpolate in $p$ by the complex method. \\

We refer to \cite[Lemma 3.3]{AuscherMemoirs}  for a proof of (3).\\

We now turn to (4).
Let $E,F\subset \R^{n}$ be two Borel sets, and $t \in \R^{*}$.
Set $\delta=\dist(E,F)$ and $G=\{x \in \R^n\,;\,\dist(x,F)<\frac{\delta}{2}\}$. Then $\dist(E,\overline{G})\geq \frac{\delta}{2}$ and $\dist(\R^n\setminus G,F) \geq \frac{\delta}{2}$. Observe that the assumptions on $V_t$ and $S_t$ in particular imply that $\sup_{t\in \R^{*}}\|V_t\|_{\calL(L^2)} <\infty$ and $\sup_{t\in \R^{*}}\||t|^{n(\frac{1}{p}-\frac{1}{2})}S_t\|_{\calL(L^p,L^2)}<\infty$ (taking $E=F=\R^{n}$ in the definition of off-diagonal bounds).
 We have the following for all $u \in L^p$:
\begin{align*}
\|\Eins_{E}V_{t}S_{t}\Eins_{F}u\|_{2} 
& \leq \|\Eins_{E}V_{t}\Eins_{G}S_{t}\Eins_{F}u\|_{2} + \|\Eins_{E}V_{t}\Eins_{\R^n \setminus G}S_{t}\Eins_{F}u\|_{2}\\
& \lesssim (1+\frac{\dist(E,\overline{G})}{|t|})^{-M} 
\|\Eins_GS_t\Eins_Fu\|_2 + \|\Eins_{\R^n \setminus G}S_t\Eins_F u\|_2
\\
& \lesssim |t|^{-n(\frac{1}{p}-\frac{1}{2})} ((1+\frac{\dist(E,\overline{G})}{|t|})^{-M} + (1+\frac{\dist(\R^n\setminus G,F)}{|t|})^{-M} )\|\Eins_{F}u\|_{p}\\
&\lesssim |t|^{-n(\frac{1}{p}-\frac{1}{2})}(1+\frac{\dist(E,F)}{|t|})^{-M} \|\Eins_{F}u\|_{p}.
\end{align*}
This proves (4)  for $\{V_t S_{t}\;;\; t \in \R^{*}\}$. For $\{T_t Z_{t}\;;\; t \in \R^{*}\}$, a ball $B$  of radius $|t|$,  $j \in \N$, and $u \in \dot{W}^{1,p}_{\ovB}$, we have, for all $N\in\N$, that
\begin{align*}
\|\Eins_{S_{j}(B)}T_{t}Z_{t}u\|_{2} &\leq \sum \limits _{k=1} ^{\infty} \|\Eins_{S_{j}(B)}T_{t}\Eins_{S_{k}(B)}Z_{t}u\|_{2}
\lesssim \sum \limits _{k=1} ^{\infty} 2^{-|j-k|(N+1)}\|\Eins_{S_{k}(B)}Z_{t}u\|_{2}\\
& \lesssim |t|^{-n(\frac{1}{p}-\frac{1}{2})}\sum \limits _{k=1} ^{\infty} 2^{-|j-k|(N+1)}2^{-k(N+1)}\|\nabla u\|_{p} \lesssim |t|^{-n(\frac{1}{p}-\frac{1}{2})}2^{-jN}\|\nabla u\|_{p}.
\end{align*}
(5) The extension $T_t: L^\infty(\R^n;\C^N)\to L^2_{\loc}(\R^n;\C^N)$ can be defined as
$$
	\Eins_Q (T_t u) = \lim_{\rho \to \infty} \sum_{\substack{R \in \Delta_{|t|} \\ \dist(Q,R) <\rho}} \Eins_Q (T_t (\Eins_R u)),
$$
where $u \in L^\infty(\R^n;\C^N)$, and $Q \in \Delta_{|t|}$ a dyadic cube in $\R^n$ (see the beginning of Section \ref{sec:low} for a definition of $\Delta_{|t|}$). 
It is shown in \cite[Corollary 5.3]{AKM} that the limit exists and the extension is well-defined. 
\end{proof}

The next lemma was shown in \cite[Lemma 7.3]{HvNP} (as in \cite[Lemma 3.6]{AMR}).

\begin{Lemma}
\label{lem:psiOD}
Suppose $M>0$, $0\leq\omega<\theta<\mu<\frac{\pi}{2}$. 
Let $D$ be an $\omega$-bisectorial operator in $L^2(\R^n;\C^N)$ such that
$\{z(zI-D)^{-1}\,;\,z\in\C\setminus S_\theta\}$ has $L^2$-$L^2$ off diagonal bounds of order $M$ in the sense that
\begin{equation*} 
	\| z(zI-D)^{-1}u\|_{L^2(E)} \leq C_M\left(1+|z|\dist(E,F)\right)^{-M} \|u\|_{2}
\end{equation*}
for all $z\in \C\setminus S_\theta$, all Borel subsets $E, F\subseteq\R^n$, and all $u \in L^2(\R^n;\C^N)$ with $\supp u \subseteq F$. 
If $\psi \in \Psi_{\beta} ^{\alpha} (S^o_{\mu})$ for   $\alpha>0$, $\beta>M$, then $\{\psi(tD)\;;\; t \in \R^{*}\}$ has $L^2$-$L^2$ off-diagonal bounds  of order $M$.
\end{Lemma}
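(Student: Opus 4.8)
The plan is to represent $\psi(tD)$ via the Cauchy integral formula and estimate the integrand using the given off-diagonal bounds on the resolvents, then integrate. Concretely, fix $\theta' \in (\theta,\mu)$ and write
$$
\psi(tD) = \frac{1}{2\pi i} \int_{\partial S_{\theta'}} \psi(tz)(zI-D)^{-1}\, dz = \frac{1}{2\pi i} \int_{\partial S_{\theta'}} \frac{\psi(tz)}{z}\, z(zI-D)^{-1}\, dz,
$$
where we have inserted a factor $z/z$ so that the operator appearing is exactly $z(zI-D)^{-1}$, for which we control off-diagonal decay. Then for Borel sets $E,F$ with $\supp u \subseteq F$, setting $\delta = \dist(E,F)$,
$$
\|\psi(tD)u\|_{L^2(E)} \lesssim \int_{\partial S_{\theta'}} \frac{|\psi(tz)|}{|z|}\, \bigl(1+|z|\delta\bigr)^{-M}\, |dz|\; \|u\|_2.
$$
So the whole lemma reduces to the scalar estimate
$$
\int_{\partial S_{\theta'}} \frac{|\psi(tz)|}{|z|}\, \bigl(1+|z|\delta\bigr)^{-M}\, |dz| \lesssim \Bigl(1+\frac{\delta}{|t|}\Bigr)^{-M}.
$$

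To prove this scalar bound, parametrise $\partial S_{\theta'}$ by $z = \pm s e^{\pm i\theta'}$, $s>0$, so that $|dz| = ds$ and $|z| = s$, and use $|\psi(tz)| \leq C \min\bigl((t s)^{\alpha}, (ts)^{-\beta}\bigr)$ since $\psi \in \Psi_{\beta}^{\alpha}(S_\mu^o)$ — note this is why we need the decay exponent $\beta$. After the substitution $r = ts$ the integral becomes (up to the constant)
$$
\int_0^\infty \min(r^{\alpha}, r^{-\beta})\, \frac{dr}{r}\, \bigl(1 + r \tfrac{\delta}{|t|}\bigr)^{-M}.
$$
Writing $\rho = \delta/|t|$, one splits at $r = 1/\rho$ (when $\rho \geq 1$) or handles $\rho \leq 1$ trivially by dropping the last factor. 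For $r \leq 1/\rho$ the factor $(1+r\rho)^{-M}$ is $\approx 1$ and the remaining integral $\int_0^{1/\rho} \min(r^\alpha,r^{-\beta})\tfrac{dr}{r}$ is bounded by $\int_0^{1/\rho} r^{-\beta-1}\,dr \lesssim \rho^{\beta}$ when $1/\rho \leq 1$, and is $O(1)$ when $1/\rho \geq 1$ since then $\int_0^1 r^{\alpha-1}dr + \int_1^{1/\rho} r^{-\beta-1}dr = O(1)$; for $r \geq 1/\rho$ one uses $(1+r\rho)^{-M} \lesssim (r\rho)^{-M}$ together with $\min(r^\alpha,r^{-\beta}) \leq r^{-\beta}$ to get $\int_{1/\rho}^\infty r^{-\beta-M-1}\rho^{-M}\,dr \lesssim \rho^{\beta}$. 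Combining the two ranges yields a bound of order $\min(1,\rho^{\beta}) \lesssim (1+\rho)^{-\beta} \leq (1+\rho)^{-M}$ because $\beta > M$, which is exactly the claimed decay. (One could also argue more crudely: bound $\min(r^\alpha, r^{-\beta}) (1+r\rho)^{-M}$ above by $C_\beta r^{-\beta}(1+r\rho)^{-M}$ on $r \geq 1$ and by $r^{\alpha-1}$ on $r \leq 1$, and carry the estimate through; the point is only that the $\beta$-decay of $\psi$ beats the $M$ we want.)

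The main obstacle — really the only non-routine point — is making sure the Cauchy integral representation is legitimate on $L^2$ and that one may localise it against $\Eins_E$ and $\Eins_F$ to bring the off-diagonal resolvent bounds to bear. Here $\psi \in \Psi_\beta^\alpha(S_\mu^o)$ with $\alpha,\beta>0$ guarantees absolute convergence of the contour integral in $\calL(L^2)$ (the integrand is $O(\min(s^{\alpha-1}, s^{-\beta-1}))$ as a bounded-operator-valued function of $s$ by bisectoriality), so $\psi(tD)$ is well-defined and the standard contour-deformation argument lets us choose the contour $\partial S_{\theta'}$ with $\theta < \theta' < \mu$, on which the hypothesis on $\{z(zI-D)^{-1}\}$ applies. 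Then $\Eins_E \psi(tD) \Eins_F u = \frac{1}{2\pi i}\int_{\partial S_{\theta'}} \frac{\psi(tz)}{z}\, \Eins_E\, z(zI-D)^{-1}\Eins_F u\, dz$ by Fubini/dominated convergence for Bochner integrals, and taking $L^2(E)$ norms inside the integral gives the pointwise-in-$z$ estimate above. Everything after that is the scalar computation already described.
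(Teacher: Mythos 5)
Your overall strategy --- represent $\psi(tD)$ by the Cauchy integral over $\partial S_{\theta'}$ with $\theta'\in(\theta,\mu)$, apply the resolvent off-diagonal bounds pointwise in $z$, and reduce everything to a scalar Schur-type integral --- is exactly the standard proof of this lemma (the paper itself gives no proof but cites \cite[Lemma 7.3]{HvNP} and \cite[Lemma 3.6]{AMR}, which argue precisely this way). There is, however, a concrete error in your scalar estimate, caused by reading the class $\Psi_\beta^\alpha$ backwards. In the paper's definition the \emph{subscript} records the order of vanishing at the origin and the \emph{superscript} the order of decay at infinity, so $\psi\in\Psi_\beta^\alpha(S_\mu^o)$ means $|\psi(z)|\lesssim \min(|z|^{\beta},|z|^{-\alpha})$; the hypothesis $\beta>M$ concerns the behaviour of $\psi$ \emph{at the origin}. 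You instead use $|\psi(tz)|\lesssim\min((|t|s)^{\alpha},(|t|s)^{-\beta})$.

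With your reading the argument cannot close: the integral $\int_0^\infty \min(r^\alpha,r^{-\beta})(1+r\rho)^{-M}\,\frac{dr}{r}$ is bounded \emph{below} by $c\int_0^{1/\rho}r^{\alpha-1}\,dr\gtrsim \rho^{-\alpha}$ for $\rho\geq 1$, so knowing only $\alpha>0$ one cannot reach decay of order $M$. The symptoms are visible in your computation: the asserted bound $\int_0^{1/\rho}r^{-\beta-1}\,dr\lesssim\rho^{\beta}$ involves a divergent integral, and the concluding inequality $\min(1,\rho^{\beta})\lesssim(1+\rho)^{-\beta}$ is false for $\rho\geq 1$. The repair is simply to use the correct bound $|\psi(tz)|\lesssim\min((|t|s)^{\beta},(|t|s)^{-\alpha})$, after which your splitting works and the hypotheses land in the right places: for $\rho\geq 1$ one has $\int_0^{1/\rho}r^{\beta-1}\,dr\lesssim\rho^{-\beta}\leq\rho^{-M}$, then $\int_{1/\rho}^{1}r^{\beta}(r\rho)^{-M}\,\frac{dr}{r}=\rho^{-M}\int_{1/\rho}^{1}r^{\beta-M-1}\,dr\lesssim\rho^{-M}$ because $\beta>M$, and $\int_{1}^{\infty}r^{-\alpha}(r\rho)^{-M}\,\frac{dr}{r}\lesssim\rho^{-M}$ because $\alpha>0$; for $\rho\leq 1$ the integral is simply $O(1)$. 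Everything else in your write-up --- the absolute convergence of the contour integral, the localisation against $\Eins_E$ and $\Eins_F$, and the observation that $\partial S_{\theta'}\subseteq\C\setminus S_\theta$ so the hypothesis applies on the contour --- is fine.
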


\subsection{Tent spaces}
Recall that the tent space $T^{p,2}(\R^{n+1}_+)$, first introduced by Coifman, Meyer, and Stein in \cite{cms}, is the completion of $C_{c}^{\infty}(\R^{n+1}_{+})$ with respect to the norm
$$
\|F\|_{T^{p,2}}  = ( \int \limits _{\R^{n}} (\int \limits _{0} ^{\infty} \int \limits _{B(x,t)} |F(t,y)|^{2} \,\frac{dydt}{t^{n+1}})^{\frac{p}{2}} \,dx)^{\frac{1}{p}}
$$
for $p \in [1,\infty)$, and with respect to the norm
$$
	\|F\|_{T^{\infty,2}} = \sup_{(r,x) \in \R_+ \times \R^n} (r^{-n} \int_0^r \int_{B(x,r)} |F(t,y)|^2 \, \frac{dydt}{t})^{\frac12}
$$
for $p=\infty$. \\

The tent spaces interpolate by the complex method, in the sense that $[T^{p_{0},2},T^{p_{1},2}]_{\theta} = T^{p_{\theta},2}$ for $\theta \in [0,1]$ and 
$\frac{1}{p_{\theta}}=\frac{1-\theta}{p_{0}}+\frac{\theta}{p_{1}}$.
We recall a basic result about tent spaces, and another about operators acting on them.

\begin{Lemma}
\cite{a}
\label{lem:angle}
Let $p \in [1,\infty)$, $\alpha \geq 1$ and $T^{p,2}_{\alpha}(\R^{n+1}_+)$ denote the completion of $C_{c}^{\infty}(\R^{n+1}_{+})$ with respect to the norm
$$
\|F\|_{T^{p,2}_{\alpha}}  = ( \int \limits _{\R^{n}} (\int \limits _{0} ^{\infty} \int \limits _{B(x,\alpha t)} |F(t,y)|^{2} \,\frac{dydt}{t^{n+1}})^{\frac{p}{2}} \,dx )^{\frac{1}{p}}.
$$
Then $T^{p,2}_{\alpha}(\R^{n+1}_+) = T^{p,2}(\R^{n+1}_+)$ with the equivalence of norms
$$
\|F\|_{T^{p,2}} \leq \|F\|_{T^{p,2}_{\alpha}} \lesssim \alpha^{\frac{n}{\min\{p,2\}}}\|F\|_{T^{p,2}} \quad \forall F \in T^{p,2}(\R^{n+1}_+).
$$
\end{Lemma}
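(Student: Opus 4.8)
The two inequalities are of quite different natures, so I would treat them separately. The lower bound $\|F\|_{T^{p,2}} \leq \|F\|_{T^{p,2}_\alpha}$ for $\alpha \geq 1$ is essentially immediate from monotonicity: since $\alpha \geq 1$, we have $B(x,t) \subseteq B(x,\alpha t)$, so the inner double integral defining the area function $\mathcal{A}_1 F(x) = (\int_0^\infty \int_{B(x,t)} |F(t,y)|^2 \, dydt/t^{n+1})^{1/2}$ is pointwise (in $x$) dominated by $\mathcal{A}_\alpha F(x)$, and taking $L^p$ norms in $x$ gives the claim. This requires no real work.

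The substantive content is the upper bound $\|F\|_{T^{p,2}_\alpha} \lesssim \alpha^{n/\min\{p,2\}} \|F\|_{T^{p,2}}$. Here I would split into the cases $p \geq 2$ and $p \leq 2$, since the exponent $\min\{p,2\}$ signals that two different mechanisms are at play. For $p = 2$, Fubini's theorem gives the exact identity $\|F\|_{T^{2,2}_\alpha}^2 = \int_0^\infty \int_{\R^n} |F(t,y)|^2 |B(y,\alpha t)|/t^{n+1} \, dydt = c_n \alpha^n \|F\|_{T^{2,2}}^2$, which is sharp and gives exactly the $\alpha^{n/2}$ factor. For $p = \infty$ (or $p$ large), the Carleson-type characterization $\|F\|_{T^{\infty,2}_\alpha} \approx \|F\|_{T^{\infty,2}}$ holds with a constant independent of $\alpha$ — covering a tent of aperture $\alpha$ over a ball by $O(\alpha^n)$ tents of aperture $1$ and using the sup — but since we only claim $p < \infty$ here, it suffices to interpolate: by the complex interpolation identity $[T^{2,2}_\alpha, T^{\infty,2}_\alpha]_\theta = T^{p,2}_\alpha$ (which holds for the $\alpha$-aperture spaces with constants uniform in $\alpha$, just as for $\alpha = 1$), one gets the bound $\alpha^{n/p}$ for all $p \in [2,\infty)$. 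For $p \in [1,2)$ I would instead use a good-$\lambda$ / change-of-aperture argument in the spirit of Coifman–Meyer–Stein and of the reference \cite{a}: one shows that if $x$ is such that the aperture-$1$ area function is controlled on a large portion of a ball around $x$, then the aperture-$\alpha$ area function at $x$ is also controlled, the loss being governed by how the "bad" set inflates, which produces the factor $\alpha^{n/p}$. Alternatively — and this is cleanest — one exploits the atomic decomposition of $T^{p,2}$ for $p \leq 1$ together with interpolation with $T^{2,2}_\alpha$ to cover $p \in [1,2]$.

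The step I expect to be the main obstacle is the range $p \in [1,2)$: there, neither Fubini nor a soft sup-argument applies, and one genuinely needs either the stopping-time/good-$\lambda$ technology from \cite{cms}, the atomic decomposition, or a careful tracking of constants through interpolation with the endpoint $T^{1,2}_\alpha$ (whose equivalence with $T^{1,2}$ and the relevant $\alpha$-dependence must itself be established, typically via atoms supported in tents of bounded aperture). In practice I would cite \cite{a} for precisely this change-of-aperture estimate, since the statement attributes the lemma to that reference, and merely indicate the proof sketch: reduce to the two endpoints $p=2$ (Fubini, factor $\alpha^{n/2}$) and $p=1$ (atoms, factor $\alpha^n$), then interpolate to get $\alpha^{n/\min\{p,2\}}$ on $(1,2]$, and interpolate $p=2$ with $p=\infty$ (uniform in $\alpha$) to get $\alpha^{n/p}$ on $[2,\infty)$; the two ranges patch together because at $p = 2$ both give $\alpha^{n/2}$.
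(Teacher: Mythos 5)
The paper gives no proof of this lemma: it is quoted from Auscher's note \cite{a}, so there is no in-text argument to compare yours against. Your lower bound and your treatment of $p\in[1,2]$ are sound in outline and follow the classical route: $B(x,t)\subseteq B(x,\alpha t)$ gives the first inequality; Fubini gives the exact factor $\alpha^{n/2}$ at $p=2$; a $T^{1,2}$ atom associated with a ball $B$ is sent to a function supported in $C\alpha B$, so Cauchy--Schwarz plus the $p=2$ bound gives the factor $\alpha^{(n/2)+(n/2)}=\alpha^{n}$ at $p=1$; and interpolating the identity map between the $T^{1,2}$ and $T^{2,2}$ endpoints yields $\alpha^{n/p}$ on $[1,2]$, which is the stated exponent there.

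The genuine gap is in the range $p\in(2,\infty)$. You claim that interpolating the $p=2$ estimate (factor $\alpha^{n/2}$) with an $\alpha$-uniform estimate at $p=\infty$ yields the factor $\alpha^{n/p}$ for $p\in[2,\infty)$. That conclusion is false, so the argument cannot be repaired as stated: take $F=\Eins_{(1,2)\times B(0,R)}$ with $R\gg\alpha\geq 1$. Then $\mathcal{A}_1F(x)^2\approx|B(x,1)\cap B(0,R)|$, so $\|\mathcal{A}_1F\|_p\approx R^{n/p}$, while $\mathcal{A}_\alpha F(x)^2\approx|B(x,\alpha)\cap B(0,R)|\approx\alpha^n$ for $|x|\leq R$, so $\|\mathcal{A}_\alpha F\|_p\gtrsim\alpha^{n/2}R^{n/p}$. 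Hence the change-of-aperture constant is at least $c\,\alpha^{n/2}$ for \emph{every} $p$, and no $\alpha$-uniform $T^{\infty,2}$ endpoint can coexist with $\alpha$-uniform constants in the identification $[T^{2,2}_\alpha,T^{\infty,2}_\alpha]_\theta=T^{p,2}_\alpha$; the missing factor $\alpha^{n/2}$ hides either in the normalisation of the aperture-$\alpha$ Carleson functional or in those interpolation constants. The correct (and sharp) bound for $p\geq2$ is the stated $\alpha^{n/2}$, and it is obtained directly by duality rather than by interpolation above $p=2$: writing $\|\mathcal{A}_\alpha F\|_p^2=\|(\mathcal{A}_\alpha F)^2\|_{p/2}$ and pairing with $g\in L^{(p/2)'}$, Fubini gives
$$
\int_{\R^n}(\mathcal{A}_\alpha F)^2g
=\int_0^\infty\int_{\R^n}|F(t,y)|^2\Big(\int_{B(y,\alpha t)}g\Big)\frac{dydt}{t^{n+1}}
\lesssim\alpha^n\int_{\R^n}\calM g\,(\mathcal{A}_1F)^2
\lesssim\alpha^n\|g\|_{(p/2)'}\|\mathcal{A}_1F\|_p^2,
$$
using $\int_{B(y,\alpha t)}|g|\lesssim(\alpha t)^n\,\calM g(z)$ for $z\in B(y,t)$, a second application of Fubini, and the boundedness of the maximal operator on $L^{(p/2)'}$ (here $(p/2)'>1$ since $p<\infty$). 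You should replace the interpolation step for $p>2$ by this argument, or simply cite \cite{a} for the whole statement as the paper does.
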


\begin{Lemma} \label{tent-boundedness}
\cite[Theorem 5.2]{HvNP}
Let $p \in (1,\infty)$. Let $\{T_t\}_{t>0}$ be a family of operators acting on $L^2(\R^n)$ with $L^2$-$L^2$ off-diagonal bounds of order $M>\frac{n}{min\{p,2\}}$. 
Then there exists $C>0$ such that for all $F \in T^{p,2}(\R^{n+1}_+)$
\[
	\|(t,x)\mapsto T_tF(t,\,.\,)(x)\|_{T^{p,2}} \leq C \|F\|_{T^{p,2}}.
\]
\end{Lemma}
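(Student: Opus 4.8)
The plan is to prove the estimate first for $p=2$, then for the endpoint $p=1$ by means of the atomic decomposition of $T^{1,2}$ (see \cite{cms}), to deduce the range $p\in(1,2)$ by complex interpolation (using the identity $[T^{1,2},T^{2,2}]_{\theta}=T^{p_{\theta},2}$ recalled above), and finally to treat $p\in(2,\infty)$ by duality. For $p=2$ there is nothing to do beyond Fubini's theorem: taking $E=F=\R^{n}$ in the definition of $L^{2}$-$L^{2}$ off-diagonal bounds gives $\sup_{t>0}\|T_{t}\|_{\calL(L^{2}(\R^{n}))}\le C_{M}$, and since $\|F\|_{T^{2,2}}^{2}\approx\int_{0}^{\infty}\|F(t,\cdot)\|_{L^{2}(\R^{n})}^{2}\,\tfrac{dt}{t}$, the claimed bound follows with constant $\lesssim C_{M}$ and no restriction on $M$ other than $M>0$.

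For $p=1$, by the atomic decomposition of $T^{1,2}$ and a standard density argument (using the $p=2$ bound to make sense of $F\mapsto(t,x)\mapsto T_{t}F(t,\cdot)(x)$ on $T^{1,2}\cap T^{2,2}$ and then passing to the $T^{1,2}$-limit of the atomic expansion of a nice $F$), it suffices to show that $\|(t,x)\mapsto T_{t}a(t,\cdot)(x)\|_{T^{1,2}}\lesssim 1$ uniformly over all $T^{1,2}$-atoms $a$. Fix such an atom, associated with a ball $B$ of radius $r_{B}$: then $\supp a(t,\cdot)\subseteq B$ for $0<t\le r_{B}$, $a(t,\cdot)=0$ for $t>r_{B}$, and $\int_{0}^{r_{B}}\|a(t,\cdot)\|_{L^{2}}^{2}\,\tfrac{dt}{t}\lesssim|B|^{-1}$. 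Write $G(t,y)=T_{t}a(t,\cdot)(y)$ and split the base space as $\R^{n}=S_{1}(B)\cup\bigcup_{j\ge 2}S_{j}(B)$. Over $S_{1}(B)=4B$, Cauchy--Schwarz in $x$, the $p=2$ bound and the normalisation of $a$ give a contribution $\lesssim|4B|^{1/2}\|G\|_{T^{2,2}}\lesssim|B|^{1/2}\|a\|_{T^{2,2}}\lesssim 1$. Over $S_{j}(B)$ with $j\ge 2$, Cauchy--Schwarz in $x$ over $S_{j}(B)$ (of measure $\approx(2^{j}r_{B})^{n}$) followed by Fubini bounds the corresponding contribution by $\lesssim(2^{j}r_{B})^{n/2}\bigl(\int_{0}^{r_{B}}\tfrac1t\|\Eins_{E_{j}}G(t,\cdot)\|_{L^{2}}^{2}\,dt\bigr)^{1/2}$, where $E_{j}$ is the $r_{B}$-neighbourhood of $S_{j}(B)$, so that $\dist(E_{j},B)\gtrsim 2^{j}r_{B}$. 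Since $\supp a(t,\cdot)\subseteq B$, the off-diagonal bounds give $\|\Eins_{E_{j}}G(t,\cdot)\|_{L^{2}}\lesssim(2^{j}r_{B}/t)^{-M}\|a(t,\cdot)\|_{L^{2}}$ for $0<t\le r_{B}$; inserting this and using $\int_{0}^{r_{B}}t^{2M}\|a(t,\cdot)\|_{L^{2}}^{2}\,\tfrac{dt}{t}\le r_{B}^{2M}\int_{0}^{r_{B}}\|a(t,\cdot)\|_{L^{2}}^{2}\,\tfrac{dt}{t}\lesssim r_{B}^{2M}|B|^{-1}$ shows that the $j$-th contribution is $\lesssim 2^{j(n/2-M)}$. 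The hypothesis $M>n/\min\{p,2\}$, which in particular gives $M>n/2$, makes $\sum_{j\ge 2}2^{j(n/2-M)}$ converge, and the case $p=1$ follows.

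It remains to remove the restriction to $p\in\{1,2\}$. For $p\in(1,2)$, complex interpolation between the $T^{1,2}$- and $T^{2,2}$-bounds, combined with $[T^{1,2},T^{2,2}]_{\theta}=T^{p,2}$, gives the estimate on $T^{p,2}$ (both endpoints being available since $M>n/2$). For $p\in(2,\infty)$, the adjoint family $\{T_{t}^{*}\}_{t>0}$ has off-diagonal bounds of the same order $M$ (operator norms are invariant under adjoints and $\dist$ is symmetric), so by the cases already proved $G\mapsto(t,x)\mapsto T_{t}^{*}G(t,\cdot)(x)$ is bounded on $T^{p',2}$ with $1/p+1/p'=1$; since the tent-space duality $(T^{p',2})^{*}=T^{p,2}$ identifies this operator with the adjoint of $F\mapsto(t,x)\mapsto T_{t}F(t,\cdot)(x)$ for the natural pairing (induced by $\int_{0}^{\infty}\int_{\R^{n}}F\,\overline{G}\,\tfrac{dydt}{t}$), the latter is bounded on $T^{p,2}$, which completes the proof.

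The step I expect to be the main obstacle is the $p=1$ atomic estimate: extracting the geometric decay $2^{j(n/2-M)}$ over the dyadic shells $S_{j}(B)$ depends on keeping careful track of the support of $a(t,\cdot)$ in \emph{both} the $t$-variable (so that the relevant $t$-integral is cut off at scale $r_{B}$) and the $y$-variable (so that $a(t,\cdot)$ really sits in the fixed ball $B$) before invoking the off-diagonal bounds. A secondary, purely technical point is the verification that $F\mapsto(t,x)\mapsto T_{t}F(t,\cdot)(x)$ commutes with the atomic decomposition, which is handled by the density argument indicated above.
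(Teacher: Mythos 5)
Your proof is correct, and it is worth noting that the paper itself does not prove this lemma: it is imported verbatim from \cite[Theorem 5.2]{HvNP}, so you are supplying an argument where the paper offers only a citation. Your scheme — the trivial $p=2$ case via Fubini, an atomic estimate at $p=1$ over the shells $S_j(B)$ yielding the decay $2^{j(n/2-M)}$, complex interpolation $[T^{1,2},T^{2,2}]_\theta=T^{p,2}$ for $1<p<2$, and duality for $p>2$ (the adjoint family inherits the off-diagonal bounds since $\|\Eins_E T_t^*\Eins_F\|=\|\Eins_F T_t\Eins_E\|$) — is the standard one, and is essentially the technique the paper uses elsewhere for close relatives of this statement: the appendix proof of Proposition \ref{prop:SIO-tent} runs the same atom-plus-shells computation, and Proposition \ref{lem:reverse} handles $p\geq2$ by the dual Carleson-measure route. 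A common alternative, closer to how the quoted hypothesis arises, is to avoid atoms altogether: decompose $T_tF(t,\cdot)=\sum_j T_t\Eins_{S_j(B(\cdot,t))}F(t,\cdot)$ and pay the change-of-aperture cost $2^{jn/\min\{p,2\}}$ from Lemma \ref{lem:angle} against the gain $2^{-jM}$; that proof works for all $p$ at once and explains the exponent $n/\min\{p,2\}$. Your route buys something slightly stronger: the atomic estimate only needs $M>\tfrac n2$, so your argument in fact establishes the conclusion for every $p\in(1,\infty)$ under the weaker hypothesis $M>\tfrac n2$ — which is implied by, hence consistent with, the stated hypothesis $M>n/\min\{p,2\}$. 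The technical points you flag are genuinely routine: $\dist(E_j,B)\geq(2^j-2)r_B\gtrsim 2^jr_B$ for $j\geq2$, and the operator is a priori defined on $T^{2,2}$, hence on atoms and on $T^{p,2}\cap T^{2,2}$, which legitimises both the atomic synthesis and the interpolation.
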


\subsection{Hardy spaces associated with bisectorial operators}
We consider Hardy spaces associated with bisectorial operators. 
We refer to \cite{AMR,DY,HvNP,hm,HMMc,HLMMY} and the references therein for more details about such spaces, and just recall here the main definition and result.\\

Let $0 \leq \omega <\mu<\frac{\pi}{2}$, and $D$ be an  $\omega$-bisectorial operator in $L^{2}(\R^{n};\C^{N})$ 
such that
$\{(I+itD)^{-1} \;;\; t \in \R\setminus\{0\}\}$ has $L^{2}$-$L^{2}$ off-diagonal bounds of order $M>\frac{n}{2}$.
Assume further that $D$ has a bounded $H^{\infty}$ functional calculus with angle $\theta \in (\omega, \mu)$.
Given $u \in L^2(\R^n;\C^N)$ and $\psi \in \Psi(S_\mu^o)$, write
\[
	\calQ_\psi u(x,t):= \psi(tD)u(x), \qquad x \in \R^n,\; t>0.
\]

\begin{Def}
Let $p \in [1,\infty)$, let $\psi \in \Psi(S_\mu^o)$ be non-degenerate. 
The Hardy space $H^p_{D,\psi}(\R^n;\C^N)$ associated with $D$ and $\psi$ is the completion of the space 
\[
	\{ u \in \overline{\calR_2(D)} \,:\, \calQ_\psi u \in T^{p,2}(\R^{n+1}_{+};\C^N)\}
\]
with respect to the norm
\[
	\|u\|_{H^p_{D,\psi}} := \|\calQ_\psi u\|_{T^{p,2}}.
\]
\end{Def}

Let us also recall \cite[Theorem 7.10]{HvNP}:

\begin{Theorem} \label{thm:hardy}
Let $\eps>0$. Let $p \in (1,2]$ and $\psi,\tilde{\psi} \in \Psi_\eps^{\frac{n}{2}+\eps}(S_\mu^o)$, or $p \in [2,\infty)$ and $\psi,\tilde{\psi} \in \Psi_{\frac{n}{2}+\eps}^\eps(S_\mu^o)$, where $\mu>\omega$ and  both $\psi$ and $\tilde\psi$ are  non-degenerate.
Then
\begin{enumerate}
\item
$H^p_{D,\psi}(\R^n;\C^N) = H^p_{D,\tilde{\psi}}(\R^n;\C^N) =: H^p_{D}(\R^n;\C^N)$;
\item
For all $u \in H^p_{D}(\R^n;\C^N)$, and all $f \in \Psi(S^{o}_{\mu})$, we have 
$$
\|(t,x) \mapsto \psi(tD)f(D)u(x)\|_{T^{p,2}} \lesssim \|f\|_{\infty}\|u\|_{H^p_{D}}.
$$
In particular, $D$ has a bounded $H^\infty$ functional calculus on $H^p_{D}(\R^n;\C^N)$.
\end{enumerate}
\end{Theorem}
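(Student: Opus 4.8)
The plan is to deduce Theorem \ref{thm:hardy} from the general theory of $\Psi$-bounded functional calculi on tent spaces, using the off-diagonal hypothesis on the resolvents together with Lemma \ref{lem:psiOD} and Lemma \ref{tent-boundedness}. The starting observation is that if $\psi \in \Psi_\eps^{\frac n2+\eps}(S_\mu^o)$ (for $p\le 2$) or $\psi\in\Psi_{\frac n2+\eps}^{\eps}(S_\mu^o)$ (for $p\ge 2$), then $\psi$ decays like $|z|^{\min(\eps,\frac n2+\eps)}$ at one end and like $|z|^{-\min(\eps,\frac n2+\eps)}$ at the other; in either case, writing $\psi(tD)=\psi(tD)(I\mp itD)^{-K}\cdot(I\mp itD)^{K}$ one is led to compose resolvents with functions in $\Psi$-classes, and by Lemma \ref{lem:psiOD} every $\psi(tD)$ with $\psi\in\Psi_\beta^\alpha(S_\mu^o)$, $\beta>M_0$, inherits $L^2$--$L^2$ off-diagonal bounds of order $M_0$. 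Choosing $M_0>\frac n{\min(p,2)}$ (possible because $M>\frac n2$ and because the relevant decay exponent exceeds $\frac n2+\eps$ on the side where it is large) and applying Lemma \ref{tent-boundedness} then shows that $u\mapsto \calQ_\psi u$ and, more importantly, $u\mapsto\big((t,x)\mapsto\psi(tD)f(D)u(x)\big)$ are bounded on $T^{p,2}$ whenever one already controls a single comparison square function.

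For part (1), the standard route is a Calderón-type reproducing formula. Since $D$ has a bounded $H^\infty$ calculus on $\overline{\calR_2(D)}$, for suitable non-degenerate $\varphi\in\Psi$ one has $\int_0^\infty \varphi(tD)\frac{dt}{t}=cI$ on $\overline{\calR_2(D)}$ with $c\ne 0$, hence for $u\in\overline{\calR_2(D)}$,
\[
  \tilde\psi(sD)u \;=\; c^{-1}\int_0^\infty \tilde\psi(sD)\varphi(tD)\,u\,\frac{dt}{t}.
\]
The composition $\tilde\psi(sD)\varphi(tD)=\eta(s,t)(\min(s/t,t/s))$-type kernel is a $\Psi$-function of $sD$ (or $tD$) with good decay in $\min(s/t,t/s)$; combining this with the tent-space boundedness from Lemma \ref{tent-boundedness} and a Schur estimate in the $t/s$ variable gives $\|\calQ_{\tilde\psi}u\|_{T^{p,2}}\lesssim\|\calQ_\psi u\|_{T^{p,2}}$, and by symmetry the reverse inequality, so the two completions coincide; this is exactly the argument of \cite[Theorem 7.10]{HvNP} which we are quoting, so in the paper it suffices to cite it. Part (2) is then the same mechanism applied to $\tilde\psi(sD)f(D)\varphi(tD)$ with $f\in H^\infty(S_\mu^o)$ bounded, using that $\|f(D)\|_{\calL(L^2)}\lesssim\|f\|_\infty$ and that $f(D)$ commutes with the $\Psi$-functions of $D$, so that $\|f\|_\infty$ factors out; the final "in particular" statement follows by taking $f$ to range over an approximating sequence and using the definition of $H^p_D$ as the completion.

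The main obstacle is the interplay between the two ends of the sector in the $\Psi$-class bookkeeping: one needs the off-diagonal order surviving all the resolvent compositions to stay strictly above $\frac n{\min(p,2)}$ \emph{and} one needs enough decay at the other end of $S_\mu^o$ to make the Calderón integral converge and to run the Schur estimate in $T^{p,2}$ rather than merely in $T^{2,2}$. This is why the hypotheses are asymmetric in $p$: for $p\le 2$ the critical regularity is the $\frac n2+\eps$ decay at infinity (controlling off-diagonal behaviour of $\psi(tD)$ for large argument), while for $p\ge 2$ it is the $\frac n2+\eps$ vanishing at the origin, by a duality between $T^{p,2}$ and $T^{p',2}$. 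Since all of this is precisely the content of \cite[Theorem 7.10]{HvNP}, which the excerpt is quoting verbatim, in the paper itself the "proof" is simply the citation; the sketch above records how that cited result is established so that the reader sees why the stated $\Psi$-classes and the off-diagonal order $M>\frac n2$ are the right hypotheses.
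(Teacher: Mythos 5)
The paper gives no proof of this statement: it is recalled verbatim from \cite[Theorem 7.10]{HvNP}, and you correctly identify that the citation is the entire argument here. Your sketch of how the cited result is established (Calder\'on reproducing formula, off-diagonal bounds via Lemma \ref{lem:psiOD}, and Schur-type estimates on tent spaces) is consistent with the route taken in \cite{HvNP}, so there is nothing to compare beyond the citation itself.
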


\subsection{Hodge-Dirac operators}

Throughout the paper, we work with the following class of Hodge-Dirac operators. It is a slight modification of the classes considered in \cite{AKM} and \cite{HMcP2}.

\begin{Def} A {\em Hodge-Dirac operator with constant coefficients} is
an operator of the form $\Pi = \Gamma + \Gamma^\ast$,
where $\Gamma=-i\sum_{j=1}^n\hat{\Gamma}_j\partial_j$ is a Fourier multiplier with symbol defined by
\begin{equation*}
\hat{\Gamma}=\hat{\Gamma} (\xi)=\sum_{j=1}^n \hat{\Gamma}_j\xi_j  \qquad  \forall \xi \in \R^{n}, 
\end{equation*}
with $\hat{\Gamma}_{j} \in \mathscr{L}(\C^{N})$,
the operator $\Gamma$ is nilpotent, i.e.
\(\hat{\Gamma}(\xi)^2=0\) for all \(\xi\in\R^n\), and there exists $\kappa >0$ such that
\begin{equation*}\tag{$\Pi$1}\label{Pi1}
  \kappa|\xi||w|\leq|\hat{\Pi}(\xi)w| \qquad
  \forall w\in \calR(\hat{\Pi}(\xi)),\; \forall \xi \in \R^{n}.
\end{equation*}
\end{Def}

We list some results about these operators.

\begin{Prop}
\label{prop:pi}
Suppose $p \in (1,\infty)$.\\
(1) The operator identity $\Pi=\Gamma + \Gamma^\ast$ holds in $L^p(\R^n;\C^N)$, in the sense
that $\calD_p(\Pi)= \calD_p(\Gamma) \cap \calD_p(\Gamma^\ast)$ and $\Pi u= \Gamma u + \Gamma^\ast u$ for all $u \in \calD_p(\Pi)$.\\
(2) There holds $\calN_p(\Pi)=\calN_p(\Gamma) \cap \calN_p(\Gamma^\ast)$.\\
(3) $\Pi$ Hodge decomposes $L^p(\R^n;\C^N)$ in the sense that
$$
L^p(\R^n;\C^N) = \calN_{p}(\Pi)\oplus \overline{\calR_{p}(\Gamma)} \oplus \overline{\calR_{p}(\Gamma^\ast)}\ ,
$$ or equivalently,
$
L^p = \calN_{p}(\Gamma)\oplus  \overline{\calR_{p}(\Gamma^\ast)} 
$ and $
L^p = \calN_{p}(\Gamma^\ast)\oplus \overline{\calR_{p}(\Gamma)}
$.\\
(4) $\calN_{p}(\Gamma)$, $\calN_{p}(\Gamma^\ast)$, $\overline{\calR_{p}(\Gamma)}$ and $\overline{\calR_{p}(\Gamma^\ast)}$ each form complex interpolation scales, $p \in (1,\infty)$.\\
(5) Hodge-Dirac operators with constant coefficients have a bounded $H^{\infty}$ functional calculus in $L^p(\R^n;\C^N)$.\\  
(6) There holds $\|\nabla\otimes u\|_{p} \lesssim \|\Pi u\|_{p}$ for all $u \in \calD_{p}(\Pi)\cap \overline{\calR_{p}(\Pi)}$.\\
(7) There exists a bounded potential map $S_\Gamma: \overline{\calR_{p}(\Gamma)} \to \dot W^{1,p}(\R^n;\C^N)$ such that $\Gamma S_\Gamma =I$ on $\overline{\calR_{p}(\Gamma)}$; and there exists a bounded potential map $S_{\Gamma^*}: \overline{\calR_{p}(\Gamma^*)} \to \dot W^{1,p}(\R^n;\C^N)$ such that $\Gamma^* S_{\Gamma^*} =I$ on $\overline{\calR_{p}(\Gamma^*)}$.

\end{Prop}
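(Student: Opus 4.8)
The plan is to reduce every assertion to the Mihlin--H\"ormander multiplier theorem, exploiting that $\Gamma$, $\Gamma^\ast$ and $\Pi$ are Fourier multipliers with symbols $\hat\Gamma(\xi)$, $\hat\Gamma(\xi)^\ast$ and $\hat\Pi(\xi)=\hat\Gamma(\xi)+\hat\Gamma(\xi)^\ast$, all homogeneous of degree $1$. First I would record the linear-algebra input at a fixed $\xi\neq0$: from $\hat\Gamma(\xi)^2=0$ one gets $\langle\hat\Gamma(\xi)a,\hat\Gamma(\xi)^\ast b\rangle=0$, hence $\calR(\hat\Gamma(\xi))\perp\calR(\hat\Gamma(\xi)^\ast)$; self-adjointness of $\hat\Pi(\xi)$ then yields the orthogonal splitting $\C^N=\calN(\hat\Pi(\xi))\oplus\calR(\hat\Gamma(\xi))\oplus\calR(\hat\Gamma(\xi)^\ast)$, together with $\calN(\hat\Pi(\xi))=\calN(\hat\Gamma(\xi))\cap\calN(\hat\Gamma(\xi)^\ast)$, the identity $\hat\Pi(\xi)^2=\hat\Gamma(\xi)\hat\Gamma(\xi)^\ast+\hat\Gamma(\xi)^\ast\hat\Gamma(\xi)$, and $|\hat\Pi(\xi)w|^2=|\hat\Gamma(\xi)w|^2+|\hat\Gamma(\xi)^\ast w|^2$. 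Since $\|\hat\Gamma(\xi)\|\lesssim|\xi|$ and, by \eqref{Pi1}, $|\hat\Pi(\xi)w|\geq\kappa|\xi||w|$ on $\calR(\hat\Pi(\xi))=\calN(\hat\Pi(\xi))^\perp$, the spectrum of the self-adjoint matrix $\hat\Pi(\xi)$ lies in $\{0\}\cup\{\lambda\in\R:\kappa|\xi|\leq|\lambda|\leq C|\xi|\}$ and that of $\hat\Pi(\xi)^2$ in $\{0\}\cup[\kappa^2|\xi|^2,C^2|\xi|^2]$. This spectral gap produces contour-integral formulas that are visibly smooth and homogeneous off the origin: $P_0(\xi)=\tfrac1{2\pi i}\oint_{|z|=\kappa|\xi|/2}(zI-\hat\Pi(\xi))^{-1}\,dz$ (projection onto $\calN(\hat\Pi(\xi))$), $(\hat\Pi(\xi)^2)^{-1}_{\mathrm{red}}=\tfrac1{2\pi i}\oint_\gamma z^{-1}(zI-\hat\Pi(\xi)^2)^{-1}\,dz$ (the inverse of $\hat\Pi(\xi)^2$ on its range), and then $P_1(\xi):=\hat\Gamma(\xi)\hat\Gamma(\xi)^\ast(\hat\Pi(\xi)^2)^{-1}_{\mathrm{red}}$, $P_2(\xi):=\hat\Gamma(\xi)^\ast\hat\Gamma(\xi)(\hat\Pi(\xi)^2)^{-1}_{\mathrm{red}}$, which are the orthogonal projections onto $\calR(\hat\Gamma(\xi))$ and $\calR(\hat\Gamma(\xi)^\ast)$, with $P_0+P_1+P_2=I$ and $P_iP_j=\delta_{ij}P_i$. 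Differentiating the resolvent identity and using that $\partial_{\xi_j}\hat\Pi(\xi)=\hat\Gamma_j+\hat\Gamma_j^\ast$ is constant, one gets $|\xi|^{|\alpha|}\|\partial_\xi^\alpha(\cdot)\|\lesssim_\alpha1$ for all these symbols, so each is a Mihlin multiplier.

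With this in hand I would prove (5) first. Writing $\psi(\Pi)$ as the multiplier with symbol $\psi(\hat\Pi(\xi))=\tfrac1{2\pi i}\int_\gamma\psi(z)(zI-\hat\Pi(\xi))^{-1}\,dz$, self-adjointness plus the spectral gap give $\|(zI-\hat\Pi(\xi))^{-1}\|\lesssim_\mu(|z|+|\xi|)^{-1}$ for $z\in\gamma\subset S_\mu^o$, with analogous bounds $\|\partial_\xi^\alpha(zI-\hat\Pi(\xi))^{-1}\|\lesssim(|z|+|\xi|)^{-1}|\xi|^{-|\alpha|}$; combined with $|\psi(z)|\lesssim\min(|z|^\alpha,|z|^{-\beta})$ this gives $|\xi|^{|\alpha|}\|\partial_\xi^\alpha\psi(\hat\Pi(\xi))\|\lesssim_\alpha\|\psi\|_\infty$, so Mihlin yields $\|\psi(\Pi)\|_{\calL(L^p)}\lesssim_p\|\psi\|_\infty$, and the same resolvent bound gives $L^p$-bisectoriality; hence a bounded $H^\infty$ calculus. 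For (7), take $S_\Gamma$, $S_{\Gamma^\ast}$ with symbols $\hat\Gamma(\xi)^\ast(\hat\Pi(\xi)^2)^{-1}_{\mathrm{red}}$ and $\hat\Gamma(\xi)(\hat\Pi(\xi)^2)^{-1}_{\mathrm{red}}$, homogeneous of degree $-1$ and smooth off $0$: then $\xi\mapsto i\xi\otimes\hat\Gamma(\xi)^\ast(\hat\Pi(\xi)^2)^{-1}_{\mathrm{red}}$ is a Mihlin multiplier, so $S_\Gamma\colon L^p\to\dot W^{1,p}(\R^n;\C^N)$ is bounded, while $\Gamma S_\Gamma$ has symbol $\hat\Gamma(\xi)\hat\Gamma(\xi)^\ast(\hat\Pi(\xi)^2)^{-1}_{\mathrm{red}}=P_1(\xi)$, the identity on $\calR(\hat\Gamma(\xi))$. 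For (6): if $u\in\calD_p(\Pi)\cap\overline{\calR_p(\Pi)}$ then $\widehat u(\xi)\in\calR(\hat\Pi(\xi))$ a.e., so $\widehat u(\xi)=\hat\Pi(\xi)^{-1}_{\mathrm{red}}\widehat{\Pi u}(\xi)$ and $\nabla\otimes u=(\nabla\otimes\Pi^{-1}_{\mathrm{red}})(\Pi u)$, where $\xi\mapsto i\xi\otimes\hat\Pi(\xi)^{-1}_{\mathrm{red}}$ is a degree-$0$ Mihlin multiplier; hence $\|\nabla\otimes u\|_p\lesssim\|\Pi u\|_p$.

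The remaining items are bookkeeping with the bounded projections $P_0,P_1,P_2\in\calL(L^p)$ induced by the above symbols, which satisfy $P_0+P_1+P_2=I$, $P_iP_j=\delta_{ij}P_i$, hence give a topological direct sum $L^p=P_0L^p\oplus P_1L^p\oplus P_2L^p$. From the symbols, $P_0L^p=\calN_p(\Pi)$; the inclusion $\calR_p(\Gamma)\subseteq P_1L^p$ is immediate (since $\widehat{\Gamma u}(\xi)\in\calR(\hat\Gamma(\xi))$), and for the reverse one applies the potential map of (7) to $v\in P_1L^p$ to write $v=\Gamma S_\Gamma v$ with $S_\Gamma v\in\calD_p(\Gamma)$, so $P_1L^p=\overline{\calR_p(\Gamma)}$; likewise $P_2L^p=\overline{\calR_p(\Gamma^\ast)}$, which is (3). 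For (1), any $u\in\calD_p(\Pi)$ splits as $u=P_0u+P_1u+P_2u$ with $\Pi u=\Gamma(P_2u)+\Gamma^\ast(P_1u)$, the two summands lying in complementary closed subspaces, so $\Gamma u=P_1\Pi u\in L^p$ and $\Gamma^\ast u=P_2\Pi u\in L^p$; the reverse containment is trivial, and specialising to $\Pi u=0$ gives (2). Finally (4): $\calN(\hat\Gamma(\xi))=\calN(\hat\Pi(\xi))\oplus\calR(\hat\Gamma(\xi))$ (a dimension count using $\calR(\hat\Gamma(\xi))\subseteq\calR(\hat\Pi(\xi))$), so $\calN_p(\Gamma)=(P_0+P_1)L^p$, and similarly $\calN_p(\Gamma^\ast)=(P_0+P_2)L^p$, $\overline{\calR_p(\Gamma)}=P_1L^p$, $\overline{\calR_p(\Gamma^\ast)}=P_2L^p$; since each is the range of one fixed Fourier-multiplier projection acting compatibly on all $L^p$, the standard interpolation-of-complemented-subspaces argument together with $[L^{p_0},L^{p_1}]_\theta=L^{p_\theta}$ gives the complex interpolation scales.

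The hard part will be the uniform Mihlin estimates for (5): keeping $|\xi|^{|\alpha|}\|\partial_\xi^\alpha\psi(\hat\Pi(\xi))\|\lesssim\|\psi\|_\infty$ uniform over $\psi$ requires care with the Cauchy contour where it passes near $0$ --- there $|\psi(z)|$ is small but $(zI-\hat\Pi(\xi))^{-1}$ is only of size $(|z|+|\xi|)^{-1}$ --- and one must check that the resulting one-dimensional integrals converge after rescaling by $|\xi|$. A secondary subtlety is logical ordering: the projections $P_i$ and the potential maps $S_\Gamma,S_{\Gamma^\ast}$ must be established before (3), since identifying $P_1L^p$ with $\overline{\calR_p(\Gamma)}$ genuinely uses the potential map (and one must be slightly careful that $S_\Gamma v\in\dot W^{1,p}$ still lies in $\calD_p(\Gamma)$).
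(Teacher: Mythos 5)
Your proposal is correct in substance, but it is worth saying what it is relative to the paper: the paper's own ``proof'' of this proposition is almost entirely a citation to \cite{HMcP2} (Lemma 5.3, Propositions 4.4, 4.8, 5.2, Theorem 3.6) and \cite{HMc}, and the only argument actually written out is for part (7). What you have done is reconstruct, from scratch, the Fourier-multiplier machinery that those references supply: the pointwise orthogonal splitting $\C^N=\calN(\hat\Pi(\xi))\oplus\calR(\hat\Gamma(\xi))\oplus\calR(\hat\Gamma(\xi)^\ast)$, the spectral gap from \eqref{Pi1}, and Mihlin bounds for the projections $P_0,P_1,P_2$, for $\psi(\hat\Pi(\xi))$ with constant $\lesssim\|\psi\|_\infty$, and for the degree $-1$ potential symbols. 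That is the same route as the cited sources, so the mathematics matches; the one place where your route genuinely diverges from the paper's text is (7), where you take $S_\Gamma$ to be the multiplier with symbol $\hat\Gamma(\xi)^\ast(\hat\Pi(\xi)^2)^{-1}_{\mathrm{red}}$ directly, whereas the paper builds $S_\Gamma u=\lim_k w_k$ with $w_k=k^2\Gamma^\ast(I+k^2\Pi^2)^{-1}u$ and deduces boundedness from (6). These are the same operator (the approximants are the truncated symbols), but the paper's version has the advantage of staying inside $\calD_p(\Gamma)$ at every stage.

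Two points need attention. First, your resolvent bound $\|(zI-\hat\Pi(\xi))^{-1}\|\lesssim(|z|+|\xi|)^{-1}$ is false on $\calN(\hat\Pi(\xi))$, where the norm is exactly $|z|^{-1}$; the correct statement is for $(zI-\hat\Pi(\xi))^{-1}(I-P_0(\xi))$, and one must separately observe that $\frac{1}{2\pi i}\int_\gamma\psi(z)z^{-1}\,dz\,P_0(\xi)=0$ because $\psi$ decays at $0$ and $\infty$. After that the contour near the origin can be deformed to $|z|=\kappa|\xi|/2$, which resolves the convergence issue you flag. Second, the step ``$v\in P_1L^p$ implies $v=\Gamma S_\Gamma v\in\calR_p(\Gamma)$'' does not literally work, since $S_\Gamma v\in\dot W^{1,p}$ need not lie in $L^p$ and hence need not lie in $\calD_p(\Gamma)$ (the domain of $\Gamma$ as a closed operator in $L^p$). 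You flag this yourself; the repair is exactly the paper's approximation: $w_k=k^2\Gamma^\ast(I+k^2\Pi^2)^{-1}v\in\calD_p(\Gamma)$ and $\Gamma w_k=k^2\Pi^2(I+k^2\Pi^2)^{-1}v\to v$ in $L^p$ for $v\in P_1L^p$, which shows $P_1L^p\subseteq\overline{\calR_p(\Gamma)}$ without ever leaving $L^p$. With these two repairs the argument is complete.
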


\begin{proof}
See \cite{HMcP2}, Lemma 5.3, Proposition 5.4. For (4), see \cite{HMc}. Part (5) is proven in \cite[Theorem 3.6]{HMcP2}. Part (6) is a consequence of \eqref{Pi1}, as shown in \cite[Proposition 5.2]{HMcP2}.

To prove part (7) for $\Gamma$, first note that, for all $u\in \overline{\calR_{p}(\Gamma)}$,  $u = \lim_{k\to\infty} \Gamma w_k$ where $w_k = k^2\Gamma^*(I+k^2\Pi^2) u$, so that  $\|\nabla\otimes w_k\|_{p} \lesssim \|u\|_{p}$  by (6), 

and further  $(w_k)_{k \in \N}$ is a Cauchy sequence in $\dot W^{1,p}(\R^n;\C^N)$. Define $S_\Gamma u = \lim_{k \to \infty} w_k$ in $\dot W^{1,p}(\R^n;\C^N)$, and we obtain $\Gamma S_\Gamma u = u$,  since $\Gamma$ is a bounded operator from $\dot W^{1,p}(\R^n;\C^N)$ to $L^p$. The same proof applies to $\Gamma^{*}$.  
\end{proof}

We now consider \emph{perturbed Hodge-Dirac operators}.

\begin{Def}\label{pHD} A {\em perturbed Hodge-Dirac operator} is an operator of the form
$$\Pi_{B}:= \Gamma + \Gamma_B^\ast := \Gamma + B_{1}\Gamma^\ast B_{2},$$
where  $\Pi = \Gamma + \Gamma^\ast$ is a Hodge-Dirac operator with constant coefficients, and
$B_{1},B_{2} $ are multiplication operators by $L^{\infty}(\R^{n};\mathscr{L}(\C^{N}))$ functions which satisfy
\begin{align*}
\Gamma^\ast B_{2}B_{1}\Gamma^\ast   &= 0 \quad \text{in the sense that} \quad \calR_2(B_{2}B_{1}\Gamma^\ast) \subset \calN_2(\Gamma^{\ast}); \\
\Gamma B_{1}B_{2}\Gamma   &= 0 \quad \text{in the sense that} \quad \calR_2(B_{1}B_{2}\Gamma) \subset \calN_2(\Gamma);
\end{align*}
\begin{align*}
	\Re(B_1\Gamma^\ast u,\Gamma^\ast u) &\geq \kappa_1 \|\Gamma^\ast u\|_2^2 , \qquad \forall u \in \calD_2(\Gamma^\ast)\qquad\text{and} \\
	\Re(B_2 \Gamma u,\Gamma u) &\geq \kappa_2 \|\Gamma u\|_2^2 , \qquad \forall u \in \calD_2(\Gamma)
\end{align*}
for some $\kappa_1,\kappa_2>0$. Let the angles of accretivity be
\begin{align*}
	\omega_1 :=\sup_{u \in \calR(\Gamma^\ast) \setminus \{0\}} |\arg(B_1u,u)| <\frac{\pi}{2},\\
	\omega_2 :=\sup_{u \in \calR(\Gamma) \setminus \{0\}} |\arg(B_2u,u)| <\frac{\pi}{2},
\end{align*}
and set $\omega:=\frac{1}{2}(\omega_1+\omega_2)$.
\end{Def}

Such operators satisfy the invertibility properties (denoting $\frac{1}{p'}=1-\frac{1}{p}$)
\begin{equation*}\tag{$\Pi_{B}(p)$}\label{PiB3}
\|u\|_{p} \leq C_p \|B_{1} u\|_{p} \quad \forall u \in \overline{\calR_{p}(\Gamma^\ast)}
\quad \text{and} \quad
\|v\|_{p'} \leq C_{p'} \|{B_{2}}^\ast v\|_{p'} \quad \forall v \in \overline{\calR_{p'}(\Gamma)}
\end{equation*}
when $p=2$. \\

In many cases they satisfy (\ref{PiB3}) for all $p\in(1,\infty)$, for example if $B_1$ and $B_2$ are invertible in $L^\infty$, though in general all we can say is that the set of $p$ for which (\ref{PiB3}) holds is open in $(1,\infty)$.  This follows on applying the extrapolation result of Kalton and Mitrea (\cite{KM}, Theorem 2.5) to the interpolation families $B_1: \overline{\calR_{p}(\Gamma^\ast)} \to L^p(\R^n)$ and ${B_2}^\ast: \overline{\calR_{p'}(\Gamma)}\to L^{p'}(\R^n)$. \\

As noted in \cite{HMcP2}, it is a consequence of (\ref{PiB3}) that $\Gamma^*_B$ is a closed operator in $L^p$ with adjoint $(\Gamma^*_B)^\ast={B_2}^*\Gamma{B_1}^*$ acting in $L^{p'}$, that $\overline{\calR_{p}(\Gamma^*_B)} = B_1\overline{\calR_{p}(\Gamma^*)}$, and that $\overline{\calR_{p'}({B_2}^*\Gamma {B_1}^*)} = {B_2}^*\overline{\calR_{p'}(\Gamma)}$. Moreover, if (\ref{PiB3}) 
holds for all $p$ in a subinterval of $(1,\infty)$, then the spaces $\overline{\calR_{p}(\Gamma^*_B)}$ interpolate for those $p$ also.     \\

\begin{Def} \label{def:Hodge-dec}
 A perturbed Hodge-Dirac operator $\Pi_B$ {\em Hodge decomposes} $L^p(\R^n;\C^N)$ for some $p \in (1,\infty)$, if {\em(\ref{PiB3})} holds and there is a splitting into complemented subspaces 
\begin{equation*}
	L^p(\R^n;\C^N) = \calN_{p}(\Pi_{B})\oplus \overline{\calR_{p}(\Pi_{B})} =  \calN_{p}(\Pi_{B})\oplus \overline{\calR_{p}(\Gamma)} \oplus \overline{\calR_{p}(\Gamma^\ast_B)}.
\end{equation*} 
\end{Def}
It is proved in \cite[Proposition 2.2]{AKM} that $\Pi_B$ Hodge decomposes $L^2(\R^n;\C^N)$.\\

In investigating the property of Hodge Decomposition,  let $\Pb_q$ denote the bounded projection of $L^q(\R^n;\C^N)$ onto $\overline{\calR_{q}(\Gamma^\ast)}$ with nullspace $\calN_{q}(\Gamma)$, and let $\Qb_q$ denote the bounded projection of $L^q(\R^n;\C^N)$ onto $\overline{\calR_q(\Gamma)}$  with nullspace $\calN_q(\Gamma^\ast)$ ($1<q<\infty$).  When $\Pi_{B}$ Hodge decomposes $L^p(\R^n;\C^N)$, we denote by $\mathbb{P}_{\overline{\calR_{p}(\Pi_{B})}}$ the projection of $L^p(\R^{n};\C^{N})$ onto $\overline{\calR_{p}(\Pi_{B})}$  with nullspace $\calN_{p}(\Pi_{B})$.

\begin{Prop}\label{equiv}  Let $\Pi_B$ be a perturbed Hodge-Dirac operator, and let $p\in (1,\infty)$. Then {\em (i)} $\Pi_B$ Hodge decomposes $L^p(\R^n;\C^N)$ if and only if both {\em (A)} and {\em (B)} hold, where
\begin{equation*}\tag{A} \|u\|_{p} \lesssim \|B_{1} u\|_{p} \quad \forall u \in \overline{\calR_{p}(\Gamma^\ast)}\quad\text{and}\quad L^p(\R^n;\C^N) = \calN_{p}(\Gamma)\oplus B_1\overline{\calR_{p}(\Gamma^\ast)}\ ;
\end{equation*}
\begin{equation*}\tag{B}
\|v\|_{p'} \lesssim \|{B_{2}}^\ast v\|_{p'} \quad \forall v \in \overline{\calR_{p'}(\Gamma)}
\quad\text{and}\quad L^{p'}(\R^n;\C^N) = \calN_{p'}(\Gamma^\ast)\oplus {B_2}^\ast\,\overline{\calR_{p'}(\Gamma)}\ .
\end{equation*}
{\em (ii)} Moreover  {\em(A)} is equivalent to {\em(A')}, and  {\em(B)} is equivalent to {\em(B')} where 
\begin{equation*}\tag{A'} \Pb_p B_1: \overline{\calR_{p}(\Gamma^\ast)}\to \overline{\calR_{p}(\Gamma^\ast)} \quad\text{is an isomophism}\ ;
\end{equation*}  
\begin{equation*}\tag{B'} \Qb_{p'} {B_2}^\ast: \overline{\calR_{p'}(\Gamma)}\to \overline{\calR_{p'}(\Gamma)} \quad\text{is an isomophism}\ .
\end{equation*}
\end{Prop}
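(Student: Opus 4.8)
The plan is to regard part (ii) as the concrete heart of the statement and part (i) as functional-analytic bookkeeping, using throughout the unperturbed Hodge decompositions of Proposition~\ref{prop:pi}(3), the nilpotency of both $\Gamma$ and $\Gamma_B^\ast$ (the latter forced by $\Gamma^\ast B_2B_1\Gamma^\ast=0$, which gives $(\Gamma_B^\ast)^2=0$), and $L^p$--$L^{p'}$ duality. The key preliminary observation is that the two norm estimates in (A) and (B) are jointly identical to \eqref{PiB3}; hence in part~(i) only the \emph{decomposition} clauses need to be produced, and one may freely use the consequences of \eqref{PiB3} recorded after Definition~\ref{pHD}, namely that $\Gamma_B^\ast$ is closed with $(\Gamma_B^\ast)^\ast={B_2}^\ast\Gamma{B_1}^\ast$, that $\overline{\calR_p(\Gamma_B^\ast)}=B_1\overline{\calR_p(\Gamma^\ast)}$, and that $\overline{\calR_{p'}({B_2}^\ast\Gamma{B_1}^\ast)}={B_2}^\ast\overline{\calR_{p'}(\Gamma)}$.

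For (i)~$\Rightarrow$, I would start from $L^p=\calN_p(\Pi_B)\oplus\overline{\calR_p(\Gamma)}\oplus\overline{\calR_p(\Gamma_B^\ast)}$: directness gives $\overline{\calR_p(\Gamma)}\cap\overline{\calR_p(\Gamma_B^\ast)}=\{0\}$, so for $u\in\calN_p(\Pi_B)$ one has $\Gamma u=-\Gamma_B^\ast u\in\overline{\calR_p(\Gamma)}\cap\overline{\calR_p(\Gamma_B^\ast)}=\{0\}$; this yields $\calN_p(\Pi_B)\subseteq\calN_p(\Gamma)$ and $\calN_p(\Gamma)\cap\calN_p(\Gamma_B^\ast)=\calN_p(\Pi_B)$, and a short computation with nilpotency upgrades it to $\calN_p(\Gamma)=\calN_p(\Pi_B)\oplus\overline{\calR_p(\Gamma)}$, whence $L^p=\calN_p(\Gamma)\oplus B_1\overline{\calR_p(\Gamma^\ast)}$ with the bounded projections inherited from the triple splitting --- this is the decomposition in (A). Exchanging the roles of $\Gamma$ and $\Gamma_B^\ast$ gives $L^p=\calN_p(\Gamma_B^\ast)\oplus\overline{\calR_p(\Gamma)}$, and dualising (using $\overline{\calR_p(\Gamma)}^\perp=\calN_{p'}(\Gamma^\ast)$ and $\calN_p(\Gamma_B^\ast)^\perp={B_2}^\ast\overline{\calR_{p'}(\Gamma)}$) gives the decomposition in (B). For (i)~$\Leftarrow$, (A) supplies $L^p=\calN_p(\Gamma)\oplus\overline{\calR_p(\Gamma_B^\ast)}$ and, dualising, (B) supplies $L^p=\calN_p(\Gamma_B^\ast)\oplus\overline{\calR_p(\Gamma)}$; since $\overline{\calR_p(\Gamma)}\subseteq\calN_p(\Gamma)$ and $\overline{\calR_p(\Gamma_B^\ast)}\subseteq\calN_p(\Gamma_B^\ast)$, combining these two splittings by the modular law gives $L^p=(\calN_p(\Gamma)\cap\calN_p(\Gamma_B^\ast))\oplus\overline{\calR_p(\Gamma)}\oplus\overline{\calR_p(\Gamma_B^\ast)}$ with bounded projections; one identifies the first summand with $\calN_p(\Pi_B)$ as above, and then identifies $\overline{\calR_p(\Gamma)}\oplus\overline{\calR_p(\Gamma_B^\ast)}$ with $\overline{\calR_p(\Pi_B)}$ as explained in the last paragraph, so that $\Pi_B$ Hodge decomposes $L^p$.

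For part (ii), let $\Pb_p$ be the bounded projection of $L^p$ onto $\overline{\calR_p(\Gamma^\ast)}$ with null space $\calN_p(\Gamma)$, which exists by Proposition~\ref{prop:pi}(3). If (A) holds, the estimate makes $B_1$ an isomorphism of $\overline{\calR_p(\Gamma^\ast)}$ onto its closed range $B_1\overline{\calR_p(\Gamma^\ast)}$, while the direct-sum clause makes $\Pb_p$ restrict to a bijection $B_1\overline{\calR_p(\Gamma^\ast)}\to\overline{\calR_p(\Gamma^\ast)}$ (injective since $\calN_p(\Gamma)\cap B_1\overline{\calR_p(\Gamma^\ast)}=\{0\}$, surjective by decomposing elements of $\overline{\calR_p(\Gamma^\ast)}$ along $L^p=\calN_p(\Gamma)\oplus B_1\overline{\calR_p(\Gamma^\ast)}$); composing, and using the bounded inverse theorem, gives (A'). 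Conversely, if (A') holds then $\|u\|_p\approx\|\Pb_pB_1u\|_p\lesssim\|B_1u\|_p$ on $\overline{\calR_p(\Gamma^\ast)}$, which is the estimate of (A) and also shows $B_1\overline{\calR_p(\Gamma^\ast)}$ is closed; and given $f\in L^p$, the element $u:=(\Pb_pB_1)^{-1}\Pb_pf$ satisfies $f-B_1u\in\calN_p(\Gamma)$, so $L^p=\calN_p(\Gamma)\oplus B_1\overline{\calR_p(\Gamma^\ast)}$ with bounded projection $B_1(\Pb_pB_1)^{-1}\Pb_p$ onto the second summand, i.e.\ (A). The equivalence (B)$\Leftrightarrow$(B') is obtained by running the identical argument in $L^{p'}$ with ${B_2}^\ast$ and $\Qb_{p'}$ in the roles of $B_1$ and $\Pb_p$.

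The step I expect to be the main obstacle is the identification $\overline{\calR_p(\Pi_B)}=\overline{\calR_p(\Gamma)}\oplus\overline{\calR_p(\Gamma_B^\ast)}$ needed to match Definition~\ref{def:Hodge-dec}: the inclusion $\overline{\calR_p(\Pi_B)}\subseteq\overline{\calR_p(\Gamma)}\oplus\overline{\calR_p(\Gamma_B^\ast)}$ is immediate from $\calR_p(\Pi_B)\subseteq\calR_p(\Gamma)+\calR_p(\Gamma_B^\ast)$ together with closedness of the right-hand side, but the reverse inclusion cannot be obtained by producing $\Pi_B$-preimages directly and must be extracted by duality. Taking annihilators, $(\overline{\calR_p(\Gamma)}\oplus\overline{\calR_p(\Gamma_B^\ast)})^\perp=\calN_{p'}(\Gamma^\ast)\cap\calN_{p'}((\Gamma_B^\ast)^\ast)$; since $\Gamma^\ast$ and $(\Gamma_B^\ast)^\ast={B_2}^\ast\Gamma{B_1}^\ast$ are nilpotent and $\calN_p(\Gamma)+\calN_p(\Gamma_B^\ast)=L^p$ (whence $\overline{\calR_{p'}(\Gamma^\ast)}\cap\overline{\calR_{p'}((\Gamma_B^\ast)^\ast)}=\{0\}$), the same ``nullspace equals intersection of nullspaces'' argument, now applied to $\Pi_B^\ast=\Gamma^\ast+(\Gamma_B^\ast)^\ast$ in $L^{p'}$, shows this annihilator equals $\calN_{p'}(\Pi_B^\ast)=\overline{\calR_p(\Pi_B)}^\perp$, and reflexivity of $L^p$ then gives the reverse inclusion. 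The only other point requiring vigilance is checking at each stage that the projections involved are bounded, so that every splitting obtained is genuinely a decomposition into complemented subspaces.
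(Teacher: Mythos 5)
Your part (ii) is essentially the paper's own argument: the same projection $\Pb_p$, the same use of the lower bound $\|u\|_p\lesssim\|B_1u\|_p$ to make $B_1$ an isomorphism onto its closed range, the same injectivity/surjectivity checks, and the same construction of the splitting from $(\Pb_pB_1)^{-1}$ in the converse direction. Where you genuinely diverge is part (i): the paper disposes of it in two lines by invoking \cite[Lemmas 6.1, 6.2]{HMcP2}, which assert that under \eqref{PiB3} the triple Hodge splitting is equivalent to the pair of two-piece splittings $L^p=\calN_p(\Gamma)\oplus\overline{\calR_p(\Gamma_B^\ast)}$ and $L^p=\calN_p(\Gamma_B^\ast)\oplus\overline{\calR_p(\Gamma)}$, and that the latter dualises to the decomposition in (B). You reconstruct exactly this from scratch: the observation that the norm estimates in (A), (B) are precisely \eqref{PiB3}; the identification $\calN_p(\Pi_B)=\calN_p(\Gamma)\cap\calN_p(\Gamma_B^\ast)$ via directness of the ranges; the recombination of the two two-piece splittings into the triple one (your ``modular law'' step is cleanest phrased via the two complementary projections $Q_1,Q_2$ onto $\overline{\calR_p(\Gamma)}$ and $\overline{\calR_p(\Gamma_B^\ast)}$, which satisfy $Q_1Q_2=Q_2Q_1=0$ because each range sits inside the other's nullspace, so $Q_1+Q_2$ is a bounded projection); and the duality argument identifying $\overline{\calR_p(\Pi_B)}$ with $\overline{\calR_p(\Gamma)}\oplus\overline{\calR_p(\Gamma_B^\ast)}$. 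All of this is correct and buys a self-contained proof at the cost of length. The one point to make explicit is the operator-theoretic bookkeeping your duality steps rely on: the identities $\calN_p(\Gamma_B^\ast)^\perp=\overline{\calR_{p'}({B_2}^\ast\Gamma{B_1}^\ast)}$ and $\calN_{p'}({\Pi_B}^\ast)=\overline{\calR_p(\Pi_B)}^\perp$ presuppose that $\Gamma_B^\ast$ and $\Pi_B$ are closed, densely defined in $L^p$ with the stated adjoints, and that the operator identity $\Pi_B=\Gamma+\Gamma_B^\ast$ (hence ${\Pi_B}^\ast=\Gamma^\ast+{B_2}^\ast\Gamma{B_1}^\ast$) holds in $L^p$. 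The first of these is recorded in the paper as a consequence of \eqref{PiB3}, but the second is precisely the content that the cited lemmas of \cite{HMcP2} supply, so you should either cite them for that step or verify it directly rather than treating it as automatic.
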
  

\begin{proof} (i)     Under the invertibility assumption (\ref{PiB3}), $\Pi_B$ Hodge decomposes $L^p(\R^n;\C^N)$ if and only if both $L^p(\R^n;\C^N) = \calN_{p}(\Gamma)\oplus \overline{\calR_{p}(\Gamma^\ast_B)}$ and $L^p(\R^n;\C^N) = \calN_{p}(\Gamma^\ast_B)\oplus \overline{\calR_{p}(\Gamma)}$ hold, i.e. if and only if 
$L^p(\R^n;\C^N) = \calN_{p}(\Gamma)\oplus B_1\overline{\calR_{p}(\Gamma^\ast)}$ and $L^{p'}(\R^n;\C^N) = \calN_{p'}(\Gamma^\ast)\oplus {B_2}^\ast\,\overline{\calR_{p'}(\Gamma)}$ \cite[Lemmas 6.1, 6.2]{HMcP2}. This gives the proof of (i). \\

(ii) (A) implies (A'): Let $u\in \overline{\calR_{p}(\Gamma^\ast)}$.  Then $\Pb_{p} B_1u = -(I-\Pb_{p})B_1u+B_1u$ with, by (A), $\|\Pb_{p} B_1u\|_p\approx \|(I-\Pb_p)B_1u\|_p+\|B_1u\|_p$, so that $\|u\|_p\lesssim\|B_1u\|_p\lesssim\|\Pb_{p} B_1u\|_p$. It remains to prove surjectivity. Let $v\in \overline{\calR_{p}(\Gamma^\ast)}$. By (A), there exist $w\in \calN_{p}(\Gamma)$ and $u\in \overline{\calR_{p}(\Gamma^\ast)}$ such that $v = w + B_1u$, and hence $v = \Pb_{p} v = \Pb_{p} B_1u$ as claimed.\\

(A') implies (A): First we have that if $u\in  \overline{\calR_{p}(\Gamma^\ast)}$, then $\|u\|_p \lesssim \|\Pb_p B_1 u\|_p\lesssim \|B_1u\|_p$. Next we show that $\calN_{p}(\Gamma)\cap B_1\overline{\calR_{p}(\Gamma^\ast)}=\{0\}$. Indeed if $u\in \calN_{p}(\Gamma)$, and $u=B_1v$ with $v\in \overline{\calR_{p}(\Gamma^\ast)}$, then $\Pb_p B_1 v = \Pb_p u = 0$, so by (A'), $v=0$ and thus $u=0$.  Now we show that every element $u\in L^p(\R^n;\C^N)$ can be decomposed as stated. Let $u\in L^p(\R^n;\C^N)$. Then 
\begin{align*} u&=(I-\Pb_p)u +\Pb_p u\\
&=(I-\Pb_p)u + \Pb_p B_1 v\qquad\text{for some }v\in \overline{\calR_{p}(\Gamma^\ast)}\quad\text{(by (A'))}\\
&=(I-\Pb_p)(u-B_1v) + B_1v\\
&\in \calN_{p}(\Gamma)\quad+\quad B_1\overline{\calR_{p}(\Gamma^\ast)}
\end{align*} with $\|B_1v\|_p\lesssim\|v\|_p\lesssim\|\Pb_p u\|_p\lesssim\|u\|_p$. This gives the claimed direct sum decomposition.\\

The proof that (B) is equivalent to (B') follows the same lines, with $p, \Gamma, B_1$ replaced by $p', \Gamma^\ast, {B_2}^\ast$.
\end{proof}

\begin{Prop} \label{prop:Hodge}
The set of all $p$ for which $\Pi_B$ Hodge decomposes $L^p(\R^n;\C^N)$, is an open interval $(p_H,p^H)$, where $1\leq p_H<2<p^H\leq\infty$. 
\end{Prop}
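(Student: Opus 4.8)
The plan is to show that the set $P_H := \{p \in (1,\infty) : \Pi_B \text{ Hodge decomposes } L^p\}$ is open and is an interval, and then to note $2 \in P_H$ by the already-cited result of \cite{AKM}. By Proposition \ref{equiv}(i)--(ii), $p \in P_H$ if and only if both (A') and (B') hold, i.e. $\Pb_p B_1$ is an isomorphism of $\overline{\calR_p(\Gamma^\ast)}$ and $\Qb_{p'} {B_2}^\ast$ is an isomorphism of $\overline{\calR_{p'}(\Gamma)}$. The key structural inputs are that the spaces $\overline{\calR_p(\Gamma^\ast)}$ and $\overline{\calR_p(\Gamma)}$ form complex interpolation scales in $p \in (1,\infty)$ (Proposition \ref{prop:pi}(4)), and that the operators $B_1$ and ${B_2}^\ast$, together with the Hodge projections $\Pb_p,\Qb_{p'}$, are bounded on these scales for all $p$ (multiplication by $L^\infty$ functions, and Proposition \ref{prop:pi}(3)). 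This places us exactly in the setting of the stability/extrapolation theorem for invertibility within an interpolation scale due to Kalton and Mitrea, \cite[Theorem 2.5]{KM}, which is already invoked in the excerpt for the invertibility property (\ref{PiB3}).

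First I would record that (A') is stable under complex interpolation in the following sense: if $\Pb_{p_0} B_1$ is invertible on $\overline{\calR_{p_0}(\Gamma^\ast)}$ for some $p_0$, then by Kalton--Mitrea it remains invertible for all $p$ in an open neighbourhood of $p_0$ in $(1,\infty)$. This immediately gives that $\{p : (A') \text{ holds}\}$ is open. The same argument, applied to the interpolation scale $\overline{\calR_{q}(\Gamma)}$, $q \in (1,\infty)$, and the operators $\Qb_q {B_2}^\ast$, shows that $\{q : (B') \text{ holds}\}$ is open; passing to conjugate exponents (a homeomorphism of $(1,\infty)$), $\{p : (B') \text{ holds}\}$ is open as well. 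Hence $P_H$, being the intersection of two open sets, is open, and it contains $2$.

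Next I would prove that each of the two sets is in fact an interval (i.e. connected), which, together with openness, forces $P_H$ to be an open interval containing $2$. For this I would again use Kalton--Mitrea, but now in its full strength: if $\Pb_{p_0}B_1$ and $\Pb_{p_1}B_1$ are both invertible for $p_0 < p_1$, then, the family $\{\Pb_p B_1\}$ being a consistent holomorphic family of bounded operators on the interpolation scale $\{\overline{\calR_p(\Gamma^\ast)}\}$, invertibility propagates to every intermediate $p \in (p_0,p_1)$ via the interpolation estimate for inverses; thus $\{p : (A') \text{ holds}\}$ is convex, hence an interval. Likewise for (B'). Intersecting, $P_H$ is an open interval. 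Writing it as $(p_H, p^H)$, the containment $2 \in P_H$ gives $p_H < 2 < p^H$; the bounds $1 \le p_H$ and $p^H \le \infty$ are automatic since $P_H \subseteq (1,\infty)$.

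The main obstacle is making the interpolation-of-invertibility step rigorous: one must check that $\{\overline{\calR_p(\Gamma^\ast)}\}_{p}$ (resp. $\{\overline{\calR_p(\Gamma)}\}_p$) is a genuine complex interpolation scale in the sense required by \cite[Theorem 2.5]{KM} — which is exactly Proposition \ref{prop:pi}(4) — and that the maps $\Pb_p B_1$ are \emph{compatible} across the scale (they are restrictions of a single map defined on $\bigcup_p L^p$, namely multiplication by $B_1$ followed by the Hodge projection, which is consistent because the projections $\Pb_p$ agree on overlaps by density, cf. Proposition \ref{prop:pi}). Once compatibility and the scale property are in hand, the openness and convexity of the invertibility set are formal consequences of the Kalton--Mitrea machinery, and the proof closes. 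One should also double-check the harmless point that the conjugation $p \mapsto p'$ maps open intervals to open intervals and is order-reversing, so that the interval structure of the (B') set transfers correctly to $p$.
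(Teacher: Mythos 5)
Your proposal is correct and follows essentially the same route as the paper: reduce via Proposition \ref{equiv} to conditions (A') and (B'), use the interpolation scale structure of $\overline{\calR_p(\Gamma^\ast)}$ and $\overline{\calR_{p'}(\Gamma)}$ together with the Kalton--Mitrea stability theorem to conclude that each condition holds on an open interval containing $2$, and intersect. The paper's proof is a terser version of exactly this argument.
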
 

\begin{proof} 
By the interpolation properties of $\overline{\calR_{p}(\Gamma^\ast)}$, the set of $p$ for which (A') holds, is an open interval which contains 2, and the same can be said about (B'). So the set of all $p$ for which $\Pi_B$ Hodge decomposes $L^p(\R^n;\C^N)$ is the intersection of these two intervals, and thus is itself an open interval which we denote
 by $(p_H,p^H)$, with $1\leq p_H<2<p^H\leq\infty$. 
\end{proof}

An investigation of $\Pi_B$ involves  the related operator $\underline{\Pi_B}=\Gamma^\ast + B_2 \Gamma B_1$, which is also a perturbed Hodge-Dirac operator with $(\Gamma,\Gamma^\ast,B_1,B_2)$ replaced by $(\Gamma^\ast,\Gamma,B_2,B_1)$, and for it we  need the invertibility properties
\begin{equation*}\tag{$\underline{\Pi_{B}}(p)$}\label{PiB4}
\|u\|_{p} \leq C_p \|B_{2} u\|_{p} \quad \forall u \in \overline{\calR_{p}(\Gamma)} 
\quad \text{and} \quad 
\|v\|_{p'} \leq C_{p'} \|{B_{1}}^\ast v\|_{p'} \quad \forall v \in \overline{\calR_{p'}(\Gamma^\ast)}\ .
\end{equation*}

The formulae connecting $\Pi_B$ and $\underline{\Pi_B}$ are, for $\theta \in (\omega,\frac{\pi}{2})$, $f \in H^\infty(S_\theta^o)$ and $u \in \calD_2(\Gamma^\ast)$,
\begin{align} \nonumber
	f(\underline{\Pi_B})(\Gamma^\ast u) &= B_2 f(\Pi_B)(B_1\Gamma^\ast u), & \text{when}\ f \ \text{is odd},\\
	B_1g(\underline{\Pi_B})(\Gamma^\ast u) &= g(\Pi_B)(B_1 \Gamma^\ast u), & \text{when}\ g \ \text{is even}.
	\label{eq:swap-bar}
\end{align}

\begin{Prop}
\label{prop:properties}
Suppose $\Pi_B$ is a perturbed Hodge-Dirac operator which Hodge decomposes $L^p(\R^n;\C^N)$ for all $p \in (p_H,p^H)$. Then:
\begin{enumerate}
\item  ${\Pi_B}^\ast = \Gamma^\ast + {B_2}^\ast\Gamma{B_1}^\ast$ is a perturbed Hodge-Dirac operator which Hodge decomposes $L^q(\R^n;\C^N)$ for  all  $q \in ((p^H)',(p_H)')$, i.e. {\em($\Pi_B(q)$)} holds and
$$L^{q}(\R^n;\C^N) = \calN_{q}({\Pi_{B}}^\ast)\oplus \overline{\calR_{q}(\Gamma^\ast)} \oplus \overline{\calR_{q}({B_2}^\ast \Gamma B_1^\ast)}\ .$$
\item The perturbed Hodge-Dirac operator  $\underline{\Pi_B}$ Hodge decomposes $L^p(\R^n;\C^N)$ for all  $p \in (p_H,p^H)$, i.e. {\em(\ref{PiB4})} holds and
 \begin{equation*}	L^p(\R^n;\C^N) = \calN_{p}(\underline{\Pi_{B}})\oplus \overline{\calR_{p}(\Gamma^\ast)} \oplus \overline{\calR_{p}(B_2\Gamma B_1)}\ .
\end{equation*}
 \item If, for some $p\in(p_H,p^H)$, $\Pi_{B}$ is $\omega$-bisectorial in $L^p(\R^n;\C^N)$, then $\underline{\Pi_{B}}$ is also $\omega$-bisectorial in $L^p(\R^n;\C^N)$.
\end{enumerate}
\end{Prop}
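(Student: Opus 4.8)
The plan is to reduce everything to the characterizations (A'), (B') from Proposition \ref{equiv} and the abstract duality/interpolation facts already recorded, so that each of the three parts follows by a symmetry or dualization argument rather than fresh computation. First I would set up the bookkeeping: recall that $\overline{\calR_p(\Gamma)}$, $\overline{\calR_p(\Gamma^\ast)}$, $\calN_p(\Gamma)$, $\calN_p(\Gamma^\ast)$ are complex interpolation scales (Proposition \ref{prop:pi}(4)), that the $L^p$ adjoint of $\Gamma$ is $\Gamma^\ast$ acting in $L^{p'}$ (and vice versa), and that under (\ref{PiB3}) one has $\overline{\calR_p(\Gamma^\ast_B)} = B_1\overline{\calR_p(\Gamma^\ast)}$ with $(\Gamma^\ast_B)^\ast = B_2^\ast\Gamma B_1^\ast$ in $L^{p'}$, as noted after Definition \ref{pHD}. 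The projections $\Pb_p$ and $\Qb_{p'}$ are formal adjoints of each other, so (A') for $\Pi_B$ in $L^p$ is literally the same statement as (B') for $\Pi_B^\ast$ in $L^{p'}$, and similarly for (B') versus (A').

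For part (1): I would first check that $\Pi_B^\ast = \Gamma^\ast + B_2^\ast\Gamma B_1^\ast$ fits Definition \ref{pHD} with the tuple $(\Gamma,\Gamma^\ast,B_1,B_2)$ replaced by $(\Gamma^\ast,\Gamma,B_2^\ast,B_1^\ast)$: the two nilpotency-type identities $\Gamma B_1^\ast B_2^\ast \Gamma = 0$ and $\Gamma^\ast B_2^\ast B_1^\ast \Gamma^\ast = 0$ are the adjoints of the two identities in Definition \ref{pHD}, the accretivity of $B_1^\ast$ on $\calR(\Gamma^\ast)$ (resp.\ $B_2^\ast$ on $\calR(\Gamma)$) follows since $\Re(B_j^\ast u, u) = \Re(u, B_j u) = \Re(B_j u, u)$, and the angles of accretivity are unchanged because $|\arg(B_j^\ast u,u)| = |\arg \overline{(B_j u, u)}| = |\arg(B_j u,u)|$. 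Then, by Proposition \ref{equiv}, $\Pi_B^\ast$ Hodge decomposes $L^q$ iff (A') and (B') hold for it in $L^q$; by the adjoint identification above these are exactly (B') and (A') for $\Pi_B$ in $L^{q'}$, which hold whenever $q' \in (p_H,p^H)$, i.e.\ $q \in ((p^H)',(p_H)')$. The displayed Hodge splitting then follows from Definition \ref{def:Hodge-dec} applied to $\Pi_B^\ast$, noting $\overline{\calR_q(\Gamma_{B^\ast}^\ast)} = B_2^\ast \overline{\calR_q(\Gamma^\ast)}$... wait — more precisely one should write the range term as $\overline{\calR_q(B_2^\ast\Gamma B_1^\ast)}$, which is exactly the statement.

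For part (2): $\underline{\Pi_B} = \Gamma^\ast + B_2\Gamma B_1$ is, by the remark in the text, a perturbed Hodge-Dirac operator with $(\Gamma,\Gamma^\ast,B_1,B_2)$ replaced by $(\Gamma^\ast,\Gamma,B_2,B_1)$ — I would verify the hypotheses of Definition \ref{pHD} for this tuple just as above (the roles of the two nilpotency identities and of $\kappa_1,\kappa_2$ simply swap). Applying Proposition \ref{equiv} to $\underline{\Pi_B}$, Hodge decomposition of $L^p$ is equivalent to: $\Qb_p B_2 : \overline{\calR_p(\Gamma)} \to \overline{\calR_p(\Gamma)}$ is an isomorphism (this is ``(A') for $\underline{\Pi_B}$'') together with $\Pb_{p'} B_1^\ast : \overline{\calR_{p'}(\Gamma^\ast)} \to \overline{\calR_{p'}(\Gamma^\ast)}$ an isomorphism (``(B') for $\underline{\Pi_B}$''). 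The first is precisely condition (\ref{PiB4})-type (B') — it is the second half of the hypothesis that $\Pi_B$ Hodge decomposes $L^p$, read through Proposition \ref{equiv}(ii) (indeed (B) for $\Pi_B$ in $L^p$ says $L^{p'} = \calN_{p'}(\Gamma^\ast)\oplus B_2^\ast\overline{\calR_{p'}(\Gamma)}$, equivalently (B') i.e.\ $\Qb_{p'}B_2^\ast$ iso, whose adjoint statement in $L^p$ is $\Pb_p B_2$... ). This is the step I expect to be the main obstacle: getting the dictionary between ``(A),(B) for $\Pi_B$'' and ``(A),(B) for $\underline{\Pi_B}$'' exactly right, because the bookkeeping of which projection, in which $L^p$, on which range space, is easy to garble — the clean way is to push everything through the two equivalent forms (A)$\Leftrightarrow$(A'), (B)$\Leftrightarrow$(B') and use that (A) for $\underline{\Pi_B}$ = (B) for $\Pi_B$ and (B) for $\underline{\Pi_B}$ = (A) for $\Pi_B$ as statements, so that the set of $p$ is the same interval $(p_H,p^H)$. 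The displayed splitting is then Definition \ref{def:Hodge-dec} for $\underline{\Pi_B}$ with $\overline{\calR_p(B_2\Gamma B_1)}$ playing the role of the perturbed range.

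For part (3): assuming $\Pi_B$ is $\omega$-bisectorial in $L^p$ for some $p \in (p_H,p^H)$, I would use the intertwining formulae \eqref{eq:swap-bar}. The resolvent $(\lambda I - \underline{\Pi_B})^{-1}$ can be expressed, on each summand of the Hodge decomposition of $L^p$ furnished by part (2), in terms of $(\lambda I - \Pi_B)^{-1}$ conjugated by the bounded maps $B_1, B_2$ and the bounded projections: on $\overline{\calR_p(\Gamma^\ast)}$ one has, for $\lambda \notin S_\theta$, $\lambda(\lambda I - \underline{\Pi_B})^{-1}$ acting essentially as $B_2\,\lambda(\lambda I - \Pi_B)^{-1} B_1$ composed with appropriate projections (using that $f(z) = \lambda z/(\lambda - z) - $ const, or rather approximating $z \mapsto \lambda(\lambda-z)^{-1}$ by odd and even parts and invoking \eqref{eq:swap-bar}, together with the fact that $\underline{\Pi_B}$ annihilates $\calN_p(\underline{\Pi_B})$); on $\calN_p(\underline{\Pi_B})$ the resolvent is just $\lambda^{-1}$. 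Since $B_1, B_2 \in L^\infty$ and the Hodge projections are bounded on $L^p$ by part (2), the uniform bound $\|\lambda(\lambda I - \Pi_B)^{-1}\|_{\calL(L^p)} \le C_\theta$ transfers to a uniform bound for $\underline{\Pi_B}$, and $\sigma_{L^p}(\underline{\Pi_B}) \subset S_\omega$ follows from invertibility of $\lambda I - \underline{\Pi_B}$ off $S_\theta$ for every $\theta > \omega$. Hence $\underline{\Pi_B}$ is $\omega$-bisectorial in $L^p$. Throughout I would lean on Proposition \ref{prop:properties}(2) to guarantee the Hodge decomposition of $L^p$ needed to make these summand-by-summand arguments legitimate, and on the already-established $L^2$ theory from \cite{AKM} to know the formulae \eqref{eq:swap-bar} extend appropriately; the only genuinely new content is the transfer of the $L^p$ uniform resolvent bound, which is immediate once the decomposition and the intertwining are in place.
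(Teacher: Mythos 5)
Your overall architecture is the same as the paper's: reduce everything to the isomorphism conditions of Proposition \ref{equiv} and transfer them by relabelling and duality (the paper handles (1) and (3) by citing \cite{HMcP2}, Lemmas 6.3 and 6.4, and proves (2) exactly by dualising (A') and (B')). Your verification that $\Pi_B^\ast$ and $\underline{\Pi_B}$ satisfy Definition \ref{pHD}, and your sketch of (3) via \eqref{eq:swap-bar} and the decomposition from (2), are fine in outline. However, there is a concrete error in the duality dictionary, and it lands precisely on the step you flagged as the main obstacle in part (2). Under the identification of dual spaces induced by the unperturbed Hodge decomposition, one has $\Pb_p^\ast=\Pb_{p'}$ and $\Qb_p^\ast=\Qb_{p'}$ — \emph{not} $\Pb_p^\ast=\Qb_{p'}$ as you assert at the outset. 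For part (1) this does no damage, because no adjoint of a projection is actually needed: ``(B') for $\Pi_B^\ast$ in $L^{p'}$'' is, after substituting the tuple $(\Gamma^\ast,\Gamma,B_2^\ast,B_1^\ast)$, literally the condition ``$\Pb_pB_1$ is an isomorphism of $\overline{\calR_p(\Gamma^\ast)}$'', i.e.\ (A') for $\Pi_B$ in $L^p$, by pure relabelling.

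In part (2) the error is not harmless. Your proposed shortcut ``(A) for $\underline{\Pi_B}$ $=$ (B) for $\Pi_B$ as statements'' is false: (A) for $\underline{\Pi_B}$ in $L^p$ concerns $B_2$ acting on $\overline{\calR_p(\Gamma)}$ inside $L^p$, whereas (B) for $\Pi_B$ in $L^p$ concerns $B_2^\ast$ acting on $\overline{\calR_{p'}(\Gamma)}$ inside $L^{p'}$; these are dual statements, not identical ones, and no relabelling of the tuple converts one into the other. Your attempted adjoint computation also goes wrong: the operator you write, $\Pb_pB_2$, does not even map $\overline{\calR_p(\Gamma)}$ into itself. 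The correct step, which is what the paper does, is: identify the dual of $\overline{\calR_{p'}(\Gamma)}$ with $\overline{\calR_p(\Gamma)}$ using $L^{p'}=\calN_{p'}(\Gamma^\ast)\oplus\overline{\calR_{p'}(\Gamma)}$ and $(\calN_{p'}(\Gamma^\ast))^\perp=\overline{\calR_p(\Gamma)}$; then for $u\in\overline{\calR_{p'}(\Gamma)}$ and $v\in\overline{\calR_p(\Gamma)}$ one computes $\skp{\Qb_{p'}B_2^\ast u,v}=\skp{B_2^\ast u,v}=\skp{u,B_2v}=\skp{u,\Qb_pB_2v}$, so the adjoint of $\Qb_{p'}B_2^\ast|_{\overline{\calR_{p'}(\Gamma)}}$ is $\Qb_pB_2|_{\overline{\calR_p(\Gamma)}}$. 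Hence (B') for $\Pi_B$ is equivalent to (A'') for $\underline{\Pi_B}$, and symmetrically (A') for $\Pi_B$ is equivalent to (B''), which gives (2) with the same interval $(p_H,p^H)$. With this correction the rest of your argument stands.
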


\begin{proof}(1) First note that the invertibility condition (\ref{PiB3}) for $\Pi_B$ is the same as the invertibility condition (${\Pi_B}^\ast(p')$) for ${\Pi_B}^\ast = \Gamma^\ast + {B_2}^\ast\Gamma{B_1}^\ast$. Using this, it is proved in \cite{HMcP2}, Lemma 6.3 that the Hodge decomposition for ${\Pi_B}^\ast$ in $L^{p'}(\R^n;\C^N)$ is equivalent to the Hodge decomposition for ${\Pi_B}$ in $L^{p}(\R^n;\C^N)$.\\
(2) On applying Proposition \ref{equiv}, $\underline{\Pi_B}$ Hodge decomposes $L^p(\R^n;\C^N)$ if and only if 
\begin{equation*}\tag{A''} \Qb_{p} {B_2}: \overline{\calR_{p}(\Gamma)}\to \overline{\calR_{p}(\Gamma)} \quad\text{is an isomophism}\ \text{and}
\end{equation*}
\begin{equation*}\tag{B''} \Pb_{p'} {B_1}^\ast: \overline{\calR_{p'}(\Gamma^\ast)}\to \overline{\calR_{p'}(\Gamma^\ast)} \quad\text{is an isomophism}\ .
\end{equation*}  
Using the Hodge decompositions for the unperturbed operators to identify the dual of $\overline{\calR_{p}(\Gamma^\ast)}$ with $\overline{\calR_{p'}(\Gamma^\ast)}$, we find by duality that (A') is equivalent to (B'') and (B') is equivalent to (A'').  This proves (2).  \\
(3) This is essentially proved in \cite{HMcP2}, Lemma 6.4.
\end{proof}
 
\begin{Remark} We are not saying that {\em(\ref{PiB3})} is equivalent to {\em(\ref{PiB4})} for general $p$.
\end{Remark}

We now define the operators
\begin{align*}
	R_t^B &:=(I+it\Pi_B)^{-1}, \quad t \in \R,\\
	P_t^B &:=(I+t^2{\Pi_B}^2)^{-1} = \frac12 (R_t^B + R_{-t}^B) = R_t^BR_{-t}^B, \quad t>0,\\
	Q_t^B &:= t\Pi_B(I+t^2{\Pi_B}^2)^{-1} = \frac{1}{2i}(-R_t^B + R_{-t}^B), \quad t>0.
\end{align*}
In the unperturbed case $B_1=B_2=I$, we write $R_t$, $P_t$ and $Q_t$ for $R_t^B$, $P_t^B$ and $Q_t^B$, respectively.
If we replace $\Pi_{B}$ by $\underline{\Pi_{B}}$, we replace $R_t^B$, $P_t^B$ and $Q_t^B$ by $\underline{R_t^B}$, $\underline{P_t^B}$ and $\underline{Q_t^B}$, respectively.
\\

We state some basic results for the unperturbed operator $\Pi$, noting that when we apply  \cite{HMcP2},
we do not make use of the probabilistic/dyadic methods developed there.

\begin{Prop}
\label{lem:hardyPi}

Let $M>\frac{n}{2}+1$.

\begin{enumerate}

\item
 For all $p \in (1,2]$, the family $\{s^{n(\frac{1}{p}-\frac{1}{2})}(R_s)^{M}\mathbb{P}_{\overline{\calR(\Pi)}} \;;\; s \in \R^{*}\}$ 
is uniformly bounded in $\calL(L^{p},L^{2})$, and the family
$\{(Q_s)^{M} \;;\; s \in \R^{*}\}$ 
has $L^p$-$L^2$ off-diagonal bounds of every order. 

\item 
For all $p \in (1,\infty)$, the family $\{P_s \;;\; s \in \R^{*}\}$ has $L^p$-$L^p$ off-diagonal bounds of 
every order.

\item
For all $p \in (1,\infty)$,
$$\|(s,x)\mapsto {Q_s}^{M} u(x)\|_{T^{p,2}} 
\approx \|u\|_{p} \quad \forall u \in \overline{\calR_p(\Pi)}.$$
\item
For all $p \in (\max\{2_{*},1\},2]$,
$$ \|(s,x)\mapsto Q_s u(x)\|_{T^{p,2}} \approx \|u\|_{p} \quad \forall u \in \overline{\calR_p(\Pi)}.$$
\end{enumerate}
\end{Prop}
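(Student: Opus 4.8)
All four parts concern the unperturbed, constant–coefficient operator $\Pi$, whose symbol $\hat\Pi(\xi)=\hat\Gamma(\xi)+\hat\Gamma(\xi)^{*}$ is self-adjoint for real $\xi$ and elliptic on its range by \eqref{Pi1}. The plan is to obtain (1) and (2) by Fourier analysis of this symbol, and then to deduce the square–function equivalences (3) and (4) from the bounded $H^{\infty}$ functional calculus of $\Pi$ on every $L^{p}(\R^{n};\C^{N})$ (Proposition \ref{prop:pi}(5)) together with the Hardy–space theory of Theorem \ref{thm:hardy}.

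\emph{Parts (1) and (2).} I would start from the fact that the resolvents $\{z(zI-\Pi)^{-1}:z\notin S_{\theta}\}$ have $L^{2}$-$L^{2}$ off-diagonal bounds of every order in the sense of Lemma \ref{lem:psiOD}: writing $\xi=\alpha+i\beta$, $\hat\Pi(\xi)=\hat\Pi(\alpha)+i\hat\Pi(\beta)$ is a sum of a self-adjoint matrix and $i$ times a self-adjoint one, so every eigenvalue $\mu$ satisfies $|\Im\mu|\le\|\hat\Pi(\beta)\|\lesssim|\beta|$; hence $\det(zI-\hat\Pi(\xi))$ has no zeros in a strip $\{|\Im\xi|<c|z|\}$, the symbol extends there holomorphically with polynomial growth, and its kernel decays faster than any polynomial, uniformly in $z$ with $|z|\dist(E,F)$ in the exponent (this is also in \cite{AKM}). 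By Lemma \ref{lem:psiOD} — or by the same argument applied to $\det(I+\hat\Pi(\xi)^{2})^{M}\ge1$ — the operators $(Q_{s})^{M}=\psi_{M}(s\Pi)$, with $\psi_{M}(z)=(z(1+z^{2})^{-1})^{M}\in\Psi_{M}^{M}(S_{\mu}^{o})$, have $L^{2}$-$L^{2}$ off-diagonal bounds of every order, and the analogous fact for $\{P_{s}\}$ in the $L^{p}$-$L^{p}$ scale follows from $\Pi^{2}$ being a constant-coefficient operator that is elliptic on $\overline{\calR(\Pi)}$ (see \cite{HMcP2}). To pass to $L^{p}$ for $p\in(1,2]$ I would estimate the matrix symbol $m_{s}(\xi)$ of $(R_{s})^{M}\Pb_{\overline{\calR(\Pi)}}$, resp.\ of $(Q_{s})^{M}$, on $\calR(\hat\Pi(\xi))$ by $\min(1,(s|\xi|)^{-M})$, resp.\ $\min((s|\xi|)^{M},(s|\xi|)^{-M})$ (the projection $\Pb_{\overline{\calR(\Pi)}}$ being essential so that the symbol also decays on $\calN(\hat\Pi(\xi))$); then Hausdorff--Young followed by Hölder gives $\|m_{s}(D)u\|_{2}\le\|m_{s}\|_{L^{\rho}}\|u\|_{p}$ with $1/\rho=1/p-1/2$, and $\|m_{s}\|_{L^{\rho}}\lesssim s^{-n/\rho}=s^{-n(1/p-1/2)}$ as soon as $M\rho>n$, i.e.\ $M>n(1/p-1/2)$, which holds for every $p\in(1,2]$ because $M>\tfrac n2+1$. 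This proves the $\calL(L^{p},L^{2})$ bound of (1); feeding the bound for $(Q_{s})^{M}$ into Lemma \ref{lem:ODcomp}(1) upgrades its $L^{2}$-$L^{2}$ off-diagonal bounds to $L^{p}$-$L^{2}$ ones of every order, and the same symbol estimates at the $L^{p}$-$L^{p}$ level give (2).

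\emph{Part (3).} For $M>\tfrac n2+1$ the function $\psi_{M}$ lies in both $\Psi_{\eps}^{n/2+\eps}(S_{\mu}^{o})$ and $\Psi_{n/2+\eps}^{\eps}(S_{\mu}^{o})$ for small $\eps>0$ and is non-degenerate, and by (1)--(2) the operator $\Pi$ satisfies all the hypotheses of the Hardy–space theory, so Theorem \ref{thm:hardy} applied with $\psi=\tilde\psi=\psi_{M}$ gives $\|(s,x)\mapsto(Q_{s})^{M}u(x)\|_{T^{p,2}}=\|u\|_{H^{p}_{\Pi,\psi_{M}}}=\|u\|_{H^{p}_{\Pi}}$ for all $p\in(1,\infty)$. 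It then remains to identify $H^{p}_{\Pi}$ with $\overline{\calR_{p}(\Pi)}$, with equivalent norms. The inequality $\|(Q_{s})^{M}u\|_{T^{p,2}}\lesssim\|u\|_{p}$ is the standard $\calQ$-function bound, which for $p\le2$ follows from the $L^{p}$-$L^{2}$ off-diagonal bounds of (1) by the arguments of \cite{HvNP}, and for $p\ge2$ by duality from the case $p'\le2$ (or from Lemma \ref{tent-boundedness}, the classical $p=2$ quadratic estimate, and interpolation of tent spaces); the converse $\|u\|_{p}\lesssim\|(Q_{s})^{M}u\|_{T^{p,2}}$ follows by the usual duality argument, using tent-space duality $(T^{p,2})^{*}=T^{p',2}$, a Calderón reproducing formula $\int_{0}^{\infty}\psi_{M}(s\Pi)^{2}\,\tfrac{ds}{s}=cI$ on $\overline{\calR(\Pi)}$, and the $\calQ$-function bound at the exponent $p'$ applied to $v\in\overline{\calR_{p'}(\Pi)}=(\overline{\calR_{p}(\Pi)})^{*}$. (One may instead simply quote $H^{p}_{\Pi}=\overline{\calR_{p}(\Pi)}$ from \cite{HvNP,HMcP2}.)

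\emph{Part (4), and the main obstacle.} The scheme is that of (3), with $\psi_{1}(z)=z(1+z^{2})^{-1}\in\Psi_{1}^{1}(S_{\mu}^{o})$ in place of $\psi_{M}$. The lower bound $\|u\|_{p}\lesssim\|Q_{s}u\|_{T^{p,2}}$ for $u\in\overline{\calR_{p}(\Pi)}$ is again obtained by duality — pairing $Q_{s}u$ against $\psi_{N}(s\Pi)v$ ($N$ large and even, $v\in\overline{\calR_{p'}(\Pi)}$, $p'\ge2$) via $\int_{0}^{\infty}\psi_{1}(s\Pi)\psi_{N}(s\Pi)\,\tfrac{ds}{s}=cI$ and invoking (3) at $p'$ — and needs nothing beyond $p\le2$. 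The hypothesis $p>2_{*}$ enters only in the upper bound $\|Q_{s}u\|_{T^{p,2}}\lesssim\|u\|_{p}$: since $\psi_{1}$ decays only like $|z|^{-1}$ at infinity, neither Theorem \ref{thm:hardy} nor Lemma \ref{tent-boundedness} applies directly for $n\ge2$, and one must go through off-diagonal bounds. The symbol of $Q_{s}=s\Pi(I+s^{2}\Pi^{2})^{-1}$ is globally smooth (near $\xi=0$ it equals $s\hat\Pi(\xi)$) and extends holomorphically with polynomial growth to a complex strip, so the kernel $k_{s}$ is rapidly decreasing for $|x|\gtrsim s$; the $|\xi|^{-1}$ decay of the symbol at infinity leaves at worst a local singularity $|k_{s}(x)|\lesssim s^{-1}|x|^{-(n-1)}$, whence $\|k_{s}\|_{L^{r}}\lesssim s^{-n(1-1/r)}$ for $r<n/(n-1)$, and Young's inequality gives $\sup_{s}\|s^{n(1/p-1/2)}Q_{s}\|_{\calL(L^{p},L^{2})}<\infty$ precisely when $1/p<1/2+1/n$, i.e.\ $p>2_{*}$. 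Combined with the $L^{2}$-$L^{2}$ off-diagonal bounds of every order for $\{Q_{s}\}$, Lemma \ref{lem:ODcomp}(1) then yields $L^{p}$-$L^{2}$ off-diagonal bounds of every order for $\{Q_{s}\}$ when $p\in(\max\{2_{*},1\},2]$, from which the upper bound follows by the $\calQ$-function argument of (3) (alternatively, by interpolating the upper bound between $p=2$, where it is the classical $L^{2}$ quadratic estimate, and an exponent just above $2_{*}$). The genuinely delicate point throughout — and the source both of the constraint $M>\tfrac n2+1$ and, in (4), of the Sobolev threshold $p>2_{*}$ — is this passage from $L^{2}$ to $L^{p}$, namely the scaling-sharp $\calL(L^{p},L^{2})$ bounds for $(R_{s})^{M}\Pb_{\overline{\calR(\Pi)}}$, $(Q_{s})^{M}$ and $Q_{s}$; once these are in hand, Lemma \ref{lem:ODcomp}(1) together with the functional-calculus and tent-space apparatus (whose more delicate part, the Schur-type bounds of Section \ref{sec:tech1}, is needed only in the borderline case of (4)) completes the argument.
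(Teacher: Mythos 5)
Parts (1)--(3) of your proposal are essentially sound. For (1) your Hausdorff--Young/H\"older estimate on the matrix symbol (restricted to $\calR(\hat\Pi(\xi))$, where \eqref{Pi1} gives the decay $\min(1,(s|\xi|)^{-M})$) is a clean substitute for the paper's route, which instead proves an $L^1\to L^2$ bound using the auxiliary multiplier $\Pi^{M-1}R_1^M$ and then interpolates with the weak $(1,1)$ bound for $\Pb_{\overline{\calR(\Pi)}}$; both then feed into Lemma \ref{lem:ODcomp}(1) identically. For (3) you diverge from the paper, which deduces the upper bound for $p\le 2$ by dominating the conical square function by the vertical one (Theorem \ref{thm:conicalVSvertical}) and then invoking the $H^\infty$ calculus of $\Pi$ in $L^p$; your route via the weak-type/molecular extrapolation from the $L^p$-$L^2$ off-diagonal bounds of $(Q_s)^M$ is legitimate for the unperturbed operator (though the attribution to \cite{HvNP} is off --- that reference gives Theorem \ref{thm:hardy}, i.e.\ the comparison of different $\psi$'s, not the identification of $H^p_\Pi$ with $\overline{\calR_p(\Pi)}$; the extrapolation argument is rather Blunck--Kunstmann/Auscher style). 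Your treatment of the reverse inequalities and of $p\ge 2$ matches Proposition \ref{lem:reverse}.

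The genuine gap is in part (4), at the step ``from which the upper bound follows by the $\calQ$-function argument of (3).'' Your computation that $\sup_s\|s^{n(\frac1p-\frac12)}Q_s\|_{\calL(L^p,L^2)}<\infty$ exactly when $p>2_*$ is correct, and Lemma \ref{lem:ODcomp}(1) does then give $L^p$-$L^2$ off-diagonal bounds of every order for $\{Q_s\}$. But these bounds do not, by the arguments available for $(Q_s)^M$, yield $\|Q_su\|_{T^{p,2}}\lesssim\|u\|_p$. Every version of the argument in (3) --- the weak-type extrapolation with approximation operators $A_r$, the molecular theory, or the paper's comparison with the vertical square function --- exploits the high order of vanishing of $\psi_M(z)=(z(1+z^2)^{-1})^M$ at $z=0$, which controls $Q_s^M(I-A_r)$ (resp.\ the kernel $Q_s^MQ_\sigma$) by $(r/s)^{\theta}$ with $\theta$ large for $s\gg r$. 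For $\psi_1$ this time-decay is only of order $1$, and a direct Schur or weak-type estimate with first-order decay fails for $n\ge 2$ (one would need roughly $1>\frac n2$). This is precisely why the paper's proof of (4) takes a different path: it writes $Q_tu=c\int_0^\infty Q_tQ_s\,(Q_s)^{M}u\,\frac{ds}{s}$, identifies $Q_tQ_s=\min(\frac ts,\frac st)(I-P_{\max\{t,s\}})P_{\min\{t,s\}}$, and applies the Stein-interpolated Schur estimate on tent spaces (Corollary \ref{Cor:SIO-tent}, interpolating between $T^{1,2}$, where the exponent must exceed $\frac n2$, and $T^{2,2}$, where any positive exponent suffices); the condition $1>n(\frac1q-\frac12)$, i.e.\ $q>2_*$, arises there, not from the $\calL(L^p,L^2)$ mapping properties of $Q_s$. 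That you recover the same numerical threshold from Young's inequality is a genuine (Sobolev-type) coincidence of numerology, but it proves a different, weaker statement. Your closing remark that the Schur-type bounds of Section \ref{sec:tech1} are ``needed only in the borderline case of (4)'' is exactly right --- but the argument you actually give for (4) does not use them, and without them (or the reproducing-formula reduction to $(Q_s)^M$) the step is unjustified. Your alternative suggestion of interpolating between $p=2$ and ``an exponent just above $2_*$'' is circular, since the bound at that endpoint is what is to be proved.
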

\begin{proof}
(1)  
Let $s>0$. By \cite[Proposition 4.8]{HMcP2}, $(R_{s})^{M}$ is a Fourier multiplier with bounded symbol $\xi \mapsto m(s\xi)$.
We also have that $\Pi^{M-1}{R_{1}}^{M}$  is a Fourier multiplier with bounded symbol 
$\tilde{m}:\xi \mapsto \widehat{\Pi}(\xi)^{M-1}m(\xi)$. 
Since  $|\xi|^{M-1}|m(\xi)w| \lesssim |\tilde{m}(\xi)w|$ for every $\xi \in \R^{n}$ and every $w \in \calR(\widehat{\Pi}(\xi))$  (by \eqref{Pi1}),
we have that 
$$
\underset{\xi \in \R^{n}\backslash\{0\}}{\sup} |\xi|^{M-1}|m(\xi)w| \lesssim \|\tilde{m}\|_{\infty}.
$$
For $u \in \overline{\calR(\Pi)} \cap L^1$, this implies 
$$
\|s^{\frac{n}{2}}{R_s}^{M} u\|_{2} \lesssim \|s^{\frac{n}{2}}m(s\,.)\,\widehat{u}\|_{2} \lesssim \|\widehat{u}\|_{\infty} \lesssim \|u\|_{1}.
$$
Since $\mathbb{P}_{\overline{\calR(\Pi)}}$ is a Fourier multiplier of weak type 1-1 by \cite[Proposition 4.4]{HMcP2}, we have by interpolation that, for all $p \in (1,2]$, the family $\{s^{n(\frac{1}{p}-\frac{1}{2})}(R_s)^{M}\mathbb{P}_{\overline{\calR(\Pi)}} \;;\; s \in \R^{*}\}$ is uniformly bounded in $\calL(L^{p},L^{2})$.
This implies that $\{s^{n(\frac{1}{p}-\frac{1}{2})}(Q_s)^{M} \;;\; s \in \R^{*}\}$ is uniformly bounded in $\calL(L^{p},L^{2})$.
Using Lemma \ref{lem:ODcomp} to interpolate this uniform bound with the $L^2$-$L^2$ off-diagonal bounds for 
$\{(Q_s)^{M} \;;\; s \in \R^{*}\}$ gives the second part of (1).\\
(2) By Proposition \ref{prop:pi} (5), $\Pi$ is bisectorial in $L^p(\R^n;\C^N)$, $p \in (1,\infty)$. Then the proof of \cite[Proposition 5.2]{AKM}, showing off-diagonal estimates in $L^2$, carries over to $p \in (1,\infty)$.  \\
(3)  Let $p\in (1,2]$. We first notice that 
 $\{(R_t)^{\frac{M}{2}-2} \Gamma \;;\; t \in \R^{*}\}$ and $\{(R_t)^{\frac{M}{2}-2} \Gamma^{*} \;;\; t \in \R^{*}\}$ have $\dot{W}^{1,p}$-$L^2$ off-diagonal bounds of every order on balls.  This follows from (1) and Lemma \ref{lem:ODcomp}.
Using Theorem \ref{thm:conicalVSvertical}, Proposition \ref{prop:pi}, and Theorem \ref{thm:hardy}, we thus get 
$$\|(s,x)\mapsto {Q_s}^{M} u(x)\|_{T^{p,2}} 
\sim \|(s,x)\mapsto {Q_s}^{10n+1} u(x)\|_{T^{p,2}} \lesssim
\|(\int \limits _{0} ^{\infty} |Q_s u|^{2} \frac{ds}{s})^{\frac{1}{2}}\|_{L^p}
\lesssim \|u\|_{p},$$
for all  $p \in (1,2]$  and all $u \in \overline{\calR_p(\Pi)}$.  
Note that our use of Theorem \ref{thm:conicalVSvertical} for the unperturbed operator $\Pi$ only relies on the relevant $\dot{W}^{1,p}$-$L^2$ off-diagonal bounds. In particular, it does not rely on later parts of the paper that could be based on the proposition being proven.
The reverse estimate follows from  Proposition \ref{lem:reverse}, and the equivalence for all $p\in (1,\infty)$ follows by duality.
(4) For  $p \in (\max(1,2_{*}),\infty)$ and $u \in L^p \cap L^2$,
$$
\|(t,x)\mapsto Q_{t}u(x)\|_{T^{p,2}} \lesssim \|(t,x) \mapsto \int \limits _{0} ^{\infty}Q_{t}Q_{s}{Q_{s}}^{M}u(x)\,\frac{ds}{s}\|_{T^{p,2}}.
$$
Noting that $Q_{t}Q_{s} =  \begin{cases} \frac{s}{t}(I-P_{t})P_{s} \;\; \text{if} \; 0< s \leq t,\\
\frac{t}{s}(I-P_{s})P_{t} \;\; \text{if} \; 0<t \leq s,
\end{cases}
$
we consider the integral operator defined by
$$
T_{K}F(t,x) = \int \limits _{0} ^{\infty} \min(\frac{t}{s},\frac{s}{t})K(t,s)F(s,x) \frac{ds}{s} \qquad \forall t > 0 \quad \forall x \in \R^{n}, 
$$
for $F\in T^{2,2}$ and $K(t,s) = \begin{cases} (I-P_{t})P_{s} \;\; \text{if} \; 0< s \leq t,\\
(I-P_{s})P_{t} \;\; \text{if} \; 0< t \leq s.\end{cases}$
Since, for every $\varepsilon>0$, the integral operator defined by 
$
\tilde{T}_{K}F(t,x) = \int \limits _{0} ^{\infty} \min(\frac{t}{s},\frac{s}{t})^{\varepsilon}K(t,s)F(s,x) \frac{ds}{s}$, for $F \in T^{2,2}$ and all $t > 0$, $x \in \R^{n}$, 
is bounded on $T^{2,2}$ by Schur's lemma,
the result follows by Corollary \ref{Cor:SIO-tent} and (3). Note that Corollary \ref{Cor:SIO-tent}, and Section \ref{sec:tech1} in general, does not rely on the rest of the paper.

\end{proof}

We conclude the section by recalling the main result of  Axelsson, Keith and the second author in \cite{AKM}. Note that   perturbed Hodge-Dirac operators satisfy the assumptions of \cite{AKM} and \cite{HMcP2}. In particular,  $\|\nabla\otimes u\|_{2} \lesssim \|\Pi u\|_{2}$ for all $u \in \calD_{2}(\Pi)\cap \overline{\calR_{2}(\Pi)}$ as stated in Proposition \ref{prop:pi} (6).

\begin{Theorem}
\label{thm:p=2}
Suppose $\Pi_B$ is a perturbed Hodge-Dirac operator with angles of accretivity as specified in Definition \ref{pHD}.
Then:
\begin{enumerate}
\item $\Pi_B$ is an $\omega$-bisectorial operator in $L^{2}(\R^{n};\C^{N})$.
\item The family $\{R_{t}^{B} \;;\; t \in \R\}$ has $L^2$-$L^2$ off-diagonal bounds of every order.
\item $\Pi_{B}$ satisfies the quadratic estimate
\[
	\|(t,x)\mapsto Q_t^Bu(x)\|_{T^{2,2}} \approx \|u\|_{L^2}
\]
for all $u \in \overline{\calR_2(\Pi_B)} \subseteq L^2(\R^n;\C^N)$. 
\item For all $\mu>\omega$, $\Pi_{B}$ has a bounded $H^{\infty}$ functional calculus with angle $\mu$ in $L^2(\R^n;\C^N)$.
\end{enumerate}
\end{Theorem}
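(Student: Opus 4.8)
Since this is the main theorem of \cite{AKM}, the plan is to reconstruct that argument. The statements are organised so that (4) follows from (1)--(3): once $\Pi_B$ is $\omega$-bisectorial in $L^2$ and satisfies the two-sided square function bound $\|(t,x)\mapsto Q_t^Bu(x)\|_{T^{2,2}}\approx\|u\|_{L^2}$ on $\overline{\calR_2(\Pi_B)}$, the boundedness of the $H^\infty(S_\mu^o)$ calculus with any angle $\mu>\omega$ is the classical consequence established in \cite{m,CDMcY,AMcN,ADM}: one uses the Hodge splitting $L^2(\R^n;\C^N)=\calN_2(\Pi_B)\oplus\overline{\calR_2(\Pi_B)}$ from \cite[Proposition 2.2]{AKM} (the calculus being trivial on the kernel), and applies the one-sided square function estimate both to $\Pi_B$ and to its adjoint ${\Pi_B}^\ast$, which is again a perturbed Hodge-Dirac operator of the same type. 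So the real content lies in (1), (2), and above all (3).

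For (1): as in \cite[Section 2]{AKM}, one checks that $\Pi_B=\Gamma+\Gamma_B^\ast$ is closed and densely defined with $\calD_2(\Pi_B)=\calD_2(\Gamma)\cap\calD_2(\Gamma_B^\ast)$; the relations $\Gamma B_1B_2\Gamma=0$ and $\Gamma^\ast B_2B_1\Gamma^\ast=0$ give $\Gamma^2=(\Gamma_B^\ast)^2=0$ and hence $\Pi_B^2=\Gamma\Gamma_B^\ast+\Gamma_B^\ast\Gamma$; and the accretivity inequalities $\Re(B_1\Gamma^\ast u,\Gamma^\ast u)\geq\kappa_1\|\Gamma^\ast u\|_2^2$, $\Re(B_2\Gamma u,\Gamma u)\geq\kappa_2\|\Gamma u\|_2^2$ confine the numerical range of $\Pi_B$ to $S_\omega$, $\omega=\tfrac12(\omega_1+\omega_2)$, from which $\|\lambda(\lambda I-\Pi_B)^{-1}\|_{\calL(L^2)}\leq C_\theta$ for $\lambda\notin S_\theta$, $\theta>\omega$, follows by Lax--Milgram. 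For (2): $\Pi_B$ is a first-order operator with $L^\infty$ coefficients, so for Borel sets $E,F$ with $d:=\dist(E,F)>0$, a Lipschitz cutoff $\eta\equiv 1$ near $E$, $\eta\equiv 0$ on $F$, $\|\nabla\eta\|_\infty\lesssim 1/d$, and $\supp u\subseteq F$ one has $\Eins_ER_t^Bu=\Eins_E\,it\,R_t^B[\Pi_B,\eta]R_t^Bu$, the term $R_t^B(\eta u)$ vanishing; since $[\Pi_B,\eta]$ is multiplication by a bounded matrix of norm $\lesssim 1/d$, iterating this identity at intermediate scales against the uniform $L^2$ resolvent bound produces the decay $(1+d/|t|)^{-M}$ for every $M$, as in \cite[Proposition 5.2]{AKM}.

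For (3), by duality, polarisation, and the symmetries $(\Gamma,\Gamma_B^\ast)\mapsto(\Gamma_B^\ast,\Gamma)$ and $\Pi_B\mapsto{\Pi_B}^\ast$, it suffices to prove the one-sided bound $\int_0^\infty\|Q_t^Bu\|_2^2\,\tfrac{dt}{t}\lesssim\|u\|_2^2$ for $u\in\overline{\calR_2(\Gamma)}$. Writing $Q_t^B=Q_t^B(I-P_t)+Q_t^BP_t$ with $P_t=(I+t^2\Pi^2)^{-1}$ the \emph{unperturbed} resolvent, the ``high frequency'' part $Q_t^B(I-P_t)=Q_t^B\,t^2\Pi^2P_t$ is purely operator-theoretic: expanding $\Pi^2P_t$ through the unperturbed $Q_s$ and using an almost-orthogonality (Schur/Cotlar) estimate $\|Q_t^B(I-P_t)u\|_2\lesssim\int_0^\infty\min(t/s,s/t)^\alpha\|Q_su\|_2\,\tfrac{ds}{s}$ together with the unperturbed quadratic estimate reduces it to a bounded integral operator on $T^{2,2}$. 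The ``low frequency'' principal part $Q_t^BP_t$ is the crux: one introduces dyadic averages $\mathbb{E}_t$ at scale $|t|$, defines the \emph{principal part} $\gamma_t(x):=(Q_t^B\Eins)(x)\in\mathscr{L}(\C^N)$ (well defined by Lemma \ref{lem:ODcomp}(5)), shows $Q_t^BP_tu-\gamma_t\,\mathbb{E}_tu\to 0$ in $T^{2,2}$ using the Poincar\'e inequality $\|\nabla\otimes u\|_2\lesssim\|\Pi u\|_2$ of Proposition \ref{prop:pi}(6) together with the off-diagonal bounds, and thereby reduces everything to showing that $|\gamma_t(x)|^2\,\tfrac{dx\,dt}{t}$ is a Carleson measure.

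The Carleson measure bound on $\gamma_t$ is where I expect the main difficulty, and it is exactly the point at which the real-variable harmonic analysis behind the Kato square root problem enters. Following \cite[Sections 4--5]{AKM} (cf.\ \cite{ahlmt}), one runs a stopping-time / John--Nirenberg argument on an arbitrary dyadic cube $Q$: using the accretivity of $B_1$ one builds, on the stopping sub-cubes, a test function $w_Q\in\calD_2(\Gamma)$ (an ``$\eps$-approximation of the identity'' adapted to $\Gamma_B^\ast$) with $\|\nabla w_Q\|_2\lesssim|Q|^{1/2}$ and $\mathbb{E}_Q(B_1\Gamma^\ast w_Q)$ within a fixed small constant of $\mathbb{E}_Q\Eins$ away from the stopped set; applying $Q_t^B$ to $w_Q$, the error $Q_t^Bw_Q-\gamma_t\,\mathbb{E}_tw_Q$ is controlled by the unperturbed quadratic estimate for $\Pi$ and the $L^2$-boundedness of a dyadic square function, while on the stopped part one sums a geometric series in the stopping parameter; this yields $\iint_{Q\times(0,\ell(Q))}|\gamma_t(x)|^2\,\tfrac{dx\,dt}{t}\lesssim|Q|$, hence the Carleson bound and hence (3). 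I would cite \cite{AKM} for the details of this step rather than reproduce them, as they are independent of the $L^p$ developments in the rest of the paper; with (1)--(3) in hand, (4) follows as indicated above.
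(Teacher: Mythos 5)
The paper offers no proof of this theorem at all --- it is explicitly recalled as the main result of \cite{AKM} --- and your proposal is an accurate reconstruction of exactly that argument (bisectoriality from accretivity and the $L^2$ Hodge decomposition, off-diagonal bounds via the commutator identity $\Eins_E R_t^B u = \Eins_E\, it\, R_t^B[\Pi_B,\eta I]R_t^B u$, and the quadratic estimate via the high/low frequency splitting, principal part approximation, and the Carleson measure obtained from the stopping-time construction of test functions). Since you, like the paper, ultimately defer the hard Carleson-measure step to \cite{AKM}, this is essentially the same approach and is correct.
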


\section{Main results}
\label{sec:res}

Our main results are conical square function estimates on the range of $\Gamma$. Combining these estimates, and using the structure of  Hodge-Dirac operators, we obtain functional calculus results as corollaries.

\begin{Theorem} \label{thm:main}
Suppose $\Pi_B$ is a perturbed Hodge-Dirac operator. \\
Given $p \in (\max\{1,(p_H)_\ast\},\infty)$, we have
\begin{align*}
\|(t,x) \mapsto (Q_{t}^{B})^{M}u(x)\|_{T^{p,2}} &\leq C_p \|u\|_{p} \quad \forall u \in \overline{\calR_p(\Gamma)}\quad\text{and}\\
\|(t,x) \mapsto (\underline{Q_{t}^{B}})^{M}u(x)\|_{T^{p,2}} &\leq C_p \|u\|_{p} \quad \forall u \in \overline{\calR_p(\Gamma^\ast)} \, ,
\end{align*}
where $M \in \N$ if $p \geq 2$, and $M \in \N$ with $M>\frac{n}{2}$ if $p<2$. 
\end{Theorem}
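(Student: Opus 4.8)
The plan is to prove Theorem~\ref{thm:main} by combining the low frequency estimates (to be established in Section~\ref{sec:low}) with the high frequency estimates (Sections~\ref{sec:high1} and~\ref{sec:high}), using the splitting
\[
(Q_t^B)^M u = (Q_t^B)^M (I+t^2\Pi^2)^{-M} u + (Q_t^B)^M\bigl(I-(I+t^2\Pi^2)^{-M}\bigr) u,
\]
where $\Pi$ is the unperturbed Hodge-Dirac operator underlying $\Pi_B$. The first term is the \emph{high frequency} part and the second is the \emph{low frequency} part. It suffices to bound each in $T^{p,2}$ by $C_p\|u\|_p$ for $u \in \overline{\calR_p(\Gamma)}$; the statement for $\underline{Q_t^B}$ and $u \in \overline{\calR_p(\Gamma^\ast)}$ follows by the symmetry exchanging $(\Gamma,\Gamma^\ast,B_1,B_2)$ with $(\Gamma^\ast,\Gamma,B_2,B_1)$ (recall $\underline{\Pi_B} = \Gamma^\ast + B_2\Gamma B_1$), so one only needs to prove one of the two inequalities.

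\textbf{Low frequency part.} For the low frequency term I would use that $I - (I+t^2\Pi^2)^{-M}$ is, up to constants, a sum of terms of the form $(t\Pi)^{2k}(I+t^2\Pi^2)^{-M}$ with $1\le k\le M$, each of which can be written using the potential maps $S_\Gamma, S_{\Gamma^\ast}$ from Proposition~\ref{prop:pi}(7) and the operators $\Gamma, \Gamma^\ast$ acting from $\dot W^{1,p}$ to $L^p$. The key is to develop $L^p$ conical square function analogues of the real-variable tools from \cite{AKM}: one decomposes $u \in \overline{\calR_p(\Gamma)}$, applies the off-diagonal decay of the relevant operator families (which, by Proposition~\ref{prop:pi}(1) and Lemma~\ref{lem:ODcomp}, enjoy $\dot W^{1,p}$-$L^2$ off-diagonal bounds of every order on balls for the unperturbed factor), and controls the resulting tent-space norm by a stopping-time/Carleson-measure argument as in the Kato problem. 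This part works for all $p \in (1,\infty)$ with no assumption on the $L^p$ behaviour of $\Pi_B$; the restriction $M > n/2$ when $p<2$ enters to guarantee that the tent-space boundedness results (Lemma~\ref{tent-boundedness}, applied with order $M > n/\min\{p,2\}$) and the Hardy space machinery (Theorem~\ref{thm:hardy}) are available at the exponent $p$.

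\textbf{High frequency part.} When $p \in (\max\{1,2_\ast\},2]$ the high frequency term is handled directly: one uses the $L^p$-$L^2$ off-diagonal bounds for $(Q_s)^M$ from Proposition~\ref{lem:hardyPi}(1) together with the $L^2$ boundedness of $\Pi_B$'s functional calculus (Theorem~\ref{thm:p=2}) and $L^2$-$L^2$ off-diagonal bounds of $Q_t^B$, combined through tent-space singular integral estimates and the $L^2$ quadratic estimate; no $L^p$ hypothesis on $\Pi_B$ is needed, and in dimensions $n=1,2$ this already covers all $p\in(1,\infty)$. For general $p$ in the stated range, I would instead invoke the $L^p$-$L^2$ off-diagonal bounds for the resolvents $R_t^B$ (Section~\ref{sec:high2}), use them to dominate the conical square function by a vertical square function tied to the $L^2$ functional calculus of $\Pi_B$ (Section~\ref{sec:tech2}), and finally apply the Schur-type extrapolation for singular integral operators on tent spaces from Section~\ref{sec:tech1}. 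The exponent $(p_H)_\ast$ appears precisely because the $L^p$-$L^2$ resolvent off-diagonal bounds, hence the extrapolation, require $\Pi_B$ to Hodge decompose on the relevant range, and descending one Sobolev exponent (via the potential maps and $\|\nabla\otimes u\|_p \lesssim \|\Pi u\|_p$) costs that gap.

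\textbf{Main obstacle.} The hard part is the high frequency estimate for $p \ne 2$: establishing the $L^p$-$L^2$ off-diagonal bounds for the perturbed resolvents and converting the conical square function into a vertical one without assuming R-bisectoriality of $\Pi_B$ in $L^p$ — this is exactly where the present approach diverges from \cite{HMcP1,HMcP2,HMc} — and then running the Schur-type extrapolation on tent spaces so that the endpoint $L^2$ bound of Theorem~\ref{thm:p=2} propagates to the open range $(\max\{1,(p_H)_\ast\},\infty)$. By contrast the low frequency part, while technically lengthy, is a fairly mechanical transcription of the $L^2$ Kato-type argument into the conical $L^p$ framework using the off-diagonal bounds already recorded in Proposition~\ref{prop:pi} and Lemma~\ref{lem:ODcomp}.
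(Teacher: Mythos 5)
Your decomposition $(Q_t^B)^M u = (Q_t^B)^M P_t^M u + (Q_t^B)^M(I-P_t^M)u$ is essentially the one used in the paper (which takes the power $\tilde N=10n$, first reduces to $M=10n$ via Theorem \ref{thm:hardy}, and disposes of $p>2$ separately by Proposition \ref{lem:reverse} --- a case your outline does not cover), and the reduction of the second inequality to the first via $\underline{\Pi_B}$ is correct. The genuine problem is that you have attached the two proof techniques to the wrong halves of the decomposition. In the paper it is the term $(Q_t^B)^M P_t^{\tilde N}u$ that is treated by the principal part approximation and the Carleson measure argument (Theorem \ref{thm:low}, Section \ref{sec:low}), and this is the half that holds for all $p\in(1,\infty)$ with no $L^p$ hypothesis on $\Pi_B$; it is the term $(Q_t^B)^M(I-P_t^{\tilde N})u$ that is treated by the operator-theoretic machinery --- $L^p$-$L^2$ off-diagonal bounds for the resolvents, the conical-versus-vertical comparison, and the Schur-type extrapolation on tent spaces --- and this is the half responsible for the restriction $p>\max\{1,(p_H)_*\}$ (Theorem \ref{thm:high}, Sections \ref{sec:high1}--\ref{sec:high}). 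You assert the opposite on both counts.

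This is not merely a labelling slip: each argument fails on the piece you assign it to. The Carleson measure argument rests on the principal part approximation, which bounds $Q_t^B F-\gamma_t A_tF$ via the Poincar\'e inequality by $\|(t,x)\mapsto t\nabla F(t,x)\|_{T^{p,2}}$; with $F=P_t^{\tilde N}u$ and $u=\Pi v$ one has $t\partial_{x_j} P_t^{\tilde N}u=\tilde Q_t(\partial_{x_j}v)$ with $\tilde Q_t = t\Pi P_t^{\tilde N}$, which is square-function bounded, whereas with $F=(I-P_t^M)u$ the gradient term is not controlled --- and the estimate for this piece genuinely requires the Hodge decomposition hypotheses, so it cannot hold for all $p\in(1,\infty)$ as you claim. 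Conversely, the Schur-type argument of Sections \ref{sec:tech1} and \ref{sec:high} uses the reproducing formula $u=C\int_0^\infty Q_s^{2\tilde N}u\,\frac{ds}{s}$ and requires a kernel satisfying $\|K(t,s)\|\lesssim\min(t/s,s/t)^\alpha$; for $K(t,s)=(Q_t^B)^M P_t^M Q_s^{2\tilde N}$ there is no decay in the regime $t\ll s$, since $P_t^M\to I$ strongly as $t\to 0$. The needed decay there comes precisely from the factorisation $I-P_t^{\tilde N}=t\Gamma(\sum_k P_t^k)Q_t$ on $\overline{\calR(\Gamma)}$, which vanishes to high order as $t\to 0$. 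Carried out as written, both halves of your proof would therefore break down; the repair is to swap the two arguments and to add the separate treatment of $p\in(2,\infty)$ via Proposition \ref{lem:reverse}.
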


The proof of Theorem \ref{thm:main} consists of two parts: a low-frequency estimate and a high-frequency estimate, stated below in Theorem \ref{thm:low} and Theorem \ref{thm:high}, respectively, and proven in the subsequent sections. We show that Theorem \ref{thm:main} is a consequence of Theorems \ref{thm:low} and \ref{thm:high} after their statements.\\

In the above theorem, when we consider a function of the form $(t,x) \mapsto (Q_{t}^{B})^{M}u(x)$ in a tent space $T^{p,2}$, we are considering $Q_{t}^{B}$ as a bounded operator on $L^2$ for each $t$. When $p \in (p_H,p^H)$, $Q_t^B$ does in fact extend to a bounded operator on $L^p$.\\

As a first consequence, we obtain equivalence of the Hardy space $H^p_{\Pi_B}(\R^n;\C^N)$ with the $L^p$ closure of $\calR_p(\Pi_B)$ whenever $p \in (p_H,p^H)$, and corresponding results restricted to the ranges of $\Gamma$ and $\Gamma_B^\ast$ for $p$ below $p_H$. We recall that \eqref{PiB3} always holds for $p \in (p_H,p^H)$.

\begin{Cor} \label{cor:hardy}
Suppose $\Pi_B$ is a perturbed Hodge-Dirac operator. Suppose that $\mu \in (\omega,\frac{\pi}{2})$, and that $\psi \in \Psi_\alpha^\beta(S_\mu^o)$ is non-degenerate with 
$$
	\text{either} \; p \in (1,2], \ \text{and} \ \alpha>0, \beta>\tfrac{n}{2}; \quad \text{or} \; p \in [2,\infty), \ \text{and} \ \alpha>\tfrac{n}{2}, \beta>0. 
$$
\begin{enumerate}
\item Let $p \in (\max\{1,(p_H)_\ast\},p^H)$. Then 
\begin{align*}
	\|(t,x) \mapsto \psi(t\Pi_B)u(x)\|_{T^{p,2}} & \approx \|u\|_{p}  \quad \forall u \in \overline{\calR_p(\Gamma)}\quad\text{and}\\
	\|(t,x) \mapsto \psi(t\underline{\Pi_B})u(x)\|_{T^{p,2}} & \approx \|u\|_{p}  \quad \forall u \in \overline{\calR_p(\Gamma^\ast)}.
\end{align*}
In particular, $\overline{\calR_p(\Gamma)} \subseteq H^p_{\Pi_B}(\R^n;\C^N)$ and $\overline{\calR_p(\Gamma^\ast)} \subseteq H^p_{\underline{\Pi_B}}(\R^n;\C^N)$.
\item Let $p \in (\max\{1,(p_H)_\ast\},p^H)$ and suppose that \eqref{PiB3} holds. Then 
\begin{align*}
	\|(t,x) \mapsto \psi(t\Pi_B)u(x)\|_{T^{p,2}} & \approx \|u\|_{p}  \quad \forall u \in \overline{\calR_p(\Gamma_B^\ast)}.
\end{align*}
In particular, $\overline{\calR_p(\Gamma_B^\ast)} \subseteq H^p_{\Pi_B}(\R^n;\C^N)$.
\item Let $p \in (p_H,p^H)$. Then 
\begin{align*}
	\|(t,x) \mapsto \psi(t\Pi_B)u(x)\|_{T^{p,2}} & \approx \|u\|_{p}  \quad \forall u \in \overline{\calR_p(\Pi_B)}.
\end{align*}
In particular, $H^p_{\Pi_B}(\R^n;\C^N) =\overline{\calR_p(\Pi_B)}$.
\end{enumerate}
\end{Cor}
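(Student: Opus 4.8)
The plan is to reduce the three statements to the one-sided bounds of Theorem \ref{thm:main} by means of the Hardy-space identification of Theorem \ref{thm:hardy} and the intertwining identities \eqref{eq:swap-bar}. Observe first that $\Pi_B$, $\underline{\Pi_B}$, ${\Pi_B}^\ast$ and $(\underline{\Pi_B})^\ast$ are all perturbed Hodge-Dirac operators (Proposition \ref{prop:properties}), hence by Theorem \ref{thm:p=2} each is $\omega$-bisectorial in $L^2$ with a bounded $H^\infty$ calculus and with resolvents having $L^2$-$L^2$ off-diagonal bounds of every order; in particular each satisfies the hypotheses of Theorem \ref{thm:hardy}. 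Moreover $\underline{\Pi_B}$ has Hodge interval $(p_H,p^H)$ (Proposition \ref{prop:properties}(2)), while ${\Pi_B}^\ast$ and $(\underline{\Pi_B})^\ast$ have Hodge interval $((p^H)',(p_H)')$ (Proposition \ref{prop:properties}(1)), so the range restriction in Theorem \ref{thm:main} is satisfied for the conjugate exponent $p'$ as soon as $p<p^H$. To obtain the upper bounds ``$\lesssim$'', fix an integer $M>\frac n2+1$ and put $\psi_0(z)=(z(1+z^2)^{-1})^M$; then Theorem \ref{thm:main} says $\|\calQ_{\psi_0}u\|_{T^{p,2}}\lesssim\|u\|_p$ for $u\in\overline{\calR_p(\Gamma)}$, i.e.\ $u\in H^p_{\Pi_B,\psi_0}$ with $\|u\|_{H^p_{\Pi_B,\psi_0}}\lesssim\|u\|_p$. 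Since the given $\psi\in\Psi_\alpha^\beta$ lies in $\Psi_\eps^{n/2+\eps}(S_\mu^o)$ when $p\le2$ (resp.\ $\Psi_{n/2+\eps}^\eps(S_\mu^o)$ when $p\ge2$) for small $\eps>0$, and $\psi_0$ lies in the same class, Theorem \ref{thm:hardy}(1) gives $H^p_{\Pi_B,\psi}=H^p_{\Pi_B,\psi_0}=:H^p_{\Pi_B}$ with equivalent norms, whence $\|\psi(t\Pi_B)u\|_{T^{p,2}}=\|u\|_{H^p_{\Pi_B,\psi}}\approx\|u\|_{H^p_{\Pi_B}}\lesssim\|u\|_p$. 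The same argument yields the corresponding upper bounds for $\underline{\Pi_B}$ on $\overline{\calR_p(\Gamma^\ast)}$, for ${\Pi_B}^\ast$ on $\overline{\calR_{p'}(\Gamma^\ast)}$ and for $(\underline{\Pi_B})^\ast$ on $\overline{\calR_{p'}(\Gamma)}$ (the last two for every $p<p^H$).

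The reverse bounds are the heart of the matter. By density --- the Hodge projection onto $\overline{\calR(\Gamma)}$ is a Fourier multiplier bounded on every $L^q$ --- it is enough to treat $u\in\overline{\calR_p(\Gamma)}\cap\overline{\calR_2(\Gamma)}\subseteq\overline{\calR_2(\Pi_B)}$. Choosing $\eta\in\Psi(S_\mu^o)$ with $\int_0^\infty\eta(tz)\psi(tz)\,\tfrac{dt}t=1$ for all $z\in S_\mu^o$, the $L^2$ functional calculus of $\Pi_B$ gives the Calderón reproducing formula $u=\int_0^\infty\eta(t\Pi_B)\psi(t\Pi_B)u\,\tfrac{dt}t$ in $L^2$; pairing with $v\in L^{p'}$ and using Coifman--Meyer--Stein tent-space duality,
\[
 |\langle u,v\rangle|\le\int_0^\infty\bigl|\langle\psi(t\Pi_B)u,\bar\eta(t{\Pi_B}^\ast)v\rangle\bigr|\,\tfrac{dt}t\lesssim\|\psi(t\Pi_B)u\|_{T^{p,2}}\,\|\bar\eta(t{\Pi_B}^\ast)v\|_{T^{p',2}},
\]
so everything comes down to showing $\|\bar\eta(t{\Pi_B}^\ast)v\|_{T^{p',2}}\lesssim\|v\|_{p'}$. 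When $p\in(p_H,p^H)$ this is clean: $p'$ then lies in the Hodge interval of ${\Pi_B}^\ast$, so $v$ splits with bounded projections into components in $\calN_{p'}({\Pi_B}^\ast)$, $\overline{\calR_{p'}(\Gamma^\ast)}$ and $\overline{\calR_{p'}(B_2^\ast\Gamma B_1^\ast)}$, and $\bar\eta(t{\Pi_B}^\ast)v$ is controlled on the latter two by the upper bounds just established for ${\Pi_B}^\ast$ and, via \eqref{eq:swap-bar} and the analogue \eqref{PiB4} of \eqref{PiB3}, for $(\underline{\Pi_B})^\ast$; this simultaneously gives the reverse bound in (1) for $p\in(p_H,p^H)$ and the identity $H^p_{\Pi_B}=\overline{\calR_p(\Pi_B)}$ of part (3) (the synthesis operator $F\mapsto\int_0^\infty\eta(t\Pi_B)F(t,\cdot)\tfrac{dt}t$ being bounded $T^{p,2}\to L^p$ is, by duality, exactly this estimate). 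The delicate point --- and what I expect to be the main obstacle --- is the case $p\le p_H$, where the Hodge decomposition of ${\Pi_B}^\ast$ in $L^{p'}$ is no longer available, so the reverse conical square function estimate must be produced by a separate argument; a natural route is to combine the unperturbed equivalence $\|{Q_s}^Mu\|_{T^{p,2}}\approx\|u\|_p$ on $\overline{\calR_p(\Pi)}\supseteq\overline{\calR_p(\Gamma)}$, valid for all $p\in(1,\infty)$ by Proposition \ref{lem:hardyPi}, with a reverse-type perturbation argument, and it is here that the exact role of the Sobolev exponent $(p_H)_\ast$ enters.

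Finally I would assemble parts (2) and (3). For (2), assuming \eqref{PiB3} we have $\overline{\calR_p(\Gamma_B^\ast)}=B_1\overline{\calR_p(\Gamma^\ast)}$ with $\|v\|_p\approx\|B_1v\|_p$ on $\overline{\calR_p(\Gamma^\ast)}$; writing $u=B_1v$ and splitting $\psi$ into its even and odd parts, the identities \eqref{eq:swap-bar} together with the isomorphism property of $\Qb_pB_2$ on $\overline{\calR_p(\Gamma)}$ from Proposition \ref{equiv} let one rewrite $\psi(t\Pi_B)(B_1v)$ in terms of $\psi(t\underline{\Pi_B})v$, so that $\|\psi(t\Pi_B)u\|_{T^{p,2}}\approx\|\psi(t\underline{\Pi_B})v\|_{T^{p,2}}\approx\|v\|_p\approx\|u\|_p$ by part (1) for $\underline{\Pi_B}$ on $\overline{\calR_p(\Gamma^\ast)}$. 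For (3), when $p\in(p_H,p^H)$ the space $\overline{\calR_p(\Pi_B)}$ splits into the complemented subspaces $\overline{\calR_p(\Gamma)}$ and $\overline{\calR_p(\Gamma_B^\ast)}$, so the ``$\lesssim$'' bound follows from (1) and (2) by the triangle inequality, while the ``$\gtrsim$'' bound, as well as $H^p_{\Pi_B}=\overline{\calR_p(\Pi_B)}$, follows from the duality computation of the previous paragraph.
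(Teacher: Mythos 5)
Your reduction of the upper bounds to Theorem \ref{thm:main} via Theorem \ref{thm:hardy}, your density step, and your assembly of parts (2) and (3) all match the paper's strategy (the paper simplifies (2) slightly by reducing to $\psi=(Q_t^B)^M$ with $M$ even first, so that only the even identity in \eqref{eq:swap-bar} is ever needed and no inversion of $\Qb_pB_2$ is required). But your argument for the reverse inequality has a genuine gap exactly where you flag it: for $p\in(\max\{1,(p_H)_\ast\},p_H]$ you reduce matters to $\|\bar\eta(t{\Pi_B}^\ast)v\|_{T^{p',2}}\lesssim\|v\|_{p'}$ and propose to prove this by Hodge-decomposing $v$ in $L^{p'}$; since $p'\geq (p_H)'$ lies outside the Hodge interval $((p^H)',(p_H)')$ of ${\Pi_B}^\ast$, that route is unavailable, and the alternative you sketch (a ``reverse-type perturbation argument'' off the unperturbed equivalence) is not carried out. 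Without this, the two-sided equivalence in (1) and (2) is unproved precisely in the range $(\max\{1,(p_H)_\ast\},p_H]$ that is the point of the corollary.

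The paper closes this gap with Proposition \ref{lem:reverse}, whose key observation is that the dual estimate does \emph{not} require decomposing $v$ at all: one proves $\|(t,x)\mapsto ((Q_t^B)^\ast)^M v(x)\|_{T^{q,2}}\lesssim\|v\|_q$ for \emph{every} $v\in L^q\cap L^2$ and every $q\in[2,\infty]$, by interpolating between the $L^2\to T^{2,2}$ bound of Theorem \ref{thm:p=2} and an $L^\infty\to T^{\infty,2}$ Carleson-measure bound obtained Fefferman--Stein style from $L^2$-$L^2$ off-diagonal estimates alone. Since ${\Pi_B}^\ast$ is itself a perturbed Hodge-Dirac operator, this uses only its $L^2$ theory and no $L^{p'}$ Hodge decomposition whatsoever; the reverse bound $\|u\|_p\lesssim\|(Q_t^B)^Mu\|_{T^{p,2}}$ then follows for all $u\in\overline{\calR_2(\Pi_B)}\cap L^p$ and all $p\in(1,2]$ by the Calder\'on reproducing formula and tent-space duality, exactly as in your pairing argument but with the unconditional dual bound in place of the decomposition of $v$. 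Replacing your treatment of the dual term by this observation repairs the proof.
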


\begin{Remark}
In Corollary \ref{cor:hardy} (2), one has in fact
$H^p_{\Pi_B}(\R^n;\C^N) = \overline{\calR_p(\Gamma)} \oplus \overline{\calR_p(\Gamma_B^\ast)}$, for $p \in (\max\{1,(p_H)_\ast\},p^H)$.
This follows from the fact that the Hodge projections preserve Hardy spaces, as can be seen by considering their actions on $H^{1}_{\Pi_{B}}$ molecules (as defined in \cite{AMcMo}).

\end{Remark}

\begin{Remark}
An inspection of our proof shows that we are actually proving that
$$
\|u\|_{H^{p}_{\Pi_{B}}} \approx \|u\|_{H^{p}_{\Pi}} \quad \forall u \in \overline{R_{2}(\Gamma)} \cap H^{p}_{\Pi}\ .
$$
When $p>1$, we then use that $\|u\|_{H^{p}_{\Pi}} \approx \|u\|_{L^{p}}$. The proof still works if $(p_{H})_{*} < 1$ and $p=1$. In this case we get that
$$
\|u\|_{H^{1}_{\Pi_{B}}} \approx \|u\|_{H^{1}_{\Pi}} \quad \forall u \in \overline{R_{2}(\Gamma)} \cap H^{1}_{\Pi}\ .
$$
As $\Pi$ is a Fourier multiplier one can then relate the $H^{1}_{\Pi}$ norm to the classical $H^{1}$ norm:
$$
\|u\|_{H^{1}_{\Pi}} \approx  \|u\|_{H^{1}(\R^n,\C^N)} \quad\forall u\in H^1_{\Pi}\ .
$$
 This can be done, for instance, by using the molecular theory presented in \cite{AMcMo}.
\end{Remark}

As a second consequence, we obtain functional calculus results for $\Pi_B$.

\begin{Cor} \label{cor:fc}
Suppose $\Pi_B$ is a perturbed Hodge-Dirac operator.
Suppose $\mu \in (\omega,\frac{\pi}{2})$.
\begin{enumerate}
\item Let $p \in (\max\{1,(p_H)_\ast\},p^H)$. Then for all $f \in \Psi(S_\mu^o)$,
\begin{align*}
\|f(\Pi_{B})u\|_{p} &\leq C_p \|f\|_{\infty}\|u\|_{p} \quad \forall u \in \overline{\calR_p(\Gamma)}.
\end{align*}
\item Let $p \in (\max\{1,(p_H)_\ast\},p^H)$ and suppose  \eqref{PiB3} holds.  Then for all $f \in \Psi(S_\mu^o)$,
\begin{align*}
\|f(\Pi_{B})u\|_{p} &\leq C_p \|f\|_{\infty}\|u\|_{p} \quad \forall u \in \overline{\calR_p(\Gamma_B^\ast)}.
\end{align*}
\item Let $p \in (p_H,p^H)$. Then $\Pi_B$ is $\omega$-bisectorial, and has a bounded $H^\infty$ functional calculus with angle $\mu$ in $L^p(\R^n;\C^N)$.
\end{enumerate}
\end{Cor}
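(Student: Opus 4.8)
The plan is to derive all three statements from the conical square function estimates in Corollary \ref{cor:hardy} together with the Hardy-space boundedness result Theorem \ref{thm:hardy}, applied to $D=\Pi_B$ (and, for one step, to $D=\underline{\Pi_B}$). First note that by Theorem \ref{thm:p=2} the operators $\Pi_B$ and $\underline{\Pi_B}$ are $\omega$-bisectorial in $L^2(\R^n;\C^N)$, have a bounded $H^\infty$ calculus there, and have resolvents with $L^2$-$L^2$ off-diagonal bounds of every order, so the hypotheses of Theorem \ref{thm:hardy} are met. Fix a non-degenerate $\psi\in\Psi_\eps^{n/2+\eps}(S_\mu^o)$ if $p\leq 2$, or $\psi\in\Psi_{n/2+\eps}^\eps(S_\mu^o)$ if $p\geq 2$; such $\psi$ is admissible both in Corollary \ref{cor:hardy} and in Theorem \ref{thm:hardy}.

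For parts (1) and (2), split $f=f_e+f_o$ into its even part $f_e(z)=\tfrac12(f(z)+f(-z))$ and odd part $f_o(z)=\tfrac12(f(z)-f(-z))$; both lie in $\Psi(S_\mu^o)$ with $\|f_e\|_\infty+\|f_o\|_\infty\lesssim\|f\|_\infty$. Since $\Gamma^2=(\Gamma_B^\ast)^2=0$, the operator $\Pi_B$ is off-diagonal for the $L^2$ Hodge splitting $\overline{\calR_2(\Pi_B)}=\overline{\calR_2(\Gamma)}\oplus\overline{\calR_2(\Gamma_B^\ast)}$ (on $\overline{\calR_2(\Gamma)}$ one has $\Pi_B u=\Gamma_B^\ast u$, and on $\overline{\calR_2(\Gamma_B^\ast)}$ one has $\Pi_B u=\Gamma u$); hence $\Pi_B^2$ preserves each summand, so $f_e(\Pi_B)=h(\Pi_B^2)$ preserves each summand while $f_o(\Pi_B)$ interchanges them. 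The basic estimate, for $v$ ranging over a dense subclass of the relevant subspace (e.g.\ $v\in\calR_2(\Gamma)\cap L^p$), is the chain
\[
\|g(\Pi_B)v\|_p\ \approx\ \|(t,x)\mapsto\psi(t\Pi_B)g(\Pi_B)v(x)\|_{T^{p,2}}\ \lesssim\ \|g\|_\infty\,\|v\|_{H^p_{\Pi_B}}\ \approx\ \|g\|_\infty\,\|v\|_p,
\]
where the outer equivalences are Corollary \ref{cor:hardy}(1) when the element in question lies in $\overline{\calR_p(\Gamma)}$ and Corollary \ref{cor:hardy}(2) when it lies in $\overline{\calR_p(\Gamma_B^\ast)}$ (legitimate because \eqref{PiB3} is assumed in part (2)), and the middle bound is Theorem \ref{thm:hardy}(2). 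Applying this with $g=f_e$ and $g=f_o$, and using that $f_e(\Pi_B)u$ stays in $\overline{\calR_p(\Gamma_B^\ast)}$ while $f_o(\Pi_B)u$ moves into $\overline{\calR_p(\Gamma)}$, proves part (2); in part (1) the even part $f_e(\Pi_B)u\in\overline{\calR_p(\Gamma)}$ is handled the same way, staying entirely inside Corollary \ref{cor:hardy}(1) and therefore needing no hypothesis. For the odd part in part (1), where $f_o(\Pi_B)u\in\overline{\calR_2(\Gamma_B^\ast)}$ and \eqref{PiB3} is not available, the plan is to use the intertwining identities \eqref{eq:swap-bar}: they express $g(\Pi_B)$ and $\psi(t\Pi_B)$ acting on $B_1\overline{\calR(\Gamma^\ast)}=\overline{\calR(\Gamma_B^\ast)}$ in terms of $\underline{\Pi_B}$ acting on $\overline{\calR(\Gamma^\ast)}$, so that writing $u=\Gamma v$ and pushing $f_o(\Pi_B)u$ and $\psi(t\Pi_B)f_o(\Pi_B)u$ through these identities reduces the claim to the second line of Corollary \ref{cor:hardy}(1) for $\underline{\Pi_B}$ on $\overline{\calR_p(\Gamma^\ast)}$ together with the upper square function bound for $\Pi_B$ on $\overline{\calR_p(\Gamma)}$ — both valid for every $p\in(\max\{1,(p_H)_\ast\},p^H)$. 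One then extends from the dense subclass by density.

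For part (3), $p\in(p_H,p^H)$, so \eqref{PiB3} holds and $L^p(\R^n;\C^N)=\calN_p(\Pi_B)\oplus\overline{\calR_p(\Gamma)}\oplus\overline{\calR_p(\Gamma_B^\ast)}$ with bounded Hodge projections. For $f\in\Psi(S_\mu^o)$ one has $f(0)=0$, so $f(\Pi_B)$ annihilates $\calN_p(\Pi_B)$ and is bounded on $\overline{\calR_p(\Gamma)}$ by part (1) and on $\overline{\calR_p(\Gamma_B^\ast)}$ by part (2); summing the three contributions gives $\|f(\Pi_B)u\|_p\leq C_p\|f\|_\infty\|u\|_p$ for all $u\in L^p(\R^n;\C^N)$, i.e.\ a bounded $\Psi$-calculus. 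Bisectoriality of $\Pi_B$ in $L^p$ is immediate on $\calN_p(\Pi_B)$, and on $\overline{\calR_p(\Pi_B)}=H^p_{\Pi_B}$ (Corollary \ref{cor:hardy}(3)) it is part of the Hardy-space theory underlying Theorem \ref{thm:hardy}; together with the Hodge splitting this gives the uniform resolvent bounds on all of $L^p$. The bounded $\Psi$-calculus then upgrades to a bounded $H^\infty(S_\mu^o)$ calculus by the standard approximation argument, writing $f(\Pi_B)u=f(0)\Pb_{\calN(\Pi_B)}u+\lim_n\psi_n(\Pi_B)u$ with $\psi_n\in\Psi(S_\mu^o)$ uniformly bounded and converging locally uniformly to $f$.

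I expect the odd-part step in (1) — transferring to $\underline{\Pi_B}$ through \eqref{eq:swap-bar} precisely in order to avoid invoking \eqref{PiB3} — to be the technical heart of the argument. A secondary, recurring point requiring care is the identification of abstract elements of $H^p_{\Pi_B}$ with genuine $L^p$ functions (so that the $T^{p,2}$-finiteness of $\calQ_\psi(g(\Pi_B)v)$ indeed forces $g(\Pi_B)v\in L^p$ with the stated norm), and the verification that the subclasses on which the a priori estimates are first established are dense in the relevant ranges.
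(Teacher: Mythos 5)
Your overall architecture (square function equivalences from Corollary \ref{cor:hardy} plus Theorem \ref{thm:hardy}(2)) is the right one, but the even/odd splitting creates a genuine gap in part (1), precisely at the step you identify as the technical heart. For $u\in\overline{\calR_p(\Gamma)}$ the identities \eqref{eq:swap-bar} as stated act on elements of the form $B_1\Gamma^\ast u$, so to transfer $f_o(\Pi_B)\Gamma v$ you must use the swapped version, namely $f_o(\Pi_B)(\Gamma v)=B_1 f_o(\underline{\Pi_B})(B_2\Gamma v)$. The element $B_2\Gamma v$ lies in $B_2\overline{\calR(\Gamma)}=\overline{\calR(B_2\Gamma B_1)}$, which is the \emph{perturbed} range for $\underline{\Pi_B}$, not in $\overline{\calR_p(\Gamma^\ast)}$ as your reduction requires. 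Controlling $f_o(\underline{\Pi_B})$ there is exactly the analogue of part (2) for $\underline{\Pi_B}$ and needs the invertibility condition \eqref{PiB4}, i.e.\ precisely the kind of hypothesis part (1) is designed to avoid. So the detour through $\underline{\Pi_B}$ does not close.

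The fix — and the paper's actual route — is to drop the splitting entirely. Proposition \ref{lem:reverse} gives the lower bound $\|v\|_p\lesssim\|(t,x)\mapsto(Q_t^B)^Mv(x)\|_{T^{p,2}}$ for \emph{every} $v\in\overline{\calR_2(\Pi_B)}\cap L^p$ when $p\le 2$, with no $L^p$ assumptions on $\Pi_B$ and no need to locate $v$ in a particular Hodge component (its proof is by duality against $\Pi_B^\ast$ and also yields $v\in L^p$, which disposes of your secondary concern). Applying this to $v=f(\Pi_B)u$, then Theorem \ref{thm:hardy}(2), then Theorem \ref{thm:main}/Corollary \ref{cor:hardy}(1) to $u$ itself gives
\[
\|f(\Pi_B)u\|_p\lesssim\|(t,x)\mapsto(Q_t^B)^Mf(\Pi_B)u(x)\|_{T^{p,2}}\lesssim\|f\|_\infty\|(t,x)\mapsto(Q_t^B)^Mu(x)\|_{T^{p,2}}\lesssim\|f\|_\infty\|u\|_p
\]
for all $p\in(\max\{1,(p_H)_\ast\},2]$, with part (2) identical under \eqref{PiB3}. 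The paper then obtains part (3) for $p\in(p_H,2]$ from the Hodge decomposition exactly as you do, and covers $p\in[2,p^H)$ by duality with $\Pi_B^\ast$ (Proposition \ref{prop:properties}) rather than by running the argument directly above $2$; since $[2,p^H)\subset(p_H,p^H)$, parts (1) and (2) there are subsumed by part (3).
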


For the proofs, we use the following result, that establishes the reverse square function estimates when $p<2$.

\begin{Prop}
\label{lem:reverse}
Suppose $\Pi_B$ is a perturbed Hodge-Dirac operator.
For all $p \in [2,\infty]$ and all $M \in \N$, we have
\begin{equation} \label{eq:reverse}
\|(t,x) \mapsto (Q_{t}^{B})^{M}u(x)\|_{T^{p,2}} \leq C_p \|u\|_{p} \quad \forall u \in L^2(\R^n;\C^N) \cap L^p(\R^n;\C^N).
\end{equation}
Consequently, for all $p \in (1,2]$ and all $M \in \N$, we have
$$
 \|u\|_{p}\leq C_p \|(t,x) \mapsto (Q_{t}^{B})^{M}u(x)\|_{T^{p,2}} \quad \forall u \in \overline{\calR_2(\Pi_B)}\cap L^p(\R^n;\C^N).
$$
\end{Prop}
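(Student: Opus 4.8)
The plan is to prove the direct estimate \eqref{eq:reverse} by interpolating between the two endpoints $p=2$ and $p=\infty$, and then to deduce the reverse inequality for $p\in(1,2]$ by duality, applying the direct estimate to the adjoint operator $\Pi_B^\ast$. For $p=2$ the bound is essentially Theorem \ref{thm:p=2}: decomposing $u=u_0+u_1$ along $L^2(\R^n;\C^N)=\calN_2(\Pi_B)\oplus\overline{\calR_2(\Pi_B)}$ we have $(Q_t^B)^Mu=(Q_t^B)^Mu_1$, and since $\|F\|_{T^{2,2}}\approx(\int_0^\infty\|F(t,\cdot)\|_2^2\frac{dt}{t})^{1/2}$, the quadratic estimate of Theorem \ref{thm:p=2} combined with the standard comparison of square functions attached to different non-degenerate auxiliary functions for an operator with bounded $H^\infty$ calculus (see \cite{ADM,CDMcY}) gives $\|((Q_t^B)^Mu)\|_{T^{2,2}}\lesssim\|u_1\|_2\le\|u\|_2$.

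\textbf{The endpoint $p=\infty$ and interpolation.} This amounts to a Carleson-measure bound: for a fixed ball $B=B(x_0,r)$ one estimates $r^{-n}\int_0^r\int_B|(Q_t^B)^Mu(y)|^2\frac{dy\,dt}{t}$ by splitting $u=u\Eins_{4B}+\sum_{j\ge2}u\Eins_{S_j(B)}$ into $L^2$ pieces. The local part is controlled by the $p=2$ bound, since $\|u\Eins_{4B}\|_2\lesssim r^{n/2}\|u\|_\infty$. For the far parts, one uses that $\{(Q_t^B)^M\}_t$ has $L^2$-$L^2$ off-diagonal bounds of every order: writing $Q_t^B=\frac{1}{2i}(-R_t^B+R_{-t}^B)$, $(Q_t^B)^M$ is a finite linear combination of $M$-fold products of the resolvents $R_{\pm t}^B$, which have such bounds by Theorem \ref{thm:p=2}, and composition preserves them (the $p=2$ instance of Lemma \ref{lem:ODcomp}(4)). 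Choosing off-diagonal order $L>\frac{n}{2}$ and using $\dist(B,S_j(B))\gtrsim2^jr\ge t$ for $j\ge2$, $t\le r$, one obtains a geometric decay in $j$ that sums, again producing a bound $\lesssim r^n\|u\|_\infty^2$; hence $\|((Q_t^B)^Mu)\|_{T^{\infty,2}}\lesssim\|u\|_\infty$. Since $u\mapsto((t,x)\mapsto(Q_t^B)^Mu(x))$ is then bounded $L^2\to T^{2,2}$ and $L^\infty\to T^{\infty,2}$ (on $L^2\cap L^\infty$, dense in $L^p$ for $2<p<\infty$), and both couples interpolate by the complex method ($[L^2,L^\infty]_\theta=L^p$, $[T^{2,2},T^{\infty,2}]_\theta=T^{p,2}$ with $\frac1p=\frac{1-\theta}2$), complex interpolation yields \eqref{eq:reverse} for all $p\in[2,\infty]$.

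\textbf{The reverse estimate for $p\in(1,2]$.} Let $u\in\overline{\calR_2(\Pi_B)}\cap L^p$ and pick $K\in\N$ with $M+K$ even. By the bounded $H^\infty$ calculus and quadratic estimates of $\Pi_B$ in $L^2$ (Theorem \ref{thm:p=2}), the Calder\'on reproducing formula $u=\frac1c\int_0^\infty(Q_t^B)^K(Q_t^B)^Mu\,\frac{dt}{t}$ holds in $L^2$, with $c=\int_0^\infty(t(1+t^2)^{-1})^{M+K}\frac{dt}{t}\ne0$ (the parity of $M+K$ makes this integral agree on both sectors). For $v\in L^2\cap L^{p'}$, pairing in $L^2$ and using $((Q_t^B)^K)^\ast=(Q_t^{\Pi_B^\ast})^K$ (which follows from $((I+it\Pi_B)^{-1})^\ast=(I-it\Pi_B^\ast)^{-1}$), one gets
$$\langle u,v\rangle=\frac1c\int_0^\infty\int_{\R^n}(Q_t^B)^Mu(y)\,\overline{(Q_t^{\Pi_B^\ast})^Kv(y)}\,\frac{dy\,dt}{t},$$
which by the tent-space pairing estimate $|\langle F,G\rangle|\lesssim\|F\|_{T^{p,2}}\|G\|_{T^{p',2}}$ of \cite{cms} is $\lesssim\|((Q_t^B)^Mu)\|_{T^{p,2}}\,\|((Q_t^{\Pi_B^\ast})^Kv)\|_{T^{p',2}}$. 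Now $\Pi_B^\ast=\Gamma^\ast+B_2^\ast\Gamma B_1^\ast$ is again a perturbed Hodge-Dirac operator (Proposition \ref{prop:properties}(1)) and $p'\in[2,\infty)$, so the direct estimate just established, applied to $\Pi_B^\ast$ with exponent $p'$ and integer $K$, gives $\|((Q_t^{\Pi_B^\ast})^Kv)\|_{T^{p',2}}\lesssim\|v\|_{p'}$. Taking the supremum over $v\in L^2\cap L^{p'}$ with $\|v\|_{p'}\le1$ (norming for $L^p$ since $p>1$) yields $\|u\|_p\lesssim\|((Q_t^B)^Mu)\|_{T^{p,2}}$.

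\textbf{Main obstacle.} The delicate point is securing $L^2$-$L^2$ off-diagonal bounds of \emph{every} order for $(Q_t^B)^M$ for \emph{all} $M\ge1$: Lemma \ref{lem:psiOD} applied to $z\mapsto(z(1+z^2)^{-1})^M$ would only give order $M-1$, which is too weak when $M$ is small compared to $n$; this is why one expands $Q_t^B$ into resolvents and uses stability of off-diagonal bounds under composition. One should also observe that there is no circularity, since the direct ($p\ge2$) part rests only on Theorem \ref{thm:p=2}, Lemma \ref{lem:ODcomp}, and interpolation, so it may legitimately be invoked for $\Pi_B^\ast$ in the duality step.
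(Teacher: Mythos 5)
Your proposal is correct and follows essentially the same route as the paper: the $p=2$ case from Theorem \ref{thm:p=2}, the $T^{\infty,2}$ Carleson-measure bound via the Fefferman--Stein splitting into a local piece and dyadic annuli controlled by $L^2$-$L^2$ off-diagonal bounds of order $>\frac{n}{2}$, complex interpolation for $2<p<\infty$, and duality through the Calder\'on reproducing formula and the tent-space pairing, applying the direct estimate to $\Pi_B^\ast$ (a perturbed Hodge-Dirac operator by Proposition \ref{prop:properties}). Your explicit justification of the off-diagonal bounds for $(Q_t^B)^M$ by expanding into resolvent products, and your use of an auxiliary even total exponent $M+K$ in the reproducing formula, are harmless variants of what the paper does implicitly (the paper simply pairs $M$ factors against $M$ factors).
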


\begin{proof}
The result for $p=2$ holds by Theorem \ref{thm:p=2}.
We show that $u \mapsto (Q_t^B)^{M}u$ maps $L^\infty$ to $T^{\infty,2}$. The claim for $p \in (2,\infty)$ then follows by interpolation. 
The argument goes back to Fefferman and Stein \cite{FeffermanStein}, and was used in a similar context in e.g. \cite[Section 3.2]{AHM}.
Fix a cube $Q$ in $\R^n$ and split $u \in L^{\infty}(\R^n;\C^N)$ into $u=u \Eins_{4Q} + u \Eins_{(4Q)^c}$.  
Recall $S_{j}(Q) = 2^{j+1}Q \backslash 2^{j}Q$ for all $j \geq 2$.
Theorem \ref{thm:p=2} gives
\[
	(\frac{1}{|Q|} \int_0^{l(Q)} \int_Q |(Q_t^B)^{M} \Eins_{4Q} u(x)|^2 \,\frac{dxdt}{t})^{\frac{1}{2}}
		\lesssim |Q|^{-\frac{1}{2}} \|\Eins_{4Q}u\|_{2} \lesssim \|u\|_{\infty}.
\] 
On the other hand, $L^2$-$L^2$ off-diagonal bounds for $(Q_t^B)^{M}$ of order $N'>\frac{n}{2}$ yield
\begin{align*}
	 & (\frac{1}{|Q|} \int_0^{l(Q)} \int_Q |(Q_t^B)^{M} \Eins_{(4Q)^c} u(x)|^2 \,\frac{dxdt}{t})^{\frac{1}{2}}\\
	  & \qquad \lesssim \sum_{j=2}^{\infty} (\frac{1}{|Q|} \int_0^{l(Q)} \int_Q |(Q_t^B)^{M} \Eins_{S_j(Q)} u(x)|^2 \,\frac{dxdt}{t})^{\frac{1}{2}}\\
	 & \qquad \lesssim \sum_{j=2}^{\infty} 2^{-jN'} (\frac{1}{|Q|} \int_0^{l(Q)} (\frac{t}{l(Q)})^{2N'} \|\Eins_{2^{j+1}Q}u\|_{2}^2 \,\frac{dt}{t})^{\frac{1}{2}} \lesssim \|u\|_{\infty}.
\end{align*}
Consider now $p \in (1,2)$. Let $u \in \overline{\calR_2(\Pi_B)}\cap L^p(\R^n;\C^N)$ and $v \in L^{p'}(\R^n;\C^N)\cap L^2(\R^n;\C^N)$. 
We apply the above result to $\Pi_B^\ast$ in $L^{p'}(\R^n;\C^N)$, noting that $\Pi_B^\ast$ Hodge decomposes $L^{q}$ for all $q \in ((p^H)',(p_H)')$ by Proposition \ref{prop:properties}.
By Calder\'on reproducing formula, tent space duality and the argument above, we have that
\begin{align*}
& |\int \limits _{\R^{n}} u(x).v(x)dx| \lesssim \int \limits _{\R^{n}} \int \limits _{0} ^{\infty} |(Q_t^B)^{M}u(x)||((Q_t^B)^\ast)^{M}v(x)| \frac{dt}{t}dx
\\& \qquad \lesssim  \|(t,x)\mapsto (Q_t^B)^{M} u(x)\|_{T^{p,2}} \|(t,x)\mapsto ((Q_t^B)^\ast)^{M} v(x)\|_{T^{p',2}}\\
& \qquad \lesssim  \|(t,x)\mapsto (Q_t^B)^{M} u(x)\|_{T^{p,2}}  \|v\|_{p'}.
\end{align*}
This gives the assertion.
\end{proof}

\begin{Remark}\label{CM}
Note that for the proof of \eqref{eq:reverse}, we only use that 
$((Q_t^B)^M)_{t>0}$ satisfies $L^2$-$L^2$ off-diagonal bounds of order $N'>\frac{n}{2}$, and defines a bounded mapping from $L^2$ to $T^{2,2}$. In particular, we do not use any assumptions on $\Pi_B$ in $L^p$ for $p \neq 2$. The proof gives a way to define a bounded extension from $L^p$ to $T^{p,2}$ of this mapping. In the case $p=\infty$, the above result shows that for every $u \in L^\infty(\R^n;\C^N)$, $|(Q_t^B)^M u(x)|^2\,\frac{dx dt}{t}$ is a Carleson measure. We will make use of this fact in Proposition \ref{Prop:Carleson-part} below. 
\end{Remark}

We next show that Corollaries \ref{cor:hardy} and \ref{cor:fc} follow from Theorem \ref{thm:main} and Proposition \ref{lem:reverse}.

\begin{proof}[Proof of Corollary \ref{cor:hardy}]
By Theorem \ref{thm:hardy}, it suffices to show
\begin{align*}
	\|(t,x) \mapsto (Q_t^B)^Mu(x)\|_{T^{p,2}} & \approx \|u\|_{p}  ,
\end{align*}
and the corresponding equivalence for $(\underline{Q_t^B})^M$ in case (1), for $M=10n$ and $u$ as given in (1), (2) or (3). First suppose $p \in (\max\{1,(p_H)_\ast\},2]$. Combining Theorem \ref{thm:main} and Proposition \ref{lem:reverse} gives the equivalence for $(Q_t^B)^M$ and all $u \in \overline{\calR_{2}(\Gamma)}\cap L^p(\R^n;\C^N)$. The same reasoning applies to $(\underline{Q_t^B})^M$ and $u  \in  \overline{\calR_{2}(\Gamma^{*})}\cap L^p(\R^n;\C^N)$. 
Now note that $\calN_p(\Gamma^\ast) \cap L^2(\R^n;\C^N) \subseteq \calN_2(\Gamma^\ast)$, and the same holds with $p$ and $2$ interchanged. 
Using the Hodge decomposition for the unperturbed operator $\Pi$, we therefore have 
$$
	L^p(\R^n;\C^N) \cap L^2(\R^n;\C^N)
	= [ \calN_p(\Gamma^\ast) \cap \calN_2(\Gamma^\ast)]
	\oplus [\overline{\calR_p(\Gamma)} \cap \overline{\calR_2(\Gamma)}].
$$
Since this space is dense in $L^p(\R^n;\C^N)$, the space $\overline{\calR_2(\Gamma)} \cap L^p(\R^n;\C^N)$ is dense in $\overline{\calR_p(\Gamma)}$. 
This gives the above equivalence on $\overline{\calR_p(\Gamma)}$, and, similarly, for $(\underline{Q_t^B})^M$ on $\overline{\calR_p(\Gamma^\ast)}$.
As stated before Definition \ref{def:Hodge-dec}, we have $\overline{\calR_p(\Gamma_B^\ast)}
=B_1\overline{\calR_p(\Gamma^\ast)}$ under \eqref{PiB3}.
Using that $M$ is even, the identity \eqref{eq:swap-bar}, $\|B_1\|_{\infty}<\infty$, and that \eqref{PiB3} holds by assumption, we therefore deduce from the above that, for $u= B_{1}\Gamma^{*} B_{2}v \in  \overline{\calR_{p}(\Gamma^{*}_{B})}$,
\begin{align*}
	&\|(t,x) \mapsto (Q_t^B)^M u(x)\|_{T^{p,2}} 	
	=\|(t,x) \mapsto (Q_t^B)^M B_1\Gamma^{*} B_2 v(x)\|_{T^{p,2}} \\
	& \qquad =\|(t,x) \mapsto B_1(\underline{Q_t^B})^M \Gamma^{*} B_2 v(x)\|_{T^{p,2}} 
	\lesssim \|(t,x) \mapsto (\underline{Q_t^B})^M \Gamma^{*} B_2 v(x)\|_{T^{p,2}} \\
	& \qquad \lesssim \|\Gamma^{*} B_2 v\|_{p} 
	\lesssim \|u\|_{p}.
\end{align*}
This gives (2).
In the case $p \in (p_H,2]$, $\Pi_B$ Hodge decomposes $L^p$. This yields the result on $\overline{\calR_p(\Pi_B)}$. 
The case $p \in [2,p^H)$ follows by duality, cf. the proof of Corollary \ref{cor:fc}.
\end{proof}

\begin{proof}[Proof of Corollary \ref{cor:fc}]
First suppose $p \in (\max\{1,(p_H)_\ast\},2]$. Let $M=10n$, and $\mu \in (\omega,\frac{\pi}{2})$. Let $f \in \Psi(S_\mu^o)$ and 
$u \in \overline{\calR_{p}(\Gamma)}$. Using Corollary \ref{cor:hardy} and Theorem \ref{thm:hardy}, we have that
\begin{align*}
\|f(\Pi_{B})u\|_{p} 
&\lesssim \|(t,x) \mapsto (Q_{t}^{B})^{M}f(\Pi_{B})u(x)\|_{T^{p,2}} \\
& \lesssim \|f\|_{\infty} \|(t,x) \mapsto (Q_{t}^{B})^{M}u(x)\|_{T^{p,2}}
\lesssim \|f\|_{\infty} \|u\|_{p}.
\end{align*}
The same reasoning applies to $u  \in  \overline{\calR_{p}(\Gamma^{*}_B)}$, assuming \eqref{PiB3}.
 Now let $p \in (p_H,2]$. Since $\Pi_{B}$ Hodge decomposes $L^p$, we have that, for all $f \in \Psi(S_\mu^o)$,
 $$
 \|f(\Pi_{B})u\|_{p} \lesssim \|f\|_{\infty} \|u\|_{p} \quad \forall u \in L^p(\R^n;\C^N).
 $$
This implies that $\Pi_{B}$ is $\omega$-bisectorial and has a bounded $H^{\infty}$ functional calculus in $L^p$.
 Finally, we consider the case $p \in [2,p^H)$. We apply the above result to $\Pi_B^\ast$, which Hodge decomposes $L^q$ for all $q \in ((p^H)',(p_H)')$ by Proposition \ref{prop:properties}. Hence, $\Pi_B^\ast$ has a bounded $H^\infty$ functional calculus in $L^q$ for all $q \in ((p^H)',2]$. By duality, $\Pi_B$ has a bounded $H^\infty$ functional calculus in $L^p$ for all $p \in [2,p^H)$.
\end{proof}

The first conical square function estimate is an $L^p$ version of the low frequency estimate in the main result of \cite{AKM}, Theorem 2.7 (and hence captures the harmonic analytic part of the proof of the Kato square root problem). The separation into low and high frequency is done via the operators ${P_{t}}^{\tilde{N}}$ and $I-{P_{t}}^{\tilde{N}}$ where $\tilde{N}$ is a large natural number. Throughout the paper we fix $\tilde{N}=10n$.

\begin{Theorem}
\label{thm:low}
Suppose $\Pi_B$ is a perturbed Hodge-Dirac operator. Suppose $M \in \N$ and $p \in (1,\infty)$. Then
$$
\|(t,x) \mapsto (Q_{t}^{B})^M {P_{t}}^{\tilde{N}}u(x)\|_{T^{p,2}} \leq C_p \|u\|_{p} \quad \forall u \in \overline{\calR_p(\Pi)}.
$$
\end{Theorem}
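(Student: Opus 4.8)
The plan is to adapt the principal part approximation of \cite{AKM} to the conical $L^p$ setting, using no $L^p$ information about $\Pi_B$ beyond the $L^2$ off-diagonal bounds and $L^2$ quadratic estimate of Theorem \ref{thm:p=2}. By the Hodge decomposition of the unperturbed operator (Proposition \ref{prop:pi}(3)) and a density argument as in the proof of Corollary \ref{cor:hardy}, it suffices to prove the estimate for $u \in \overline{\calR_2(\Pi)} \cap L^p(\R^n;\C^N)$, so that $P_t^{\tilde N}u \in L^2$ and $(Q_t^B)^M$ may be applied to it. Write $\Theta_t := (Q_t^B)^M$ and $v_t := P_t^{\tilde N}u = (I+t^2\Pi^2)^{-\tilde N}u$. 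By Theorem \ref{thm:p=2}(2), $\{\Theta_t\}$ has $L^2$-$L^2$ off-diagonal bounds of every order, and by Lemma \ref{lem:ODcomp}(5) each $\Theta_t$ extends to a map $L^\infty \to L^2_{\loc}$; this lets us define the principal part $\gamma_t(x) \in \mathscr{L}(\C^N)$ by $\gamma_t(x)w := \Theta_t(\mathbf{1}\otimes w)(x)$, with $\sup_{t,x}\|\gamma_t(x)\| \lesssim 1$. Writing $E_t$ for the dyadic averaging operator at scale $t$, the estimate splits as
\[
	\Theta_t v_t = \bigl(\Theta_t - \gamma_t E_t\bigr)v_t + \gamma_t E_t v_t =: R_t u + S_t u ,
\]
and it suffices to bound the $T^{p,2}$ norm of each piece by $C_p\|u\|_p$.

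\emph{The error term $R_t u$.} For fixed $t$ and a dyadic cube $Q$ of side length $\approx t$, one has for $x \in Q$ that $\gamma_t(x)E_t v_t(x) = \Theta_t(\mathbf{1}\otimes(v_t)_Q)(x)$, hence $R_t u = \Theta_t\bigl(v_t - \mathbf{1}\otimes(v_t)_Q\bigr)$ on $Q$. Splitting $v_t - \mathbf{1}\otimes(v_t)_Q$ over the dyadic annuli $S_j(Q)$, applying the off-diagonal bounds of a large order to each annular piece, and using the Poincaré inequality on $2^{j}Q$ together with the telescoping bound $|(v_t)_{2^jQ}-(v_t)_Q| \lesssim j\,2^{j}t\,|2^jQ|^{-1/2}\|\nabla v_t\|_{L^2(2^jQ)}$, one obtains
\[
	\|R_t u\|_{L^2(Q)}^2 \lesssim |Q| \sum_{j\geq 1} 2^{-j\eps_0} \fint_{2^{j+1}Q} |t\nabla v_t|^2 ,
\]
with $\eps_0$ as large as desired. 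Summing over such $Q$, integrating over cones, and using Lemma \ref{lem:angle} to absorb the resulting aperture factors against $2^{-j\eps_0}$, gives $\|(t,x)\mapsto R_t u(x)\|_{T^{p,2}} \lesssim \|(t,x)\mapsto t\nabla v_t(x)\|_{T^{p,2}}$. It then remains to prove the \emph{unperturbed} bound $\|(t,x)\mapsto t\nabla(I+t^2\Pi^2)^{-\tilde N}u(x)\|_{T^{p,2}} \lesssim \|u\|_p$ for $u \in \overline{\calR_p(\Pi)}$: since $\Pi$ is a Fourier multiplier satisfying \eqref{Pi1}, the family $\{t\nabla(I+t^2\Pi^2)^{-\tilde N}\mathbb{P}_{\overline{\calR(\Pi)}}\}$ has $L^2$-$L^2$ off-diagonal bounds of every order (the symbol decays rapidly for $t|\xi|$ large on $\calR(\hat\Pi(\xi))$, by \eqref{Pi1} and $\tilde N=10n$) and is bounded $L^2 \to T^{2,2}$ on $\overline{\calR_2(\Pi)}$, so the $L^p$ estimate follows from the theory of the constant-coefficient operator in Proposition \ref{lem:hardyPi} (for $p \geq 2$, via the argument of Proposition \ref{lem:reverse} one first obtains the corresponding $L^\infty \to T^{\infty,2}$ bound and then interpolates, using Proposition \ref{prop:pi}(4)).

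\emph{The principal part $S_t u$.} The key fact is that $\gamma_t$ generates a Carleson measure: for $w \in \C^N$ with $|w|\leq 1$, the function $(t,x)\mapsto \gamma_t(x)w = \Theta_t(\mathbf{1}\otimes w)(x)$ is $\Theta_t$ applied to an $L^\infty$ function, so $|\gamma_t(x)w|^2\,\frac{dxdt}{t}$ is a Carleson measure by Proposition \ref{lem:reverse} (cf. Remark \ref{CM}), uniformly in $w$. This is the only point at which the harmonic analysis underlying the $L^2$ result of \cite{AKM} enters, through Theorem \ref{thm:p=2} and hence Proposition \ref{lem:reverse}. Combining this with the $L^p$ version of Carleson's embedding theorem for tent spaces — which controls $\|(t,x)\mapsto\gamma_t(x)E_t F(t,\cdot)(x)\|_{T^{p,2}}$ by the Carleson norm of $\gamma$ times $\|\mathcal{N}_*(E_\cdot F)\|_{L^p}$, with $\mathcal{N}_*$ the nontangential maximal function — applied with $F = v_\cdot$, and using that $\{P_s^{\tilde N}\}$ has $L^p$-$L^p$ off-diagonal bounds of every order (Proposition \ref{lem:hardyPi}(2)) so that $\mathcal{N}_*(E_\cdot v_\cdot) \lesssim \mathcal{M}u$ with $\|\mathcal{M}u\|_p \lesssim \|u\|_p$ for $p>1$, yields $\|(t,x)\mapsto S_t u(x)\|_{T^{p,2}} \lesssim \|u\|_p$.

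The main obstacle is the principal part term, and more precisely the two technical inputs it and the error term require for \emph{all} $p \in (1,\infty)$: the $L^p$ tent-space form of Carleson's inequality, with the correct control of $\mathcal{N}_*(E_t v_t)$ uniform in $t$; and the unperturbed conical square function bound $\|t\nabla P_t^{\tilde N}u\|_{T^{p,2}} \lesssim \|u\|_p$, which for $p \geq 2$ must be routed through an $L^\infty \to T^{\infty,2}$ Carleson estimate rather than through Theorem \ref{thm:hardy}. The error-term reduction itself is routine once Lemma \ref{lem:angle} and these inputs are available. The notable feature is that neither piece uses any mapping property of $\Pi_B$ on $L^p$ for $p \neq 2$; all the $\Pi_B$-information is the $L^2$ content of Theorem \ref{thm:p=2}, exactly as highlighted in the introduction.
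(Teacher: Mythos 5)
Your proposal is correct and follows essentially the same route as the paper: the principal part approximation $\Theta_t v_t=(\Theta_t-\gamma_tA_t)v_t+\gamma_tA_tv_t$, with the error term handled by $L^2$-$L^2$ off-diagonal bounds, Poincar\'e, and change of aperture (Lemma \ref{lem:angle}) reducing to the unperturbed bound $\|(t,x)\mapsto t\nabla P_t^{\tilde N}u(x)\|_{T^{p,2}}\lesssim\|u\|_p$, and the principal part handled by the $T^{p,\infty}\cdot T^{\infty,2}\to T^{p,2}$ factorisation (your ``Carleson embedding'') together with the nontangential maximal estimate and the $T^{\infty,2}$ bound on $\gamma_t$ from Proposition \ref{lem:reverse}. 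The only cosmetic differences are that the paper first reduces $(Q_t^B)^M$ to $Q_t^B$ via Lemma \ref{tent-boundedness}, and makes the unperturbed gradient bound precise by writing $u=\Pi v$ and $t\partial_jP_t^{\tilde N}u=\tilde Q_t(\partial_jv)$ with Proposition \ref{prop:pi}(6), rather than treating $t\nabla P_t^{\tilde N}\mathbb{P}_{\overline{\calR(\Pi)}}$ as a single family.
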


This result is proven in Section \ref{sec:low}.\\

The second conical square function estimate is an $L^p$ version of the high frequency estimate \cite[Proposition 4.8, part (i)]{AKM}.  Note that this operator theoretic part of the proof in the case $p=2$, is the part that does not necessarily hold for all $p \in (1,\infty)$.

\begin{Theorem}
\label{thm:high}
Suppose $\Pi_B$ is a perturbed Hodge-Dirac operator.
Suppose $M=10n$ and $p \in (\max\{1,(p_H)_{*}\},2]$. Then
$$
\|(t,x) \mapsto (Q_{t}^{B})^{M}(I-{P_{t}}^{\tilde{N}})u(x)\|_{T^{p,2}} \leq C_p \|u\|_{p} \quad \forall u \in \overline{\calR_{p}(\Gamma)}.
$$
\end{Theorem}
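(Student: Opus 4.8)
The plan is to realise $(t,x)\mapsto (Q_t^B)^M(I-P_t^{\tilde N})u(x)$ as a singular integral operator on the tent space $T^{p,2}$ applied to the conical square function of $u$ associated with the \emph{unperturbed} operator $\Pi$, and then to invoke the tent-space operator theory of Sections~\ref{sec:tech1} and~\ref{sec:high}. Fix $M=10n$ and let $u\in\overline{\calR_2(\Gamma)}\cap\overline{\calR_p(\Gamma)}$, which is dense in $\overline{\calR_p(\Gamma)}$ by the Hodge decomposition of the unperturbed $\Pi$, so it suffices to treat such $u$. Since $\Pi$ has a bounded $H^\infty$ calculus in $L^2$, the Calder\'on reproducing formula gives $u=c_M\int_0^\infty(Q_s)^{2M}u\,\tfrac{ds}{s}$, hence
\[
(Q_t^B)^M(I-P_t^{\tilde N})u(x)=c_M\int_0^\infty U_{t,s}\bigl((Q_s)^Mu\bigr)(x)\,\tfrac{ds}{s},\qquad U_{t,s}:=(Q_t^B)^M(I-P_t^{\tilde N})(Q_s)^M\in\calL(L^2(\R^n;\C^N)).
\]
By Proposition~\ref{lem:hardyPi}(3), $F:=\bigl((s,x)\mapsto (Q_s)^Mu(x)\bigr)$ lies in $T^{p,2}$ with $\|F\|_{T^{p,2}}\approx\|u\|_p$, so the theorem reduces to showing that $F\mapsto\bigl((t,x)\mapsto\int_0^\infty U_{t,s}F(s,\cdot)(x)\,\tfrac{ds}{s}\bigr)$ is bounded on $T^{p,2}$ for $p\in(\max\{1,(p_H)_\ast\},2]$. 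Crucially, $I-P_t^{\tilde N}$ and $(Q_s)^M$ are functions of the unperturbed $\Pi$ only, so the perturbation sits entirely in the single factor $(Q_t^B)^M$.

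Boundedness of this integral operator on $T^{p,2}$ will follow from the Schur-type extrapolation of Section~\ref{sec:tech1} (Corollary~\ref{Cor:SIO-tent}), together with Lemma~\ref{tent-boundedness}, once two families of estimates on the $\calL(L^2)$-valued kernel $U_{t,s}$ are established. \emph{Spatial off-diagonal decay.} The resolvents $\{R_t^B\}$, hence $\{(Q_t^B)^M\}$, have $L^2$--$L^2$ off-diagonal bounds of every order by Theorem~\ref{thm:p=2}, and $\{I-P_t^{\tilde N}\}$, $\{(Q_s)^M\}$ do as well (Lemma~\ref{lem:psiOD} and Proposition~\ref{prop:pi}); restricting the input to $\overline{\calR_p(\Gamma)}$ and writing $v=\Gamma S_\Gamma v$ with $S_\Gamma v\in\dot W^{1,p}$ and $\|\nabla S_\Gamma v\|_p\lesssim\|v\|_p$ (Proposition~\ref{prop:pi}(7)), one upgrades these to $L^p$--$L^2$ off-diagonal bounds of arbitrary order for $U_{t,s}$ with the correct scaling $|t|^{-n(\frac{1}{p}-\frac{1}{2})}$; this is the content of Section~\ref{sec:high2}, and it is here that the lower restriction $p>(p_H)_\ast$ is forced. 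Indeed, for $p\in(\max\{1,2_\ast\},2]$ the Sobolev embedding $\dot W^{1,p}\hookrightarrow L^2$ on balls makes this elementary and free of any $L^p$ hypothesis on $\Pi_B$ (the short Section~\ref{sec:high1}), while reaching down to one Sobolev exponent below the Hodge range requires the genuine $L^p$ behaviour of $\Pi_B$, and no further. Composition of off-diagonal bounds throughout is handled by Lemma~\ref{lem:ODcomp}.

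\emph{Quasi-orthogonality in $(t,s)$.} One needs $\|\,|t|^{n(\frac{1}{p}-\frac{1}{2})}U_{t,s}\|_{\calL(L^p,L^2)}\lesssim\min(s/t,t/s)^{\delta}$ on $\overline{\calR_p(\Gamma)}$ for some $\delta>0$. For $t\le s$ this is easy: the factorization $I-P_t^{\tilde N}=(t\Pi)^2\Phi(t\Pi)$ with $\Phi\in H^\infty(S_\mu^o)$ gives, by the bounded $H^\infty$ calculus of the unperturbed $\Pi$, that $\|(I-P_t^{\tilde N})(Q_s)^M\|_{\calL(L^2)}\lesssim(t/s)^2$, while $\|(Q_t^B)^M\|_{\calL(L^2)}\lesssim1$. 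For $s\le t$ no such gain is visible at the level of norms — the symbol of $(I-P_t^{\tilde N})(Q_s)^M$ has size $\approx1$, peaking at frequencies $\approx1/s\gg1/t$ — and one must genuinely exploit that $(Q_t^B)^M=(t\Pi_B)^M(I+(t\Pi_B)^2)^{-M}$ damps data localized at frequencies $\gg1/t$. I would prove this by a near-orthogonality argument in the spirit of \cite{AKM}, Section~5, converting the frequency separation into quantitative decay $(s/t)^{\delta}$ through the spatial off-diagonal bounds of $R_t^B$ and $(Q_s)^M$ together with the nilpotent/Hodge structure (on $\overline{\calR(\Gamma)}$ one has $\Pi_B^2=\Gamma\Gamma_B^\ast$, and identities such as $t\Pi_BQ_t^B=I-P_t^B$ allow one to relate powers of $Q_t^B$ to bounded functions of $\Pi_B$ and pair scales against the factor $s\Pi$ carried by $(Q_s)^M$). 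The same off-diagonal bounds feed Section~\ref{sec:tech2}, which dominates a conical square function by a vertical one attached to the functional calculus of $\Pi_B$, providing a complementary viewpoint.

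\emph{Main obstacle.} The hard part is the $s\le t$ quasi-orthogonality: one must compare the perturbed functional calculus of $\Pi_B$ with the unperturbed calculus of $\Pi$ \emph{without} having any $H^\infty$ calculus, or even bisectoriality, for $\Pi_B$ in $L^p$. This is why the argument must proceed through the $L^p$--$L^2$ off-diagonal bounds for the resolvents $R_t^B$ and the algebraic Hodge/nilpotence identities rather than through a clean commutation of operators; obtaining those $L^p$--$L^2$ bounds on $\overline{\calR_p(\Gamma)}$, and the Sobolev exponent lost in doing so, is what pins down the admissible range $p>(p_H)_\ast$, while the restriction to $p\le2$ reflects that the relevant reverse inequalities (Proposition~\ref{lem:reverse}) and the mapping of $(Q_s)^M$ into $T^{p,2}$ (Proposition~\ref{lem:hardyPi}(3)) are only available on that side of $2$.
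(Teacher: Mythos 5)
Your overall architecture matches the paper's: Calder\'on reproducing formula in $s$, reduction to an integral operator with operator-valued kernel $U_{t,s}$ acting on $T^{p,2}$, $L^p$--$L^2$ off-diagonal bounds obtained from the potential maps and Sobolev embeddings of Section \ref{sec:high2}, and a Schur-type extrapolation on tent spaces, with the range $p>(p_H)_\ast$ emerging from a bootstrap on Sobolev exponents. One remark and one genuine gap. The remark: the $s\le t$ quasi-orthogonality you rightly identify as the crux is obtained in the paper by pure algebra rather than by a near-orthogonality argument --- one factor $s\Pi$ is peeled off $(Q_s)^{2\tilde N}$ and, using the nilpotence of $\Gamma$ and the fact that $\Gamma$ commutes with the unperturbed resolvents and with $\Pi_B^2$, rewritten as $\tfrac{s}{t}\cdot t\Gamma$ sitting against $(Q_t^B)^{M}$; since $\{t\Gamma Q_t^B\}$ has $L^2$--$L^2$ off-diagonal bounds of every order (Lemma \ref{Lemma:off-diag-est}), Lemma \ref{tent-boundedness} absorbs that factor and leaves the kernel $K^+_1(t,s)=\tfrac{s}{t}(Q_t^B)^{M-2}(I-P_t^{\tilde N})P_sQ_s^{\tilde N-1}$. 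So the gain $(s/t)^{\delta}$ you hope for does exist ``at the level of norms'', with $\delta=1$, once the right factor of $\Gamma$ is extracted; this is simpler and more concrete than what you sketch.

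The gap: Corollary \ref{Cor:SIO-tent} is an interpolation between $T^{1,2}$ (where the size and off-diagonal bounds you establish do suffice, via Proposition \ref{prop:SIO-tent}) and $T^{p,2}$, and its hypothesis $\sup_{\gamma}\|T_{K^+_{i\gamma}}\|_{\calL(T^{p,2})}<\infty$ is \emph{not} a consequence of size and off-diagonal bounds. If the only upper endpoint available is $T^{2,2}$ (Schur on $L^2$), the explicit gain $\tfrac{s}{t}$ buys exactly the range $q>2_\ast$ of Section \ref{sec:high1} and nothing more. To descend below $2_\ast$ one must verify the $T^{p,2}$ hypothesis for $p<2$, and this is precisely the role of Theorem \ref{thm:conicalVSvertical}: it dominates the conical square function of $T_{K^+_{\varepsilon+i\gamma}}F$ by a vertical one, which is then controlled via the R-boundedness in $L^r$ of the unperturbed family $\{(I-P_t^{\tilde N})P_sQ_s^{\tilde N-1}\}$ and the Kalton--Weis multiplier theorem (Lemma \ref{lem:vertSchur}). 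You describe Section \ref{sec:tech2} as ``a complementary viewpoint'', but it is the load-bearing step; without it, or a substitute, your argument stalls at $2_\ast$. Relatedly, you should make the iteration explicit: the $L^p$ hypotheses for $p<2$ (bisectoriality, resolvent and $\dot W^{1,p}$--$L^2$ bounds) are not available a priori, but are regenerated from the conclusion at the previous Sobolev exponent through the Hardy-space identification, as in Corollary \ref{cor:fullHF} and Proposition \ref{prop:induction}.
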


This result is proven in Sections \ref{sec:high1}, \ref{sec:high2} and \ref{sec:high}. \\

We now show how to prove Theorem \ref{thm:main} from Theorems \ref{thm:low} and \ref{thm:high}. Notice that the large (and somewhat arbitrary) value of $M$ appearing in Theorem \ref{thm:high} is appropriately reduced as part of this proof.

\begin{proof}[Proof of Theorem \ref{thm:main}]
For $p \in (2,\infty)$, the claim has been shown in Proposition \ref{lem:reverse}. From now on, suppose $p \in (\max\{1,(p_H)_\ast\},2]$. 
Without loss of generality, we can assume that $M=10n$. Indeed, the result for $M>\frac{n}{2}$ will then follow by Theorem \ref{thm:hardy}.
Combining Theorem \ref{thm:low} and Theorem \ref{thm:high}, we have that
$$
\|(t,x) \mapsto (Q_{t}^{B})^{M}u(x)\|_{T^{p,2}} \lesssim \|u\|_{p} \quad \forall u \in \overline{\calR_{p}(\Gamma)}.
$$
Applying the same results to $\underline{\Pi_{B}}$ gives
$$
\|(t,x) \mapsto (\underline{Q_{t}^{B}})^{M}u(x)\|_{T^{p,2}} \lesssim \|u\|_{p}\quad \forall  u \in \overline{\calR_{p}(\Gamma^\ast)}.$$
\end{proof}

We conclude this section by showing that in certain situations the results can be improved when restricted to subspaces of the form $L^p(\R^{n};W)$, where $W$ is a subspace of $\C^{N}$. The proof given depends on  Corollary \ref{cor:fullHFW},  as well as the preceding material.

\begin{Theorem}
\label{cor:restW}
Suppose $\Pi_B$ is a perturbed Hodge-Dirac operator.
Let $W$ be a subspace of $\C^{N}$ that is stable under $\widehat{\Gamma^{*}}(\xi)\widehat{\Gamma}(\xi)$ and $\widehat{\Gamma}(\xi)\widehat{\Gamma^{*}}(\xi)$ for all $\xi \in \R^{n}$. \\ 
(1) Let $p \in (\max\{1,r_{*}\},2]$ for some $r \in (1,2)$, and $M\in 2\N$ with $M \geq 10n$.
 Suppose further that $(\Pi_{B}(r))$ holds,
$\{t\Gamma^{*}_{B} P_{t}^{B}\mathbb{P}_{W} \;;\; t \in \R^{*}\}$ and $\{t\Gamma P_{t}^{B}\mathbb{P}_{W} \;;\; t \in \R^{*}\}$ are uniformly bounded in $\calL(L^{r})$, and that
 $\{(R_t^B)^{\frac{M}{2}-2} \Gamma \mathbb{P}_{W} \;;\; t \in \R^{*}\}$ and $\{(R_t^B)^{\frac{M}{2}-2} B_{1}\Gamma^{*} \mathbb{P}_{W}\;;\; t \in \R^{*}\}$ have $\dot{W}^{1,r}$-$L^2$ off-diagonal bounds of every order on balls, where $\mathbb{P}_{W}$ denotes the projection from $L^r(\R^{n};\C^{N})$ onto $L^r(\R^{n};W)$.

Then, for $\mu \in (\omega,\frac{\pi}{2})$, we have
\begin{equation}\tag{i}
 \|f(\Pi_{B})\Gamma u\|_{p} \lesssim \|f\|_{\infty}\|\Gamma u\|_{p} \quad \forall f \in H^\infty(S^o_{\mu}) \quad  \forall u \in \calD_{p}(\Gamma)\cap L^{2}(\R^{n};W).
\end{equation} 
Moreover, $\overline{\calR_p(\Gamma|_{L^p(\R^n;W)})} \subseteq H^p_{\Pi_B}(\R^n;\C^N)$ with
\begin{equation}\tag{ii}
	\|v\|_{H^p_{\Pi_B}} \approx \|v\|_{p} \qquad \forall v \in \overline{\calR_p(\Gamma|_{L^p(\R^n;W)})}.
\end{equation}
(2) If $L^{2}(\R^{n};W) \subset \overline{\calR_{2}(\Gamma^{*}_{B})}$, $(p_{H})_{*} > 1$, and $(\Pi_{B}(r))$ holds for all $r \in ((p_H)_{*},2]$, then the hypotheses and conclusions of (1) hold for all $p \in (\max\{1,(p_{H})_{**}\},2]$. 
 In particular, 
\begin{equation}\tag{iii} \|({\Pi_B}^2)^{1/2}u\|_p \lesssim \|\Gamma u\|_p \quad  \forall u \in \calD_{p}(\Gamma)\cap L^{2}(\R^{n};W).
\end{equation}
\end{Theorem}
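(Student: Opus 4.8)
The plan is to reduce the statement to three earlier results: the low frequency estimate Theorem~\ref{thm:low} (valid for every $p\in(1,\infty)$ on $\overline{\calR_p(\Pi)}$, with no $L^p$-hypothesis on $\Pi_B$), the $W$-restricted high frequency estimate Corollary~\ref{cor:fullHFW} (which is where the hypotheses on $W$, on $(\Pi_B(r))$, and the off-diagonal bounds are actually used), and the Hardy space package consisting of Theorem~\ref{thm:hardy} together with the reverse estimate Proposition~\ref{lem:reverse}. Throughout I would use the structural fact that, because $W$ is stable under $\widehat{\Gamma^{*}}(\xi)\widehat{\Gamma}(\xi)$ and $\widehat{\Gamma}(\xi)\widehat{\Gamma^{*}}(\xi)$, the orthogonal Fourier-multiplier Hodge projections $\Pb_q,\Qb_q$ preserve $L^q(\R^n;W)$, so the unperturbed Hodge decomposition restricts to $L^q(\R^n;W)$ and the subspaces $\overline{\calR_q(\Gamma|_{L^q(\R^n;W)})}$ form compatible interpolation families in $q$; in particular the density argument from the proof of Corollary~\ref{cor:hardy} applies verbatim.

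\emph{Part (1).} Combining Theorem~\ref{thm:low} with Corollary~\ref{cor:fullHFW} and then extending by the density observation, I would obtain
\[
\|(t,x)\mapsto(Q_t^B)^M v(x)\|_{T^{p,2}}\lesssim\|v\|_p\qquad\forall\,v\in\overline{\calR_p(\Gamma|_{L^p(\R^n;W)})},
\]
for $p\in(\max\{1,r_{*}\},2]$. Adding the reverse estimate of Proposition~\ref{lem:reverse} (which needs only the $L^2$-$L^2$ off-diagonal bounds and $L^2\to T^{2,2}$ boundedness of $(Q_t^B)^M$) turns this into a two-sided equivalence, and since $\psi(z):=(z(1+z^2)^{-1})^M\in\Psi_M^M(S_\mu^o)\subset\Psi_{\eps}^{n/2+\eps}(S_\mu^o)$ for small $\eps>0$ (here $M\ge 10n>n/2$) is non-degenerate, Theorem~\ref{thm:hardy} identifies the tent-space norm with $\|\cdot\|_{H^p_{\Pi_B}}$; this is (ii). For (i), I would follow the proof of Corollary~\ref{cor:fc}: for $f\in\Psi(S_\mu^o)$ and $v=\Gamma u$ with $u\in\calD_p(\Gamma)\cap L^2(\R^n;W)$ (so $v\in\overline{\calR_p(\Gamma|_{L^p(\R^n;W)})}$ by density),
\[
\|f(\Pi_B)v\|_p\lesssim\|(Q_t^B)^Mf(\Pi_B)v\|_{T^{p,2}}\lesssim\|f\|_\infty\|(Q_t^B)^Mv\|_{T^{p,2}}\lesssim\|f\|_\infty\|v\|_p,
\]
using Proposition~\ref{lem:reverse}, then Theorem~\ref{thm:hardy}(2) with (ii), then (ii). For general $f\in H^\infty(S_\mu^o)$ I would invoke the usual convergence lemma (see \cite{CDMcY}): pick $\psi_n\in\Psi(S_\mu^o)$ uniformly bounded and converging locally uniformly to $f$, so that $\psi_n(\Pi_B)v\to f(\Pi_B)v$ in $L^2$, pass to an a.e.-convergent subsequence, and conclude by Fatou's lemma together with the uniform bound above.

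\emph{Part (2).} The hypothesis $L^2(\R^n;W)\subset\overline{\calR_2(\Gamma_B^{*})}$ is the engine. As $\Gamma_B^{*}$ is nilpotent, $\overline{\calR_2(\Gamma_B^{*})}\subset\calN_2(\Gamma_B^{*})$, so on $L^2(\R^n;W)$ one has $\Gamma_B^{*}=0$ and $\Pi_B=\Gamma$; moreover $\Pi_B^{2}=\Gamma\Gamma_B^{*}+\Gamma_B^{*}\Gamma$ preserves $\overline{\calR_2(\Gamma_B^{*})}$, hence so does $P_t^B$. Consequently, on $L^2(\R^n;W)$, $t\Gamma_B^{*}P_t^B\Pb_W=0$ and $t\Gamma P_t^B\Pb_W=t\Pi_BP_t^B\Pb_W=Q_t^B\Pb_W$, so the hypotheses of part (1) at exponent $r$ reduce to $(\Pi_B(r))$ (given for $r\in((p_H)_{*},2]$), uniform $L^r$-boundedness of $\{Q_t^B\Pb_W\}$, and the stated $\dot W^{1,r}$-$L^2$ off-diagonal bounds on balls. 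For $r\in(p_H,2]$ the latter two are standard: Corollary~\ref{cor:fc}(3) gives a bounded $H^\infty$ calculus of $\Pi_B$ in $L^r$ (so $\{Q_t^B\}$ is uniformly $L^r$-bounded), and the $L^r$-$L^2$ off-diagonal bounds for resolvent powers of $\Pi_B$ from Section~\ref{sec:high2}, composed via Lemma~\ref{lem:ODcomp}(4) with the support-preserving bounded maps $\Gamma,B_1\Gamma^{*}\colon\dot W^{1,r}_{\ovB}\to L^r$, give the off-diagonal bounds. Thus part (1) applies for every $r\in(p_H,2]$, yielding (i), (ii) for $p\in(\max\{1,(p_H)_{*}\},2]$. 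To descend below $p_H$ I would bootstrap: for $r\in((p_H)_{*},p_H]$, the conclusion (i) — now available \emph{at exponent $r$}, applied to $\underline{\Pi_B}$ and transferred through the swap identities \eqref{eq:swap-bar} relating $\Pi_B$ on $\overline{\calR(\Gamma_B^{*})}=B_1\overline{\calR(\Gamma^{*})}$ with $\underline{\Pi_B}$ on $\overline{\calR(\Gamma^{*})}$ — together with the identity $t\Gamma P_t^B\Pb_W=Q_t^B\Pb_W$ valid on $L^2(\R^n;W)$, supplies the uniform $L^r$-boundedness of $\{Q_t^B\Pb_W\}$, while the $\dot W^{1,r}$-$L^2$ off-diagonal bounds from Section~\ref{sec:high2} already hold on all of $((p_H)_{*},2]$ since they only require $(\Pi_B(r))$. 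Applying part (1) with $r\in((p_H)_{*},p_H]$ then extends (i), (ii) to $p\in(\max\{1,(p_H)_{**}\},2]$. Finally (iii) follows from (i): for $u\in\calD_p(\Gamma)\cap L^2(\R^n;W)$ we have $u\in\calN_2(\Gamma_B^{*})$, hence $u\in\calN_p(\Gamma_B^{*})$ with $\Pi_Bu=\Gamma u$, so $({\Pi_B}^2)^{1/2}u=\sgn(\Pi_B)(\Pi_Bu)=\sgn(\Pi_B)(\Gamma u)$, where $\sgn\in H^\infty(S_\mu^o)$ equals $\pm1$ on $S_{\mu\pm}^o$, and (i) with $f=\sgn$ gives the bound.

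\emph{Main obstacle.} The genuinely hard input is Corollary~\ref{cor:fullHFW}, the $W$-restricted high frequency estimate; once it is in hand, part (1) is pure assembly. The delicate point inside the present argument is the bootstrap in part (2): recovering the uniform $L^r$-boundedness hypothesis for $r<p_H$ from the conclusions already obtained at larger exponents, which requires careful bookkeeping of the restricted Hodge structure on $L^r(\R^n;W)$ and of the swap identities \eqref{eq:swap-bar}.
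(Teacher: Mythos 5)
Your part (1) is essentially the paper's own argument: Theorem \ref{thm:low} plus Corollary \ref{cor:fullHFW} give the conical square function estimate on $\overline{\calR_p(\Gamma|_{L^p(\R^n;W)})}$, and Proposition \ref{lem:reverse} together with Theorem \ref{thm:hardy} convert it into (i) and (ii); the passage from $\Psi$ to $H^\infty$ by the usual convergence lemma is also as in the paper. No issues there.

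Part (2) has a genuine gap. What (2) must deliver is the hypotheses of (1) for exponents $r$ all the way down to $(p_H)_*$, namely (a) the $\dot W^{1,r}$-$L^2$ off-diagonal bounds on balls for $\{(R_t^B)^{\frac{M}{2}-2}\Gamma\Pb_W\}$ and $\{(R_t^B)^{\frac{M}{2}-2}B_1\Gamma^*\Pb_W\}$, and (b) the uniform $L^r$-boundedness of $\{t\Gamma P_t^B\Pb_W\}=\{Q_t^B\Pb_W\}$. For (a) you assert these bounds ``already hold on all of $((p_H)_*,2]$ since they only require $(\Pi_B(r))$''; that is not what Section \ref{sec:high2} provides: Proposition \ref{prop:induction}(2) yields the $\dot W^{1,q}$-$L^2$ bounds only for $q\in(\max\{p_H,p_*\},2]$, i.e.\ strictly above $p_H$. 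For (b), your bootstrap invokes conclusion (i) at exponent $r$, but (i) controls $f(\Pi_B)$ on $\overline{\calR_r(\Gamma|_{L^r(\R^n;W)})}$, whereas $Q_t^B\Pb_W$ acts on $L^r(\R^n;W)\subset\overline{\calR_r(\Gamma_B^*)}$ --- the opposite Hodge component --- and the proposed transfer through $\underline{\Pi_B}$ and \eqref{eq:swap-bar} would require the theorem's hypotheses to hold for $\underline{\Pi_B}$ with some stable subspace related to $B_1^{-1}W$, which is neither assumed nor verified. The missing ingredient in both places is Lemma \ref{lem:Lp-LqOD}: since $L^{q_*}(\R^n;W)\cap L^2\subset\overline{\calR_{q_*}(\Gamma_B^*)}$ and $(\Pi_B(q_*))$ holds, \eqref{eq:uniformLq2} shows that a single resolvent maps $L^{q_*}(\R^n;W)$ boundedly (with the right scaling) into $L^q$ for some $q>p_H$; iterating with \eqref{eq:uniformLq}--\eqref{eq:uniformLq2} above $p_H$ and interpolating with the $L^2$-$L^2$ off-diagonal bounds (Lemma \ref{lem:ODcomp}) produces $L^r(\R^n;W)$-$L^2$ off-diagonal bounds for $(R_t^B)^{\frac{M}{2}-2}$, hence (a), and $L^r(\R^n;W)$-$L^{\tilde r}$ off-diagonal bounds for $R_t^B$, hence (b) via Lemma \ref{lem:ODcomp}(3) and $Q_t^B=\tfrac{1}{2i}(R_{-t}^B-R_t^B)$. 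With that substitution your part (2) closes in one step (no bootstrap below $p_H$ is needed); your observation that $\Gamma_B^*P_t^B\Pb_W=0$, making that hypothesis vacuous, is correct, and your derivation of (iii) matches the paper's.
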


\begin{proof}

 (1) (i) The hypotheses of Corollary \ref{cor:fullHFW} are satisfied by assumption.
We therefore obtain, for all $p \in (\max\{1,r_{*}\},2]$,
$$
\|(t,x) \mapsto (Q_{t}^{B})^{M}(I-{P_{t}}^{\tilde{N}})\Gamma v(x)\|_{T^{p,2}} \lesssim \|\Gamma v\|_{p} \qquad \forall v \in \calD_{p}(\Gamma)\cap L^{p}(\R^{n};W).$$
Combined with Theorem \ref{thm:low}, this gives
\begin{equation} \label{eq:rieszCharact}
\|(t,x) \mapsto (Q_{t}^{B})^{M}\Gamma v(x)\|_{T^{p,2}} \lesssim \|\Gamma v\|_{p} \qquad \forall v \in \calD_{p}(\Gamma)\cap L^{p}(\R^{n};W).
\end{equation}
Therefore we have, for all $f \in \Psi(S^o_{\mu})$, $v \in \calD_{p}(\Gamma)\cap L^{p}(\R^{n};W)$, that (i) holds:
\begin{equation*}
\begin{split}
\| f(\Pi_{B}) \Gamma v\|_{p} &\lesssim \|(t,x) \mapsto (Q_{t}^{B})^{M} f(\Pi_{B}) \Gamma v(x)\|_{T^{p,2}} 
\approx \|f(\Pi_{B}) \Gamma v\|_{H^{p}_{\Pi_{B}}}\\
&\lesssim \|\Gamma v\|_{H^{p}_{\Pi_{B}}} \approx \|(t,x) \mapsto (Q_{t}^{B})^{M}  \Gamma v(x)\|_{T^{p,2}} 
\lesssim \|\Gamma v\|_{p},
\end{split}
\end{equation*}
where we have used Proposition \ref{lem:reverse}, Theorem \ref{thm:hardy}, and \eqref{eq:rieszCharact}. The estimate holds for all $f\in H^\infty(S^o_\mu)$ on taking limits as usual.\\
 (ii) This follows from \eqref{eq:rieszCharact} and the reverse inequality shown in Proposition \ref{lem:reverse}.\\ 
(2) Let $q>p_{H}$ with $q_{*}>1$ and $r \in (q_{*},q]$. By  Lemma  \ref{lem:Lp-LqOD}, and the fact that $L^{2}(\R^{n};W) \subset \overline{\calR_{2}(\Gamma^{*}_{B})}$
by assumption, we have that
$$
\|t^{n(\frac{1}{q_*}-\frac{1}{2})}R_{t}^{B}u\|_{q} \lesssim \|u\|_{L^{q_*}(\R^{n};W)} \qquad \forall t>0 \quad \forall u \in L^{2}(\R^{n};W) \cap L^{q_*}(\R^{n};W).
$$
Iterating, and interpolating with $L^2$-$L^2$ off-diagonal bounds (see Lemma \ref{lem:ODcomp}), we get that   $\{(R_{t}^{B})^{\frac{M}{2}-2} \;;\; t>0\}$ has $L^{r}(\R^{n};W)$-$L^2$ off-diagonal bounds of every order and that $\{R_{t}^{B} \;;\; t>0\}$ has $L^{r}(\R^{n};W)$-$L^{\tilde{r}}$ off-diagonal bounds of every order for some $\tilde{r} \in (r,2]$. 
 The former implies that both $\{(R_t^B)^{\frac{M}{2}-2} \Gamma \mathbb{P}_{W} \;;\; t \in \R^{*}\}$ and $\{(R_t^B)^{\frac{M}{2}-2} B_{1}\Gamma^{*} \mathbb{P}_{W}\;;\; t \in \R^{*}\}$ have $\dot{W}^{1,p}$-$L^2$ off-diagonal bounds of every order on balls, while the latter implies that 
 $\{t\Gamma P_{t}^{B}\mathbb{P}_{W} \;;\; t\in \R^{*}\} = \{Q_{t}^{B}\mathbb{P}_{W} \;;\; t\in \R^{*}\}
 = \{\frac{1}{2i}(R_{-t}^{B}-R_{t}^{B})\mathbb{P}_{W} \;;\; t\in \R^{*}\}$ is uniformly bounded in $\calL(L^r)$, by 
Lemma \ref{lem:ODcomp}.  This yields the hypotheses and hence the conclusions of (1).
 To obtain (iii), apply (i) with $f(z) = \sgn(z)$:
$$\|({\Pi_B}^2)^{1/2}u\|_p =\|\sgn(\Pi_B)\Pi_Bu\|_p=\|\sgn(\Pi_B)\Gamma u\|_p\leq C_p\|\Gamma u\|_p \  \forall u \in \calD_{p}(\Gamma)\cap L^{2}(\R^{n};W),
$$
noting that $u \in \overline{\calR_{2}(\Gamma^{*}_{B})}$ by assumption. 
\end{proof}

\section{Consequences}
\label{sec:cons}

\subsection{Differential forms.} The motivating example for our formalism is perturbed differential forms, where $\C^N = \Lambda = \oplus_{k=0}^n
\Lambda^k=\wedge_\C\R^n$, 
the complex exterior algebra over $\R^n$,  and $\Gamma = d$, the exterior derivative, acting in $L^p(\R^n;\Lambda)=\oplus_{k=0}^n L^p(\R^n;\Lambda^k)$.
If the multiplication operators $B_1, B_2$ satisfy the conditions of Definition  \ref{pHD}, then $\Pi_B = d+B_1 d^\ast B_2$ is a perturbed Hodge-Dirac operator, and it is from here that it gets its name. The $L^p$ results stated in Section \ref{sec:res}  all apply to this operator.\\

Typically, but not necessarily,  the operators $B_j, j=1,2$  split as $B_j=B^0_j \oplus \dots \oplus B^n_j$, where $B^k_j \in L^\infty(\R^n;\calL(\Lambda^k))$, in which case Corollary \ref{cor:fc} has a converse in the following sense (cf. \cite[Theorem 5.14]{AMR} for an analogous result for Hodge-Dirac operators on Riemannian manifolds).

\begin{Prop} \label{prop:ConvHodge}
Suppose $\Pi_B = d+B_1 d^\ast B_2$ is a perturbed Hodge-Dirac operator as above, with 
$B_j, j=1,2$  splitting as $B_j=B^0_j \oplus \dots \oplus B^n_j$, where $B^k_j \in L^\infty(\R^n;\calL(\Lambda^k))$, $k=0,\ldots,n$.
Suppose that for some $p \in (1,\infty)$, \eqref{PiB3} holds and $\Pi_B$ is an $\omega$-bisectorial operator in $L^p(\R^n;\Lambda)$  
with a bounded $H^\infty$ functional calculus in $L^p(\R^n;\Lambda)$. Then $p\in(p_H, p^H)$. 
\end{Prop}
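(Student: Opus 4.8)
The plan is to show that $\Pi_B$ Hodge decomposes $L^p(\R^n;\Lambda)$; since, by Proposition \ref{prop:Hodge}, the set of such $p$ is exactly the interval $(p_H,p^H)$, this gives the conclusion. Since \eqref{PiB3} is part of the hypothesis, by Definition \ref{def:Hodge-dec} it suffices to prove the complemented splitting
\[
	L^p(\R^n;\Lambda) = \calN_p(\Pi_B)\oplus\overline{\calR_p(\Gamma)}\oplus\overline{\calR_p(\Gamma_B^\ast)},
\]
where $\Gamma=d$ and $\Gamma_B^\ast=B_1d^\ast B_2$. I would separate the soft consequences of the functional calculus from the one analytic core.

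\emph{Soft part.} From $\omega$-bisectoriality together with the bounded $H^\infty$ functional calculus on $L^p$, the general theory (\cite{ADM,CDMcY}) gives $L^p=\calN_p(\Pi_B)\oplus\overline{\calR_p(\Pi_B)}$, with $\Pb_{\calN_p(\Pi_B)}=\lim_{t\to\infty}P_t^B$ strongly. Since $P_t^B$ on $L^p$ extends the $L^2$ operator, the two projections agree on $L^2\cap L^p$ (pass to an a.e.-convergent subsequence); combining this with Theorem \ref{thm:p=2}, the $L^2$ Hodge decomposition of $\Pi_B$ from \cite{AKM} (which gives in particular $\calN_2(\Pi_B)=\calN_2(\Gamma)\cap\calN_2(\Gamma_B^\ast)$), density of $L^2\cap L^p$, and closedness of $\Gamma$ and of $\Gamma_B^\ast$ (the latter by \eqref{PiB3}), one obtains $\calN_p(\Pi_B)=\calN_p(\Gamma)\cap\calN_p(\Gamma_B^\ast)$ and $\overline{\calR_p(\Gamma)},\overline{\calR_p(\Gamma_B^\ast)}\subseteq\overline{\calR_p(\Pi_B)}$. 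By nilpotency $\overline{\calR_p(\Gamma)}\subseteq\calN_p(\Gamma)$ and $\overline{\calR_p(\Gamma_B^\ast)}\subseteq\calN_p(\Gamma_B^\ast)$, so any vector in $\overline{\calR_p(\Gamma)}\cap\overline{\calR_p(\Gamma_B^\ast)}$ lies in $\calN_p(\Pi_B)\cap\overline{\calR_p(\Pi_B)}=\{0\}$; hence that intersection is trivial. Thus, once the analytic core below is established, $\overline{\calR_p(\Pi_B)}=\overline{\calR_p(\Gamma)}\oplus\overline{\calR_p(\Gamma_B^\ast)}$ is an internal direct sum of complemented subspaces, and the displayed splitting follows.

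\emph{Analytic core.} It remains to prove $\overline{\calR_p(\Pi_B)}\subseteq\overline{\calR_p(\Gamma)}+\overline{\calR_p(\Gamma_B^\ast)}$ with bounded projections onto the two summands, and this is where I would use that $B_1,B_2$ respect the grading: the Hodge Laplacian $\Pi_B^2=\Gamma\Gamma_B^\ast+\Gamma_B^\ast\Gamma$ preserves each $L^p(\R^n;\Lambda^k)$, so $\overline{\calR_p(\Pi_B)}=\overline{\calR_p(\Pi_B^2)}$ is graded, $\Gamma$ and $\Gamma_B^\ast$ commute with $P_t^B$, and for $u$ of pure degree $k$ the components $t\Gamma P_t^B u\in\Lambda^{k+1}$ and $t\Gamma_B^\ast P_t^B u\in\Lambda^{k-1}$ of $Q_t^B u$ are pointwise orthogonal, whence $|Q_t^Bu|^2=|t\Gamma P_t^Bu|^2+|t\Gamma_B^\ast P_t^Bu|^2$. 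Using the Calder\'on reproducing formula $u=c\int_0^\infty (Q_t^B)^2u\,\tfrac{dt}{t}$ (valid in $L^p$ on $\overline{\calR_p(\Pi_B)}$ by the bounded functional calculus) and the splitting $(Q_t^B)^2=t^2\Gamma\Gamma_B^\ast(P_t^B)^2+t^2\Gamma_B^\ast\Gamma(P_t^B)^2$, one pulls the closed operators $\Gamma$ and $\Gamma_B^\ast$ out of the integrals to write $u=c(w+z)$ with $w=\Gamma\big(\int_0^\infty t^2\Gamma_B^\ast(P_t^B)^2u\,\tfrac{dt}{t}\big)\in\overline{\calR_p(\Gamma)}$ and $z=\Gamma_B^\ast\big(\int_0^\infty t^2\Gamma(P_t^B)^2u\,\tfrac{dt}{t}\big)\in\overline{\calR_p(\Gamma_B^\ast)}$, the maps $u\mapsto w,z$ being bounded on $\overline{\calR_p(\Pi_B)}$. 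Establishing this last point — the uniform $L^p$-boundedness of the truncated operators $\int_\varepsilon^N t^2\Gamma\Gamma_B^\ast(P_t^B)^2(\,\cdot\,)\,\tfrac{dt}{t}$ and the identification of their strong limit on the dense subspace $\calR_p(\Pi_B^2)$, using the uniform bounds $\|t\Gamma P_t^B\|_{\calL(L^p)},\|t\Gamma_B^\ast P_t^B\|_{\calL(L^p)}\lesssim1$ together with the $L^p$ square function estimate (and the pointwise orthogonality above) that the bounded functional calculus provides on $\overline{\calR_p(\Pi_B)}$ — is the heart of the matter and the step I expect to be the main obstacle; it is the $L^p$ counterpart of the elementary $L^2$ fact that $\Gamma\Gamma_B^\ast(\Pi_B^2)^{-1}$ and $\Gamma_B^\ast\Gamma(\Pi_B^2)^{-1}$ are the complementary Hodge projections of $\overline{\calR_2(\Pi_B)}$. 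With the core in hand, the displayed splitting holds, so $\Pi_B$ Hodge decomposes $L^p(\R^n;\Lambda)$ and therefore $p\in(p_H,p^H)$ by Proposition \ref{prop:Hodge}.
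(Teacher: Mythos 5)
Your overall strategy coincides with the paper's: reduce the claim to showing that $\Pi_B$ Hodge decomposes $L^p$, note that bisectoriality already gives $L^p=\calN_p(\Pi_B)\oplus\overline{\calR_p(\Pi_B)}$, and exploit the fact that the splitting of $B_1,B_2$ makes ${\Pi_B}^2$, $P_t^B$ and $(\Pi_B^2)^{1/2}$ degree-preserving while $\Gamma$ and $\Gamma_B^\ast$ shift degree by $\pm1$. The soft part of your argument is fine. But your ``analytic core'' has a genuine gap, and you say so yourself: the uniform $L^p$-boundedness and strong convergence of the truncated operators $\int_\varepsilon^N t^2\Gamma\Gamma_B^\ast(P_t^B)^2\,\frac{dt}{t}$, together with the identification of the limit as a bounded projection onto $\overline{\calR_p(\Gamma)}$, is precisely the content of the statement you are trying to prove, and you only indicate the ingredients (vertical square function estimates from the $H^\infty$ calculus, pointwise orthogonality, a duality pairing) without carrying out the estimate. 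As written, the decisive step is an expectation, not a proof. The route is probably completable --- one would dualise the integral against $\int_0^\infty\langle t\Gamma_B^\ast P_t^Bu,\,t\Gamma^\ast(P_t^B)^\ast v\rangle\frac{dt}{t}$ and invoke the equivalence of bounded $H^\infty$ calculus and vertical square function estimates on subspaces of $L^p$ --- but none of this is needed.

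The paper closes the argument much more directly. It suffices to prove the norm equivalence $\|\Gamma u\|_p+\|\Gamma_B^\ast u\|_p\approx\|\Pi_B u\|_p$ on $\calD_p(\Pi_B)$, and this follows in a few lines from the grading: for $u^{(k)}$ of pure degree, $\Gamma u^{(k)}$ and $\Gamma_B^\ast u^{(k)}$ lie in the degree-$(k\!+\!1)$ and degree-$(k\!-\!1)$ components of $\Pi_B u^{(k)}$, so each is trivially dominated by $\|\Pi_B u^{(k)}\|_p$; then the boundedness of $\sgn(\Pi_B)$ gives $\|\Pi_B v\|_p\approx\|({\Pi_B}^2)^{1/2}v\|_p$, and since $({\Pi_B}^2)^{1/2}$ preserves degree one can sum over $k$ and return from $\sum_k\|\Pi_B u^{(k)}\|_p$ to $\|\Pi_B u\|_p$. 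This replaces your entire reproducing-formula construction: no Calder\'on formula, no square functions, and no convergence of truncated integrals are required. I recommend you abandon the integral construction of the Hodge projections and instead prove the norm equivalence degree by degree as above.
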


We do not know if this converse holds for all perturbed Hodge-Dirac operators. It does, however, hold for all examples given in this section.  
\begin{proof} 
We need to show that $\Pi_B$ Hodge decomposes $L^p(\R^n;\Lambda)$, i.e. $L^p(\R^n;\Lambda)= \calN_p(\Pi_B) \oplus \overline{\calR_p(\Gamma)} \oplus \overline{\calR_p(\Gamma_B^\ast)}$, where $\Gamma=d$ and $\Gamma^\ast_B = B_1d^\ast B_2$.
Since $\Pi_B$ is bisectorial in $L^p(\R^n;\Lambda)$, we know that 
$L^p(\R^n;\Lambda)= \calN_p(\Pi_B) \oplus \overline{\calR_p(\Pi_B)}$. Therefore, it suffices to show that 
\[
	\|\Gamma u\|_p + \|\Gamma_B^\ast u\|_p \approx \|\Pi_B u\|_p \quad \forall u \in \calD_p(\Pi_B)=\calD_p(\Gamma) \cap \calD_p(\Gamma_B^\ast).
\] 
For $k=0,\ldots,n$ and $u \in L^p(\R^n;\Lambda)$, denote by $u^{(k)} \in L^p(\R^n;\Lambda^k)$ the $k$-th component of $u$. 
Note that $\Gamma: L^p(\R^n;\Lambda^k) \to L^p(\R^n;\Lambda^{k+1})$, $\Gamma_B^\ast: L^p(\R^n;\Lambda^{k+1}) \to L^p(\R^n;\Lambda^k)$, $k=0,\ldots,n-1$, and ${\Pi_B}^2: L^p(\R^n;\Lambda^k) \to L^p(\R^n;\Lambda^k)$, $k=0,\ldots,n$. 
Using that $\sgn(\Pi_B)$,
where 
 $$\sgn(z) = \begin{cases} 1, \quad \text{if} \; \Re z>0, \\ -1, \quad \text{if} \; \Re z<0, \end{cases} \forall z \in S_{\mu}\setminus\{0\}\quad\text{and}\quad \sgn(0)=0,$$
is bounded in $L^p(\R^n;\Lambda)$ since $\Pi_{B}$ has a bounded $H^{\infty}$ calculus, we therefore get for $u \in \calD_p(\Pi_B)$:
\begin{align*}
	\|\Gamma u\|_p + \|\Gamma_B^\ast u\|_p
	&\approx \sum_{j=0}^n \|(\Gamma u)^{(j)}\|_p + \sum_j \|(\Gamma_B^\ast u)^{(j)}\|_p\\
	&\approx \sum_{k=0}^n ( \|\Gamma u^{(k)}\|_p + \|\Gamma_B^\ast u^{(k)}\|_p)
	\approx \sum_{k=0}^n \|\Pi_B u^{(k)}\|_p 
	\approx \sum_{k=0}^n \|({\Pi_B}^2)^{1/2} u^{(k)}\|_p\\
	& \approx \sum_{k=0}^n \|(({\Pi_B}^2)^{1/2} u)^{(k)}\|_p
	\approx \|({\Pi_B}^2)^{1/2}u\|_p 
	\approx \|\Pi_B u\|_p. 
\end{align*}
\end{proof}

\subsection{Second order elliptic operators}
Let $L$ denote  the uniformly elliptic second order  operator defined by 
$$  L f=-a\div A \nabla f=-a\sum_{j,k=1}^n \partial_j (A_{j,k} \partial_k f) $$
where $a\in L^\infty(\R^n)$  with $\Re(a(x)) \geq \kappa_1 >0$ a.e. and
$A\in L^\infty(\R^n;{\mathcal L}(\C^n))$  with $\Re(A(x)) \geq \kappa_2 I >0$ a.e.
 Associated with $L$ is the Hodge-Dirac operator 
$$\Pi_B = \Gamma+\Gamma^\ast_B=
\Gamma + B_1\Gamma^\ast B_2 =  \left[\begin{array}{cc}
0&-a\div  A  \\ \nabla &0 \end{array}\right]\quad\text{acting in}\quad L^2(\R^n;(\C^{1+n}))=\begin{array}{c}
L^2(\R^n) \\ \oplus \\ L^2(\R^n;\C^n) \end{array}
$$
where $$ \Gamma= \left[\begin{array}{cc}
0&0 \\  \nabla &0 \end{array}\right],\ \Gamma^\ast= \left[\begin{array}{cc}
0&-\div \\  0 &0 \end{array}\right],\  B_1 =\left[\begin{array}{cc}
a&0 \\ 0&0 \end{array}\right],\ B_2 =\left[\begin{array}{cc}
0&0 \\ 0&A \end{array}\right]\ ,
$$
so that
$${\Pi_B}^2
=\left[\begin{array}{cc}
 L &0 \\ 0 & \tilde L   \end{array}\right] \qquad(\text{where}\quad \tilde L=- \nabla a\div  A)\ .
 $$
As shown in \cite{AKM} (and recalled in Theorem \ref{thm:p=2}), $\Pi_B$ is an $\omega$-bisectorial operator with an $H^\infty$ functional calculus in $L^2$, so that in particular $\sgn(\Pi_B)$ is a bounded operator on $L^2(\R^n;\C^{1+n})$.

Using the expression
$$
\sgn(\Pi_B)= ({\Pi_B}^2)^{-1/2}\Pi_B
=\left[\begin{array}{cc}
 0 &-L^{-1/2}\,  a\div  A \\ \nabla L^{-1/2} &0   \end{array}\right]
 \ ,$$ on $\calD(\Pi_B)$,
and the
fact that $(\sgn(\Pi_B))^2 u = u$ for all $u \in \overline{\calR_2(\Pi_B)}=L^2(\R^n)\oplus \overline{\calR_2(\nabla)}$, we find  that $\|\nabla L^{-1/2}g\|_2 \approx \|g\|_2$ for all $g\in \calR(L^{1/2})$, i.e.  $\|\nabla f\|_2 \approx \|L^{1/2} f\|_2$ for all $f\in \calD(L^{1/2}) = W^{1,2}(\R^n)$, this being the Kato conjecture, previously solved in \cite{ahlmt} (when $a=1$).\\

Turning now to $L^p$, we see that by our hypotheses, (\ref{PiB3}) holds for all $p\in(1,\infty)$, and that
$$ \calN_p(\Pi_B)=\begin{array}{c}
\{0\} \\ \oplus \\ \calN_p(\div A) \end{array},\quad \overline{\calR_p(\Gamma)} =\begin{array}{c}
\{0\} \\ \oplus \\ \overline{\calR_p(\nabla)} \end{array},\quad \overline{\calR_p(\Gamma^\ast_B)} =\begin{array}{c}
L^p(\R^n) \\ \oplus \\ \{0\} \end{array}.
$$

So $p\in(p_H,p^H)$, i.e. the Hodge decomposition  $L^p(\R^n;(\C^{1+n})) =\calN_p(\Pi_B)\oplus\overline{\calR_p(\Gamma)}\oplus\overline{\calR_p(\Gamma^\ast_B)}$ holds, if and only if 
$L^p(\R^n;\C^n) = \calN_p(\div\,A)\oplus \overline{\calR_p(\nabla)}$.\\

Turning briefly to Hardy space theory, we have
$$H^2_{\Pi_B} = \overline{\calR_2(\Pi_B)} = \begin{array}{c}
L^2(\R^n) \\ \oplus \\ \overline{\calR_2(\nabla)} \end{array} =  \begin{array}{c}
H^2_L \\ \oplus \\ H^2_{\tilde L}  \end{array}\quad\text{and}\qquad
H^p_{\Pi_B} = H^p_{{\Pi_B}^2} = \begin{array}{c}
H^p_L \\ \oplus \\ H^p_{\tilde L}  \end{array}
$$
for all $p\in(1,\infty)$. We remark that $L$ has a bounded $H^\infty$ functional calculus in 
$H^p_L$, and that $\sgn(\Pi_B)$ is an isomorphism interchanging  $H^p_L$ and $H^p_{\tilde L}$.\\

We now state how the results of this section apply to  $\Pi_B$, and have as consequences for $L$ and its Riesz transform, results which are known, at least when $a=1$ (see \cite{AuscherMemoirs} and \cite[Section 5]{HMMc}).

\begin{Cor}
\label{cor:div-form}
Let  $L=-a\div A \nabla$ be a uniformly elliptic operator as above.
Then the following hold:
\begin{enumerate}
\item If $p_{H}<p<p^H$, then $\Pi_B$ is an $\omega$-bisectorial operator in $L^p(\R^n;\C^{1+n})$ with a bounded $H^\infty$ functional calculus.
\item
If $\max\{1,(p_{H})_{*}\}<p<p^H$, then $H^p_{\Pi_B}= \overline{\calR_p(\Pi_B)}$ and  
$\Pi_B$ is an $\omega$-bisectorial operator in $ \overline{\calR_p(\Pi_B)}$ with a bounded $H^\infty$ functional calculus, so that $L$ has a bounded $H^\infty$ functional calculus in $L^p(\R^n)$, and $\calD_p(L^{1/2}) = W^{1,p}(\R^{n})$ with
$\|L^{1/2} f\|_{p} \approx\|\nabla f\|_{p}  $.
\item
If $r \in (1,2]$, $\max\{1,\min\{r_{*},(p_{H})_{**}\}\}<p<p^H$, $M \in \N$ with $M \geq 10n$, and $\{(I+t^2L)^{-(\frac{M}{2}-1)}\,;\,t>0\}$
 has $L^r(\R^n)$-$L^2(\R^n)$ off-diagonal bounds of every order, 
then $H^p_{\tilde L}=\overline{\calR_p(\nabla)}$ and
$
  \|L^{1/2} f\|_{p} \lesssim \|\nabla f\|_{p}$ for all  $f \in W^{1,p}(\R^{n})$.
  Also $g\in H^p_L$ if and only if $\nabla L^{-1/2}g \in L^p(\R^n;\C^n)$, with $\|g\|_{H^p_L}\approx \|\nabla L^{-1/2}g\|_p$.
  \end{enumerate}
\end{Cor}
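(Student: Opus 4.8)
The plan is to deduce all three statements from the abstract results of Sections~\ref{sec:res} and~\ref{sec:cons}, specialised to the Hodge-Dirac operator $\Pi_B = \Gamma + \Gamma^\ast_B$ (acting in $L^p(\R^n;\C^{1+n})$) attached to $L$, using two structural facts recorded above: $\Pi_B^2$ is block-diagonal with diagonal blocks $L$ and $\tilde L$, and the Hodge subspaces are explicit for \emph{every} $p\in(1,\infty)$, namely $\overline{\calR_p(\Gamma)} = \{0\}\oplus\overline{\calR_p(\nabla)}$, $\overline{\calR_p(\Gamma^\ast_B)} = L^p(\R^n)\oplus\{0\}$, and $\overline{\calR_p(\Pi_B)} = L^p(\R^n)\oplus\overline{\calR_p(\nabla)}$. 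Note also that \eqref{PiB3} holds for all $p\in(1,\infty)$: from $\Re a\geq\kappa_1$ and $\Re A\geq\kappa_2 I$ one gets $1/a, A^{-1}\in L^\infty$, so $B_1$ and ${B_2}^\ast$ are invertible multiplication operators. Part~(1) is then immediate: for $p\in(p_H,p^H)$, $\Pi_B$ Hodge decomposes $L^p(\R^n;\C^{1+n})$, so Corollary~\ref{cor:fc}(3) applies directly.

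For Part~(2), fix $p\in(\max\{1,(p_H)_\ast\},p^H)$. By the Remark following Corollary~\ref{cor:hardy}, $H^p_{\Pi_B} = \overline{\calR_p(\Gamma)}\oplus\overline{\calR_p(\Gamma^\ast_B)} = \overline{\calR_p(\Pi_B)}$. Since $\Pi_B$ satisfies the standing hypotheses of the Hardy-space theory (Theorem~\ref{thm:p=2}), Theorem~\ref{thm:hardy}(2) gives a bounded $H^\infty$ functional calculus, hence $\omega$-bisectoriality, on $H^p_{\Pi_B} = \overline{\calR_p(\Pi_B)}$. As $\Pi_B^2$ is block-diagonal and $\overline{\calR_p(\Pi_B)}$ splits compatibly with the identification $H^p_{\Pi_B} = H^p_{\Pi_B^2} = H^p_L\oplus H^p_{\tilde L}$, one reads off $H^p_L = L^p(\R^n)$, $H^p_{\tilde L}=\overline{\calR_p(\nabla)}$, and, restricting the calculus of $\Pi_B^2$ to the first block, that $L$ has a bounded $H^\infty$ calculus on $L^p(\R^n)$. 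For the square-root statement, $\sgn(\Pi_B) = f(\Pi_B)$ with $\|f\|_\infty=1$ is bounded on $\overline{\calR_p(\Pi_B)}$, and since $f^2\equiv1$ off the origin, $\sgn(\Pi_B)^2 = I$ there; inserting the matrix form of $\sgn(\Pi_B)$ displayed above yields $\|\nabla L^{-1/2}g\|_p\approx\|g\|_p$ for $g\in L^p(\R^n)$, i.e.\ $\|L^{1/2}f\|_p\approx\|\nabla f\|_p$, and $\calD_p(L^{1/2})=W^{1,p}(\R^n)$ follows from this two-sided estimate by completing a common core.

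For Part~(3) the range $(\max\{1,(p_H)_\ast\},p^H)$ is already covered by Part~(2), so the substance is $p\in(\max\{1,\min\{r_\ast,(p_H)_{\ast\ast}\}\},(p_H)_\ast]\subset(1,2)$. Here I would apply Theorem~\ref{cor:restW} with $W := \C\oplus\{0\}\subset\C^{1+n}$. This $W$ is stable under $\widehat{\Gamma^\ast}(\xi)\widehat{\Gamma}(\xi)$ and $\widehat{\Gamma}(\xi)\widehat{\Gamma^\ast}(\xi)$, both block-diagonal (the first maps the scalar block to itself, the second maps it to $0$); moreover $L^2(\R^n;W) = \overline{\calR_2(\Gamma^\ast_B)}$ and \eqref{PiB3} holds. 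On $W$-valued functions $(P_t^B)^k\mathbb{P}_W$ acts as $(I+t^2L)^{-k}$ in the scalar slot and $\Gamma^\ast_B\mathbb{P}_W = 0$, so the assumed $L^r$-$L^2$ off-diagonal bounds for $\{(I+t^2L)^{-(M/2-1)}\}$, combined with Lemma~\ref{lem:ODcomp} and the potential map of Proposition~\ref{prop:pi}(7), supply the $\dot W^{1,r}$-$L^2$ off-diagonal bounds and uniform $L^r$ bounds required by Theorem~\ref{cor:restW}(1), or (when $(p_H)_{\ast\ast}\leq r_\ast$) are produced internally by Theorem~\ref{cor:restW}(2). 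Applying part~(iii) of Theorem~\ref{cor:restW} to $u=(f,0)$ gives $\|L^{1/2}f\|_p = \|(\Pi_B^2)^{1/2}(f,0)\|_p\lesssim\|\Gamma(f,0)\|_p = \|\nabla f\|_p$; applying part~(ii) gives $\{0\}\oplus\overline{\calR_p(\nabla)} = \overline{\calR_p(\Gamma|_{L^p(\R^n;W)})}\subseteq H^p_{\Pi_B}$ with equivalent norms, which against $H^p_{\Pi_B} = H^p_L\oplus H^p_{\tilde L}$ forces $H^p_{\tilde L}=\overline{\calR_p(\nabla)}$; and the characterisation $g\in H^p_L\Leftrightarrow\nabla L^{-1/2}g\in L^p(\R^n;\C^n)$ with $\|g\|_{H^p_L}\approx\|\nabla L^{-1/2}g\|_p$ follows since $\sgn(\Pi_B)$ is an isomorphism interchanging $H^p_L$ and $H^p_{\tilde L}$ with off-diagonal block $\nabla L^{-1/2}$.

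The step I expect to be the main obstacle is the hypothesis-checking for Part~(3): $\Pi_B$ itself does not respect the block decomposition, so transferring $L^r$-$L^2$ off-diagonal information from the scalar powers $(I+t^2L)^{-(M/2-1)}$ to the genuinely mixed families $\{(R_t^B)^{M/2-2}\Gamma\mathbb{P}_W\}$, $\{(R_t^B)^{M/2-2}B_1\Gamma^\ast\mathbb{P}_W\}$ and $\{t\Gamma P_t^B\mathbb{P}_W\}$ appearing in Theorem~\ref{cor:restW} requires routing everything through the block-diagonal operators $(P_t^B)^k = (I+t^2\Pi_B^2)^{-k}$ while absorbing the remaining (uniformly $L^2$-bounded) factors $R_t^B$, and then bookkeeping the Sobolev thresholds $(p_H)_\ast$, $(p_H)_{\ast\ast}$, $r_\ast$ so as to invoke the correct alternative — Theorem~\ref{cor:restW}(1) or~(2) — on each sub-range. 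A secondary, more routine, point is the density argument identifying $\calD_p(L^{1/2})$ with $W^{1,p}(\R^n)$ in Part~(2).
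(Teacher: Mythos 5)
Your proposal follows the paper's proof essentially verbatim: (1) is Corollary \ref{cor:fc}(3); (2) is Corollary \ref{cor:hardy}(1)--(2) combined with the explicit block structure of the Hodge subspaces and the $\sgn(\Pi_B)$ identity; and (3) is Theorem \ref{cor:restW} applied with $W=\C\oplus\{0\}$, falling back on (2) when $(p_H)_*\leq 1$. The only difference is that you spell out the verification of the off-diagonal hypotheses of Theorem \ref{cor:restW}(1) in part (3) more explicitly than the paper, which simply cites that theorem.
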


We remark that the hypotheses of (3) can also be stated in terms of off-diagonal bounds for the semigroup $(e^{-tL})_{t>0}$.

\begin{proof}
As described above, $\Pi_B$ is a perturbed Hodge-Dirac operator.
(1) follows from Corollary \ref{cor:fc} (3).
(2) follows from Corollary \ref{cor:hardy} (1) and (2), noting that in our situation, the decomposition $\overline{\calR_p(\Pi_B)} = \overline{\calR_p(\Gamma)} \oplus \overline{\calR_p(\Gamma_B^\ast)}$ holds for all $p \in (1,\infty)$. 
(3) Set $W=\C$. As stated before, $L^2(\R^n;W) \subseteq \overline{\calR_2(\Gamma^\ast_B)} =L^2(\R^n) \oplus \{0\}$. 
For $w \in W$ and $\xi \in \C^{n}$, we have that $\widehat{\Gamma^{*}}(\xi)\widehat{\Gamma}(\xi) w = (\sum \limits _{j=1} ^{n} |\xi_{j}|^{2})w$ and
$\widehat{\Gamma}(\xi)\widehat{\Gamma^{*}}(\xi) w = 0$, so $W$ is stable under $\widehat{\Gamma^{*}}(\xi)\widehat{\Gamma}(\xi)$ and
$\widehat{\Gamma}(\xi)\widehat{\Gamma^{*}}(\xi)$. 
If  $(p_{H})_{*}>1$,  we can therefore apply Theorem \ref{cor:restW}, which gives (3).
If $(p_{H})_{*}\leq1$, (3) follows from (2).
\end{proof}

\begin{Remark}
If $A=I$, one has $(p_H,p^H)=(1,\infty)$, so the estimates in Corollary \ref{cor:div-form} hold for all $p \in (1,\infty)$, in agreement with the results of \cite{McN} concerning $L=-a\Delta$.
\end{Remark}

\subsection{First order systems of the form $DA$}
 Results for operators of the form $DA$ or $AD$,  used in studying boundary value problems as in \cite{AAMc}, can be obtained in a similar way to those in this paper, building on the $L^2$ theory in \cite{AAMc2}. However they can also be obtained as consequences of the results for $\Pi_B$, as was shown in Section 3 of \cite{AKM} when $p=2$. Let us briefly summarise this in the $L^p$ case.\\

Let $D$ be a first order system which is self-adjoint in $L^2(\R^n;\C^N)$, and $A\in L^\infty(\R^n;{\mathcal L}(\C^N))$  with $\Re(ADu,Du) \geq \kappa {\|Du\|_2}^2 $ for all $u\in\calD_2(D)$.  Set
$$\Pi_B = \Gamma+\Gamma^\ast_B=
\Gamma + B_1\Gamma^\ast B_2 =  \left[\begin{array}{cc}
0&AD A  \\ D &0 \end{array}\right]\quad\text{acting in}\quad L^2(\R^n;\C^{2N})=\begin{array}{c}
L^2(\R^n;\C^N) \\ \oplus \\ L^2(\R^n;\C^N) \end{array}
$$
where $$ \Gamma= \left[\begin{array}{cc}
0&0 \\  D &0 \end{array}\right],\ \Gamma^\ast= \left[\begin{array}{cc}
0&D\\  0 &0 \end{array}\right],\  B_1 =\left[\begin{array}{cc}
A&0 \\ 0&0 \end{array}\right],\ B_2 =\left[\begin{array}{cc}
0&0 \\ 0&A \end{array}\right]\ .
$$
Then $\Pi_B$ is a Hodge-Dirac operator, and so, by \cite{AKM}, has a bounded $H^\infty$ functional calculus in $L^2(\R^n;\C^{2N})$. \\

Turning to $p\in(1,\infty)$, we find that \eqref{PiB3} holds if and only if 
\begin{align*}\|u\|_p &\lesssim \|Au\|_p\quad \forall u\in\overline{\calR_p(D)}\qquad\text{and}\\
\|v\|_{p'} &\lesssim \|A^\ast v\|_{p'}\quad \forall v\in\overline{\calR_{p'}(D)}\ ,
\end{align*} and that \eqref{PiB4} is the same.
Assuming this (in particular if $A$ is invertible in $L^\infty$), we find that 
$$ \calN_p(\Pi_B)=\begin{array}{c}
\calN_p(D)\\ \oplus \\ \calN_p(DA) \end{array},\quad \overline{\calR_p(\Gamma)} =\begin{array}{c}
\{0\} \\ \oplus \\ \overline{\calR_p(D)} \end{array},\quad \overline{\calR_p(\Gamma^\ast_B)} =\begin{array}{c}
\overline{\calR_p(AD)} \\ \oplus \\ \{0\} \end{array}.
$$
and hence that $\Pi_B$ Hodge decomposes $L^p(\R^n;\C^{2N})$, i.e. $p\in(p_H,p^H)$, if and only if 
\begin{equation} \label{eq:HodgeDA}
L^p(\R^n;\C^{N})= \calN_{p}(DA)\oplus\overline{\calR_p(D)}.
\end{equation}
This can be seen following the arguments in Proposition \ref{equiv}: Under \eqref{PiB3}, \eqref{eq:HodgeDA} holds if and only if $L^{p'}(\R^n;\C^{N})= \calN_{p'}(D)\oplus\overline{\calR_{p'}(A^\ast D)}$, i.e. if and only if  $L^{p}(\R^n;\C^{N})= \calN_{p}(D)\oplus\overline{\calR_{p}(AD)}$.\\

As in \cite[Proof of Theorem 3.1]{AKM} and \cite[Corollary 8.17]{HMcP2}, we compute that, when defined,
$$f(DA)u=\left[\begin{array}{cc}
0&I \end{array}\right]f(\Pi_B)\left[\begin{array}{c}
A\\ I \end{array}\right]u,
$$
so that results concerning $DA$ having a bounded $H^\infty$ functional calculus in $L^p(\R^n;\C^{N})$ can be obtained from our results for $\Pi_B$ in $L^p(\R^n;\C^{2N})$. Moreover  results concerning bounds on $f(DA)u$ when $u\in \overline{\calR_p(D)}$ 
can be obtained from our results on $f(\Pi_B)v$ when $v \in \overline{\calR_p(\Gamma)}$ and on $f(\Pi_B)w$ when $w \in \overline{\calR_p(\Gamma^\ast_B)}$.\\

We leave further details to the reader, as well as consideration  of $AD$.

\section{Low frequency estimates: The Carleson measure argument}
\label{sec:low}

In this section, we prove the low frequency estimate, Theorem \ref{thm:low}. 
By Lemma \ref{tent-boundedness} and Theorem \ref{thm:p=2}, we have
$$
\|(t,x) \mapsto (Q_{t}^{B})^{M} {P_{t}}^{\tilde{N}}u(x)\|_{T^{p,2}}
\lesssim  \|(t,x) \mapsto Q_{t}^{B}{P_{t}}^{\tilde{N}}u(x)\|_{T^{p,2}}
$$
so it suffices to prove
\begin{equation}\label{reduction} \|(t,x) \mapsto Q_{t}^{B} {P_{t}}^{\tilde{N}}u(x)\|_{T^{p,2}} \lesssim \|u\|_{p}
\quad \forall u \in \overline{\calR_p(\Pi)}.
\end{equation}

According to Theorem \ref{thm:p=2} and Lemma \ref{lem:ODcomp}, the operator $Q_t^B$ extends to an operator $Q_t^B: L^\infty(\R^n;\C^N) \to L^2_{\loc}(\R^n;\C^N)$ with
\begin{equation} \label{eq:L2loc-est}
	\|Q_t^B u\|_{L^2(B(x_0,t))} \lesssim t^{\frac{n}{2}} \|u\|_{\infty} \quad \forall u \in L^\infty(\R^n;\C^N), \; x_0 \in \R^n, \;t>0.
\end{equation}
We can therefore define
\begin{equation}
\label{def:gamma_t}
	\gamma_t(x)w :=(Q_t^B w)(x)
	\qquad \forall w \in \C^N, \; x \in \R^n,
\end{equation}
where, on the right-hand side, $w$ is considered as the constant function defined  by $w(x)=w$ for all $x\in \R^{n}$. Note that the definition of $\gamma_t$ is different from the one in \cite[Definition 5.1]{AKM}. \\

 In order to prove \eqref{reduction},  we use the splitting 
\[
	Q_t^B {P_t}^{\tilde{N}} u = [Q_t^B {P_t}^{\tilde{N}} u - \gamma_t A_t {P_t}^{\tilde{N}}u]
		+ \gamma_t A_t {P_t}^{\tilde{N}} u,
\]
and refer to $\gamma_t A_t {P_t}^{\tilde{N}}u$ as the principal part, and $[Q_t^B {P_t}^{\tilde{N}}u - \gamma_t A_t {P_t}^{\tilde{N}}u]$ as the principal part approximation.\\
 We use the following \emph{dyadic decomposition} of $\R^n$. 
Let $\Delta =\bigcup_{j=-\infty}^{\infty} \Delta_{2^j}$, where $\Delta_{2^j}:=\{2^j(k+(0,1]^n) \,:\, k \in \Z^n\}$. For a dyadic cube $Q \in \Delta_{2^j}$, denote by $l(Q) =2^j$ its sidelength, by $|Q| =2^{jn}$ its volume. We set $\Delta_t=\Delta_{2^j}$, if $2^{j-1}<t\leq 2^j$. 
The dyadic averaging operator $A_t: L^2(\R^n;\C^N) \to L^2(\R^n;\C^N)$ is defined by
\[
	A_t u(x):= \frac{1}{|Q_{x,t}|}\int_{Q_{x,t}} u(y)\,dy=:\langle u\rangle_{Q_{x,t}}
	\qquad \forall u \in L^2(\R^n;\C^N), \; x \in \R^n, \; t>0,
\]
where $Q_{x,t}$ is the unique dyadic cube in $\Delta_t$ that contains $x$. \\

Let us make the following simple observation: for all $\eps>0$, there exists a constant $C>0$ such that for all $t>0$
\[
	\sup_{Q \in \Delta_t} \sum_{R \in \Delta_t} \left(1+\frac{\dist(Q,R)}{t}\right)^{-(n+\eps)} \leq C.
\]

We first consider the principal part approximation, similar to \cite[Proposition 5.5]{AKM}.

\begin{Prop} \label{Prop:Principal-part-approx}
Suppose $\Pi_B$ is a perturbed Hodge-Dirac operator. 
Suppose $p \in (1,\infty)$. Then
\[
		\|(t,x) \mapsto Q_t^B {P_t}^{\tilde{N}} u(x) -\gamma_t(x) A_t {P_t}^{\tilde{N}} u(x)\|_{T^{p,2}} 
		\leq C_p \|u\|_{p} 
		\qquad \forall u \in \overline{\calR_p(\Pi)}.
\]
\end{Prop}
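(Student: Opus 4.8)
The plan is to reduce the tent-space estimate to an $L^p$ bound by exploiting the fact that the operator $R_t u := Q_t^B {P_t}^{\tilde{N}} u - \gamma_t A_t {P_t}^{\tilde{N}} u$ should, after composition with ${P_t}^{\tilde N}$, have $L^2$-$L^2$ off-diagonal decay of high order, and moreover should map into a ``mean-zero'' type situation that kills the principal (constant-in-$x$) part. Concretely, writing $u = {P_t}^{\tilde N}u$ momentarily, the key observation is the pointwise identity $Q_t^B u(x) - \gamma_t(x) A_t u(x) = Q_t^B(u - \langle u\rangle_{Q_{x,t}})(x)$, valid on the cube $Q_{x,t}$, since $\gamma_t(x)\langle u\rangle_{Q_{x,t}} = Q_t^B(\langle u\rangle_{Q_{x,t}})(x)$ by the definition \eqref{def:gamma_t} applied to the constant $w = \langle u\rangle_{Q_{x,t}}$ and the $L^2_{\loc}$-extension of $Q_t^B$ from Lemma \ref{lem:ODcomp}(5). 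So the principal part approximation measures the oscillation of ${P_t}^{\tilde N}u$ at scale $t$, which is controlled by $t\nabla{P_t}^{\tilde N}u$ — and this is where the smoothing of the resolvent powers enters.

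First I would fix $t>0$ and a dyadic cube $Q \in \Delta_t$, and estimate $\|Q_t^B {P_t}^{\tilde N}u - \gamma_t A_t {P_t}^{\tilde N}u\|_{L^2(Q)}$ by splitting ${P_t}^{\tilde N}u - \langle {P_t}^{\tilde N}u\rangle_{Q}$ into contributions from the dyadic shells $S_j(Q)$. On each shell one uses Poincaré's inequality together with the $L^2$-$L^2$ off-diagonal bounds for $Q_t^B$ (Theorem \ref{thm:p=2}(2), combined via Lemma \ref{lem:ODcomp}) of order larger than, say, $n+1$; the gain is a factor $2^{-jM'}$ against which the polynomial growth of the shells is summable. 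This produces a bound of the form
\[
	\|Q_t^B {P_t}^{\tilde N}u - \gamma_t A_t {P_t}^{\tilde N}u\|_{L^2(Q)} \lesssim \sum_{j} 2^{-jM'} \Big( \int_{2^{j+1}Q} |t\nabla {P_t}^{\tilde N}u|^2 + (\text{lower-order terms from the averaging})\Big)^{1/2},
\]
where one uses $\|\nabla\otimes v\|_2 \lesssim \|\Pi v\|_2$ on $\overline{\calR_2(\Pi)}$ (Proposition \ref{prop:pi}(6)) and the fact that ${P_t}^{\tilde N} = (I+t^2\Pi^2)^{-\tilde N}$ commutes with $\Pi$ and gains powers of $t\Pi$, so that $t\nabla {P_t}^{\tilde N}u$ is, up to harmless resolvent factors, of the form $(t\Pi)(I+t^2\Pi^2)^{-1}$ applied to ${P_t}^{\tilde N -1}u$, i.e. essentially $Q_t {P_t}^{\tilde N - 1}u$ in the unperturbed calculus.

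Then I would square, integrate $dx$ over $Q$ (which just multiplies by $|Q|$), divide by $|Q|^{?}$ appropriately, sum over $Q \in \Delta_t$ and integrate $\frac{dt}{t}$, re-summing the shell index $j$ by Minkowski's inequality and absorbing the geometric factor $\sum_j 2^{-jM'}2^{jn/2}$. The upshot is
\[
	\|(t,x)\mapsto Q_t^B{P_t}^{\tilde N}u(x) - \gamma_t(x)A_t{P_t}^{\tilde N}u(x)\|_{T^{p,2}} \lesssim \Big\| (t,x) \mapsto (Q_t)^{K}{P_t}^{\tilde N - K}u(x)\Big\|_{T^{p,2}}
\]
for some fixed small $K\geq 1$, and then Proposition \ref{lem:hardyPi}(3) (the tent-space quadratic estimate for the unperturbed $\Pi$, which holds for all $p\in(1,\infty)$ on $\overline{\calR_p(\Pi)}$) gives $\lesssim \|u\|_p$. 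I expect the main obstacle to be the careful bookkeeping that turns the off-diagonal-plus-Poincaré shell estimate into a clean $T^{p,2}$ bound, in particular making sure the averaging operator $A_t$ contributes only terms that are themselves dominated by the oscillation of ${P_t}^{\tilde N}u$ (so that no ``solid'' non-oscillating term survives), and handling the passage from the $L^2$-based shell decomposition to the $L^p$ tent-space norm uniformly in $t$ — this is the $L^p$ analogue of \cite[Proposition 5.5]{AKM} and the genuinely new point is that here one argues entirely through tent spaces rather than through the $L^2$ Carleson-measure/stopping-time machinery.
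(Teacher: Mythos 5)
Your proposal follows essentially the same route as the paper's proof: the exact identity $Q_t^B{P_t}^{\tilde{N}}u-\gamma_t A_t{P_t}^{\tilde{N}}u=Q_t^B\bigl({P_t}^{\tilde{N}}u-\langle{P_t}^{\tilde{N}}u\rangle_{Q_{x,t}}\bigr)$ (so in particular there are no ``lower-order terms from the averaging''), the shell decomposition combining $L^2$-$L^2$ off-diagonal bounds with Poincar\'e, the change of angle in tent spaces, and the reduction to a conical square function estimate for the unperturbed operator. The one step to tighten is the last: one cannot replace $t\nabla$ by $t\Pi$ inside the tent-space norm directly; the paper instead writes $u=\Pi v$ and commutes $\partial_{x_j}$ through the Fourier multiplier to get $t\partial_{x_j}{P_t}^{\tilde{N}}u=t\Pi{P_t}^{\tilde{N}}(\partial_{x_j}v)$, then applies Proposition \ref{lem:hardyPi} to $\partial_{x_j}v$ and uses $\|\partial_{x_j}v\|_p\lesssim\|\Pi v\|_p=\|u\|_p$ from Proposition \ref{prop:pi}(6) --- both ingredients you name, but assembled in that specific order.
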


\begin{proof}
Fix $x \in \R^n$. For $t>0$, we cover the ball $B(x,t)$ by a finite number of cubes $Q \in \Delta_t$. According to Theorem \ref{thm:p=2}, $Q_t^B$ has $L^2$-$L^2$ off-diagonal bounds of every order $N'>0$. This, together with the Cauchy-Schwarz inequality and the Poincar\'e inequality (see \cite[Lemma 5.4]{AKM}), yields the following for $Q \in \Delta_{t}$:
\begin{align*}
	& (\int_0^\infty \int_Q |Q_t^B{P_{t}}^{\tilde{N}}u(y) -\gamma_t(y) A_t{P_{t}}^{\tilde{N}}u(y)|^2\,\frac{dydt}{t^{n+1}})^{\frac{1}{2}} \\
	& \qquad =(\int_0^\infty \int_Q |Q_t^B({P_{t}}^{\tilde{N}}u-\skp{{P_{t}}^{\tilde{N}} u}_Q)(y)|^2 \,\frac{dydt}{t^{n+1}})^{\frac{1}{2}} \\
	& \qquad \leq (\int_0^\infty (\sum_{R \in \Delta_t} \|Q_t^B\Eins_R({P_{t}}^{\tilde{N}} u - \skp{{P_{t}}^{\tilde{N}}u}_Q)\|_{L^2(Q)})^2 \,\frac{dt}{t^{n+1}})^{\frac{1}{2}} \\
	& \qquad \lesssim (\int_0^\infty ( \sum_{R \in \Delta_t} (1+\frac{\dist(Q,R)}{t})^{-N'} \|{P_{t}}^{\tilde{N}}u-\skp{{P_{t}}^{\tilde{N}}u}_Q\|_{L^2(R)} )^2 \,\frac{dt}{t^{n+1}} )^{\frac{1}{2}}\\
	& \qquad \lesssim (\int_0^\infty \sum_{R \in \Delta_t} (1+\frac{\dist(Q,R)}{t})^{-N'} \|{P_{t}}^{\tilde{N}}u - \skp{{P_{t}}^{\tilde{N}} u}_Q\|_{L^2(R)}^2 \,\frac{dt}{t^{n+1}} )^{\frac{1}{2}}\\
	& \qquad \lesssim (\int_0^{\infty} \int_{\R^n} (1+\frac{\dist(Q,y)}{t})^{-N'+2n} |t\nabla {P_{t}}^{\tilde{N}}u(y)|^2 \,\frac{dydt}{t^{n+1}})^{\frac{1}{2}} \\
	& \qquad \lesssim \sum_{j=0}^{\infty} (\int_0^\infty \int_{S_j(Q)} 2^{-j(N'-2n)} |t\nabla {P_{t}}^{\tilde{N}} u(y)|^2 \,\frac{dydt}{t^{n+1}})^{\frac{1}{2}}. 
\end{align*}
By change of angle in tent spaces, see Lemma \ref{lem:angle}, we thus get
\begin{align*}
 	 &\|(t,x) \mapsto Q_t^B{P_{t}}^{\tilde{N}}u(x) - \gamma_t(x)A_t{P_{t}}^{\tilde{N}}u(x)\|_{T^{p,2}}
 		 \lesssim \sum_{j=0}^{\infty} 2^{-\frac{j}{2}(N'-2n)} \|(t,x)\mapsto t\nabla {P_{t}}^{\tilde{N}}u(x)\|_{T^{p,2}_{2^j}} \\
 		& \qquad  \lesssim \sum_{j=0}^{\infty} 2^{-\frac{j}{2}(N'-2n)} 2^{j\frac{n}{\min\{p,2\}}} \|(t,x)\mapsto t\nabla {P_{t}}^{\tilde{N}}u(x)\|_{T^{p,2}} 
 		\lesssim \|(t,x)\mapsto t\nabla {P_{t}}^{\tilde{N}}u(x)\|_{T^{p,2}} ,
\end{align*}
choosing $N'> 2n+\frac{2n}{\min\{p,2\}}$. 
Since ${P_{t}}^{\tilde{N}}$ is a Fourier multiplier, we have that, for $u= \Pi v$ with $v \in \calD_2(\Pi)$, and all $j=1,...,n$:
$$
t\partial_{x_{j}}{P_{t}}^{\tilde{N}}u = \tilde{Q}_{t}(\partial_{x_{j}}v)
$$
with $\tilde{Q}_t= t\Pi {P_{t}}^{\tilde{N}}$.
Therefore, by Proposition \ref{lem:hardyPi} 
and Theorem \ref{thm:hardy} (for $p \leq 2$), or Proposition \ref{lem:reverse} (for $p\geq 2$),
along with Proposition \ref{prop:pi} (6), we have that
$$
\|(t,x)\mapsto t\nabla {P_{t}}^{\tilde{N}}u(x)\|_{T^{p,2}} \lesssim \underset{j=1,...,n}{\max}\|(t,x)\mapsto \tilde{Q}_t(\partial_{x_{j}}v)(x)\|_{T^{p,2}}
\lesssim \underset{j=1,...,n}{\max}\|\partial_{x_{j}}v\|_{p} \lesssim \|u\|_{p},
$$
which concludes the proof.
\end{proof}

Turning now to the estimate for the principal part, we first show  that $\{\gamma_tA_t\}_{t>0}$ defines a bounded operator on $T^{p,2}$ for all $p \in (1,\infty)$. This is an
analogue of \cite[Proposition 5.7]{AKM}.

\begin{Lemma} \label{tent-bdd-gamma}
Suppose $\Pi_B$ is a perturbed Hodge-Dirac operator. 
Suppose $p \in (1,\infty)$. Then
\[
	\|(t,x)\mapsto \gamma_t(x)A_tF(t,\,.\,)(x)\|_{T^{p,2}}
	\leq C_p \|F\|_{T^{p,2}} \qquad \forall F \in T^{p,2}(\R^{n+1}_+;\C^N).
\]
\end{Lemma}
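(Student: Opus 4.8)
The plan is to show that $\{\gamma_t A_t\}_{t>0}$ is bounded on $T^{p,2}$ by interpolation between the endpoint cases $p=2$ and $p=\infty$, following the strategy of \cite[Proposition 5.7]{AKM}, but adapted to our slightly different definition of $\gamma_t$. The key structural input is a \emph{Carleson measure estimate}: the measure $d\nu(t,x) := |\gamma_t(x)|^2\,\frac{dx\,dt}{t}$ is a Carleson measure on $\R^{n+1}_+$, i.e. $\nu(T(B)) \lesssim |B|$ for every ball (or cube) $B$, where $T(B)$ is the tent over $B$. Granting this, the $T^{\infty,2}$-boundedness of $F \mapsto \gamma_t A_t F$ would follow by a Carleson-type argument (the pointwise bound $|\gamma_t(x) A_t F(t,x)| \le |\gamma_t(x)|\,|A_t F(t,x)|$ together with Carleson's lemma applied to the maximal function controlling $A_tF$), and the $T^{2,2}$ case would follow since $\|\gamma_t A_t F\|_{T^{2,2}}^2 = \int_0^\infty \int_{\R^n} |\gamma_t(x)|^2 |A_t F(t,x)|^2\,\frac{dx\,dt}{t} \lesssim \iint |A_t F(t,x)|^2\,\frac{dx\,dt}{t} \lesssim \|F\|_{T^{2,2}}^2$, using that $\gamma_t$ is pointwise bounded (this is where \eqref{eq:L2loc-est} enters, giving $\|\gamma_t\|_{L^\infty_{loc}}$-type control through local $L^2$ averages) and that $A_t$ is bounded on $T^{2,2}$. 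Complex interpolation of tent spaces then gives all $p \in (1,\infty)$; for $p \in (1,2)$ one interpolates between $p=2$ and a value slightly above $1$, but since $T^{p,2}$ interpolates by the complex method for all $p \in (1,\infty)$ as recalled in the Preliminaries, interpolating between $T^{2,2}$ and $T^{\infty,2}$ covers $p \in [2,\infty)$ directly, and for $p \in (1,2]$ one argues by a duality/molecular argument or interpolates between $T^{1,2}$ and $T^{2,2}$ after establishing the weak-type endpoint — I would follow whichever of these is cleanest, most likely mirroring \cite{AKM} which handles $p<2$ via the atomic decomposition of $T^{1,2}$.

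First I would establish the Carleson measure bound for $\nu$. Fix a cube $Q \in \Delta$ with sidelength $\ell(Q)$. One must estimate $\int_0^{\ell(Q)} \int_Q |\gamma_t(x)|^2\,\frac{dx\,dt}{t}$. The point is that $\gamma_t(x) w = (Q_t^B w)(x)$ where $w$ is the constant function; since constants are not in $L^2$, one uses \eqref{eq:L2loc-est} locally. Writing $w = w\Eins_{c Q} + w \Eins_{(cQ)^c}$ for a suitable dilate, the local piece is handled by Theorem \ref{thm:p=2} (the $T^{2,2}$ quadratic estimate, or rather the Carleson measure reformulation in Remark \ref{CM}, applied to the $L^\infty$ function $w\Eins_{cQ}$ — actually to $w$ itself via Remark \ref{CM}, which already states that $|(Q_t^B)^M u|^2\frac{dx\,dt}{t}$ is a Carleson measure for $u \in L^\infty$; with $M=1$ one needs the analogous statement, which holds by the same Fefferman–Stein argument using $L^2$-$L^2$ off-diagonal bounds for $Q_t^B$), and the far piece is controlled by the off-diagonal decay of $Q_t^B$ summed over dyadic annuli $S_j(Q)$, exactly as in the proof of Proposition \ref{lem:reverse}. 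This yields $\nu(T(Q)) \lesssim |Q| \|w\|_\infty^2 \lesssim |Q|$ uniformly in $w$ with $|w| \le 1$, hence the Carleson bound for $\gamma_t$.

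The main obstacle I anticipate is the careful bookkeeping in the Carleson estimate: one must make the "apply $Q_t^B$ to a constant" step rigorous using the local $L^2$ extension from Lemma \ref{lem:ODcomp}(5) and \eqref{eq:L2loc-est}, and then combine the local Carleson bound (from Remark \ref{CM}/Theorem \ref{thm:p=2}) with the tail estimate, keeping the constants uniform in $t$ and in $w$. A secondary technical point is verifying that $\gamma_t A_t$ genuinely maps into $T^{p,2}$ as opposed to only being formally defined — here one uses that $A_t F(t,\cdot)$ is, for $F \in T^{p,2}$, a locally integrable (indeed locally $L^2$) function on which $\gamma_t$ acts by pointwise multiplication by a bounded-on-cubes matrix-valued function, so the composition makes sense. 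Once the Carleson measure property and the two endpoint estimates are in hand, the interpolation step is routine given the complex interpolation scale for tent spaces recalled above, and the proof concludes. I expect the write-up to closely parallel \cite[Proposition 5.7]{AKM}, with the only genuine difference being the bookkeeping forced by the modified definition \eqref{def:gamma_t} of $\gamma_t$.
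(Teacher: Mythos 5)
Your plan has a genuine gap, and it sits at the very first step. You assert that $\gamma_t$ is ``pointwise bounded'' so that $\iint |\gamma_t(x)|^2|A_tF(t,x)|^2\,\frac{dx\,dt}{t}\lesssim \iint |A_tF(t,x)|^2\,\frac{dx\,dt}{t}$, citing \eqref{eq:L2loc-est}. But \eqref{eq:L2loc-est} only gives $\|\gamma_t\|_{L^2(B(x_0,t))}\lesssim t^{n/2}$, i.e.\ bounded \emph{local $L^2$ averages} of $\gamma_t$ at scale $t$; there is no pointwise bound on $\gamma_t(x)$ (it is the image of a constant under the singular family $Q_t^B$), so your $T^{2,2}$ step is unjustified. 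The $p=\infty$ endpoint via the Carleson measure $|\gamma_t|^2\frac{dx\,dt}{t}$ fares no better for a \emph{general} $F\in T^{p,2}$: both Carleson's lemma and the factorization $T^{p,\infty}\cdot T^{\infty,2}\to T^{p,2}$ require control of the non-tangential maximal function of $A_tF$, i.e.\ of $\|A_tF\|_{T^{p,\infty}}$, and this is not dominated by $\|F\|_{T^{p,2}}$ (take $F=\eps^{-1/2}\Eins_{(1,1+\eps)}(t)\Eins_{B(0,1)}(y)$: the $T^{p,2}$ norm stays bounded while $\sup_t|A_tF|$ blows up as $\eps\to 0$). That is precisely why the paper reserves the Carleson/factorization argument for Proposition \ref{Prop:Carleson-part}, where $F(t,\cdot)={P_t}^{\tilde N}u$ and Lemma \ref{lemma:nontang-max-est} supplies the missing $T^{p,\infty}$ bound; the present lemma must handle arbitrary $F$.

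The actual proof is shorter and needs neither interpolation nor the Carleson property: the two local bounds you have are exactly complementary and should be paired by H\"older at each fixed $(x,t)$. Since $A_tF(t,\cdot)$ is constant on dyadic cubes of scale $t$, one has $\|A_tF(t,\cdot)\|_{L^\infty(B(x,t))}\lesssim t^{-n/2}\|F(t,\cdot)\|_{L^2(B(x,5t))}$, and combining this with $\|\gamma_t\|_{L^2(B(x,t))}\lesssim t^{n/2}$ gives
\begin{equation*}
\Bigl(\int_0^\infty\int_{B(x,t)}|\gamma_t(y)A_tF(t,y)|^2\,\frac{dy\,dt}{t^{n+1}}\Bigr)^{1/2}\lesssim\Bigl(\int_0^\infty\int_{B(x,5t)}|F(t,y)|^2\,\frac{dy\,dt}{t^{n+1}}\Bigr)^{1/2}.
\end{equation*}
Taking $L^p$ norms in $x$ and invoking the change-of-aperture Lemma \ref{lem:angle} finishes the proof for every $p\in(1,\infty)$ at once.
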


\begin{proof}
First observe that, given $x \in \R^n$ and $t>0$,
\begin{align*}
	\|A_tF(t,\,.\,)\|_{L^{\infty}(B(x,t))}
	&=\sup_{y \in B(x,t)} |A_tF(t,y)|
	=\sup_{\substack{Q \in \Delta_t \\ B(x,t) \cap Q \neq \emptyset}} |Q|^{-1}|\int_Q F(t,z) \,dz|\\
	&\leq \sup_{\substack{Q \in \Delta_t \\ B(x,t) \cap Q \neq \emptyset}}|Q|^{-\frac{1}{2}} \|F(t,\,.\,)\|_{L^2(B(x,5t))}
	\lesssim t^{-\frac{n}{2}} \|F(t,\,.\,)\|_{L^2(B(x,5t))}.
\end{align*}
According to \eqref{eq:L2loc-est}, we have on the other hand
$
	\|\gamma_t\|_{L^2(B(x,t))} \lesssim t^{\frac{n}{2}},
$
and consequently
\begin{align*}
	 (\int_0^\infty \int_{B(x,t)} |\gamma_t(y) \cdot A_tF(t,y)|^2 \,\frac{dydt}{t^{n+1}} )^{\frac{1}{2}} 
	 \lesssim (\int_0^\infty \int_{B(x,5t)} |F(t,y)|^2 \,\frac{dydt}{t^{n+1}} )^{\frac{1}{2}}.
\end{align*}
Taking the $L^p$ norm with respect to $x \in \R^n$ then yields the assertion.
\end{proof}

The corresponding estimate for the principal part $\gamma_t(x) A_t {P_{t}}^{\tilde{N}}$ relies on the following factorisation result  for tent spaces:

\begin{Theorem}[\cite{CohnVerbitsky}, Theorem 1.1] \label{Thm:factorisation-tent}
Let $p,q \in (1,\infty)$. If $F \in T^{p,\infty}(\R^{n+1}_+;\C^N)$ and $G \in T^{\infty,q}(\R^{n+1}_+;\C^N)$, then $FG \in T^{p,q}(\R^{n+1}_+;\C^N)$ and
\[
	\|F \cdot G\|_{T^{p,q}} \leq C \|F\|_{T^{p,\infty}} \|G\|_{T^{\infty,q}},
\]
with a constant $C$ which is independent of $F$ and $G$.
\end{Theorem}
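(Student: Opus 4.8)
The theorem is quoted verbatim from \cite{CohnVerbitsky}, so the ``proof'' here is really the citation; let me nonetheless describe the argument one would give. Unravelling the definitions --- with $N_{\infty}F(x):=\sup\{|F(t,y)|:|x-y|<t\}$ the nontangential maximal function, so that $\|F\|_{T^{p,\infty}}=\|N_{\infty}F\|_{L^{p}}$, and $A_{q}H(x):=\bigl(\int_{0}^{\infty}\int_{B(x,t)}|H(t,y)|^{q}\,\tfrac{dy\,dt}{t^{n+1}}\bigr)^{1/q}$ the conical $q$-functional --- the assertion is exactly
\[
\|A_{q}(FG)\|_{L^{p}}\lesssim\|N_{\infty}F\|_{L^{p}}\,\|G\|_{T^{\infty,q}}.
\]

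The plan is to first dispose of the model case $p=q$. Here Fubini gives $\|A_{q}(FG)\|_{L^{q}}^{q}=c_{n}\int_{0}^{\infty}\int_{\R^{n}}|F(t,y)G(t,y)|^{q}\,\tfrac{dy\,dt}{t}$, and one applies the layer-cake formula to $|F|^{q}$. Setting $\mathcal{O}_{\lambda}:=\{x\in\R^{n}:N_{\infty}F(x)>\lambda\}$, an open set, one has the pointwise inclusion $\{(t,y):|F(t,y)|>\lambda\}\subseteq\widehat{\mathcal{O}_{\lambda}}:=\{(t,y):B(y,t)\subseteq\mathcal{O}_{\lambda}\}$, simply because $(t,y)$ lies in the cone over every point of $B(y,t)$. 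Since the definition of $\|\cdot\|_{T^{\infty,q}}$ says precisely that $|G|^{q}\,\tfrac{dy\,dt}{t}$ is a Carleson measure with constant $\|G\|_{T^{\infty,q}}^{q}$, a Whitney decomposition of $\mathcal{O}_{\lambda}$ upgrades this to $\int_{\widehat{\mathcal{O}_{\lambda}}}|G|^{q}\,\tfrac{dy\,dt}{t}\lesssim\|G\|_{T^{\infty,q}}^{q}|\mathcal{O}_{\lambda}|$; integrating $q\lambda^{q-1}$ against this bound and using the distribution-function identity $\int_{0}^{\infty}q\lambda^{q-1}|\mathcal{O}_{\lambda}|\,d\lambda=\|N_{\infty}F\|_{L^{q}}^{q}$ closes this case.

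To pass to a general exponent $p$, one localizes the above to Carleson boxes: for each dyadic cube $Q$, the part of $A_{q}(FG)$ coming from the cone over $Q$ is controlled, via the $p=q$ argument applied with $\mathcal{O}_{\lambda}$ replaced by $\mathcal{O}_{\lambda}\cap CQ$, by a multiple of $\|G\|_{T^{\infty,q}}^{q}\int_{CQ}(N_{\infty}F)^{q}$; a stopping-time selection of the relevant cubes, together with boundedness of the Hardy--Littlewood maximal operator on $L^{p/q}$ when $p>q$ (and a dual argument when $p<q$), then converts these local $L^{q}$ estimates into the global $L^{p}$ bound. An alternative is to combine tent-space duality $(T^{p,q})^{*}=T^{p',q'}$ with the product embedding $T^{\infty,q}\cdot T^{p',q'}\hookrightarrow T^{p',1}$ and the $T^{p,\infty}$--$T^{p',1}$ pairing estimate; but the product embedding is itself a Carleson-measure inequality of the same flavour, so this route does not really shorten the proof.

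The only genuine obstacle is this passage from $p=q$ to general $p$: the clean layer-cake identity is lossy off the diagonal --- indeed $A_{q}G$ need not be finite at even a single point when $G\in T^{\infty,q}$ --- so no pointwise domination of $A_{q}(FG)$ by $N_{\infty}F$ times a $\|G\|_{T^{\infty,q}}$-controlled quantity is available, and a genuinely $L^{p}$ (localization-and-summation, or duality) argument is unavoidable. Everything else --- the Fubini identities, the inclusion $\{|F|>\lambda\}\subseteq\widehat{\mathcal{O}_{\lambda}}$, the Whitney decomposition of the superlevel sets, and the distribution-function identity for $N_{\infty}F$ --- is routine, and the full details are carried out in \cite{CohnVerbitsky}.
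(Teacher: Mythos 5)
The paper gives no proof of this statement --- it is imported verbatim from Cohn--Verbitsky --- so your citation is exactly the paper's ``proof,'' and there is nothing to compare. Your accompanying sketch is a faithful outline of the standard Carleson-embedding argument (the $p=q$ case via layer-cake over the superlevel sets of $N_\infty F$, the tent inclusion $\{|F|>\lambda\}\subseteq\widehat{\mathcal{O}_\lambda}$, and the Whitney estimate for Carleson measures of tents over open sets is all correct), and you honestly identify the passage to general $p$ as the nontrivial step that is carried out in the reference.
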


This plays the role of the $L^p$ vertical square function version of Carleson's inequality proven in \cite[Lemma 8.1]{HMcP1}. Note that this conical version is substantially simpler than its vertical counterpart.\\

We also use the following conical maximal function estimate for operators with $L^q$-$L^q$ off-diagonal bounds.

\begin{Lemma} \label{lemma:nontang-max-est}
Let $q \in [1,2]$ and $p \in (1,\infty)$ with $q<p$. 
Let $\{T_t\}_{t>0}$ be a family of operators in   $\calL(L^{q}(R^{n};\C^{N}))$,
such that for all $t>0$, $T_{t}$ has
$L^q$-$L^q$ off-diagonal bounds of order $N'>\frac{n}{q}$.  Then
\[
	\|(t,x)\mapsto A_t T_t u(x)\|_{T^{p,\infty}} \leq C_p \|u\|_{p}
	 \quad \forall u \in L^q(\R^n;\C^N) \cap L^p(\R^n;\C^N). 
\]
\end{Lemma}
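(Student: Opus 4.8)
The plan is to dominate the conical (non-tangential) maximal function of $(t,x)\mapsto A_tT_tu(x)$ pointwise by a power of the Hardy--Littlewood maximal function $M$, and then invoke the boundedness of $M$ on $L^{p/q}(\R^n)$.

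First I would fix $x\in\R^n$ and a pair $(t,y)$ with $|x-y|<t$, and write $A_tT_tu(y)=\langle T_tu\rangle_{Q}$ with $Q=Q_{y,t}\in\Delta_t$ the dyadic cube containing $y$, so that $\ell(Q)\in[t,2t)$ and $|x-y|<t\le\ell(Q)$. By H\"older's inequality $|A_tT_tu(y)|\le|Q|^{-1/q}\|T_tu\|_{L^q(Q)}$. To bound $\|T_tu\|_{L^q(Q)}$ I would split $u=\sum_{k\ge1}\Eins_{S_k(Q)}u$ into the dyadic shells around $Q$ (using $T_t\in\calL(L^q)$ to pass the sum through $T_t$) and use the $L^q$--$L^q$ off-diagonal bounds of order $N'>n/q$ together with $\dist(Q,S_k(Q))\gtrsim 2^kt$ to obtain
\[
\|T_tu\|_{L^q(Q)}\lesssim\sum_{k\ge1}2^{-kN'}\|u\|_{L^q(S_k(Q))}\lesssim\sum_{k\ge1}2^{-kN'}\big(2^{kn}t^n\,M(|u|^q)(x)\big)^{1/q},
\]
where the last inequality uses that $S_k(Q)\subseteq 2^{k+1}Q$ and that $2^{k+1}Q$ is contained in a cube of comparable measure containing $x$ (valid since $y\in Q$ and $|x-y|\le\ell(Q)$), so the average of $|u|^q$ over $2^{k+1}Q$ is $\lesssim M(|u|^q)(x)$. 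Since $N'>n/q$ the series converges and $\|T_tu\|_{L^q(Q)}\lesssim t^{n/q}(M(|u|^q)(x))^{1/q}$; as $|Q|\approx t^n$, this yields $|A_tT_tu(y)|\lesssim(M(|u|^q)(x))^{1/q}$, uniformly over $(t,y)$ in the cone above $x$.

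Taking the supremum over the cone gives the pointwise bound for the non-tangential maximal function, hence
\[
\|(t,x)\mapsto A_tT_tu(x)\|_{T^{p,\infty}}\lesssim\big\|(M(|u|^q))^{1/q}\big\|_{p}=\|M(|u|^q)\|_{p/q}^{1/q}.
\]
Since $q<p$, $M$ is bounded on $L^{p/q}(\R^n)$, and $|u|^q\in L^{p/q}$ because $u\in L^p$; therefore $\|M(|u|^q)\|_{p/q}^{1/q}\lesssim\||u|^q\|_{p/q}^{1/q}=\|u\|_{p}$, which is the assertion. The hypotheses $u\in L^q$ and $T_t\in\calL(L^q)$ are what guarantee that $T_tu$ and the shell decomposition are well defined.

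The lemma is not really difficult; the only delicate point is the bookkeeping in the shell sum: one must check that the off-diagonal decay $2^{-kN'}$ dominates the $2^{kn/q}$ growth coming from the measures of the shells, which is precisely where $N'>n/q$ enters, and that the elementary geometry $|x-y|<t\approx\ell(Q)$ indeed places $x$ in a dilate of $Q$ of comparable size at every scale $k$, so that the shell averages are genuinely controlled by $M(|u|^q)(x)$. Everything else is standard.
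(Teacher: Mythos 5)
Your proof is correct and follows essentially the same route as the paper's: Hölder on the dyadic cube $Q_{y,t}$, a dyadic shell decomposition combined with the $L^q$--$L^q$ off-diagonal bounds (with $N'>n/q$ absorbing the $2^{kn/q}$ growth of the shell measures), a pointwise domination by $\calM_q u(x)=(M(|u|^q)(x))^{1/q}$ using $|x-y|<t\lesssim \ell(Q_{y,t})$, and finally the $L^{p/q}$-boundedness of the Hardy--Littlewood maximal operator. No gaps.
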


\begin{proof}
Let $u \in  L^q(\R^n;\C^N)  \cap L^p(\R^n;\C^N)$. Using H\"older's inequality and $L^q$-$L^q$ off-diagonal bounds for $T_t$, we obtain, given $x \in \R^n$, the pointwise estimate

\begin{align*}
	\sup_{(y,t) \in \Gamma(x)} |A_t T_t u(y)|
		& \lesssim \sup_{(y,t) \in \Gamma(x)} (t^{-n} \int_{Q_{y,t}} |T_t u(z)| \,dz)\\
		& \leq \sup_{(y,t) \in \Gamma(x)} \sum_{j=0}^\infty (t^{-n} \int_{Q_{y,t}} |T_t\Eins_{S_j(Q_{y,t})} u(z)|^q\,dz)^{1/q}  \\ 
				& 
 \lesssim \sup_{(y,t) \in \Gamma(x)} \sum_{j=0}^\infty t^{-\frac{n}{q}} 2^{-jN'} \|u\|_{L^q(2^j Q_{y,t})} \\
		& \lesssim \sup_{(y,t) \in \Gamma(x)} \sum_{j=0}^\infty 2^{-j(N'-\frac{n}{q})} (2^jt)^{-\frac{n}{q}} \|u\|_{L^q(2^j Q_{y,t})}\\ 
		& \lesssim \underset{R>0}{\sup} (\frac{1}{|B(x,R)|} \int \limits _{B(x,R)} |u(z)|^q dz)^{\frac{1}{q}} =: \calM_q u(x).
\end{align*}

Since $q<p$, the boundedness of the Hardy-Littlewood maximal operator in $L^{\frac{p}{q}}$ implies that the maximal operator $\calM_q$ is bounded in $L^p(\R^n;\C^N)$.
 Thus, 
\[
	\|(t,x) \mapsto A_tT_tu(x)\|_{T^{p,\infty}} 
				\lesssim \|\calM_q u\|_{p} 
		\lesssim \|u\|_{p}.
\]
\end{proof}

The estimate for the principal part is a direct consequence of the results above, together with the Carleson measure estimate for $|\gamma_t(x)|^{2}\frac{dtdx}{t}$.

\begin{Prop} \label{Prop:Carleson-part}
Suppose $\Pi_B$ is a perturbed Hodge-Dirac operator. 
Let $(t,x) \mapsto \gamma_t(x)$ be defined as in \eqref{def:gamma_t}. Suppose $p \in (1,\infty)$. Then
\[
	\|(t,x)\mapsto\gamma_t(x)A_t{P_{t}}^{\tilde{N}}u(x)\|_{T^{p,2}} \leq C_p \|u\|_{p} \qquad \forall u \in \overline{\calR_p(\Pi)}.
\]	
\end{Prop}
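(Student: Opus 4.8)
The plan is to realise the principal part $(t,x)\mapsto\gamma_t(x)A_{t}{P_{t}}^{\tilde{N}}u(x)$ as a pointwise product of a function in $T^{p,\infty}$ and a function in $T^{\infty,2}$, and then apply the factorisation result Theorem \ref{Thm:factorisation-tent} with exponents $p$ and $q=2$. Recall that $\gamma_t$ is $\calL(\C^N)$-valued, with $\gamma_t(x)w=(Q_{t}^{B}w)(x)$ (this being well defined by the $L^2_{\loc}$-extension \eqref{eq:L2loc-est}, as in \eqref{def:gamma_t}). Writing $G(t,x):=A_{t}{P_{t}}^{\tilde{N}}u(x)$, we have the pointwise bound $|\gamma_t(x)\,G(t,x)|\le|\gamma_t(x)|\,|G(t,x)|$, so it suffices to prove
\[
	\|(t,x)\mapsto|\gamma_t(x)|\|_{T^{\infty,2}}\lesssim 1
	\qquad\text{and}\qquad
	\|(t,x)\mapsto G(t,x)\|_{T^{p,\infty}}\lesssim\|u\|_{p},
\]
and then conclude by applying Theorem \ref{Thm:factorisation-tent} to the scalar functions $|G|$ and $|\gamma_t|$.

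For the first estimate I would use the Carleson measure bound already isolated in Remark \ref{CM}, namely the case $p=\infty$, $M=1$ of Proposition \ref{lem:reverse}: for every $w\in\C^N$, viewed as a constant function, the measure $|Q_{t}^{B}w(x)|^{2}\,\frac{dx\,dt}{t}$ is Carleson with constant $\lesssim\|w\|_{\infty}^{2}=|w|^{2}$. Applying this to the finitely many standard basis vectors of $\C^N$ and bounding the operator norm $|\gamma_t(x)|$ by the $\ell^{2}$-norm of the vectors $(Q_{t}^{B}e_{k})(x)$, we obtain that $|\gamma_t(x)|^{2}\,\frac{dx\,dt}{t}$ is a Carleson measure with constant $\lesssim 1$, which is exactly $\|(t,x)\mapsto|\gamma_t(x)|\|_{T^{\infty,2}}\lesssim 1$.

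For the second estimate I would invoke Lemma \ref{lemma:nontang-max-est} with $T_t={P_{t}}^{\tilde{N}}$, where $P_t=(I+t^{2}\Pi^{2})^{-1}$ is the \emph{unperturbed} operator of the statement. By Proposition \ref{lem:hardyPi}(2), $\{P_t\}_{t>0}$ has $L^{r}$-$L^{r}$ off-diagonal bounds of every order for all $r\in(1,\infty)$; since such bounds pass to compositions, the same holds for $\{{P_{t}}^{\tilde{N}}\}_{t>0}$. Choosing $q\in(1,2]$ with $q<p$ ($q=2$ if $p>2$, any $q\in(1,p)$ if $p\le 2$) and any $N'>\frac{n}{q}$, Lemma \ref{lemma:nontang-max-est} gives $\|(t,x)\mapsto A_{t}{P_{t}}^{\tilde{N}}u(x)\|_{T^{p,\infty}}\lesssim\|u\|_{p}$ for all $u\in L^{q}\cap L^{p}$. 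To pass from this dense subspace to all of $\overline{\calR_p(\Pi)}$, I would use that $\Pi$ and the Hodge projection $\mathbb{P}_{\overline{\calR(\Pi)}}$ are Fourier multipliers bounded on every $L^{r}$, $r\in(1,\infty)$, so that $\mathbb{P}_{\overline{\calR(\Pi)}}(C_c^{\infty}(\R^{n};\C^{N}))\subset\bigcap_{1<r<\infty}L^{r}$ is a dense subspace of $\overline{\calR_p(\Pi)}$ on which the bound holds; since the left-hand side of the asserted inequality is controlled continuously by $\|u\|_{p}$, it then holds on all of $\overline{\calR_p(\Pi)}$.

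Combining the two displayed estimates via Theorem \ref{Thm:factorisation-tent} yields $\|(t,x)\mapsto\gamma_t(x)A_{t}{P_{t}}^{\tilde{N}}u(x)\|_{T^{p,2}}\lesssim\|u\|_{p}$, which is the claim. I do not expect a genuine obstacle here: the one substantial analytic ingredient, the Carleson measure estimate for $\gamma_t$, has already been established in Proposition \ref{lem:reverse} and Remark \ref{CM}, and the rest is the bookkeeping of the tent-space factorisation, the choice of $q<p$, and a routine density reduction. The only point requiring mild care is handling the matrix-valued nature of $\gamma_t$, which is why I would prefer to dominate pointwise by $|\gamma_t(x)|\,|G(t,x)|$ and apply the scalar factorisation theorem rather than argue entrywise.
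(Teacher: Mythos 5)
Your proposal is correct and follows essentially the same route as the paper: the same factorisation via Theorem \ref{Thm:factorisation-tent} into a $T^{p,\infty}$ factor (controlled by Lemma \ref{lemma:nontang-max-est} with $T_t={P_t}^{\tilde N}$ and Proposition \ref{lem:hardyPi}) and a $T^{\infty,2}$ factor (the Carleson bound from Proposition \ref{lem:reverse} and Remark \ref{CM}). Your explicit treatment of the matrix-valued $\gamma_t$ via operator-norm domination and the closing density argument are harmless refinements of what the paper leaves implicit.
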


\begin{proof}
Since $A_t^2=A_t$, Theorem \ref{Thm:factorisation-tent} yields 
\begin{align*}
	\|(t,x)\mapsto\gamma_t(x)A_t{P_{t}}^{\tilde{N}}u(x)\|_{T^{p,2}}
	\lesssim \|(t,x) \mapsto A_t{P_{t}}^{\tilde{N}} u(x)\|_{T^{p,\infty}} \cdot \|(t,x) \mapsto \gamma_t(x)\|_{T^{\infty,2}}.
\end{align*}
The boundedness of the last factor is shown in Proposition \ref{lem:reverse} and noted in Remark \ref{CM}, as a consequence of the $L^2$ theory for $\Pi_B$ established in \cite{AKM}, cf. Theorem \ref{thm:p=2}. The first factor is bounded by a constant times $\|u\|_{p}$ as an application of Lemma \ref{lemma:nontang-max-est}:  take $T_t:={P_{t}}^{\tilde{N}}$ and notice that, 
 for all $t>0$, and every $q \in (1,2]$,
${P_{t}}^{\tilde{N}}$ satisfies  $L^q$-$L^q$ off-diagonal bounds of every order  by Proposition \ref{lem:hardyPi}.  This completes the proof of \eqref{reduction} and hence of Theorem \ref{thm:low}.
\end{proof}

\section{High frequency estimates for $p \in (2_{*},2]$}
\label{sec:high1}

In this section, we give a proof of Theorem \ref{thm:high} for the case $2_\ast < p_H <2$. In particular, this gives a proof for $n \in \{1,2\}$, a case we have to exclude in Section \ref{sec:high2} below for technical reasons. The proof is similar to the corresponding proof in $L^2$ in \cite{AKM}, and is less technically involved than the case $p_H \leq 2_\ast$ considered in the next sections.

\begin{Prop} \label{lem:simpleHF}
Suppose $\Pi_B$ is a perturbed Hodge-Dirac operator.
Suppose $M \in \N$ and $p \in (2_\ast,2]$. Then
\begin{align*}
\|(t,x) \mapsto (Q_{t}^{B})^{M}(I-{P_{t}}^{\tilde{N}})u(x)\|_{T^{p,2}} &\leq C_p \|u\|_{p} \quad \forall u \in \overline{\calR_{2}(\Gamma)} \cap L^{p}(\R^n;\C^{N}).
\end{align*} 
\end{Prop}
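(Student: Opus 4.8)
The plan is to transfer the estimate onto the unperturbed operator $\Pi$ by means of a potential map, the price being that one must then control a \emph{first order} perturbed expression on tent spaces; the restriction $p>2_\ast$ will enter only through a Sobolev embedding, in the spirit of the Carleson-measure argument of Section \ref{sec:low}.

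First I would reduce to $u$ of the form $\Gamma w$. Since $\overline{\calR_2(\Gamma)}\cap L^p\subseteq\overline{\calR_p(\Gamma)}$ (the Hodge decomposition for $\Pi$, exactly as in the proof of Corollary \ref{cor:hardy}), and since the potential map $S_\Gamma\colon\overline{\calR_p(\Gamma)}\to\dot W^{1,p}(\R^n;\C^N)$ of Proposition \ref{prop:pi}(7) satisfies $\Gamma S_\Gamma=I$ and $\|\nabla S_\Gamma u\|_p\lesssim\|u\|_p$, it suffices (after a routine approximation letting us take $w:=S_\Gamma u$ as regular as needed) to prove
\[
  \|(t,x)\mapsto(Q_t^B)^M(I-{P_t}^{\tilde N})\Gamma w(x)\|_{T^{p,2}}\lesssim\|\nabla w\|_p .
\]

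Next I would use two algebraic facts to expose the derivatives. Since $\Gamma^2=0$, one has $\Pi^2\Gamma=\Gamma(\Gamma^\ast\Gamma)$ on $\calD_2(\Gamma^\ast\Gamma)$, hence $(I-{P_t}^{\tilde N})\Gamma=\Gamma(I-{\widetilde P_t}^{\tilde N})$ with $\widetilde P_t:=(I+t^2\Gamma^\ast\Gamma)^{-1}$; moreover, since $\Gamma$ and $\Gamma_B^\ast$ are nilpotent and $P_t^B$ preserves both $\overline{\calR(\Gamma)}$ and $\overline{\calR(\Gamma_B^\ast)}$, on $\overline{\calR_2(\Gamma)}$ the operator $(Q_t^B)^M$ is an alternating product of the operators $t\Gamma P_t^B$ and $t\Gamma_B^\ast P_t^B$, whence $(Q_t^B)^M(t\Gamma)$ extends to a family that is uniformly bounded on $L^2$ and has $L^2$-$L^2$ off-diagonal bounds of every order (Theorem \ref{thm:p=2}, Lemma \ref{lem:ODcomp}). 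Writing $(Q_t^B)^M(I-{P_t}^{\tilde N})\Gamma w=\tfrac1t\big[(Q_t^B)^M(t\Gamma)\big]\,(I-{\widetilde P_t}^{\tilde N})w$, the problem becomes that of estimating in $T^{p,2}$ a uniformly bounded family with off-diagonal bounds of every order applied to the input $\tfrac1t(I-{\widetilde P_t}^{\tilde N})w$, which is built from $\nabla w$ by Fourier multipliers (since $\Gamma$, $\Gamma^\ast$ and all functions of $\Pi$ commute with $\nabla$) for which the $L^p$ conical square-function estimates of Proposition \ref{lem:hardyPi} and Theorem \ref{thm:hardy} are available.

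The estimate is then obtained by the dyadic-cube argument of Section \ref{sec:low}: cover each ball $B(x,t)$ by cubes $Q\in\Delta_t$, decompose the input over the dyadic shells $S_j(Q)$, estimate the $L^2(Q)$-norm of $\tfrac1t[(Q_t^B)^M(t\Gamma)](\cdot)$ using the off-diagonal decay and the locality of the first order operator $\Gamma$, invoke the Sobolev embedding $\dot W^{1,p}(B)\hookrightarrow L^2(B)$ on balls of radius $t$ --- valid precisely when $p\ge 2_\ast$, i.e. $p^\ast\ge 2$, and supplying the power of $t$ needed to match the scalings --- and sum over $j$ with the change-of-angle estimate of Lemma \ref{lem:angle}; this reduces the left-hand side to a conical square function of $\nabla w$, controlled by $\|\nabla w\|_p$. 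I expect the main obstacle to be precisely this last step: balancing the powers of $t$ coming from the Sobolev gain, from the factor $t\Gamma$ extracted out of $(Q_t^B)^M$, and from the $t^2\Gamma^\ast\Gamma$ extracted out of $I-{P_t}^{\tilde N}$ on $\overline{\calR(\Gamma)}$, so that the argument closes --- this is the only point where the hypothesis $p>2_\ast$ is used, everything else being a routine adaptation of the $L^2$ argument of \cite{AKM} and of the Carleson-measure argument of Section \ref{sec:low}.
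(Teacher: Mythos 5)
Your reduction is, up to repackaging, the paper's own argument. The paper does not pass through the potential map: for $u \in \overline{\calR_2(\Gamma)}$ one has directly $(I-P_t)u = t^2\Pi^2 P_t u = t\Gamma Q_t u$ (since $\Pi^2 = \Gamma\Gamma^\ast$ on $\calN(\Gamma)$), hence $(I-{P_t}^{\tilde N})u = t\Gamma\sum_{k=0}^{\tilde N-1}{P_t}^k Q_t u$, which is your identity $(I-{P_t}^{\tilde N})\Gamma = \Gamma(I-{\widetilde P_t}^{\tilde N})$ written without introducing $w=S_\Gamma u$. The absorption of the factor $t\Gamma$ into $(Q_t^B)^M$ is Lemma \ref{Lemma:off-diag-est} (which you should cite here rather than Theorem \ref{thm:p=2} and Lemma \ref{lem:ODcomp} alone: the off-diagonal bounds for $Q_t^B t\Gamma$ come from $\{t\Gamma^\ast \underline{Q_t^B}^{\,\ast}\}$-type families by duality, or from your alternating-product observation, and then composition), and Lemma \ref{tent-boundedness} converts these $L^2$-$L^2$ off-diagonal bounds into $T^{p,2}$-boundedness. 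After this step one is left with exactly $\|(t,x)\mapsto Q_t u(x)\|_{T^{p,2}}$, equivalently with your $\|(t,x)\mapsto \frac1t(I-{\widetilde P_t}^{\tilde N})w(x)\|_{T^{p,2}}$, since $\frac1t(I-{\widetilde P_t}^{\tilde N})w = \sum_k {P_t}^k Q_t(\Gamma w)$.

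The gap is in your final paragraph. What remains to be bounded is $\|(t,x)\mapsto\psi(t\Pi)u(x)\|_{T^{p,2}}\lesssim\|u\|_p$ for a $\psi$ with only \emph{first-order} vanishing at $0$ and first-order decay at $\infty$; this is precisely Proposition \ref{lem:hardyPi}\,(4), and that is where $p>2_\ast$ genuinely enters — through the tent-space Schur estimate of Corollary \ref{Cor:SIO-tent}, whose condition $\beta>n(\frac1q-\frac12)$ with $\beta=1$ reads $q>2_\ast$. Your proposed substitute — dyadic shells, locality of $\Gamma$, and the Sobolev embedding $\dot W^{1,p}(B)\hookrightarrow L^2(B)$ — is not carried out, and as described it does not close: decomposing the input over shells $S_j(Q)$ and applying $L^p$-to-$L^2$ (or $\dot W^{1,p}$-to-$L^2$) bounds produces averages $(\fint_{2^jQ}|\nabla w|^p)^{1/p}$, i.e. maximal-function control of $T^{p,\infty}$ type, not the $L^2(\frac{dy\,dt}{t^{n+1}})$ cone averages defining $T^{p,2}$; converting one into the other is exactly the content of the Schur/extrapolation machinery of Sections \ref{sec:tech1} and \ref{sec:tech2}, not of a pointwise Sobolev inequality (the coincidence $p^\ast\geq 2\iff p\geq 2_\ast\iff 1>n(\frac1p-\frac12)$ is a numerological match, not an argument). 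Moreover, $\dot W^{1,p}$-$L^2$ off-diagonal bounds for the \emph{perturbed} family are only established later (Section \ref{sec:high2}) under extra hypotheses, so relying on them here would undercut the point of this proposition, which needs no $L^p$ assumptions on $\Pi_B$. Replacing your last paragraph by "apply Lemma \ref{tent-boundedness} to strip off $(Q_t^B)^M(t\Gamma)$ and $\sum_k{P_t}^k$, then invoke Proposition \ref{lem:hardyPi}\,(4)" closes the proof and recovers the paper's argument.
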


\begin{proof}[Proof of Proposition \ref{lem:simpleHF}]
Let $p \in (2_\ast,2]$, $M \in \N$,
and $u \in \calR_{2}(\Gamma) \cap L^{p}(\R^n;\C^{N})$. Lemma \ref{tent-boundedness} and Lemma \ref{Lemma:off-diag-est} below yield
\begin{align*}
	\|(t,x) \mapsto (Q_{t}^{B})^{M}(I-{P_{t}}^{\tilde{N}})u(x)\|_{T^{p,2}} 
	&= \|(t,x) \mapsto Q_{t}^{B}t\Gamma (\sum \limits _{k=0} ^{\tilde{N}-1} {P_{t}}^{k})Q_t u(x)\|_{T^{p,2}} \\
	&\lesssim \|(t,x) \mapsto Q_t u(x)\|_{T^{p,2}}.
\end{align*}
The assertion then follows from Proposition \ref{lem:hardyPi}.
\end{proof}

We use the following lemma in the proof of Proposition \ref{lem:simpleHF} above.
The result and its proof are a slight modification of \cite[Proposition 5.2]{AKM}.

\begin{Lemma} \label{Lemma:off-diag-est}
The families $\{t\Gamma_B^\ast Q_t^B \;;\; t \in \R\}$ 
and  $\{t\Gamma Q_t^B \;;\; t \in \R\}$ have $L^2$-$L^2$ off-diagonal bounds of every order.
\end{Lemma}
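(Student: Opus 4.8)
The statement to prove is Lemma~\ref{Lemma:off-diag-est}: the families $\{t\Gamma_B^\ast Q_t^B\}_{t\in\R}$ and $\{t\Gamma Q_t^B\}_{t\in\R}$ have $L^2$-$L^2$ off-diagonal bounds of every order. Recall $Q_t^B = t\Pi_B(I+t^2\Pi_B^2)^{-1} = \tfrac{1}{2i}(R_{-t}^B - R_t^B)$ where $R_t^B = (I+it\Pi_B)^{-1}$, and $\Pi_B = \Gamma + \Gamma_B^\ast$ with $\Gamma_B^\ast = B_1\Gamma^\ast B_2$. The plan is to follow the template of \cite[Proposition~5.2]{AKM}: use the already-known $L^2$-$L^2$ off-diagonal bounds of every order for the resolvent family $\{R_t^B\}_{t\in\R}$ (Theorem~\ref{thm:p=2}(2)), combine them via Lemma~\ref{lem:ODcomp}(4) to get the same for $\{Q_t^B\}$, and then handle the extra factor $t\Gamma$ (resp.\ $t\Gamma_B^\ast$) by a commutator/localization argument exploiting that $\Gamma$ and $\Gamma_B^\ast$ are first-order differential operators with ($L^\infty$) coefficients.

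\textbf{Key steps.} First I would fix Borel sets $E, F \subseteq \R^n$ with $\dist(E,F) = \delta$, a parameter $t \in \R^\ast$, and $u \in L^2(\R^n;\C^N)$ supported in $F$; the goal is $\|\Eins_E\, t\Gamma Q_t^B u\|_2 \lesssim (1 + \delta/|t|)^{-M}\|u\|_2$ for arbitrary $M$. Choose a smooth cutoff $\eta$ with $\eta \equiv 1$ on a neighbourhood of $E$, $\supp\eta$ at distance $\gtrsim \delta$ from $F$, and $\|\nabla\eta\|_\infty \lesssim 1/\delta$ (scaling is relevant when $\delta \lesssim |t|$, where the estimate is trivial from the $L^2$-boundedness of $t\Gamma Q_t^B$, so assume $\delta \geq |t|$). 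Then write $\Eins_E\, t\Gamma Q_t^B u = \Eins_E\, t\Gamma(\eta Q_t^B u) = \Eins_E\big( t[\Gamma,\eta]Q_t^B u + t\eta\,\Gamma Q_t^B u\big)$, but more efficiently one uses the product rule for the first-order operator: $t\Gamma(\eta Q_t^B u) = t(\Gamma\eta)Q_t^B u$ (up to the lower-order multiplication term), where $(\Gamma\eta)$ is bounded by $\|\nabla\eta\|_\infty \lesssim 1/\delta$. Since $\Gamma Q_t^B u$ need not be controlled directly, the cleaner route is: $\Eins_E \Gamma Q_t^B u = \Eins_E \Gamma(\eta Q_t^B u)$ because $\eta = 1$ near $E$, and then use $\|\Gamma(\eta v)\|_2 \lesssim \|\nabla\eta\|_\infty\|v\|_2 + \|\eta\,\Gamma v\|_2$. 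The term $\|\eta\,\Gamma v\|_2$ with $v = Q_t^B u$ is handled by noting $\Gamma Q_t^B = \tfrac{1}{2i}\Gamma(R_{-t}^B - R_t^B)$ and $\Gamma R_t^B = \tfrac{1}{it}(R_t^B - I)$ on the appropriate subspace (since $\Gamma \leq \Pi_B$ and $it\Pi_B R_t^B = I - R_t^B$), so $t\Gamma Q_t^B$ is a bounded combination of $I, R_t^B, R_{-t}^B$ and of $t\Gamma_B^\ast Q_t^B$. This circularity between the two families is resolved exactly as in \cite[Proposition~5.2]{AKM}: one proves off-diagonal bounds for $\{t\Gamma Q_t^B\}$ and $\{t\Gamma_B^\ast Q_t^B\}$ simultaneously, using $t\Pi_B Q_t^B = I - P_t^B$ together with the off-diagonal bounds for $R_t^B$ and hence for $Q_t^B$ and $P_t^B$ (via Lemma~\ref{lem:ODcomp}(4)), plus the cutoff argument to trade a derivative for a factor $1/\delta \leq 1/|t|$ at each iteration.

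\textbf{Iteration and conclusion.} To get off-diagonal decay of arbitrary order $M$, I would iterate the cutoff argument: at each step the commutator $[\Gamma,\eta]$ produces a factor $\lesssim |t|/\delta$ (after multiplying and dividing by $|t|$, using that $\{t\Gamma Q_t^B\}$, $\{R_t^B\}$ are uniformly $L^2$-bounded), applied to a resolvent $R_{\pm t}^B$ or $Q_t^B$ localized between nested shells; choosing a chain of $M$ cutoffs $\eta_1, \dots, \eta_M$ on annuli interpolating between $E$ and $F$, each of width $\gtrsim \delta/M$, and using the $L^2$-$L^2$ off-diagonal bounds of order $M$ for the resolvents at the "long jumps", yields the factor $(|t|/\delta)^M \approx (1+\delta/|t|)^{-M}$ in the regime $\delta \geq |t|$. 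This is precisely the mechanism in \cite[Proposition~5.2]{AKM} and \cite[Lemma~3.6]{AMR}. The main obstacle I anticipate is bookkeeping the interplay between the two families $\{t\Gamma Q_t^B\}$ and $\{t\Gamma_B^\ast Q_t^B\}$: because $\Gamma Q_t^B$ is only controlled through $\Pi_B Q_t^B = \Gamma Q_t^B + \Gamma_B^\ast Q_t^B$, one genuinely must run the estimate for both families at once (a coupled induction on the order $M$), rather than treating each in isolation; and one must be careful that the commutator of $\Gamma_B^\ast = B_1\Gamma^\ast B_2$ with a scalar cutoff $\eta$ still only costs $\|\nabla\eta\|_\infty$ times $\|B_1\|_\infty\|B_2\|_\infty$, which it does since $\eta$ commutes with the multiplication operators $B_1, B_2$ and $[\Gamma^\ast,\eta]$ is multiplication by (a bounded multiple of) $\nabla\eta$. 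Everything else is routine once the coupled induction is set up.
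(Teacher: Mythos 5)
Your overall template is the right one (a Lipschitz cutoff $\eta$ adapted to $\dist(E,F)$, commutators $[\eta I,\Gamma]$ and $[\eta I,\Gamma_B^\ast]$ costing $\|\nabla\eta\|_\infty$, the identity $t\Gamma Q_t^B+t\Gamma_B^\ast Q_t^B=I-P_t^B$, and the known off-diagonal bounds for $R_t^B$, $Q_t^B$, $P_t^B$ from Theorem \ref{thm:p=2} and Lemma \ref{lem:ODcomp}). But there is a genuine gap at exactly the point you flag as "the main obstacle". The circularity between the two families is \emph{not} resolved by a coupled induction: after the commutator step the two quantities $\|\Eins_E t\Gamma Q_t^B u\|_2$ and $\|\Eins_E t\Gamma_B^\ast Q_t^B u\|_2$ are linked only through their \emph{sum} being controlled by $\Eins_E(I-P_t^B)u$, and a system of inequalities of the form $A\le C+B$, $B\le C+A$ bounds neither unknown. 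The missing ingredient is the $L^2$ Hodge decomposition for $\Pi_B$ (Definition \ref{def:Hodge-dec}, valid for $p=2$ by \cite[Proposition 2.2]{AKM}): the projection onto $\overline{\calR_2(\Gamma_B^\ast)}$ along $\calN_2(\Gamma)\oplus\overline{\calR_2(\Gamma)}$ is bounded, hence $\|\Gamma_B^\ast w\|_2\lesssim\|\Pi_B w\|_2$ for $w\in\calD_2(\Pi_B)$. With this, the argument becomes \emph{sequential} rather than coupled: one bounds $\|t\Gamma_B^\ast(\eta Q_t^B u)\|_2\lesssim\|t\Pi_B(\eta Q_t^B u)\|_2\le\|[\eta I,t\Pi_B]Q_t^Bu\|_2+\|\eta(I-P_t^B)u\|_2$, both terms being good, which gives every order of decay for $\{t\Gamma_B^\ast Q_t^B\}$ alone; the family $\{t\Gamma Q_t^B\}$ then follows for free from $t\Gamma Q_t^B=(I-P_t^B)-t\Gamma_B^\ast Q_t^B$. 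Note also that the same Hodge projection is what gives the uniform $L^2$-boundedness $t\Gamma_B^\ast Q_t^B=\Pb_{\overline{\calR(\Gamma_B^\ast)}}(I-P_t^B)$, which you assert without proof in the regime $\dist(E,F)\le|t|$.

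Two smaller points. The identity $\Gamma R_t^B=\tfrac{1}{it}(R_t^B-I)$ is false as written ($\Gamma$ must be replaced by $\Pi_B$); only the consequence you draw from it, $t\Gamma Q_t^B=(I-P_t^B)-t\Gamma_B^\ast Q_t^B$, is correct. And the $M$-fold chain of nested cutoffs is unnecessary here: since $Q_t^B$ and $I-P_t^B$ already have off-diagonal decay of \emph{every} order, a single cutoff pass yields arbitrary order directly (the commutator term is supported where $\dist(\cdot,F)\gtrsim\dist(E,F)$, so it inherits $(1+\dist(E,F)/|t|)^{-M}$ for any $M$ from the decay of $Q_t^B$); the iteration would only be needed if the resolvents had decay of some fixed finite order.
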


\begin{proof}
We prove the result for $\{t\Gamma_B^\ast Q_t^B \;;\; t \in \R\}$. The result for $\{t\Gamma Q_t^B \;;\; t \in \R\}$ then follows, given that for all $t\in \R$,
$$
t\Gamma Q_t^B=(I-P_{t}^{B})-t\Gamma_B^\ast  Q_t^B,
$$
and $\{P_t^B \;;\; t \in \R\}$ has $L^2$-$L^2$ off-diagonal bounds of every order by Theorem \ref{thm:p=2}.
By Theorem \ref{thm:p=2}, we also have that the family $\{t\Gamma_B^\ast Q_t^B \;;\; t \in \R\}=\{P_{\overline{R(\Gamma_B^\ast)}}(I-P_t^B) \;;\; t \in \R\}$ is uniformly bounded in $L^2$.
Let $E,F \subset \R^{n}$ be two Borel sets, $u \in L^2(\R^n;\C^N)$, and $t \in \R$. 
As in \cite[Proposition 5.2]{AKM}, let $\eta$ be a Lipschitz function supported in 
$\tilde{E} = \{ x\in \R^{n} \;;\; \dist(x,E)<\frac{1}{2}\dist(x,F)\}$, constantly equal to $1$ on $E$, and such that
$\|\nabla \eta\|_{\infty} \leq \frac{4}{\dist(E,F)}$. We have the following:
\begin{align*}
	\|t\Gamma_B^\ast Q_t^B u\|_{L^2(E)} 
	\leq \|\eta t\Gamma_B^\ast Q_t^B  u\|_{2} 
	\leq \|[\eta I,t\Gamma_B^\ast]Q_t^Bu\|_{2} 
+ \|t\Gamma_B^\ast \eta Q_t^Bu\|_{2} .
\end{align*}
To estimate the first term, we use that $[\eta I,t\Gamma_B^\ast]=tB_1[\eta I,\Gamma^\ast]B_2$ is a multiplication operator
with norm bounded by $t\|\nabla \eta\|_{\infty}$, together with the off-diagonal bounds for $Q_t^B$.
For the second term, observe that, since $\Pi_{B}$ Hodge decomposes $L^2$ according to Proposition \ref{prop:Hodge}, we have that 
\[
	\|t\Gamma_B^\ast \eta Q_t^Bu\|_{2} 
		\lesssim \|t\Pi_B \eta Q_t^B u\|_{2} 
		\leq \|[\eta I,t\Pi_B]Q_t^Bu\|_{2} 
			+ \|\eta t\Pi_B Q_t^Bu\|_{2} .
\]
Here, we use that the commutator in the first part of the sum is again a multiplication operator. For the second part, we use that $t\Pi_BQ_t^B =I-P_t^B$, which satisfies $L^2$-$L^2$ off-diagonal bounds.
\end{proof}

\section{$L^p$-$L^2$ off-diagonal bounds}
\label{sec:high2}

We assume $n \geq 3$ throughout this section.\\

 In this section, we show how to deduce $L^{p_\ast}$-$L^p$ bounds from $L^p$ bisectoriality via a Sobolev inequality.
  We then use this result to establish appropriate $\dot{W}^{1,p}-L^{2}$ off-diagonal bounds on balls, when $p_{*}>p_{H}$.

\begin{Lemma} \label{lem:Lp-LqOD}
Suppose $n \geq 3$.
Suppose $\Pi_B$ is a perturbed Hodge-Dirac operator. Suppose $p \in (1,2]$ with $p_\ast> 1$, and assume that $p>p_{H}$ and that $\Pi_{B}$ is bisectorial in $L^p(\R^n;\C^N)$. Then   \begin{align} \label{eq:uniformLq}
	\sup_{t \in \R} \|tR_{t}^{B}u\|_{p} &\lesssim \|u\|_{{p_{*}}}\quad\forall u\in \overline{\calR_{p_{*}}(\Gamma)}.
\end{align}
Moreover, if $(\Pi_{B}(p_{*}))$ holds, then \begin{align}	
	\label{eq:uniformLq2}
	\sup_{t \in \R}  \|tR_{t}^{B}u\|_{p} &\lesssim \|u\|_{{p_{*}}}\quad \forall u\in \overline{\calR_{p_{*}}(\Gamma^*_B)}.
\end{align}
\end{Lemma}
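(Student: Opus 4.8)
The plan is to trade the $L^p$‑boundedness of $R_t^B$ (which is available because $\Pi_B$ is bisectorial in $L^p$) against a Sobolev embedding, by realising $u$ as $\Pi_B$ applied to a potential that is bounded in $L^p$ by $\|u\|_{p_{*}}$. Concretely, for $u$ in a dense subspace of $\overline{\calR_{p_{*}}(\Gamma)}$ I will produce $h \in \calD_p(\Pi_B)$ with $\Pi_B h = u$ and $\|h\|_p \lesssim \|u\|_{p_{*}}$. Granting this, the resolvent identity $it\Pi_B R_t^B = I - R_t^B$ together with $\Pi_B R_t^B = R_t^B\Pi_B$ on $\calD_p(\Pi_B)$ gives $t R_t^B u = t R_t^B\Pi_B h = \tfrac1i(I - R_t^B)h$, hence
\[
	\|t R_t^B u\|_p \leq \bigl(1 + \sup_{s\in\R}\|R_s^B\|_{\calL(L^p)}\bigr)\|h\|_p \lesssim \|u\|_{p_{*}}
\]
uniformly in $t\in\R$, and \eqref{eq:uniformLq} then extends to all of $\overline{\calR_{p_{*}}(\Gamma)}$ by density. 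Throughout I take $u$ in $\Gamma\bigl(C_c^\infty(\R^n;\C^N)\bigr)$, which is dense in $\overline{\calR_{p_{*}}(\Gamma)}$ and on which $R_t^B$ is unambiguously defined.

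\textbf{Construction of $h$ for \eqref{eq:uniformLq}.} Write $u = \Gamma w$ with $w = S_\Gamma u$ the potential of Proposition~\ref{prop:pi}(7), so that $w \in \dot W^{1,p_{*}}(\R^n;\C^N)\cap\dot W^{1,p}(\R^n;\C^N)$ with $\|\nabla w\|_{p_{*}} \lesssim \|u\|_{p_{*}}$. Since $n \geq 3$ and $1 < p_{*} < p \leq 2 < n$, the Sobolev inequality $\dot W^{1,p_{*}} \hookrightarrow L^{(p_{*})^{*}} = L^p$ yields $\|w\|_p \lesssim \|\nabla w\|_{p_{*}} \lesssim \|u\|_{p_{*}}$. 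As $p > p_H$, $\Pi_B$ Hodge decomposes $L^p$, so the projection $\mathbb{P}$ onto $\overline{\calR_p(\Gamma_B^\ast)}$ along $\calN_p(\Gamma) = \calN_p(\Pi_B)\oplus\overline{\calR_p(\Gamma)}$ is bounded on $L^p$. Set $h := \mathbb{P}w$. Then $h \in \overline{\calR_p(\Gamma_B^\ast)} \subseteq \calN_p(\Gamma_B^\ast)$, so $\Gamma_B^\ast h = 0$; and $w - h \in \calN_p(\Gamma) \subseteq \calD_p(\Gamma)$ while $w \in \calD_p(\Gamma)$ with $\Gamma w = u$, so $h \in \calD_p(\Gamma)$ with $\Gamma h = u$. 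Hence $h \in \calD_p(\Pi_B)=\calD_p(\Gamma)\cap\calD_p(\Gamma_B^\ast)$, $\Pi_B h = \Gamma h + \Gamma_B^\ast h = u$, and $\|h\|_p \lesssim \|w\|_p \lesssim \|u\|_{p_{*}}$, as required.

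\textbf{The second estimate.} Assuming $(\Pi_{B}(p_{*}))$, I argue in the same way with $u$ in $B_1\Gamma^\ast\bigl(C_c^\infty(\R^n;\C^N)\bigr)$, dense in $\overline{\calR_{p_{*}}(\Gamma_B^\ast)} = B_1\overline{\calR_{p_{*}}(\Gamma^\ast)}$. Write $u = B_1 g$ with $g = \Gamma^\ast\phi \in \overline{\calR_{p_{*}}(\Gamma^\ast)}$, so $\|g\|_{p_{*}} \lesssim \|u\|_{p_{*}}$ by $(\Pi_{B}(p_{*}))$, and put $w' := S_{\Gamma^\ast}g \in \dot W^{1,p_{*}}\cap\dot W^{1,p}\cap\overline{\calR(\Gamma)}$, so that $\Gamma^\ast w' = g$ and, by Sobolev, $\|w'\|_p \lesssim \|u\|_{p_{*}}$. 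Since $p \in (p_H,p^H)$, $\underline{\Pi_B}$ also Hodge decomposes $L^p$ by Proposition~\ref{prop:properties}(2), which, as in its proof (using the equivalences of Proposition~\ref{equiv}), makes $\Qb_p B_2 : \overline{\calR_p(\Gamma)} \to \overline{\calR_p(\Gamma)}$ an isomorphism, where $\Qb_p$ is the bounded projection onto $\overline{\calR_p(\Gamma)}$ along $\calN_p(\Gamma^\ast)$. Set $h := (\Qb_p B_2|_{\overline{\calR_p(\Gamma)}})^{-1}w' \in \overline{\calR_p(\Gamma)} \subseteq \calN_p(\Gamma)$; then $\Gamma h = 0$, while $B_2 h - w' = (I - \Qb_p)(B_2 h) \in \calN_p(\Gamma^\ast)$, so $\Gamma^\ast(B_2 h) = g$ and $\Gamma_B^\ast h = B_1\Gamma^\ast(B_2 h) = B_1 g = u$. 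Thus again $h \in \calD_p(\Pi_B)$, $\Pi_B h = u$, $\|h\|_p \lesssim \|w'\|_p \lesssim \|u\|_{p_{*}}$, and one concludes as above.

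\textbf{Main obstacle.} The heart of the matter is the construction of this $L^p$‑bounded potential: the integrability is gained only through the Sobolev embedding $\dot W^{1,p_{*}} \hookrightarrow L^p$ — which is precisely why $n \geq 3$ (equivalently $p_{*} < n$) is assumed — while the $L^p$‑Hodge decompositions of $\Pi_B$ and $\underline{\Pi_B}$ serve only to place $h$ in $\calD_p(\Pi_B)$ with no regularity demand on the coefficients. The second statement is the subtler one, since there the potential for the perturbed operator $\Gamma_B^\ast = B_1\Gamma^\ast B_2$ must be routed through the Hodge decomposition of $\underline{\Pi_B}$ and the invertibility conditions of Propositions~\ref{equiv} and~\ref{prop:properties}; the remaining points (the density reduction, and the domain bookkeeping, which rests on $\overline{\calR_p(\Gamma)} \subseteq \calN_p(\Gamma)$ and $\overline{\calR_p(\Gamma_B^\ast)} \subseteq \calN_p(\Gamma_B^\ast)$ by nilpotency) are routine.
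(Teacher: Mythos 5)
Your proof is correct, and it reaches the conclusion by a route that is organised differently from the paper's, although the essential ingredients coincide: both arguments hinge on the potential maps $S_\Gamma$, $S_{\Gamma^\ast}$ of Proposition \ref{prop:pi}(7), the Sobolev embedding $\dot W^{1,p_\ast}\hookrightarrow L^p$ (whence $n\geq 3$), the $L^p$ Hodge decomposition for $p>p_H$, and bisectoriality in $L^p$. The difference is in how the resolvent is handled. The paper never constructs a preimage of $u$ under $\Pi_B$: it writes $tR_t^B\Gamma S_\Gamma u = t\Gamma P_t^B S_\Gamma u - i\,t\Gamma_B^\ast Q_t^B S_\Gamma u$ (commuting $P_t^B$ with $\Gamma$ and using nilpotency), recognises the two terms as Hodge components of $Q_t^B S_\Gamma u$ and of $(I-P_t^B)S_\Gamma u$, and then invokes the boundedness of the Hodge projections and the uniform $L^p$ bounds on $P_t^B,Q_t^B$. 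You instead solve $\Pi_B h=u$ with $\|h\|_p\lesssim\|u\|_{p_\ast}$ — for \eqref{eq:uniformLq} by projecting $S_\Gamma u$ onto $\overline{\calR_p(\Gamma_B^\ast)}$ along $\calN_p(\Gamma)$, for \eqref{eq:uniformLq2} by inverting $\Qb_p B_2$ on $\overline{\calR_p(\Gamma)}$ via Proposition \ref{equiv}/\ref{prop:properties} — and then use only the scalar identity $itR_t^B\Pi_B=I-R_t^B$. This is arguably more transparent (it isolates exactly what is bought: an $L^p$-bounded $\Pi_B$-potential of $u$), and for the second estimate it avoids the paper's detour through $\underline{\Pi_B}$ and the intertwining formulae \eqref{eq:swap-bar}. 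The price is that you rely on the operator identity $\calD_p(\Pi_B)=\calD_p(\Gamma)\cap\calD_p(\Gamma_B^\ast)$ with $\Pi_B=\Gamma+\Gamma_B^\ast$ in $L^p$ (from \cite{HMcP2}, and on a par with what the paper itself uses), and on the minor point that $S_{\Gamma^\ast}g$ lands in $\overline{\calR_p(\Gamma)}$ — which, if one is fussy, is most cleanly secured by first applying $\Qb_p$, which does not change $\Gamma^\ast w'$.
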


\begin{proof}

To prove \eqref{eq:uniformLq}, we use the potential map $S_\Gamma: \overline{\calR_{p_*}(\Gamma)} \to \dot W^{1,p_*}(\R^n;\C^N) \hookrightarrow L^p(\R^n;\C^N)$
 defined in Proposition \ref{prop:pi}(7). Then,  for all $u\in \overline{\calR_{p_*}(\Gamma)}$, 
\begin{align*} \|tR_{t}^{B}u\|_{p} &= \|t(P_{t}^{B}-iQ_t^B)\Gamma S_\Gamma u\|_{p} 
\leq \|t\Gamma P_{t}^{B} S_\Gamma u\|_{p} + \|t\Gamma^*_BQ_{t}^{B} S_\Gamma u\|_{p}\\ 
& \lesssim \|Q_{t}^{B} S_\Gamma u\|_{p} + \|(I-P_{t}^{B}) S_\Gamma u\|_{p} \quad\text{(since } p>p_{H})\\
&\lesssim \|S_\Gamma u\|_p \quad\text{(using bisectoriality)}\\
&\lesssim \|S_\Gamma u\|_{\dot W^{1,p_*}} \lesssim \|\Gamma S_\Gamma u\|_{p_*} = \|u\|_{p_*}
\end{align*} as claimed.

By $(\Pi_{B}(p_{*}))$, \eqref{eq:uniformLq2} follows from
\begin{equation*}	
	\sup_{t \in \R}  \|tR_{t}^{B}B_1v\|_{p} \lesssim \|v\|_{{p_{*}}}  \quad \forall v\in \overline{\calR_{p_{*}}(\Gamma^*)}.
\end{equation*}
 This is proven in the same way as \eqref{eq:uniformLq}, using $\underline{\Pi_B}$ instead of $\Pi_{B}$, and remarking that 
$$tR_{t}^{B}B_1v = t(P_{t}^{B}-iQ_{t}^{B})B_1\Gamma^* S_{\Gamma^*}v
=  tB_1\underline{P_{t}^{B}} \Gamma^* S_{\Gamma^*}v
-itB_2\underline{Q_{t}^{B}} \Gamma^* S_{\Gamma^*}v \ \text{(by \eqref{eq:swap-bar})}.
$$
\end{proof}

We use the following induction argument in which  $\begin{cases} p^{*(k)}=(p^{*(k-1)})^{*} \quad \forall k \in \N, \\ p^{*(0)}=p, \end{cases}$ and $M_{s}(p)$ is the smallest natural number such that
$p^{*(M_{s}(p))} \geq 2$. A simple induction argument gives  $p^{*(M)}=\frac{np}{n-pM}$ for all $M \in \N$, so that $M_{s}(p)\ge n(\frac1p-\frac12)$.

\begin{Prop} \label{prop:induction}
Suppose $n \geq 3$.
Suppose $\Pi_B$ is a perturbed Hodge-Dirac operator. Suppose $p \in (p_H,2]$ with $p_\ast> 1$. Assume that $\Pi_B$ is bisectorial in $L^p(\R^n;\C^N)$. Assume further that  for all $M \in \N$ such that $M\geq M_{s}(p)$ and all $r \in (p,2]$ (with $r=2$ if $p=2$), 
 $\{|t|^{n(\frac1r-\frac12)}(R_t^B)^M \mathbb{P}_{\overline{\calR_{r}(\Pi_{B})}} \;;\; t \in \R^{*}\}$  is bounded from $L^r(\R^n;\C^N)$ to $L^2(\R^n;\C^N)$ uniformly in $t$. 
\begin{enumerate} 
\item Given $q \in (p_\ast,2]$ and $M \in \N$ such that $M\geq M_{s}(p)$, we have $$  
	\sup_{t \in \R} \||t|^{n(\frac{1}{q}-\frac{1}{2})}(
	R_{t}^{B})^M u\|_{2} \lesssim \|u\|_{{q}}\quad \forall u \in \overline{\calR_{q}(\Gamma)}.$$ 	
Moreover, assuming  $(\Pi_{B}(p_{*}))$ holds when $q<p_{H}$, we have  
$$
	\sup_{t\in \R}  \||t|^{n(\frac{1}{q}-\frac{1}{2})}(R_{t}^{B})^M u\|_{2} \lesssim \|u\|_{{q}}\quad \forall u \in  \overline{\calR_{2}(\Gamma_B^\ast)} \cap L^{q}(\R^{n};\C^{N}).
$$
\item Given $q \in (\max\{p_H,p_\ast\},2]$ and $M \in \N$ such that $M\geq M_{s}(p)$, we have 
$$
	\sup_{t \in \R} \||t|^{n(\frac{1}{q}-\frac{1}{2})}(R_{t}^{B})^M \mathbb{P}_{\overline{\calR_{q}(\Pi_{B})}}u\|_{2} \lesssim \|u\|_{{q}}\quad \forall u \in L^q(\R^n;\C^N).
$$
Moreover $\{(R_t^B)^M \Gamma \;;\; t \in \R^{*}\}$ and $\{(R_t^B)^M B_{1}\Gamma^{*} \;;\; t \in \R^{*}\}$ have $\dot{W}^{1,q}$-$L^2$ off-diagonal bounds of every order on balls.
\end{enumerate}
\end{Prop}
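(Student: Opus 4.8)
The plan is to prove the two parts of Proposition \ref{prop:induction} together by an induction on the number of Sobolev steps, using Lemma \ref{lem:Lp-LqOD} as the base engine for gaining one Sobolev exponent at a time. First I would fix $p\in(p_H,2]$ with $p_\ast>1$, so that $\Pi_B$ is bisectorial in $L^p$. The key observation is that since $M\geq M_s(p)$ and $p^{\ast(M_s(p))}\geq 2$, iterating the Sobolev embedding $\dot W^{1,s}\hookrightarrow L^{s^\ast}$ enough times brings any exponent $q\in(p_\ast,2]$ up to $2$ in finitely many steps. So the strategy is: start from the $L^2$--$L^2$ uniform bound of $\{(R_t^B)^M\}$ (Theorem \ref{thm:p=2}, part (2)), and peel off one resolvent factor at a time, each time applying Lemma \ref{lem:Lp-LqOD} to trade a factor $tR_t^B$ for a gain of one Sobolev exponent, i.e. to pass from an $L^s$-input estimate to an $L^{s_\ast}$-input estimate. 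This requires at each intermediate exponent $s$ that $\Pi_B$ be bisectorial in $L^s$; by Corollary \ref{cor:fc}(3) (or directly, since $(p_H,p^H)$ is the Hodge-decomposition interval and $\Pi_B$ is assumed bisectorial in $L^p$, hence by interpolation/extrapolation on that interval), bisectoriality holds for all $s\in(p,2]$, which suffices when the running exponent stays above $p_H$; below $p_H$ one uses instead the $(\Pi_B(p_\ast))$ hypothesis as in the second half of Lemma \ref{lem:Lp-LqOD}.

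For part (1), I would argue as follows. Given $q\in(p_\ast,2]$, choose $k$ minimal with $q^{\ast(k)}\geq 2$, and write $(R_t^B)^M=(R_t^B)^{M-k}(R_t^B)^k$, valid once $M\geq k$, which is guaranteed since $M\geq M_s(p)\geq k$ for $q\geq p_\ast$ (one checks $q^{\ast(k)}\geq 2$ needs no more steps than $p^{\ast(M_s(p))}\geq 2$ because $q\geq p_\ast$ means $q$ is already ``higher'' than $p$ by one step). Apply the uniform $L^{q^{\ast(k)}}$--$L^2$ bound of $\{|t|^{n(\frac1{q^{\ast(k)}}-\frac12)}(R_t^B)^{M-k}\mathbb P_{\overline{\calR(\Pi_B)}}\}$ (this is exactly the standing hypothesis of the Proposition, with $r=q^{\ast(k)}\in(p,2]$), then absorb the remaining $k$ resolvent factors by iterating \eqref{eq:uniformLq}: each application of Lemma \ref{lem:Lp-LqOD} converts $\||t|^{n(\frac1{s}-\frac12)}\,\cdot\,u\|$ controlled by $\|u\|_s$ into $\||t|^{n(\frac1{s_\ast}-\frac12)}R_t^B\,\cdot\,u\|$ controlled by $\|u\|_{s_\ast}$, tracking that the power of $|t|$ bookkeeps correctly because $n(\frac1{s_\ast}-\frac12)=n(\frac1s-\frac12)+1$. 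For $u\in\overline{\calR_q(\Gamma)}$ one uses the $\Gamma$-version of \eqref{eq:uniformLq}; for $u\in\overline{\calR_2(\Gamma_B^\ast)}\cap L^q$ one uses \eqref{eq:uniformLq2}, which is where $(\Pi_B(p_\ast))$ enters. The homogeneity $\mathbb P_{\overline{\calR(\Pi_B)}}u=u$ on $\overline{\calR_q(\Gamma)}\subset\overline{\calR_q(\Pi_B)}$ lets us drop the projection at the start.

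For part (2), given $q\in(\max\{p_H,p_\ast\},2\,]$, the point is that now $q>p_H$, so $\Pi_B$ Hodge decomposes $L^q$ and $\mathbb P_{\overline{\calR_q(\Pi_B)}}$ is bounded on $L^q$; splitting $\overline{\calR_q(\Pi_B)}=\overline{\calR_q(\Gamma)}\oplus\overline{\calR_q(\Gamma_B^\ast)}$ and applying part (1) to each summand (the second via $(\Pi_B(p_\ast))$ if needed, or automatically since $q>p_H$ gives $(\Pi_B(q))$ hence an analogue of \eqref{PiB3}) yields the uniform $L^q$--$L^2$ bound for $(R_t^B)^M\mathbb P_{\overline{\calR_q(\Pi_B)}}$. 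The $\dot W^{1,q}$--$L^2$ off-diagonal bounds on balls for $\{(R_t^B)^M\Gamma\}$ and $\{(R_t^B)^M B_1\Gamma^\ast\}$ then come by combining this uniform bound with the potential maps $S_\Gamma,S_{\Gamma^\ast}$ of Proposition \ref{prop:pi}(7): writing $\Gamma=\Gamma S_\Gamma\Gamma$ on $\overline{\calR(\Gamma)}$, one gets that $(R_t^B)^M\Gamma$ maps $\dot W^{1,q}_{\overline B}$ into $L^2$ with the right scaling in $|t|$, and the genuine off-diagonal decay in $2^{-jM}$ is produced exactly as in Lemma \ref{lem:ODcomp}, interpolating the uniform bound against the $L^2$--$L^2$ off-diagonal bounds of Theorem \ref{thm:p=2}(2), via Lemma \ref{lem:psiOD} applied to the $\Psi$-class function $z\mapsto z(1+iz)^{-M}$ and Stein interpolation as in Lemma \ref{lem:ODcomp}(1)--(2).

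The main obstacle I expect is the careful bookkeeping of exponents and $|t|$-powers through the induction — in particular making sure that at every intermediate step the running exponent $s$ either stays in $(p_H,2]$ (so bisectoriality in $L^s$ is available) or that the $(\Pi_B(p_\ast))$ assumption is genuinely enough to push through the sub-$p_H$ range via \eqref{eq:uniformLq2}; and then, at the very end, upgrading the uniform $L^q$--$L^2$ bound to honest off-diagonal estimates on balls, which is not entirely formal because $\dot W^{1,q}_{\overline B}$-data must be handled with localisation (the potential map plus cutoff/Poincaré argument) rather than by a plain interpolation of norms.
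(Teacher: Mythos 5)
Your toolkit is the right one, but your execution of part (1) misplaces where the induction lives, and this creates genuine gaps. In the paper the iteration over Sobolev exponents is \emph{external}: Proposition \ref{prop:induction} is applied repeatedly in Corollary \ref{cor:fullHF}, and at each application the standing hypothesis already supplies the uniform $L^r$--$L^2$ bounds for \emph{all} $r\in(p,2]$, not merely for $r=2$. Consequently only a \emph{single} application of Lemma \ref{lem:Lp-LqOD} is needed inside the proof: for $q\in(p_*,2]$ one has $q^*>(p_*)^*=p$, so one takes $r=q^*$ (or $r=q$ when $q>p$) and performs one Sobolev step. Your internal climb from $q$ up to $2$ in $k=M_s(q)$ steps breaks down at three points. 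First, with $k$ minimal such that $q^{*(k)}\geq 2$ one generically has $q^{*(k)}>2$, so $r=q^{*(k)}$ is \emph{not} in the range $(p,2]$ covered by the hypothesis; your claim that $q^{*(k)}\in(p,2]$ fails except when $q^{*(k)}=2$ exactly. Second, applying the hypothesis to $(R_t^B)^{M-k}$ requires $M-k\geq M_s(p)$; your justification ``$M\geq M_s(p)\geq k$'' only yields $M\geq k$, and the needed inequality fails when $M=M_s(p)$ and $k\geq 1$. Third, iterating \eqref{eq:uniformLq} is not legitimate as stated: $R_t^B$ does not preserve $\overline{\calR(\Gamma)}$ (it maps it into $\overline{\calR(\Pi_B)}$), so after one resolvent the input to the next application of Lemma \ref{lem:Lp-LqOD} is no longer in the range of $\Gamma$; one would have to Hodge-split at every intermediate exponent and invoke both \eqref{eq:uniformLq} and \eqref{eq:uniformLq2} there. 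Note also that quoting Corollary \ref{cor:fc}(3) for bisectoriality at intermediate exponents would be circular (that corollary rests on Theorem \ref{thm:high}, hence on this very proposition); your parenthetical interpolation of resolvent bounds between $L^p$ and $L^2$ is the correct route.

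Part (2) of your proposal is essentially the paper's argument and is sound: for $q>p_H$ the Hodge decomposition of $L^q$ reduces the uniform $L^q$--$L^2$ bound to part (1); the estimate $\|\Gamma w\|_q+\|B_1\Gamma^*w\|_q\lesssim\|\nabla w\|_q$ for $w\in\dot W^{1,q}_{\ovB}$ then gives the uniform $\dot W^{1,q}_{\ovB}\to L^2$ bound directly (no potential maps or cutoffs are needed); and the off-diagonal decay follows by interpolating this uniform bound against the $\dot W^{1,2}$--$L^2$ off-diagonal bounds of every order on balls via Lemma \ref{lem:ODcomp}(2). The detour through Lemma \ref{lem:psiOD} is unnecessary here.
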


\begin{proof}
(1) 
By assumption, we have for all $r \in (p,2]$ and all $M \in \N$ such that $M\geq M_{s}(p)$ 
$$
	\sup_{t \in \R} \| |t|^{n(\frac{1}{r}-\frac{1}{2})}(R_{t}^{B})^{M} \mathbb{P}_{\overline{\calR_{r}(\Pi_{B})}}u\|_{2} \lesssim \|u\|_{{r}}\quad \forall u \in L^r(\R^n;\C^N).
$$
Combining this with Lemma \ref{lem:Lp-LqOD} gives the assertion for $q=r_\ast$.\\
(2) For $q>p_H$, $\Pi_B$ Hodge decomposes $L^q(\R^n;\C^N)$ by assumption. We therefore get the first estimate in (2) as a direct consequence of (1). \\
Since $\|\Gamma w\|_{q}+\|B_{1}\Gamma^{*}w\|_{q} \lesssim \|\nabla w\|_{q}$ for all balls $B$ and all $w \in \dot{W}^{1,q}_{\ovB}$, with constants independent of the ball $B$, we have that 
$\underset{t \in \R^{*}}{\sup}\  \underset{B=B(x,|t|)}{\sup} \,\||t|^{n(\frac{1}{q}-\frac{1}{2})}(R_t^B)^M \Gamma\|_{\calL(\dot{W}^{1,q}_{\ovB},L^2)}<\infty$ 
and that 
$\underset{t \in \R^{*}}{\sup}\ \underset{B=B(x,|t|)}{\sup} \,\||t|^{n(\frac{1}{q}-\frac{1}{2})}(R_t^B)^M B_{1}\Gamma^{*}\|_{\calL(\dot{W}^{1,q}_{\ovB},L^2)}<\infty$. 
Moreover $\{(R_t^B)^M \Gamma \;;\; t \in \R^{*}\}$ and $\{(R_t^B)^M B_{1}\Gamma^{*} \;;\; t \in \R^{*}\}$ have $\dot{W}^{1,2}$-$L^2$ off-diagonal bounds of every order on balls, since
$\{(R_t^B)^M \;;\; t \in \R^{*}\}$ has $L^2$-$L^2$ off-diagonal bounds of every order. The result then follows from Lemma \ref{lem:ODcomp}(2). 

\end{proof}

\section{Estimating conical square functions by vertical square functions}
\label{sec:tech2}

While vertical and conical square functions look similar, the conical square functions are applied quite differently here compared with the way the vertical square functions are used in \cite{HMcP1}.
Nevertheless, as is the case classically (see e.g.  \cite{Steinbook}), there are relationships between conical and vertical square functions, as Auscher, Hofmann, and Martell have already pointed out in \cite{AHM}.  Here we prove a new comparison theorem that exploits  $\dot{W}^{1,p}$-$L^2$ off-diagonal bounds on balls.  The proof is based on some unpublished work of Auscher, Duong and the second author, where a similar result was obtained for operators with pointwise Gaussian bounds.

\begin{Theorem}
\label{thm:conicalVSvertical}  Let $p \in (1,2]$, and  $M \in 2\N$  with $M\geq 10n$. 
Let $\Pi_{B}$ be a perturbed Hodge Dirac operator, and assume that $(\Pi_{B}(p))$ holds.
Assume that $\{(R_t^B)^{\frac{M}{2}-2} \Gamma \;;\; t \in \R^{*}\}$ and $\{(R_t^B)^{\frac{M}{2}-2} B_{1}\Gamma^{*} \;;\; t \in \R^{*}\}$ have $\dot{W}^{1,p}$-$L^2$ off-diagonal bounds of every order on balls.
Then 
\begin{equation} \label{vertcon}
\|(t,x)\mapsto (Q_t^B)^{M}G(t,.)(x)\|_{T^{p,2}} 
\leq C_p \|(\int \limits _{0} ^{\infty} |G(t,.)|^{2} \frac{dt}{t})^{\frac{1}{2}}\|_{p}, 
\end{equation}

for all $G \in  L^p(\R^n; L^{2}(\R_{+},\frac{dt}{t};\C^{N}))$ such that either $G(t,.) \in \overline{\calR_{p}(\Gamma)}$ or  $G(t,.) \in \overline{\calR_{p}(\Gamma^{*}_{B})}$ for almost every $t>0$. If, in addition, $p>p_{H}$, then the result holds for all $G \in  L^p(\R^n; L^{2}(\R_{+},\frac{dt}{t};\C^{N}))$.
\end{Theorem}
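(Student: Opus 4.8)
The plan is to reduce the conical square function estimate to a vertical one by decomposing $\R^{n+1}_+$ into dyadic annular regions according to the distance scale relative to $t$, and to control the contribution of each annulus using the assumed $\dot W^{1,p}$-$L^2$ off-diagonal bounds on balls together with a maximal-function argument in the spirit of Auscher--Hofmann--Martell. First I would write $(Q_t^B)^M = (Q_t^B)^2 \cdot (Q_t^B)^{M-2}$ and factor the first two copies of $Q_t^B$ as $t\Gamma (R_t^B)^{\ldots}$ plus $t\Gamma_B^\ast(R_t^B)^{\ldots}$ (using $tQ_t^B = t\Gamma Q_t^B/\Pi_B\,\cdots$, more precisely the identity $Q_t^B = t\Gamma P_t^B + t\Gamma_B^\ast P_t^B$ available on $\overline{\calR(\Pi_B)}$, and on $\overline{\calR_p(\Gamma)}$ or $\overline{\calR_p(\Gamma_B^\ast)}$ the corresponding half survives). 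This exposes a factor of the form $(R_t^B)^{M/2-2}\Gamma$ or $(R_t^B)^{M/2-2}B_1\Gamma^\ast$ acting after a potential map, to which the $\dot W^{1,p}$-$L^2$ off-diagonal bounds on balls apply; the remaining copies of $Q_t^B$ carry plain $L^2$-$L^2$ off-diagonal decay of arbitrarily high order by Theorem \ref{thm:p=2} and Lemma \ref{lem:ODcomp}.

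Next I would fix $x\in\R^n$ and estimate $\big(\int_0^\infty\int_{B(x,t)} |(Q_t^B)^M G(t,\cdot)(y)|^2\,\tfrac{dy\,dt}{t^{n+1}}\big)^{1/2}$ by splitting the input $G(t,\cdot)$ into pieces supported in the dyadic shells $S_j(B(x,t))$. For the $j$-th piece one applies the off-diagonal bounds — in $\dot W^{1,p}$-$L^2$ form after passing through the potential maps $S_\Gamma$, $S_{\Gamma^\ast}$ of Proposition \ref{prop:pi}(7) to convert $\|G(t,\cdot)\|$ on the shell into $\|\nabla(\text{potential})\|_{L^p(2^jB)}$, controlled by $\|G(t,\cdot)\|_{L^p}$ locally — picking up a gain $2^{-jN'}$ for $N'$ as large as we wish. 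Summing the geometric series in $j$ and using Lemma \ref{lem:angle} (change of aperture in tent spaces) reduces matters to bounding $\|(t,x)\mapsto \|G(t,\cdot)\|$ against a conical average. The passage from the resulting conical average of $G$ to the vertical square function $\big\|(\int_0^\infty |G(t,\cdot)|^2\tfrac{dt}{t})^{1/2}\big\|_p$ is then handled by a vector-valued Hardy--Littlewood maximal estimate, exactly as in the classical comparison (see \cite{Steinbook}, \cite{AHM}), which is valid because $p>1$.

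The main obstacle I expect is the bookkeeping that makes the off-diagonal bounds on balls (which are stated for $u\in\dot W^{1,p}_{\overline B}$, i.e.\ for compactly supported Sobolev inputs on a ball of radius $|t|$) interact cleanly with the tent-space geometry: one must cover $B(x,t)$ by $O(1)$ dyadic cubes of scale $t$, localise $G(t,\cdot)$ to shells $S_k$ of those cubes, apply the potential map on each shell (noting $S_\Gamma, S_{\Gamma^\ast}$ are only bounded into homogeneous $\dot W^{1,p}$, so a Poincaré/Sobolev step is needed to return to $L^p$ norms on the shell), and then re-sum over both the shell index and the cube index without losing the off-diagonal gain. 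This is where the hypotheses $M\geq 10n$ and $M\in 2\N$ are used — the evenness lets the two factored $Q_t^B$'s split symmetrically via \eqref{eq:swap-bar}, and the largeness of $M$ provides enough spare off-diagonal decay to absorb the polynomial losses from the change of aperture ($\alpha^{n/\min\{p,2\}}$ in Lemma \ref{lem:angle}) and from the number of cubes. The case distinction on whether $G(t,\cdot)$ lies in $\overline{\calR_p(\Gamma)}$ or $\overline{\calR_p(\Gamma_B^\ast)}$ (and the final sentence, where $p>p_H$ gives the full Hodge splitting so the restriction on $G$ can be dropped) is then routine: apply the two cases to $\Pb$ and $I-\Pb$ components and add.
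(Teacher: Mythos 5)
Your strategy is genuinely different from the paper's, but it has a gap at its final step, and that gap is precisely the reason the paper does not argue this way. After you localise $G(t,\cdot)$ to the shells $S_j(B(x,t))$ and apply off-diagonal bounds that are normalised from $L^p$ (or $\dot W^{1,p}$) into $L^2$, the factor $|t|^{-n(\frac1p-\frac12)}$ in the hypotheses forces the $j$-th shell to contribute, up to the geometric gain $2^{-j(N'-n/p)}$, the quantity $\calM_p G(t,\cdot)(x)=(\calM(|G(t,\cdot)|^p)(x))^{1/p}$ rather than $\calM G(t,\cdot)(x)$. Your endgame therefore requires
\begin{equation*}
\Big\| \Big(\int_0^\infty (\calM_p G(t,\cdot))^2\,\tfrac{dt}{t}\Big)^{1/2}\Big\|_p \lesssim \Big\| \Big(\int_0^\infty |G(t,\cdot)|^2\,\tfrac{dt}{t}\Big)^{1/2}\Big\|_p ,
\end{equation*}
which, upon writing $\calM_p=(\calM(|\cdot|^p))^{1/p}$ and substituting $h_t=|G(t,\cdot)|^p$, is the Fefferman--Stein inequality for $\calM$ on $L^{p/p}(L^{2/p})=L^1(L^{2/p})$ --- an endpoint at which it is false (only the weak type holds). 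The classical conical-versus-vertical comparison you cite succeeds because pointwise kernel bounds give $L^q\to L^2$ decay for some $q$ strictly below $p$, so one lands on $\calM_q$ with $p/q>1$; the present hypotheses offer no such gain below $p$. This is exactly why the paper proves only the weak-type estimate \eqref{weakpp} at the exponent $p$ and recovers strong boundedness by interpolation with the $L^2$ bound (the statement at a given exponent being reached by starting the induction of Corollary \ref{cor:fullHF} at a larger one).

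A second, structural obstruction concerns how the hypotheses can be applied at all. The $\dot W^{1,p}$--$L^2$ off-diagonal bounds on balls accept only inputs in $\dot W^{1,p}_{\ovB}$, i.e.\ Sobolev functions compactly supported in a ball of radius $|t|$. Truncating $G(t,\cdot)$ to a shell destroys membership of $\overline{\calR_p(\Gamma)}$, and composing with the potential map does not repair this: $S_\Gamma$ is non-local, so $S_\Gamma(\Eins_{S_j}G(t,\cdot))$ is not compactly supported and $\Eins_{S_j}S_\Gamma G(t,\cdot)$ is not a potential of $\Eins_{S_j}G(t,\cdot)$. You flag this as bookkeeping, but a Poincar\'e step on each shell does not resolve it. The paper's remedy is to perform Auscher's Calder\'on--Zygmund decomposition for Sobolev spaces \cite[Lemma 4.12]{AuscherMemoirs} on the global potential $f=\tilde S_\Gamma G_1$, producing genuinely compactly supported pieces $b_j\in W^{1,p}_0(Q_j;H_1)$; these, together with the approximants $\phi_{r_j}(\Pi_B)=I-(I-(P_{r_j}^B)^M)^M$, are the only objects to which the hypothesised off-diagonal bounds are applied (Lemma \ref{prop:CZoff-diag}), and the bad terms are then controlled by Kolmogorov's lemma rather than by a strong maximal inequality. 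If you wish to keep a shell-decomposition flavour, you would still need to route it through a weak-type $(p,p)$ bound plus interpolation and through a decomposition that produces compactly supported Sobolev atoms --- at which point you have essentially reconstructed the paper's argument.
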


\begin{proof}
We use a variant of the Blunck-Kunstmann extrapolation method established by Auscher in \cite[Theorem 1.1]{AuscherMemoirs}, combined with Auscher's Calder\'on-Zygmund decomposition for Sobolev spaces \cite[Lemma 4.12]{AuscherMemoirs}. As pointed out in \cite{AuscherMemoirs} both results hold in Hilbert space valued $L^p$ spaces. 
Let us therefore consider $H_{1} = L^{2}(\R_{+},\frac{dt}{t};\C^{N})$, $H_{2} = L^{2}(\R_{+}\times \R^{n},\frac{dt dx}{t^{n+1}};\C^{N})$, and an operator $T$ defined by
$$
T(G):x \mapsto [(t,y)\mapsto \Eins_{B(y,t)}(x)(Q_t^B)^{M} G(t,.)(y)],
$$
for $G \in L^{2}(\R^{n};H_{1})$.  Since $\Pi_B$ is bisectorial in $L^2$, $T$ is a bounded operator from $L^{2}(\R^{n};H_{1})$ to $L^{2}(\R^{n};H_{2})$.  
Indeed $\|T(G)\|_{L^{2}(H_{2})} \sim (\int \limits _{0} ^{\infty} \|((Q_t^B)^{M}G(t,.)\|_{2} ^{2} \frac{dt}{t})^{\frac{1}{2}} \lesssim \|G\|_{L^{2}(H_{1})}$.
 
 We are going to show the weak type estimate: For all $\alpha>0$,
\begin{equation} \label{weakpp}
	|\{x \in \R^n:\,\|TG(x)\|_{H_2} >\alpha\}| \lesssim \alpha^{-p}\|G\|_{p}^{p}.
\end{equation}
The strong type estimate will then follow by interpolation for every $q \in (p,2]$.

In proving bounds on $L^p(\R^n; H_1)$, we use the fact that the lifting 
$S\mapsto \tilde S$ from $\calL(L^p(\R^n;\C^N))$ to $\calL(L^p(\R^n;H_1))$ is bounded, where $(\tilde S G)(t,.) = S(G(t,. ))$
 for almost every $t>0$. This is a classical consequence of Khintchine-Kahane's inequalities (see \cite[Section 2]{kuw}) for discrete square functions, that extends to continuous square functions using a decomposition in an orthonormal basis of $H_{1}$. When $p>p_{H}$, we can
apply this to the Hodge projections, and obtain the Hodge decomposition  in $L^p(\R^n;H_1)$: 
$$ G = \tilde{\mathbb{P}}_{N_{p}(\Pi_{B})}G + \tilde{\mathbb{P}}_{\overline{\calR_{p}(\Gamma)}}G + \tilde{\mathbb{P}}_{\overline{\calR_{p}(\Gamma^{*}_{B})}}G =:G_0+G_1+G_2
$$
with $\|G\|_p\approx \|G_0\|_p+\|G_1\|_p + \|G_2\|_p$.

Since $T(G_{0})=0$ for all $G_{0}$ such that $G_{0}(t,.) \in N_{p}(\Pi_{B})$ for almost every $t>0$, we only have to prove the result for $G_{1}(t,.) \in \overline{\calR_{p}(\Gamma)}$ or  $G_{2}(t,.) \in \overline{\calR_{p}(\Gamma^{*}_{B})}$ for almost every $t>0$. 
In the first case, one can use the lifted potential map $\tilde {S}_\Gamma:\overline{\calR_p(\Gamma)}\to W^{1,p}(\R^n;H_1)$ to write $G_1=\Gamma \tilde{S}_\Gamma G_1 = \Gamma f$ where $f= \tilde{S}_\Gamma G_1 \in W^{1,p}(\R^n;H_1)$ with $\|\nabla \otimes f\|_p\approx\|G_1\|_p\leq \|G\|_p$.

We thus only have  to prove  the following  weak type estimate: For all $\alpha>0$,
\begin{equation} \label{weakpp1}
	|\{x \in \R^n:\,\|TG_1(x)\|_{H_2} >\alpha\}| \lesssim \alpha^{-p}\|\nabla\otimes f\|_{p}^{p}.
\end{equation}
 for $G_{1}=\Gamma f$ and $f\in \dot W^{1,p}(\R^n; H_1)$.

According to \cite[Lemma 4.12]{AuscherMemoirs}, given $\alpha>0$, there exists  a collection of cubes $(Q_j)$ and functions $g,b_j$ such that  $f=g+\sum_j b_j$ with $\supp b_j \subseteq Q_j$ and
\begin{equation} \label{CZdec}
	G_1=\Gamma f=\Gamma g + \sum_j \Gamma b_j,
\end{equation}
satisfying
\begin{align} \label{CZeq1}
	&g \in \dot{W}^{1,2}(H_1) \cap \dot{W}^{1,p}(H_1), \qquad 
	\|\nabla g\|_2 \lesssim \alpha^{2-p}\|\nabla f\|_{p}^{p};\\
	\label{CZeq2}
	& b_j \in W^{1,p}_0(Q_j,H_1), \qquad \|\nabla b_j\|_{p} \lesssim \alpha |Q_j|^{1/{p}};\\
	\label{CZeq3}
	&\sum_j |Q_j| \lesssim \alpha^{-p} \|\nabla f\|_{p}^{p};\\
	&\sum_j \Eins_{Q_j} \leq N.
\end{align}

Applying this decomposition, we have 
\begin{align*} |\{x &\in \R^n:\,\|TG_1(x)\|_{H_2} >\alpha\}| \lesssim  |\{x \in \R^n:\,\|T\Gamma g(x)\|_{H_2} >\alpha/3\}| +\\
&|\{x \in \R^n:\,\|\sum_j T(I-A_{r_j})\Gamma b_j(x)\|_{H_2} >\frac{\alpha}{3}\}| +
|\{x \in \R^n:\,\|\sum_j T(A_{r_j})\Gamma b_j(x)\|_{H_2} >\frac{\alpha}{3}\}|.
\end{align*} 

By \eqref{CZeq1} and the $L^2$ estimate, we have
$$
	|\{x \in \R^n:\,\|T\Gamma g(x)\|_{H_2} >\alpha/3\}| 
	\lesssim \alpha^{-2}\|\Gamma g\|_{2}^{2}
	\lesssim \alpha^{-p}\|\nabla f\|_{p}^{p}
$$
so we turn our attention to the functions $b_j$.

 Define 
\begin{align*}
	\phi_{t}(\Pi_B) = I-(I-(P_t^B)^{M})^M \qquad \forall t>0
\end{align*}
and set  $A_{r_j}=\phi_{r_j}(\Pi_B)$ where 
$r_j$ is the diameter of $Q_j$.

We will establish, in Lemma \ref{prop:CZoff-diag} below, the following two off-diagonal bound conditions, similar to those of \cite[Theorem 1.1]{AuscherMemoirs}. Let $B\subset \R^{n}$ be a ball of radius $r>0$, $k \in \N$, and 
$f\in \dot W^{1,p}_{\ovB}(\R^{n};H_{1})$. 
The two conditions are, in our setting, 
\begin{align} \label{extrapol-cond1}
	(\frac{1}{|2^{k+1}B|}  \int \limits _{S_{k}(B)} \|\phi_{r}(\Pi_B)\Gamma f(.,y)\|_{H_{1}} ^{2} dy)^{\frac{1}{2}} 
	& \lesssim 
	g(j) (|B|^{-1}\int \limits _{B} \|\nabla f(.,y)\|_{H_{1}}^{p}dy)^{\frac{1}{p}}, 
	\intertext{for $k \geq 1$, and} \label{extrapol-cond2}
(\frac{1}{|2^{k+1}B|}  \int \limits _{S_{k}(B)} \|T(I-\phi_{r}(\Pi_B))\Gamma f(.,y)\|_{H_{2}} ^{2} dy)^{\frac{1}{2}}
&\lesssim g(k) (|B|^{-1}\int \limits _{B} \|\nabla f(.,y)\|_{H_{1}}^{p}dy)^{\frac{1}{p}}
\end{align}
for $k \geq 2$, with $g(k)$ satisfying $\sum_{j \in \N} g(k) 2^{nk}<\infty$.\\

Let us assume these for the moment.
Then
\begin{align*}
	&|\{x \in \R^n:\,\|\sum_j T(I-A_{r_j})\Gamma b_j(x)\|_{H_2} >\alpha/3\}|\\
	& \qquad  \lesssim \sum_j |Q_j|
	+ \alpha^{-2} \|\sum_j \Eins_{(4Q_j)^c}T(I-A_{r_j})\Gamma b_j\|_2^2,
\end{align*}
and the first term is bounded by  $\alpha^{-p} \|\nabla f\|_{p}^{p} $  according to \eqref{CZeq3}.
To bound the second term, let $u \in L^{2}(H_2)$ with $\|u\|_{2}=1$. Use H\"older's inequality in the first step, \eqref{extrapol-cond2} in the second step, $\sum_k g(k) 2^{kn}<\infty$ and \eqref{CZeq2} in the third step to obtain 
\begin{align*}
	&\int_{\R^n}|\skp{u(x),\sum_j \Eins_{(4Q_j)^c}(x) T(I-A_{r_j})\Gamma b_j(x)}|\,dx\\
	&\leq \sum_j \sum_{k=2}^\infty  
	\left(\int_{S_k(Q_j)} \|u(x)\|_{H_2}^{2}\,dx\right)^{1/2} 
	\left(\int_{S_k(Q_j)} \|T(I-A_{r_j})\Gamma b_j(x)\|_{H_2}^2\,dx\right)^{1/2}\\
	&\lesssim \sum_j \sum_{k=2}^\infty |2^kQ_j|^{1/2}
	\inf_{y \in Q_j} \calM_{2}(\|u\|_{H_2})(y) g(k) |2^kQ_j|^{1/2} |Q_j|^{-1/{p}} \|\Gamma b_j\|_{p}\\
	&\lesssim \alpha  \sum_j \int_{Q_j} \calM_{2}(\|u\|_{H_2})(y)\,dy 
	\lesssim \alpha  \int_{\bigcup_j Q_j} \calM_{2}(\|u\|_{H_2})(y)\,dy.
\end{align*}
By Kolmogorov's lemma and \eqref{CZeq3}, the last line is bounded by
\begin{align*}
	\alpha |\bigcup_j Q_j|^{1/2} \|u\|_{2}
	\lesssim \alpha (\sum_j |Q_j|)^{1/2} \lesssim \alpha^{1-p/2} \|\nabla f\|_{p}^{p/2},
\end{align*}
which gives the desired estimate.\\
It remains to consider the last part
\begin{align*}
	|\{x \in \R^n:\,\|\sum_j TA_{r_j}\Gamma b_j(x)\|_{H_2} >\alpha/3\}|
	\lesssim \alpha^{-2} \|\sum_j TA_{r_j}\Gamma b_j\|_{L^2(H_2)}^2.
\end{align*}
We again dualise with $u \in L^{2}(H_2)$, $\|u\|_{2}=1$, now using  $L^{p}$-$L^2$ off-diagonal bounds for $A_{r_j}$ on $R_p(\Gamma)$. With similar arguments as before, but now including the on-diagonal term, using \eqref{extrapol-cond1} and using that $T \in \calL(L^2(H_1),L^2(H_2))$ by assumption,  one obtains
\begin{align*}
	|\skp{u,\sum_j TA_{r_j}\Gamma b_j}| 
	&\leq \sum_j \sum_{k=0}^\infty \left(\int_{S_k(Q_j)}\|Tu(x)\|_{H_1}^{2}\,dx\right)^{1/2} \left(\int_{S_k(Q_j)}\|A_{r_j}\Gamma b_j(x)\|_{H_1}^2\,dx\right)^{1/2}\\
	&\lesssim \alpha  \calM_{2}(\|Tu\|_{H_1})(y)\,dy
	\lesssim \alpha \|Tu\|_{2} (\sum_j |Q_j|)^{1/2}
	\lesssim  \alpha^{1-p/2} \|\nabla f\|_{p}^{p/2}.
\end{align*}
This establishes \eqref{weakpp1} for  $G_1 = \Gamma f$.

 In the case where $G(t,.) \in \overline{\calR_{p}(\Gamma^{*}_{B})}$ for almost every $t>0$, we 
replace $(Q_t^B)^Mu$ for $u\in\overline{\calR_p(\Gamma^*_B)}$,  by $(\underline{Q_t^B})^Mv$  for $v\in\overline{\calR_p(\Gamma^*)}$, as in the proof of Corollary \ref{cor:hardy}, and then proceed as above. 
\end{proof}

\begin{Lemma} \label{prop:CZoff-diag}
Suppose $p \in (p_H,2]$ and  $M \in 2\N$  with $M>n+4$.\\  Assume that $\{(R_t^B)^{\frac{M}{2}-1} \Gamma \;;\; t \in \R^{*}\}$ and $\{(R_t^B)^{\frac{M}{2}-1} B_{1}\Gamma^{*} \;;\; t \in \R^{*}\}$ have $\dot{W}^{1,p}$-$L^2$ off-diagonal bounds of every order on balls.
 Then \eqref{extrapol-cond1} and \eqref{extrapol-cond2} hold
 for all balls $B\subset \R^{n}$  of radius $r>0$, and all $f\in \dot{W}^{1,p}_{\ovB}(\R^n;H_1)$.
\end{Lemma}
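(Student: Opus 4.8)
The plan is to reduce both \eqref{extrapol-cond1} and \eqref{extrapol-cond2} to \emph{scalar}, pointwise-in-$t$ off-diagonal bounds on $\dot{W}^{1,p}_{\ovB}(\R^n;\C^N)$ for the operators $\phi_r(\Pi_B)\Gamma$ and $(Q_t^B)^M(I-\phi_r(\Pi_B))\Gamma$, and then prove those. Two algebraic facts will drive everything. First, since $(\Gamma^*_B)^2=B_1(\Gamma^*B_2B_1\Gamma^*)B_2=0$ by Definition \ref{pHD}, one has $\Pi_B^2=\Gamma\Gamma^*_B+\Gamma^*_B\Gamma$, hence $\Gamma\Pi_B^2=\Gamma\Gamma^*_B\Gamma=\Pi_B^2\Gamma$, so that $\Gamma$ commutes with every bounded function of $\Pi_B^2$ — in particular with $P_r^B$, with $I-\phi_r(\Pi_B)$, and (as $M$ is even) with $(Q_t^B)^M$. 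Second, $(Q_t^B)^2=P_t^B(I-P_t^B)$, so $(Q_t^B)^M=(P_t^B)^{M/2}(I-P_t^B)^{M/2}$. Consequently, whenever $M/2-1$ resolvent factors can be placed next to $\Gamma$ we may write, for $m\geq M/2-1$, $(P_s^B)^m\Gamma=(R_s^B)^m(R_{-s}^B)^{m-(M/2-1)}\cdot(R_{-s}^B)^{M/2-1}\Gamma$; here $(R_{-s}^B)^{M/2-1}\Gamma$ has $\dot{W}^{1,p}$-$L^2$ off-diagonal bounds of every order on balls of radius $|s|$ by hypothesis, while the remaining factor is a finite sum of monomials in $R_{\pm s}^B$ and thus has $L^2$-$L^2$ off-diagonal bounds of every order (Theorem \ref{thm:p=2}), so by Lemma \ref{lem:ODcomp}(4) the composition has $\dot{W}^{1,p}$-$L^2$ off-diagonal bounds of every order on balls of radius $|s|$. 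Finally, all such bounds lift to the $H_1$-valued setting by applying them slicewise in $t$ and invoking Minkowski's integral inequality in $L^{2/p}(\R_+,\frac{dt}{t})$ (this is where $p\leq2$ is used), which also gives $\int_0^\infty\|\nabla[f(\cdot)(t)]\|_{L^p(\R^n)}^2\,\frac{dt}{t}\leq\|\nabla f\|_{L^p(\R^n;H_1)}^2$.

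For \eqref{extrapol-cond1} this is then immediate: expanding $\phi_r(\Pi_B)=\sum_{\ell=1}^M\binom{M}{\ell}(-1)^{\ell+1}(P_r^B)^{M\ell}$ (the two identity terms cancel), and routing each $(P_r^B)^{M\ell}\Gamma$ at scale $s=r$ as above (note $M\ell\geq M>M/2-1$), we get that $\phi_r(\Pi_B)\Gamma$ has $\dot{W}^{1,p}$-$L^2$ off-diagonal bounds of every order on balls of radius $r$; since $\ovB$ has radius $r$, evaluating the defining inequality on the shells $S_k(B)$ and dividing by $|2^{k+1}B|^{1/2}$ and $|B|^{1/p}$ yields \eqref{extrapol-cond1} with $g(k)\lesssim 2^{-k(N-n/2)}$, which is summable against $2^{nk}$ for $N>n/2$.

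For \eqref{extrapol-cond2}, I would first use Fubini (integrating out $x$, bounding $|\{x\in S_k(B):|x-y|<t\}|\leq c_nt^n\Eins_{U_k(t)}(y)$ with $U_k(t):=\{y:\dist(y,S_k(B))<t\}$) to dominate $\int_{S_k(B)}\|T(I-\phi_r(\Pi_B))\widetilde\Gamma f(\cdot,x)\|_{H_2}^2\,dx$ by $c_n\int_0^\infty\frac1t\|\Eins_{U_k(t)}(Q_t^B)^M(I-\phi_r(\Pi_B))\Gamma[f(\cdot)(t)]\|_{L^2(\R^n)}^2\,dt$. Setting $\Theta_k:=\sup_{t>0}\|\Eins_{U_k(t)}(Q_t^B)^M(I-\phi_r(\Pi_B))\Gamma\|_{\dot{W}^{1,p}_{\ovB}\to L^2}$, this is at most $c_n\Theta_k^2\int_0^\infty\|\nabla[f(\cdot)(t)]\|_{L^p}^2\,\frac{dt}{t}\leq c_n\Theta_k^2\|\nabla f\|_{L^p(H_1)}^2$, so it suffices to prove $\Theta_k\lesssim 2^{-k(n/2+1)}r^{-n(1/p-1/2)}$ (which makes $g(k)\lesssim 2^{-k(n+1)}$, again summable). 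I split $t\in(0,2^{k-2}r]$ and $t\in[2^{k-2}r,\infty)$. For $t\geq 2^{k-2}r\geq r$, I route $\Gamma$ at scale $t$ (legitimate since $\ovB$ lies in the concentric ball of radius $t$), writing $(Q_t^B)^M(I-\phi_r(\Pi_B))\Gamma=\mathcal B\cdot(R_{-t}^B)^{M/2-1}\Gamma$ with $\mathcal B=(I-P_t^B)^{M/2}(I-(P_r^B)^M)^M(R_t^B)^{M/2}R_{-t}^B$; the vanishing of $I-\phi_r(\Pi_B)$ to order $2M$ at $\Pi_B=0$ forces, via the $H^\infty$-calculus of Theorem \ref{thm:p=2}, $\|\mathcal B\|_{\calL(L^2)}\lesssim(r/t)^{M/2+1}$, and since $U_k(t)$ lies in a bounded dilate of that ball this gives $\Theta_k$-contribution $\lesssim(r/t)^{M/2+1}t^{-n(1/p-1/2)}$, with supremum over $t\geq 2^{k-2}r$ of order $2^{-k(M/2+1+n(1/p-1/2))}r^{-n(1/p-1/2)}$. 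For $t\leq 2^{k-2}r$, where $U_k(t)$ lies at distance $\gtrsim 2^kr$ from $\ovB$, I expand $I-\phi_r(\Pi_B)=I+\sum_{\ell=1}^M\binom{M}{\ell}(-1)^\ell(P_r^B)^{M\ell}$; each $\ell\geq1$ term is routed at scale $r$, with prefactor $\mathcal A_\ell=(Q_t^B)^M(R_r^B)^{M\ell}(R_{-r}^B)^{M\ell-(M/2-1)}$ of $L^2$-norm $\lesssim(t/r)^M$ and $L^2$-$L^2$ off-diagonal decay at scale $r$, which together with the concentration of $(R_{-r}^B)^{M/2-1}\Gamma h$ near $\ovB$ produces a contribution $\lesssim(t/r)^{M/2}2^{-kN'}r^{-n(1/p-1/2)}\|\nabla h\|_p$.

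The \emph{main obstacle} is the identity term $(Q_t^B)^M\Gamma h$ in the range $t\leq 2^{k-2}r$: $\{(Q_t^B)^M\Gamma\}$ has $\dot{W}^{1,p}$-$L^2$ off-diagonal bounds of every order \emph{only on balls of radius $t$}, whereas $\supp h\subseteq\ovB$ has the larger radius $r>t$. Here I would cover $\ovB$ by $\sim(r/t)^n$ balls $B_\nu$ of radius $t$, choose a subordinate partition of unity $\sum_\nu\chi_\nu=1$ with $|\nabla\chi_\nu|\lesssim 1/t$, and estimate $\sum_\nu\|(Q_t^B)^M\Gamma(h\chi_\nu)\|_{L^2(U_k(t))}$ using the off-diagonal bounds on balls of radius $t$; the delicate point is that $\|\nabla(h\chi_\nu)\|_p\lesssim\|\nabla h\|_{L^p(2B_\nu)}+\frac1t\|h\|_{L^p(2B_\nu)}$ produces a factor $\frac1t$, which after summation over $\nu$ is absorbed by the Sobolev–Poincaré estimate $\|h\|_{L^p(B)}\lesssim r\|\nabla h\|_p$ valid for $h\in\dot{W}^{1,p}$ supported in $\ovB$; this yields a contribution $\lesssim(t/2^kr)^N r^{n(1-1/p)+1}t^{-n/2-1}\|\nabla h\|_p$, whose supremum over $t\in(0,2^{k-2}r]$ is $\lesssim 2^{-k(n/2+1)}r^{-n(1/p-1/2)}\|\nabla h\|_p$ once $N>n/2+1$. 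Adding the three contributions and noting that $M>n+4$ leaves ample room in all the exponent inequalities, one obtains $\Theta_k\lesssim 2^{-k(n/2+1)}r^{-n(1/p-1/2)}$ and hence \eqref{extrapol-cond2}. The analogous statements with $\Gamma$ replaced by $\Gamma^*_B$ (equivalently, with $\Pi_B$ replaced by $\underline{\Pi_B}$ and $\Gamma$ by $\Gamma^*$), which are what is actually invoked in the proof of Theorem \ref{thm:conicalVSvertical}, follow in exactly the same way, the second hypothesis on $(R_t^B)^{M/2-1}B_1\Gamma^*$ together with the intertwining identities \eqref{eq:swap-bar} taking over the role of the first.
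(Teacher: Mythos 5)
Your argument is correct in substance, but it takes a genuinely different route from the paper's proof. For \eqref{extrapol-cond1} the paper factors $\phi(z)=\tilde\phi(z)(1+z^2)^{-1}$ and combines $L^2$-$L^2$ bounds for the holomorphic factor with the hypothesis via Lemma \ref{lem:ODcomp}, whereas you expand $\phi_r(\Pi_B)=I-(I-(P_r^B)^M)^M$ binomially and peel off $(R_{-r}^B)^{M/2-1}\Gamma$; both yield the same $\dot W^{1,p}$-$L^2$ bounds for $\phi_r(\Pi_B)\Gamma$ on balls of radius $r$. For \eqref{extrapol-cond2} the paper keeps the $t$-integration and splits the truncated cone over $S_j(B)$ into three space--time regions, extracting smallness from the identity $(Q_t^B)^M(I-\phi_r(\Pi_B))=(\tfrac rt)^M(t^2\Pi_B^2)^M(I+t^2\Pi_B^2)^{-M}\tilde\phi(r\Pi_B)$ together with Lemma \ref{lem:psiOD}; you instead take a supremum over $t$ of the localised operator norm $\Theta_k$ and reduce everything to pointwise-in-$t$ bounds, a clean repackaging that is equivalent to what the paper does when it bounds the powers of $t/r$ by $1$ before applying Minkowski's inequality in $L^{2/p}(dt/t)$. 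Two remarks. First, where you assert that your operators $\mathcal{A}_\ell$ enjoy simultaneously the norm bound $(t/r)^M$ and off-diagonal decay of order $N'$ at scale $r$, you should state explicitly that the powers of $t\Pi_B$ are split between the two purposes; your final exponent $(t/r)^{M/2}2^{-kN'}$ is consistent with such a split, and $M>n+4$ leaves room, but as written the claim is slightly too strong. Second, your ``main obstacle'' --- that the hypothesised $\dot W^{1,p}$-$L^2$ bounds hold only on balls of radius $t$ while $f(t,\cdot)$ is supported in $\ovB$ of radius $r>t$ --- is a genuine point that the paper's estimate of $J_1$ treats only implicitly; your covering of $\ovB$ by $\sim(r/t)^n$ balls of radius $t$ with a subordinate partition of unity, absorbing the resulting $t^{-1}\|h\|_p$ term via the Poincar\'e inequality $\|h\|_{L^p(B)}\lesssim r\|\nabla h\|_p$ for $h\in\dot W^{1,p}_{\ovB}$, supplies that step explicitly, at the cost of a decay $g(k)\approx 2^{-k(n+1)}$ that is weaker than the paper's but still summable against $2^{nk}$. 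So the proposal is sound, and at this last point somewhat more careful than the original.
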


\begin{proof}

We write $\phi(z)=\tilde{\phi}(z)(1+z^{2})^{-1}$ for $z \in S_{\theta}$ and $\theta \in (0,\frac{\pi}{2})$, and notice that $\phi \in H^{\infty}(S_{\theta})$.
Moreover $\phi$ is a sum and product of functions of the form $z \mapsto (1\pm iz)^{-1}$. Therefore $\{\phi(r\Pi_{B}) \;;\; r>0\}$ has $L^2$-$L^2$ off-diagonal bounds of every order. Combining this with the $\dot{W}^{1,p}$-$L^2$ off-diagonal bounds assumption and using Lemma \ref{lem:ODcomp}, we have that
$\{\phi(r\Pi_{B})\Gamma \;;\; r>0\}$ has $\dot{W}^{1,p}$-$L^2$ off-diagonal bounds of every order on balls. This gives the following:
\begin{equation*}
\begin{split}
(\frac{1}{|2^{j+1}B|} & \int \limits _{S_{j}(B)} \|\phi_{r}(\Pi_B)\Gamma f(.,y)\|_{H_{1}} ^{2} dy)^{\frac{1}{2}}
\approx 2^{-j\frac{n}{2}}r^{-\frac{n}{2}}(\int \limits _{0} ^{\infty} \int \limits _{S_{j}(B)} |\phi_{r}(\Pi_B)\Gamma f(t,y)|^{2} \frac{dydt}{t})^{\frac{1}{2}}\\
&\lesssim 2^{-j(M+\frac{n}{2})} r^{-\frac{n}{p}}(\int \limits _{0} ^{\infty} \|\nabla f(t,y)\|^{2} _{p} \frac{dt}{t})^{\frac{1}{2}}
\lesssim 
 2^{-j(M+\frac{n}{2})} |B|^{-\frac{1}{p}} \|\int \limits _{B} |\nabla f(t,y)|^{p} dy\|_{\frac{2}{p}} ^{\frac{1}{p}}\\ 
 & \lesssim
  2^{-j(M+\frac{n}{2})} (|B|^{-1}\int \limits _{B} (\int \limits _{0} ^{\infty} |\nabla f(t,y)|^{2} \frac{dt}{t})^{\frac{p}{2}}dy)^{\frac{1}{p}}
=  2^{-j(M+\frac{n}{2})} (|B|^{-1}\int \limits _{B} \|\nabla f(.,y)\|_{H_{1}}^{p}dy)^{\frac{1}{p}}.
\end{split}
\end{equation*}

This establishes \eqref{extrapol-cond1}. 

The proof will thus be complete once we have established \eqref{extrapol-cond2}.
To do so, we first use the straightforward integration lemma \cite[Lemma 1]{cms} (an application of Fubini's theorem), and obtain that for $j \geq 2$ 
\begin{equation*}
\begin{split}
(\frac{1}{|2^{j+1}B|}  &\int \limits _{S_{j}(B)} \|T(I-\phi_{r}(\Pi_B))\Gamma f(.,y)\|_{H_{2}} ^{2} dy)^{\frac{1}{2}}\\
&\lesssim
(\frac{1}{|2^{j+1}B|}  \int \limits_0^\infty \int \limits _{\mathcal{R}(S_{j}(B))} |(Q_t^B)^M(I-\phi_{r}(\Pi_B))\Gamma f(t,y)|^2 \frac{dydt}{t})^{\frac{1}{2}}\\
& \leq
(\frac{1}{|2^{j+1}B|}  \int \limits _{0} ^{\infty} \int \limits _{(2^{j-1}B)^{c}} |(Q_t^B)^M(I-\phi_{r}(\Pi_B))\Gamma f(t,y)|^{2} \frac{dydt}{t})^{\frac{1}{2}}\\
&\quad + (\frac{1}{|2^{j+1}B|}  \int \limits _{2^{j-2}r}^{\infty} \int \limits _{2^{j-1}B} |(Q_t^B)^M(I-\phi_{r}(\Pi_B))\Gamma f(t,y)|^{2} \frac{dydt}{t})^{\frac{1}{2}}
=:I_1 + I_{2},
\end{split}
\end{equation*}
where $\mathcal{R}(S_{j}(B)) = \bigcup \limits _{x \in S_{j}(B)} \{(t,y) \in \R_{+}\times \R^{n} \;;\; |y-x|\leq t\}$.
Let us first estimate $I_{2}$.
Notice that 
\begin{align*}
	(Q_t^B)^M(I-\phi_{r}(\Pi_B)) = (\frac{r}{t})^{M}(t^2\Pi_B^2)^M(I+t^2\Pi_B^2)^{-M} \tilde{\phi}(r\Pi_B)
	\qquad \forall t,r>0,
\end{align*}
for $\tilde{\phi}(z)=\left(\frac{(1+z^2)^M-1}{z(1+z^2)^M}\right)^M$,
and that $\{(t^2\Pi_B^2)^M(I+t^2\Pi_B^2)^{-M} \;;\; t>0\}$ has $L^2$-$L^2$ off-diagonal bounds. In order to show that 
 $\{\tilde{\phi}(r\Pi_B) \;;\; r>0\}$ has $\dot{W}^{1,p}$-$L^2$ off-diagonal bounds of order  $N'>\frac{n}{2}$ on balls, 
write $\tilde{\phi}(z)=(1+z^2)^{\frac{M}{2}-1} \tilde{\phi}(z) (1+z^2)^{-(\frac{M}{2}-1)}$. By Lemma \ref{lem:psiOD}, $\{(1+r^2\Pi_B^2)^{\frac{M}{2}-1} \tilde{\phi}(r\Pi_B)\;;\; r>0\}$ has $L^2$-$L^2$ off-diagonal estimates of every order. On the other hand,  $\{(I+r^2\Pi_B^2)^{-(\frac{M}{2}-1)}\Gamma\;;\;r \in \R^{*}\}$   
has $\dot{W}^{1,p}$-$L^2$ off-diagonal bounds of every order on balls.  Combining the two families of operators gives the statement (see Lemma \ref{lem:ODcomp}).  
Thus $\{(\frac{r}{t})^{n(\frac{1}{p}-\frac{1}{2})}(t^2\Pi_B^2)^M(I+t^2\Pi_B^2)^{-M} \tilde{\phi}(r\Pi_B)\Gamma \;;\; t>0\}$  has   $\dot{W}^{1,p}$-$L^2$ off-diagonal bounds of order $N'>\frac{n}{2}$ on balls, again by Lemma \ref{lem:ODcomp}, whenever $0<r <t$. 
In particular, $\|\psi_t(\Pi_B)(I-\phi_{r}(\Pi_B))\|_{\calL(L^p,L^2)} \lesssim (\frac{r}{t})^{\tilde{M}}
t^{-n(\frac{1}{p}-\frac{1}{2})}$, for $\tilde{M}:=M-n(\frac{1}{p}-\frac{1}{2})$.
This gives 
\begin{equation*}
\begin{split}
I_{2} &= (\frac{1}{|2^{j+1}B|}  \int \limits _{2^{j-2}r} ^{\infty} \int \limits _{2^{j-1}B} |\psi_{t}(\Pi_B)(I-\phi_{r}(\Pi_B))\Gamma f(t,y)|^{2} \frac{dydt}{t})^{\frac{1}{2}}\\
&\lesssim (\frac{1}{|2^{j+1}B|}  \int \limits _{2^{j-2}r} ^{\infty} ((\frac{r}{t})^{\tilde{M}}
t^{-n(\frac{1}{p}-\frac{1}{2})} \|\nabla f(t,.)\|_{p})^{2} \frac{dt}{t})^{\frac{1}{2}}\\
&\lesssim  
(\int \limits _{2^{j-2}r} ^{\infty} ((2^{j}r)^{-\frac{n}{2}}(\frac{r}{t})^{\tilde{M}}
t^{-n(\frac{1}{p}-\frac{1}{2})} \|\nabla f(t,.)\|_{p})^{2} \frac{dt}{t})^{\frac{1}{2}}.
\end{split}
\end{equation*}

We can estimate the above by 
\begin{align*}
 2^{-j(\tilde{M}+\frac{n}{p})}r^{-\frac{n}{p}}(\int \limits _{2^{j}r} ^{\infty}
\|\nabla f(t,.)\|_{p}^{2} \frac{dt}{t})^{\frac{1}{2}}
&\lesssim 2^{-j(\tilde{M}+\frac{n}{p})}r^{-\frac{n}{p}}
\| \int \limits_{B} |\nabla f(.,y)|^{p}dy\|_{\frac{2}{p}} ^{\frac{1}{p}}\\
&\lesssim  2^{-j(\tilde{M}+\frac{n}{p})}(\frac{1}{|B|} \int \limits _{B} \|\nabla f(.,y)\|_{H_{1}} ^{p} dy)^{\frac{1}{p}}.
\end{align*}
We now estimate 
\begin{align*}
I_1 &\leq J_{1}+J_{2} :=(\frac{1}{|2^{j+1}B|}  \int \limits _{0} ^{r} \int \limits _{(2^{j-1}B)^{c}} |(Q_t^B)^M(I-\phi_{r}(\Pi_B))\Gamma f(t,y)|^{2} \frac{dydt}{t})^{\frac{1}{2}}
\\ & \qquad +
(\frac{1}{|2^{j+1}B|}  \int \limits _{r} ^{\infty} \int \limits _{(2^{j-1}B)^{c}} |(Q_t^B)^M(I-\phi_{r}(\Pi_B))\Gamma f(t,y)|^{2} \frac{dydt}{t})^{\frac{1}{2}}.
\end{align*}
For $J_{1}$, we use that for $0<t<r$, 

$\{(Q_t^B)^M \Gamma f\;;\; t>0\}$ and $\{(Q_t^B)^M\phi_r(\Pi_B)\Gamma \;;\; r>0\}$ have $\dot{W}^{1,p}$-$L^2$ off-diagonal bounds of every order on balls
(since $(Q_{t}^B)^M = (I-P_{t} ^{B})^{\frac{M}{2}} (P_{t}^{B})^{\frac{M}{2}}$ and $\{P_{t}^{B} \;;\; t>0\}$ has $L^2$-$L^2$ off-diagonal bounds of every order). 
Thus,
\begin{equation*}
\begin{split}
(&\frac{1}{|2^{j+1}B|}  \int \limits _{0} ^{r}  \int \limits _{(2^{j-1}B)^{c}} |(Q_t^B)^M(I-\phi_{r}(\Pi_B))\Gamma f(t,y)|^{2} \frac{dydt}{t})^{\frac{1}{2}}
\\ &\leq
(\frac{1}{|2^{j+1}B|}  \int \limits _{0} ^{r} \int \limits _{(2^{j-1}B)^{c}} |(Q_t^B)^M \Gamma f(t,y)|^{2} \frac{dydt}{t})^{\frac{1}{2}} +
(\frac{1}{|2^{j+1}B|}  \int \limits _{0} ^{r} \int \limits _{(2^{j-1}B)^{c}} |(Q_t^B)^M\phi_{r}(\Pi_B) \Gamma f(t,y)|^{2} \frac{dydt}{t})^{\frac{1}{2}} \\
& \lesssim
(\frac{1}{|2^{j+1}B|}  \int \limits _{0} ^{r} (t^{-\frac{n}{p}+\frac{n}{2}} (1+\frac{2^{j}r}{t})^{-N'}\|\nabla f(t,.)\|_{p})^{2} \frac{dt}{t})^{\frac{1}{2}}
+
(\frac{1}{|2^{j+1}B|}  \int \limits _{0} ^{r} (r^{-\frac{n}{p}+\frac{n}{2}} 2^{-jN'}\|\nabla f(t,.)\|_{p})^{2} \frac{dt}{t})^{\frac{1}{2}} \\
& \lesssim 2^{-j(\frac{n}{2}+N')}r^{-\frac{n}{p}}
(\int \limits _{0} ^{r} (\frac{t}{r})^{N'-\frac{n}{p}+\frac{n}{2}} \|\nabla f(t,.)\|_{p})^{2} \frac{dt}{t})^{\frac{1}{2}}
+2^{-j(\frac{n}{2}+N')}r^{-\frac{n}{p}}
(\int \limits _{0} ^{r}  \|\nabla f(t,.)\|_{p})^{2} \frac{dt}{t})^{\frac{1}{2}}\\
& \lesssim
2^{-j(\frac{n}{2}+N')}(\frac{1}{|B|} \int \limits _{B} \|\nabla f(.,y)\|_{H_{1}} ^{p} dy)^{\frac{1}{p}}.
\end{split}
\end{equation*}
Turning to $J_{2}$, we now use that $\{(\frac{r}{t})^{n(\frac{1}{p}-\frac{1}{2})}(t^2\Pi_B^2)^M(I+t^2\Pi_B^2)^{-M} \tilde{\phi}(r\Pi_B)\Gamma \;;\; t>0\}$ has $\dot{W}^{1,p}$-$L^2$ off-diagonal bounds of order $N'>\frac{n}{2}$, which gives 
\begin{equation*}
\begin{split}
(&\frac{1}{|2^{j+1}B|}  \int \limits _{r} ^{\infty} \int \limits _{(2^{j-1}B)^{c}} |(Q_t^B)^M(I-\phi_{r}(\Pi_B))\Gamma f(t,y)|^{2} \frac{dydt}{t})^{\frac{1}{2}} \\
&\lesssim 
(\int \limits _{r} ^{2^{j}r} ((2^{j}r)^{-\frac{n}{2}}(\frac{r}{t})^{\tilde{M}}
t^{-n(\frac{1}{p}-\frac{1}{2})} (\frac{2^{j}r}{t})^{-N'}\|\nabla f(t,.)\|_{p})^{2} \frac{dt}{t})^{\frac{1}{2}}
\\& \qquad+(\int \limits _{2^{j}r} ^{\infty} ((2^{j}r)^{-\frac{n}{2}}(\frac{r}{t})^{\tilde{M}}
t^{-n(\frac{1}{p}-\frac{1}{2})} \|\nabla f(t,.)\|_{p})^{2} \frac{dt}{t})^{\frac{1}{2}}\\
& \lesssim 
2^{-j\frac{n}{p}}r^{-\frac{n}{p}}
(\int \limits _{r} ^{2^{j}r}  ((\frac{r}{t})^{\tilde{M}} (\frac{2^{j}r}{t})^{-(N'-n(\frac{1}{p}-\frac{1}{2}))}\|\nabla f(t,.)\|_{p})^{2} \frac{dt}{t})^{\frac{1}{2}}
+ 2^{-j(\tilde{M}+\frac{n}{p})}(\frac{1}{|B|} \int \limits _{B} \|\nabla f(.,y)\|_{H_{1}} ^{p} dy)^{\frac{1}{p}} \\
&\lesssim (2^{-j(N'+\frac{n}{2})} + 2^{-j(\tilde{M}+\frac{n}{p})})(\frac{1}{|B|} \int \limits _{B} \|\nabla f(.,y)\|_{H_{1}} ^{p} dy)^{\frac{1}{p}}.
\end{split}
\end{equation*}

Combining all the estimates gives
$$
(\frac{1}{|2^{j+1}B|}  \int \limits _{S_{j}(B)} \|T(I-\phi_{r}(\Pi_{B}))\Gamma f(.,y)\|_{H_{2}} ^{2} dy)^{\frac{1}{2}}
\lesssim j2^{-j\min\{M+\frac{n}{2},N'+\frac{n}{2}\}} (|B|^{-1}\int \limits _{B} \|\nabla f(.,y)\|_{H_{1}}^{p}dy)^{\frac{1}{p}},
$$
which shows \eqref{extrapol-cond2}, given that $M>\frac{n}{2}$ and $N'>\frac{n}{2}$. 
\end{proof}

\section{High frequency estimates for \texorpdfstring{$p \in (\max\{1,(p_{H})_{*}\},2]$}{p (max{1,pH*},2]}}
\label{sec:high}

In this section, we finally prove Theorem \ref{thm:high} in the case $p \in (\max\{1,(p_H)_\ast\},2]$.\\

The idea of the proof is to show, using Corollary \ref{Cor:SIO-tent}, that the integral operator defined by
\[
		T_K(F)(t,\,.\,) 
		:= \int_0^\infty K(t,s) F(s,\,.\,)\, \frac{ds}{s} 
\]
with $K(t,s):=(Q_t^B)^{M} (I-{P_{t}}^{\tilde{N}}){Q_s}^{2{\tilde{N}}}$ and $M$ sufficiently large, 
extends to a bounded operator on tent spaces. The square function estimate of Theorem \ref{thm:high} is then reduced to the square function estimate for the unperturbed operator shown in Proposition \ref{lem:hardyPi}.

\begin{Theorem}
\label{thm:fullHF}
Let $M$ be even and such that $M\geq10n$, and let $p \in (1,2]$.
Suppose $\Pi_B$ is a perturbed Hodge-Dirac operator,  such that $(\Pi_{B}(p))$ holds.
 Assume that $\{t\Gamma P_{t}^B \;;\; t \in \R^{*}\}$ and $\{t\Gamma^{*}_{B} P_{t}^B \;;\; t \in \R^{*}\}$ are uniformly bounded in $\calL(L^{p})$ and that  
 $\{(R_t^B)^{\frac{M}{2}-2} \Gamma \;;\; t \in \R^{*}\}$ and $\{(R_t^B)^{\frac{M}{2}-2} B_{1}\Gamma^{*} \;;\; t \in \R^{*}\}$ have $\dot{W}^{1,p}$-$L^2$ off-diagonal bounds of every order on balls.

 Then, for all $q \in (\max\{1,p_{*}\},2]$,
$$
\|(t,x) \mapsto (Q_{t}^{B})^{M}(I-{P_{t}}^{\tilde{N}})u(x)\|_{T^{q,2}} \leq C_q \|u\|_{{q}} \quad \forall u \in \overline{\calR_{q}(\Gamma)}.
$$\end{Theorem}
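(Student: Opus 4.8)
The plan is the one announced before the statement: to realise $(t,x)\mapsto (Q_t^B)^M(I-{P_t}^{\tilde{N}})u(x)$ as the image, under a singular integral operator on tent spaces, of an element of $T^{q,2}$, and to control that operator by the Schur-type extrapolation result of Section~\ref{sec:tech1}, Corollary~\ref{Cor:SIO-tent}. By the Hodge decomposition argument used in the proof of Corollary~\ref{cor:hardy}, $\overline{\calR_2(\Gamma)}\cap L^q(\R^n;\C^N)$ is contained and dense in $\overline{\calR_q(\Gamma)}$, so it suffices to prove the estimate for $u$ in this dense subspace. For such $u$, the Calder\'on reproducing formula for the unperturbed operator $\Pi$ gives, in $L^2$, $u=c\int_0^\infty (Q_s)^{4\tilde{N}}u\,\frac{ds}{s}$. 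Setting $F(s,\cdot):=(Q_s)^{2\tilde{N}}u$ and
\[
	K(t,s):=(Q_t^B)^M(I-{P_t}^{\tilde{N}})(Q_s)^{2\tilde{N}},\qquad T_KF(t,\cdot):=\int_0^\infty K(t,s)\,F(s,\cdot)\,\frac{ds}{s},
\]
we have $(Q_t^B)^M(I-{P_t}^{\tilde{N}})u=c\,T_KF$, while $\|F\|_{T^{q,2}}\approx\|u\|_q$ by Proposition~\ref{lem:hardyPi}(3) (as $2\tilde{N}=20n>\tfrac n2+1$). Everything is thus reduced to showing that $T_K$ extends to a bounded operator on $T^{q,2}(\R^{n+1}_{+};\C^N)$ for every $q\in(\max\{1,p_*\},2]$.

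The case $q=2$ is immediate: by the reproducing formula $T_KF=c\,(Q_t^B)^M(I-{P_t}^{\tilde{N}})u$, and $\int_0^\infty\|(Q_t^B)^M(I-{P_t}^{\tilde{N}})u\|_2^2\,\frac{dt}{t}\lesssim\|u\|_2^2\approx\|F\|_{T^{2,2}}^2$ by the quadratic estimate of Theorem~\ref{thm:p=2} and $\sup_t\|I-{P_t}^{\tilde{N}}\|_{\calL(L^2)}<\infty$. To descend to $q<2$, Corollary~\ref{Cor:SIO-tent} requires, in addition to this $T^{2,2}$ bound, that the truncated pieces $\Eins_{S_j(B)}\,K(t,s)\,\Eins_B$ — over balls $B$ of radius comparable to $\max\{t,s\}$ — obey $L^q$--$L^2$ off-diagonal bounds on balls carrying the extra scaling factor $\min\{t/s,s/t\}^{\varepsilon}$ for some $\varepsilon>0$, with constants that are summable over the dyadic shells $S_j(B)$.

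Establishing this kernel estimate is the heart of the matter, and the one place where the three standing hypotheses on $\Pi_B$ enter. One splits into the regimes $s\le t$ and $s>t$; in each, one writes $I-{P_t}^{\tilde{N}}=\sum_{k=0}^{\tilde{N}-1}({P_t}^B)^k\,t\Pi_B Q_t^B$, expands $\Pi_B=\Gamma+\Gamma_B^\ast$, and uses the nilpotency relations together with the intertwining identities \eqref{eq:swap-bar} to rewrite $K(t,s)$ as a finite sum of products of three kinds of factors. The first kind are high-frequency functions of $\Pi_B$ and of $\Pi$: combined with the spare powers in $(Q_t^B)^M$ (recall $M\ge 10n$) and with the factor $(I-P_s)^{\tilde{N}}$ in $(Q_s)^{2\tilde{N}}=(I-P_s)^{\tilde{N}}(P_s)^{\tilde{N}}$, and using the resolvent bounds and holomorphic functional calculus of $\Pi_B$ on $L^2$ (as in \cite{AKM}), they produce the scaling decay $\min\{t/s,s/t\}^{\varepsilon}$. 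The second kind are the operators $(R_t^B)^{\frac{M}{2}-2}\Gamma$ and $(R_t^B)^{\frac{M}{2}-2}B_1\Gamma^\ast$, which carry the $\dot W^{1,p}$--$L^2$ off-diagonal bounds on balls assumed in the hypotheses and, via Lemma~\ref{lem:ODcomp}(2), the corresponding $\dot W^{1,q}$--$L^2$ bounds for $q\in(p,2]$; these provide the spatial off-diagonal decay. The third kind are the potential maps $S_\Gamma,S_{\Gamma^\ast}$ of Proposition~\ref{prop:pi}(7) paired with $t\Gamma{P_t}^B$ and $t\Gamma_B^\ast{P_t}^B$, whose uniform $\calL(L^p)$ boundedness is assumed and which — interpolated (Lemma~\ref{lem:ODcomp}(1)) with the $L^2$--$L^2$ off-diagonal bounds of Lemma~\ref{Lemma:off-diag-est} — convert an $L^q$ input into a $\dot W^{1,q}$ input at the cost of one Sobolev exponent, which is exactly what produces the restriction $q>p_*$. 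The hypothesis $(\Pi_B(p))$ is what guarantees that $\Gamma_B^\ast$ is closed in $L^p$, that $\overline{\calR_p(\Gamma_B^\ast)}=B_1\overline{\calR_p(\Gamma^\ast)}$, and that the potential-map and intertwining manipulations are legitimate in $L^p$. Composing these families by Lemma~\ref{lem:ODcomp}(4), summing the resulting geometric series over the shells (using the elementary estimate recalled at the start of Section~\ref{sec:low}), and correcting apertures with Lemma~\ref{lem:angle}, one arrives at the bound on $\Eins_{S_j(B)}K(t,s)\Eins_B$ required by Corollary~\ref{Cor:SIO-tent}.

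With the kernel estimate in hand, Corollary~\ref{Cor:SIO-tent} gives $T_K\in\calL(T^{q,2})$ for $q\in(\max\{1,p_*\},2]$, whence $\|(Q_t^B)^M(I-{P_t}^{\tilde{N}})u\|_{T^{q,2}}\lesssim\|F\|_{T^{q,2}}\approx\|u\|_q$ on the dense subspace, and the estimate extends to all of $\overline{\calR_q(\Gamma)}$ by density. The main obstacle is the kernel estimate of the previous paragraph: one must propagate the scaling decay $\min\{t/s,s/t\}^{\varepsilon}$ and the spatial off-diagonal decay on balls \emph{simultaneously} through the repeated splitting of $I-{P_t}^{\tilde{N}}$ and the commutation of $\Gamma$, $\Gamma_B^\ast$, $B_1$, $B_2$ past resolvents, ensuring that at each step only the available hypotheses — the two distinguished $\dot W^{1,p}$--$L^2$ families, the two uniform $\calL(L^p)$ bounds, and $(\Pi_B(p))$ — are invoked, and never the (unavailable) $L^p$ functional calculus of $\Pi_B$ itself.
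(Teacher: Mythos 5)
Your overall architecture --- Calder\'on reproducing formula, reduction to a singular integral operator on tent spaces, and Corollary \ref{Cor:SIO-tent} with the three standing hypotheses feeding the $L^p$--$L^2$ kernel bounds --- is the paper's, and your account of where each hypothesis enters the kernel estimates is broadly right. But there is a genuine gap at the point where you invoke Corollary \ref{Cor:SIO-tent}. That corollary interpolates between two endpoints: a $T^{1,2}$ bound that Proposition \ref{prop:SIO-tent} extracts from the kernel condition \eqref{kernel-cond-Lq} alone, and the \emph{hypothesis} $\sup_{\gamma\in\R}\|T_{K^+_{i\gamma}}\|_{\calL(T^{p,2})}<\infty$, i.e.\ a uniform $T^{p,2}$ bound over the whole analytic family of modulated kernels, at the exponent $p$, for arbitrary inputs $F\in T^{p,2}$. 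You offer in its place a $T^{2,2}$ bound for the single operator $T_K$, and even that only on inputs of the special form $F=(Q_s)^{2\tilde N}u$ (the reproducing formula is an identity about $u$, not an operator bound on all of $T^{2,2}$). This endpoint cannot be replaced by the $q=2$ case of the theorem: in the paper its verification occupies the last third of the proof and is precisely where Theorem \ref{thm:conicalVSvertical} enters --- one dominates the conical square function of $T_{K^+_{\varepsilon+i\gamma}}F$ by a vertical one (consuming the $\dot W^{1,p}$--$L^2$ off-diagonal hypotheses a second time), then uses the R-boundedness in $L^r$ of the unperturbed family $\{(I-{P_t}^{\tilde N})P_s{Q_s}^{\tilde N-1}\}$ coming from the $H^\infty$ calculus of $\Pi$ and \cite[Theorem 5.3]{kw}, the vertical Schur estimate of Lemma \ref{lem:vertSchur}, and the comparison $\|F\|_{L^{r}(L^2)}\lesssim\|F\|_{T^{r,2}}$ for $r\le 2$. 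None of this appears in your proposal, and without it the interpolation has only one endpoint.

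Two further points. First, a decay $\min\{t/s,s/t\}^{\varepsilon}$ for ``some $\varepsilon>0$'' is not enough in the regime $s\le t$: Corollary \ref{Cor:SIO-tent} requires $\beta>n(\tfrac1q-\tfrac1p)$ there, which tends to $1$ as $q\downarrow p_*$. The paper therefore extracts \emph{exactly one} power of $s/t$ algebraically, converting a factor $Q_s=s\Pi P_s$ into $(\tfrac st)\,t\Gamma P_s$ and absorbing the resulting $t\Gamma$ into two powers of $Q_t^B$ via Lemma \ref{Lemma:off-diag-est} and Lemma \ref{tent-boundedness}; this single power is what produces the range $q>p_*$, and an unquantified $\varepsilon$ loses the theorem. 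Second, the paper never pushes the regime $s>t$ through the kernel machinery: it is disposed of first, for all $p\in(1,2]$, by dropping $(Q_t^B)^M$ (bounded on tent spaces by Lemma \ref{tent-boundedness}) and recognising what remains as $\psi(t\Pi)u$ with $\psi\in\Psi_2^{\tilde N}$, handled by Theorem \ref{thm:hardy}. Treating both regimes through one kernel, as you propose, would additionally require verifying \eqref{kernel-cond-Lq} at the scale $\max\{t,s\}=s$ and producing the $(t/s)^{\alpha}$ decay for the $K^-$ part, which you have not done.
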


\begin{proof}
 Let $p \in (1,2]$. Let $u \in \calR_{p}(\Pi_{B})$.
By Theorem \ref{thm:p=2}, Lemma \ref{tent-boundedness}, and using the reproducing formula $u = C \int \limits _{0} ^{\infty} {Q_{s}}^{2\tilde{N}}u\frac{ds}{s}$ for some constant $C$, we have that:
\begin{align*}
\|(t,x) \mapsto \int \limits _{t} ^{\infty} (Q_{t}^{B})^{M}(I-{P_{t}}^{\tilde{N}}){Q_{s}}^{2\tilde{N}}u(x)\frac{ds}{s}\|_{T^{p,2}} 
& \lesssim
\|(t,x) \mapsto \int \limits _{t} ^{\infty} (I-{P_{t}}^{\tilde{N}}){Q_{s}}^{2\tilde{N}}u(x)\frac{ds}{s}\|_{T^{p,2}} \\
& = 
\|(t,x) \mapsto \psi(t\Pi)u(x)\|_{T^{p,2}},
\end{align*}
for $\psi(z) = (1-(1+z^{2})^{-\tilde{N}})\int \limits _{1} ^{\infty} (\frac{zs}{1+(zs)^{2}})^{2\tilde{N}} \frac{ds}{s}$. 
Therefore 
\begin{align*}
\|(t,x) \mapsto \int \limits _{t} ^{\infty} (Q_{t}^{B})^{M}(I-{P_{t}}^{\tilde{N}}){Q_{s}}^{2\tilde{N}}u(x)\frac{ds}{s}\|_{T^{p,2}} \lesssim
\|u\|_{{p}},
\end{align*}
by Theorem \ref{thm:hardy} and Proposition \ref{lem:hardyPi},  since $\psi \in \Psi_{2} ^{\tilde{N}}$. 
Now let $q \in (\max\{1,p_\ast\},2]$.
For $u \in \calR_{2}(\Gamma)\cap L^{q}(\R^{n};\C^{N})$, we have that 
\begin{align*}
\|(t,x) & \mapsto \int \limits _{0} ^{t} (Q_{t}^{B})^{M}(I-{P_{t}}^{\tilde{N}}){Q_{s}}^{2\tilde{N}}u(x)\frac{ds}{s}\|_{T^{q,2}} \\
&= \|(t,x) \mapsto \int \limits _{0} ^{t} (\frac{s}{t})t\Gamma(Q_{t}^{B})^{M}
(I-{P_{t}}^{\tilde{N}})P_{s}{Q_{s}}^{{\tilde{N}}-1}
{Q_{s}}^{\tilde{N}}u(x)\frac{ds}{s}\|_{T^{q,2}}\\
&\lesssim
\|(t,x) \mapsto \int \limits _{0} ^{t} (\frac{s}{t})(Q_{t}^{B})^{M-2}
(I-{P_{t}}^{\tilde{N}})P_{s}{Q_{s}}^{{\tilde{N}}-1}
{Q_{s}}^{\tilde{N}}u(x)\frac{ds}{s}\|_{T^{q,2}},
\end{align*}
where in the last step we have used
Lemma \ref{Lemma:off-diag-est} and Lemma \ref{tent-boundedness}.
We now consider the integral operator $T_{K}$ with kernel
$$
K(t,s) = \Eins_{(0,\infty)}(t-s)(Q_{t}^{B})^{M-2}
(I-{P_{t}}^{\tilde{N}})P_{s}{Q_{s}}^{{\tilde{N}}-1}.
$$
Using the results of Section \ref{sec:tech1}, we aim to show that $T_{K_{1}^{+}}$ extends to a bounded operator on $T^{q,2}$.
The result then follows from Proposition \ref{lem:hardyPi}.\\
From our assumption, we have that 
$\{(P_t^B)^{\frac{M}{2}-2}\Gamma \;;\; t \in \R^{*}\}$ and $\{(P_t^B)^{\frac{M}{2}-2}B_{1}\Gamma^{*} \;;\; t \in \R^{*}\}$
 have $\dot{W}^{1,p}$-$L^{2}$ off-diagonal bounds of every order on balls. 
Since $(Q_{t}^{B})^{M-4} = (I-P_{t}^{B}) ^{\frac{M}{2}-2}.(P_{t}^{B}) ^{\frac{M}{2}-2}$ and $\{P_{t}^{B} \;;\; t \in \R\}$ has $L^2$-$L^2$ off-diagonal bounds of every order, we have that 
$\{(Q_t^B)^{M-4}\Gamma \;;\; t \in \R^{*}\}$ and $\{(Q_t^B)^{M-4}B_{1}\Gamma^{*} \;;\; t \in \R^{*}\}$ have $\dot{W}^{1,p}$-$L^{2}$ off-diagonal bounds of every order on balls. 
This gives, for $u \in L^{2}$,
\begin{align*}
\| & (Q_{t}^{B})^{M-2} 
(I-{P_{t}}^{\tilde{N}})P_{s}{Q_{s}}^{{\tilde{N}}-1}u\|_{2} \\
&\leq \|(Q_{t}^{B})^{M-3}\Gamma S_{\Gamma}t\Gamma P_{t}^{B}
(I-{P_{t}}^{\tilde{N}})P_{s}{Q_{s}}^{{\tilde{N}}-1}u\|_{2}
+
\|(Q_{t}^{B})^{M-3}B_{1}\Gamma^{*} S_{\Gamma^{*}}t\Gamma^{*}B_{2}P_{t}^{B}
(I-{P_{t}}^{\tilde{N}})P_{s}{Q_{s}}^{{\tilde{N}}-1}u\|_{2}\\
&\lesssim \|t\Gamma P_{t}^{B}(I-{P_{t}}^{\tilde{N}})P_{s}{Q_{s}}^{{\tilde{N}}-1}u\|_{p}
+
\|t\Gamma^{*}B_{2}P_{t}^{B}(I-{P_{t}}^{\tilde{N}})P_{s}{Q_{s}}^{{\tilde{N}}-1}u\|_{p} \\
&\lesssim
\|u\|_{p} + \|tB_{1}\Gamma^{*}B_{2}P_{t}^{B}(I-{P_{t}}^{\tilde{N}})P_{s}{Q_{s}}^{{\tilde{N}}-1}u\|_{p} \lesssim \|u\|_{p},
\end{align*}
where we have used the bisectoriality of the unperturbed operator in $L^p$ (see Proposition \ref{lem:hardyPi}), the assumption that
$\{t\Gamma^{*}_{B} P_{t}^B \;;\; t \in \R^{*}\}$ and $\{t\Gamma P_{t}^B \;;\; t \in \R^{*}\}$ are uniformly bounded in $\calL(L^{p})$, and the properties of the potential maps (see Proposition \ref{prop:pi}).

Using Lemma \ref{lem:ODcomp}, we thus get that $K$ satisfies (\ref{kernel-cond-Lq}) with $\max\{t,s\}=t$ for all $r \in (p,2]$. 
To conclude the proof using Corollary \ref{Cor:SIO-tent}, we thus only have to show that
$$ \underset{\gamma \in \R}{\sup} \|T_{K^{+}_{\varepsilon+i\gamma}}\|_{\calL(T^{r,2})}<\infty \quad \forall \varepsilon>0 \quad \forall r \in (p,2].$$
To do so we use Lemma \ref{tent-boundedness}, Lemma \ref{lem:psiOD}, and Theorem \ref{thm:conicalVSvertical},
and obtain the following, for $\varepsilon>0$, $r \in (p,2]$, $F\in T^{r,2}$,
and $\gamma \in \R$ (with implicit constants independent of $F$ and $\gamma$): 
\begin{equation*}
\begin{split}
\|T_{K_{\varepsilon+i\gamma} ^{+}}F\|_{T^{r,2}} &=
 \|(t,x) \mapsto \int \limits _{0} ^{t} 
(\frac{s}{t})^{\varepsilon+i\gamma}  (Q_{t}^{B})^{M-2}
(I-{P_{t}}^{\tilde{N}})P_{s}{Q_{s}}^{{\tilde{N}}-1}
 F(s,x)\frac{ds}{s}\|_{T^{r,2}}
\\ &\lesssim \|(t,x) \mapsto \int \limits _{0} ^{t} 
(\frac{s}{t})^{\varepsilon+i\gamma}  
(I-{P_{t}}^{\tilde{N}})P_{s}{Q_{s}}^{{\tilde{N}}-1}
 F(s,x)\frac{ds}{s}\|_{L^{r}(\R^{n};L^{2}((0,\infty),\frac{dt}{t}))}
 \\ & = \|T_{\widetilde{K} ^{+} _{\varepsilon+i\gamma}}F\|_{L^{r}(\R^{n};L^{2}((0,\infty),\frac{dt}{t}))},
\end{split}
\end{equation*}
where $\widetilde{K}(t,s) =  (I-{P_{t}}^{\tilde{N}})P_{s}{Q_{s}}^{\tilde{N}-1}$.
Since the unperturbed operator $\Pi$ has a bounded $H^{\infty}$ functional calculus in $L^r$, the family $\{\tilde{K}(t,s) \;;\; t,s>0\}$ is R-bounded in $L^r$ by \cite[Theorem 5.3]{kw}. Therefore, Lemma \ref{lem:vertSchur} gives
$$
\|T_{K_{\varepsilon+i\gamma} ^{+}}F\|_{T^{r,2}} \lesssim \|F\|_{L^{r}(\R^{n};L^{2}((0,\infty),\frac{dt}{t}))}.
$$
We conclude the proof using \cite[Proposition 2.1]{AHM} to get that $\|F\|_{L^{r}(\R^{n};L^{2}((0,\infty),\frac{dt}{t}))} \lesssim \|F\|_{T^{r,2}}$.
\end{proof}

\begin{Cor}
\label{cor:fullHF}
Suppose $\Pi_B$ is a perturbed Hodge-Dirac operator. 
Suppose $p \in (\max\{1,(p_{H})_{*}\},2]$ and $M\in \N$ with  $M\geq 10n$.  Then
$$
\|(t,x) \mapsto (Q_{t}^{B})^{M}(I-{P_{t}}^{\tilde{N}})u(x)\|_{T^{p,2}} \leq C_p \|u\|_{p} \quad \forall u \in \overline{\calR_{2}(\Gamma)}\cap L^{p}(\R^{n};\C^{N}).
$$\end{Cor}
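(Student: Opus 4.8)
The plan is to derive Corollary~\ref{cor:fullHF} from Theorem~\ref{thm:fullHF}, feeding the hypotheses of the latter with Proposition~\ref{prop:induction}, and using Proposition~\ref{lem:simpleHF} both as a starting point and to dispose of the range $(2_\ast,2]$. It is enough to treat $M=10n$, which is even; the general case $M\geq 10n$ then follows from the square function equivalence of Theorem~\ref{thm:hardy} once Corollary~\ref{cor:hardy}, hence Theorem~\ref{thm:main}, is in place for $M=10n$. By Proposition~\ref{lem:simpleHF} the estimate already holds for $p\in(2_\ast,2]$, so only $p\in(\max\{1,(p_H)_\ast\},2_\ast]$ remains, and there necessarily $n\geq 3$. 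Since $\overline{\calR_2(\Gamma)}\cap L^p(\R^n;\C^N)\subseteq\overline{\calR_p(\Gamma)}$ for such $p$ — by the Hodge decomposition of the unperturbed $\Pi$, exactly as in the proof of Corollary~\ref{cor:hardy} — it suffices to prove the estimate for $u\in\overline{\calR_p(\Gamma)}$.

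Theorem~\ref{thm:fullHF}, applied with a parameter $\hat p\in(p_H,2]$, yields the estimate for all $q\in(\max\{1,\hat p_\ast\},2]$ once three things are known for $\hat p$: property \eqref{PiB3} (immediate from Proposition~\ref{prop:Hodge}, since $\hat p\in(p_H,p^H)$); uniform $\calL(L^{\hat p})$-boundedness of $\{t\Gamma P_t^B\}_t$ and $\{t\Gamma^\ast_B P_t^B\}_t$; and $\dot W^{1,\hat p}$-$L^2$ off-diagonal bounds of every order on balls for $\{(R_t^B)^{\frac M2-2}\Gamma\}_t$ and $\{(R_t^B)^{\frac M2-2}B_1\Gamma^\ast\}_t$ (here $\frac M2-2\geq 5n-2\geq M_s(\hat p)$ since $M\geq 10n$). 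The last two are precisely what Proposition~\ref{prop:induction}(2) produces, together with Lemma~\ref{lem:Lp-LqOD}, Lemma~\ref{lem:ODcomp} and the Hodge projections, as in the proof of Theorem~\ref{cor:restW}(2). So, given a target $p\in(\max\{1,(p_H)_\ast\},2_\ast]$, I would pick $\hat p\in(\max\{p_H,\tfrac{n}{n-1}\},2]$ with $\hat p_\ast<p$ — possible by continuity of $r\mapsto r_\ast$, because $(\max\{p_H,\tfrac{n}{n-1}\})_\ast=\max\{1,(p_H)_\ast\}<p$, and this choice also gives $\hat p_\ast>1$ — and conclude by Theorem~\ref{thm:fullHF}.

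The obstacle, and the reason this is not a one-line deduction, is that Proposition~\ref{prop:induction} requires $\Pi_B$ to be $\omega$-bisectorial in $L^{\hat p}$ together with the uniform $L^r$-$L^2$ resolvent bounds of its hypothesis for $r\in(\hat p,2]$: genuine $L^p$ operator-theoretic input that is not available a priori (it is exactly the input the introduction flags for the high-frequency part). I would supply it by a finite bootstrap down the Sobolev scale, anchored at the $L^2$ theory (Theorem~\ref{thm:p=2}). Whenever Corollary~\ref{cor:fullHF}, hence (with Proposition~\ref{lem:simpleHF} and Theorem~\ref{thm:low}) Theorem~\ref{thm:main}, hence (with Proposition~\ref{lem:reverse}) Corollary~\ref{cor:fc}, is already known on an interval $(p_k,2]$, one obtains that $\Pi_B$ is $\omega$-bisectorial with bounded $H^\infty$ calculus in $L^p$ for $p\in(\max\{p_H,p_k\},p^H)$; iterating Lemma~\ref{lem:Lp-LqOD} and interpolating with the $L^2$ off-diagonal bounds (Lemma~\ref{lem:ODcomp}) then verifies the resolvent hypothesis of Proposition~\ref{prop:induction} on that interval, so Proposition~\ref{prop:induction}(2) and then Theorem~\ref{thm:fullHF} push the corollary about two Sobolev exponents further down. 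The base pass, with $\hat p\in(2_\ast,2]$, uses only $L^2$ bisectoriality; after finitely many passes the lower endpoint reaches $\max\{1,(p_H)_\ast\}$, and it cannot go below, because every application of Theorem~\ref{thm:fullHF} uses a parameter $\hat p>p_H$, forcing the resulting range to have lower endpoint $\geq\max\{1,(p_H)_\ast\}$. The real work lies in this bookkeeping — tracking at each stage the exact intervals on which bisectoriality, the off-diagonal bounds, and the square function estimate are known, checking the constraints $\hat p>p_H$, $\hat p>\tfrac{n}{n-1}$ and $\frac M2-2\geq M_s(\hat p)$, and verifying that the descent closes up without circularity.
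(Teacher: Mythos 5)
Your proposal is correct and follows essentially the same route as the paper's proof: a finite bootstrap down the Sobolev scale in which each pass uses the square function estimates already established (via Theorem \ref{thm:low} and the Hardy space identification) to obtain bisectoriality and the Hodge decomposition on a larger interval, then Lemma \ref{lem:Lp-LqOD}, Lemma \ref{lem:ODcomp} and Proposition \ref{prop:induction} to verify the hypotheses of Theorem \ref{thm:fullHF} at a lower exponent, terminating once the parameter reaches $p_H$ and hence the conclusion reaches $\max\{1,(p_H)_\ast\}$. The bookkeeping you defer is precisely the content of the paper's argument, and your extra remarks (the reduction to $M=10n$ even, the explicit choice of $\hat p$) fill in details the paper leaves implicit.
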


\begin{proof}
For $p=2$, $\Pi_B$ is bisectorial in $L^2(\R^n;\C^N)$ by Theorem \ref{thm:p=2}, and   
 $\{(R_t^B)^{\frac{M}{2}-2} \Gamma \;;\; t \in \R^{*}\}$ and $\{(R_t^B)^{\frac{M}{2}-2} B_{1}\Gamma^{*} \;;\; t \in \R^{*}\}$ have $\dot{W}^{1,2}$-$L^2$ off-diagonal bounds of every order on balls, since $\{R_t^B  \;;\; t \in \R^{*}\}$ has $L^{2}$-$L^2$ off-diagonal bounds of every order.
 Therefore for all $q \in (2_{*},2]$,
$$
\|(t,x) \mapsto (Q_{t}^{B})^{M}(I-{P_{t}}^{\tilde{N}})u(x)\|_{T^{q,2}} \leq C_q \|u\|_{{q}} \quad \forall u \in \overline{\calR_{q}(\Gamma)},$$
by Theorem \ref{thm:fullHF}.
Combined with Theorem \ref{thm:low}, this gives
$$
\|(t,x) \mapsto (Q_{t}^{B})^{M}u(x)\|_{T^{q,2}} \leq C_q \|u\|_{{q}} \quad \forall u \in \overline{\calR_{q}(\Gamma)}.$$
Since the same holds for $\underline{\Pi_{B}}$ instead of $\Pi_{B}$, we have, as in the proof of Corollary \ref{cor:hardy}, that
$H^{r}_{\Pi_{B}} = \overline{\calR_{r}(\Pi_{B})}$ for all $r \in (\max(p_{H},2_{*}),2]$. In particular, $\Pi_{B}$ is bisectorial in $L^r$.
Moreover, applying Proposition \ref{prop:induction}, we have that  $\{(R_t^B)^{\frac{M}{2}-2} \Gamma \;;\; t \in \R^{*}\}$ and $\{(R_t^B)^{\frac{M}{2}-2} B_{1}\Gamma^{*} \;;\; t \in \R^{*}\}$ have $\dot{W}^{r,2}$-$L^2$ off-diagonal bounds of every order on balls.
The assumptions of Theorem \ref{thm:fullHF} and Proposition \ref{prop:induction} are now satisfied in $L^r$. 
 Note that $(\Pi_{B}(r))$ holds as long as $r>p_{H}$ by Proposition \ref{equiv}.
We can repeat the argument finitely many times until we reach a value of $p$ such that $p_{*}<p_{H}$.
\end{proof}

If we restrict the off-diagonal bound assumptions to certain subspaces, the following restricted version of the theorem remains valid.

\begin{Cor}
\label{cor:fullHFW}
 Let $p \in (1,2)$, $M\in \N$  be even and such that $M \geq 10n$, and let
 $\Pi_B$ be a perturbed Hodge-Dirac operator such that $(\Pi_{B}(p))$ holds.
Let $W$ be a subspace of $\C^{N}$ that is stable under $\widehat{\Gamma^{*}}(\xi)\widehat{\Gamma}(\xi)$ and $\widehat{\Gamma}(\xi)\widehat{\Gamma^{*}}(\xi)$ for all $\xi \in \R^{n}$.
Assume that  $\{t\Gamma P_{t}^B \mathbb{P}_{W}\;;\; t \in \R^{*}\}$
$\{t\Gamma^{*}_{B} P_{t}^B \mathbb{P}_{W}\;;\; t \in \R^{*}\}$ are uniformly bounded in $\calL(L^{p})$, and that  
 $\{(R_t^B)^{\frac{M}{2}-2} \Gamma \mathbb{P}_{W} \;;\; t \in \R^{*}\}$ and $\{(R_t^B)^{\frac{M}{2}-2} B_{1}\Gamma^{*} \mathbb{P}_{W}\;;\; t \in \R^{*}\}$ have $\dot{W}^{1,p}$-$L^2$ off-diagonal bounds of every order on balls, where $\mathbb{P}_{W}$ denotes the projection from $L^p(\R^{n};\C^{N})$ onto $L^p(\R^{n};W)$.

Then, for all $q \in (\max\{1,p_{*}\},2]$,
$$
\|(t,x) \mapsto (Q_{t}^{B})^{M}(I-{P_{t}}^{\tilde{N}})\Gamma v(x)\|_{T^{q,2}} \leq C_q \|\Gamma v\|_{{q}} \quad \forall v \in \calD_{q}(\Gamma)\cap L^{q}(\R^{n};W).$$
\end{Cor}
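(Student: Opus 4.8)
The plan is to run the proof of Theorem~\ref{thm:fullHF} with the input specialised to $u=\Gamma v$, $v\in\calD_q(\Gamma)\cap L^q(\R^n;W)$, and to use the $W$‑stability hypothesis to make every appeal to the $\mathbb{P}_W$‑weighted assumptions act on a $W$‑valued function. Only two pieces of data from the hypothesis need a $W$‑valued argument: the uniform $\calL(L^p)$ bounds for $\{t\Gamma P_t^B\mathbb{P}_W\}$ and $\{t\Gamma_B^\ast P_t^B\mathbb{P}_W\}$, and the $\dot W^{1,p}$‑$L^2$ off‑diagonal bounds on balls for $\{(R_t^B)^{M/2-2}\Gamma\mathbb{P}_W\}$ and $\{(R_t^B)^{M/2-2}B_1\Gamma^\ast\mathbb{P}_W\}$; everything else (Proposition~\ref{lem:hardyPi}, the $H^\infty$ calculus of $\Pi$, Theorems~\ref{thm:hardy} and \ref{thm:conicalVSvertical}, Corollary~\ref{Cor:SIO-tent}, Lemmas~\ref{tent-boundedness}, \ref{Lemma:off-diag-est}, \ref{lem:ODcomp}, \ref{lem:psiOD}) would be used exactly as in Theorem~\ref{thm:fullHF}.

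The structural observation I would use is a $W$‑preservation statement. Since $W$ is stable under the self‑adjoint fibre operators $\widehat{\Gamma^\ast}(\xi)\widehat{\Gamma}(\xi)$ and $\widehat{\Gamma}(\xi)\widehat{\Gamma^\ast}(\xi)$, the constant projection $\mathbb{P}_W$ commutes with $\Pi^2=\Gamma\Gamma^\ast+\Gamma^\ast\Gamma$ and with $\Gamma^\ast\Gamma$ and $\Gamma\Gamma^\ast$ separately; hence $\mathbb{P}_W$ commutes with $P_s$, $P_t^{\tilde N}$, $I-P_t^{\tilde N}$, the even powers $(Q_s)^{2k}$, the Hodge projections $\Pb_q$ and $\Qb_q$, and with the potential compositions $S_\Gamma\Gamma=\Pb_q$ and $S_{\Gamma^\ast}\Gamma^\ast=\Qb_q$. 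Using also $\Gamma^2=(\Gamma^\ast)^2=0$, one has $\Pi\Gamma=\Gamma^\ast\Gamma$ and $\Pi\Gamma^\ast=\Gamma\Gamma^\ast$, whence by induction, for $v\in L^2(\R^n;W)$,
\[
(Q_s)^{2m+1}\Gamma v=s^{2m+1}(\Gamma^\ast\Gamma)^{m+1}(P_s)^{2m+1}v\in L^2(\R^n;W),\qquad (Q_s)^{2m}\Gamma v=s^{2m}(\Gamma\Gamma^\ast)^m\Gamma (P_s)^{2m}v,
\]
so every odd power $(Q_s)^{\mathrm{odd}}\Gamma v$ is $W$‑valued (and lies in $\overline{\calR(\Gamma^\ast)}$).

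I would split $\Gamma v=C\int_0^\infty (Q_s)^{2\tilde N}\Gamma v\,\tfrac{ds}{s}$ into $\int_0^t$ and $\int_t^\infty$. For the high‑frequency part, bound away $(Q_t^B)^M$ using its $L^2$‑$L^2$ off‑diagonal bounds of every order (Theorem~\ref{thm:p=2}) and Lemma~\ref{tent-boundedness}, and recognise $\int_t^\infty(I-P_t^{\tilde N})(Q_s)^{2\tilde N}\,\tfrac{ds}{s}\,\Gamma v=\psi(t\Pi)\Gamma v$ with $\psi\in\Psi_2^{\tilde N}(S_\mu^o)$; since $\Gamma v\in\overline{\calR_q(\Gamma)}\subseteq\overline{\calR_q(\Pi)}$, Theorem~\ref{thm:hardy} and Proposition~\ref{lem:hardyPi}(3) give $\|\psi(t\Pi)\Gamma v\|_{T^{q,2}}\lesssim\|\Gamma v\|_q$ for every $q\in(1,\infty)$, using neither $W$ nor any $L^p$ hypothesis on $\Pi_B$. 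For the low‑frequency part I would follow Theorem~\ref{thm:fullHF}: with the identity $(I-P_t^{\tilde N})w=t\Gamma(\sum_{k=0}^{\tilde N-1}P_t^k)Q_t w$ on $\overline{\calR(\Gamma)}$, together with Lemmas~\ref{Lemma:off-diag-est} and \ref{tent-boundedness}, the $T^{q,2}$‑norm of $\int_0^t(Q_t^B)^M(I-P_t^{\tilde N})(Q_s)^{2\tilde N}\Gamma v\,\tfrac{ds}{s}$ is reduced to the boundedness on $T^{q,2}$ of the singular integral operator $T_{K_1^+}$ with kernel $K_1^+(t,s)=\Eins_{(0,\infty)}(t-s)(Q_t^B)^{M-2}(I-P_t^{\tilde N})P_s(Q_s)^{\tilde N-1}$, applied to $F(s,\cdot)=(Q_s)^{\tilde N}\Gamma v$, whose $T^{q,2}$‑norm is $\lesssim\|\Gamma v\|_q$ by Proposition~\ref{lem:hardyPi}(3). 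To invoke Corollary~\ref{Cor:SIO-tent}, the key point is that $K_1^+(t,s)$ only ever acts on $(Q_s)^{\tilde N-1}$‑images of $F$, and $(Q_s)^{\tilde N-1}F(s,\cdot)=(Q_s)^{2\tilde N-1}\Gamma v$ is $W$‑valued by the parity identity, so that $(I-P_t^{\tilde N})P_s(Q_s)^{\tilde N-1}F(s,\cdot)$ remains $W$‑valued; expanding $(Q_t^B)^{M-2}=(Q_t^B)^{M-3}(t\Gamma P_t^B+t\Gamma_B^\ast P_t^B)$ and inserting the potential maps $S_\Gamma$, $S_{\Gamma^\ast}$ as in Theorem~\ref{thm:fullHF}, each appeal to a $\mathbb{P}_W$‑weighted hypothesis is then made on a $W$‑valued argument (the $S_\Gamma$‑steps use $S_\Gamma\Gamma=\Pb_q$, which preserves $L^q(\R^n;W)$), and Lemma~\ref{lem:ODcomp} upgrades the resulting bounds to the kernel condition (\ref{kernel-cond-Lq}) for all $r\in(p,2]$. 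The remaining requirement $\sup_{\gamma\in\R}\|T_{K^+_{\eps+i\gamma}}\|_{\calL(T^{r,2})}<\infty$ for $\eps>0$ and $r\in(p,2]$ is verified verbatim as in Theorem~\ref{thm:fullHF} — transfer to a vertical square function by Lemmas~\ref{tent-boundedness}, \ref{lem:psiOD} and Theorem~\ref{thm:conicalVSvertical}, then use the $R$‑boundedness furnished by the bounded $H^\infty$ calculus of the unperturbed $\Pi$ (\cite[Theorem~5.3]{kw}) via Lemma~\ref{lem:vertSchur} and \cite[Proposition~2.1]{AHM} — and involves neither $\Pi_B$ nor $W$. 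Combining the two parts yields the claim for $q=p$, and the full range $q\in(\max\{1,p_\ast\},2]$ follows from Corollary~\ref{Cor:SIO-tent} as in Theorem~\ref{thm:fullHF}.

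The hard part, and the only genuinely new bookkeeping, is keeping the argument of each $\mathbb{P}_W$‑weighted assumption $W$‑valued throughout the low‑frequency chain; this rests entirely on the $W$‑preservation of $\Pi^2$, $P_s$, the powers of $\Gamma^\ast\Gamma$ and $\Gamma\Gamma^\ast$, and the Hodge projections, together with the parity identities for $(Q_s)^k\Gamma v$ and the identity $S_\Gamma\Gamma=\Pb_q$. Everything else is a transcription of the proof of Theorem~\ref{thm:fullHF}.
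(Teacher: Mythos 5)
Your proposal is correct and follows essentially the same route as the paper: establish that $(I-{P_{t}}^{\tilde{N}})P_{s}{Q_{s}}^{2\tilde{N}-1}\Gamma v$ is $W$-valued (you do this via parity/commutation identities, the paper via the equivalent Fourier-symbol computation $s\widehat{\Pi}(I+s^{2}\widehat{\Pi}^{2})^{-1}\widehat{\Gamma}\widehat{v}=s\widehat{\Gamma^{*}}\widehat{\Gamma}(I+s^{2}\widehat{\Gamma^{*}}\widehat{\Gamma})^{-1}\widehat{v}\in W$), insert $\mathbb{P}_{W}$ into the kernel of Theorem \ref{thm:fullHF}, and rerun that proof with the $\mathbb{P}_{W}$-weighted hypotheses. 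No gaps.
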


\begin{proof}
Let $p \in (1,2)$, $v \in \mathcal{S}(\R^{n};W)$, and $s>0$.
Notice that, for all $\xi \in \R^{n}$, 
$$
s\widehat{\Pi}(\xi)(I+s^{2}\widehat{\Pi^{2}}(\xi))^{-1}\widehat{\Gamma}(\xi)\widehat{v}(\xi)=
s\widehat{\Gamma^{*}}(\xi)\widehat{\Gamma}(\xi)(I+s^{2}\widehat{\Gamma^{*}}(\xi)\widehat{\Gamma}(\xi))^{-1}\widehat{v}(\xi) \in W,
$$
since $\widehat{\Gamma}(\xi)$ is nilpotent, and $W$ is stable under $\widehat{\Gamma^{*}}(\xi)\widehat{\Gamma}(\xi)$.
We thus have that $Q_{s}\Gamma v$ belongs to $L^{p}(\R^{n};W)$. The same reasoning also gives that
$(I-{P_{t}}^{\tilde{N}})P_{s}Q_{s}^{2{\tilde{N}}-1}\Gamma v \in L^{p}(\R^{n};W)$ for all $t,s>0$. Therefore we have that, for all $t>0$ and $x \in \R^{n}$,
\begin{align*}
& \int \limits _{0} ^{t} (\frac{s}{t})(Q_{t}^{B})^{M-2}
(I-{P_{t}}^{\tilde{N}})P_{s}{Q_{s}}^{{\tilde{N}}-1}
{Q_{s}}^{\tilde{N}}\Gamma v(x)\frac{ds}{s} \\
& \quad = \int \limits _{0} ^{t} (\frac{s}{t})(Q_{t}^{B})^{M-2}\mathbb{P}_{W}
(I-{P_{t}}^{\tilde{N}})P_{s}{Q_{s}}^{{\tilde{N}}-1}
{Q_{s}}^{\tilde{N}}\Gamma v(x)\frac{ds}{s}.
\end{align*}
This allows us to use the proof of Theorem \ref{thm:fullHF}, replacing the kernel $K$ by
$\widetilde{K}(t,s) = \Eins_{(0,\infty)}(t-s)(Q_{t}^{B})^{M-2}\mathbb{P}_{W}
(I-{P_{t}}^{\tilde{N}})P_{s}{Q_{s}}^{{\tilde{N}}-1}$, for all $t,s>0$.\end{proof}

\section{Appendix: Schur estimates in tent spaces}
\label{sec:tech1}

We need boundedness criteria for integral operators of the form
$$
T_{K}F(t,x) = \int \limits _{0} ^{\infty} K(t,s)F(s,.)(x)\frac{ds}{s} 
\qquad \forall F \in T^{p,2}(\R^{n+1}_+;\C^N),
$$
where $\{K(t,s) \;;\; t,s>0\}$ is a uniformly bounded family of bounded linear operators acting on $L^2(\R^n;\C^N)$.
We are interested here in Schur type estimates, i.e. estimates for integral operators with kernels satisfying  size conditions of the form $\|K(t,s)\| \lesssim \min(\frac{t}{s},\frac{s}{t})^{\alpha}$ for some $\alpha>0$. 
The proofs are similar to those developed in \cite{akmp} to treat singular integral operators with kernels 
satisfying  size conditions of the form $\|K(t,s)\| \lesssim |t-s|^{-1}$. The appropriate off-diagonal bound assumptions are as follows.\\

Let $p \in [1,2]$.
Let $\{K(t,s), \; s,t>0\}$  be a uniformly bounded family of bounded linear operators acting on $L^2(\R^n;\C^N)$ that satisfies $L^p$-$L^2$ off-diagonal bounds of the following form: there exists $C>0$, $N'>0$, such that for all Borel sets $E,F \subseteq \R^n$ and all $s,t>0$
\begin{equation} 
\label{kernel-cond-Lq}
	\|\Eins_EK(t,s)\Eins_F\|_{L^p\to L^2} 
	\leq C  \max\{t,s\}^{-n(\frac{1}{p}-\frac{1}{2})} \left(1+\frac{\dist(E,F)}{\max\{t,s\}}\right)^{-N'}.
\end{equation}

Given a kernel $K$, we also consider
\begin{align*}
K^{+}_{z}(t,s) &= \Eins_{(0,\infty)}(t-s)(\frac{s}{t})^{z}K(t,s),\\
K^{-}_{z}(t,s) &= \Eins_{(0,\infty)}(s-t)(\frac{t}{s})^{z}K(t,s), \qquad \forall t,s \in (0,\infty), \quad \forall z \in \C.
\end{align*}

We then obtain the following result on $T^{1,2}(\R^{n+1}_+;\C^N)$, which is a refined version of the arguments in \cite[Theorem 4.9]{AMR}.

\begin{Prop} \label{prop:SIO-tent}
Suppose $K$ satisfies (\ref{kernel-cond-Lq}) for some $N'>\frac{n}{2}$ and $p\in [1,2]$. Then the following holds.\\
(1) Given $\alpha \in (0,\infty)$, we have $T_{K^{-}_{\alpha}} \in \calL(T^{1,2}(\R^{n+1}_+;\C^N))$. \\
(2) Given $\beta \in (\frac{n}{p'},\infty)$ and $\gamma \in \R$, we have $T_{K^{+}_{\beta+i\gamma}} \in \calL(T^{1,2}(\R^{n+1}_+;\C^N))$ with 
$$\underset{\gamma \in \R}{\sup} \, \|T_{K^{+}_{\beta+i\gamma}}\|_{\calL(T^{1,2})}<\infty.$$
\end{Prop}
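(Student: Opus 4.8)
The plan is to prove boundedness on $T^{1,2}$ via the atomic decomposition of the tent space $T^{1,2}$: it suffices to produce a uniform bound $\|T_{K}a\|_{T^{1,2}} \lesssim 1$ for every $T^{1,2}$-atom $a$ supported in the Carleson box $\widehat{B}$ over a ball $B$ of radius $r$, i.e. $a$ supported in $\{(s,y)\,:\,y\in B,\ 0<s<r\}$ with $\|a\|_{T^{2,2}} \le |B|^{-1/2}$. Then one estimates $T_{K}a$ over the dyadic annuli $S_j(B)$, $j\ge 1$ (the part over $4B$ being handled by the $T^{2,2}$-boundedness, which follows from the uniform $L^2$-boundedness of the $K(t,s)$ together with Schur's lemma applied to the scalar kernel $\min(t/s,s/t)^{\alpha}\frac{ds}{s}$ in the two cases). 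For the annular part one uses the Carleson characterisation $\|F\|_{T^{1,2}}\approx \|F\|_{T^{1,2}}$ through the identity $\|\Eins_{\widehat{S_j(B)}}F\|_{T^{1,2}}\lesssim |2^jB|^{1/2}\|\Eins_{\widehat{S_j(B)}}F\|_{T^{2,2}}$ (localisation of the $L^1$ Carleson functional), reducing matters to an $L^2$-on-annuli estimate of $T_K a$ with a gain $2^{-j\delta}$, $\delta>0$, summable against $|2^jB|^{1/2}/|B|^{1/2}\sim 2^{jn/2}$.

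The core is therefore the off-diagonal estimate for the integral operator. Fix $j\ge 2$. For $(t,x)$ with $x\in S_j(B)$ and $0<t$, one writes $T_Ka(t,x)=\int_0^\infty K(t,s)a(s,\cdot)(x)\frac{ds}{s}$ and splits the $s$-integral. In case (1), the kernel $K^-_\alpha$ forces $s>t$ and carries the factor $(t/s)^\alpha$; since $a(s,\cdot)$ is supported in $B$ and $x\in S_j(B)$ we have $\dist(x,B)\sim 2^jr$, so \eqref{kernel-cond-Lq} gives, for each fixed $s\in(0,r)$,
\begin{equation*}
\|\Eins_{S_j(B)}K(t,s)(\Eins_B a(s,\cdot))\|_2 \lesssim \max\{t,s\}^{-n(\frac1p-\frac12)}\Big(1+\frac{2^jr}{\max\{t,s\}}\Big)^{-N'}\|a(s,\cdot)\|_{p}.
\end{equation*}
One then controls $\|a(s,\cdot)\|_p$ by $\|a(s,\cdot)\|_2 |B|^{1/p-1/2}$ (Hölder, support in $B$), inserts the factor $(t/s)^\alpha$ (only $t<s<r$ contributes), integrates in $s$ against $\frac{ds}{s}$, and then computes the $L^2(dx\,dt/t)$-norm over the cone region $\mathcal R(S_j(B))$. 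The point is that $\max\{t,s\}\le \max\{s,\dots\}$ and $2^jr/\max\{t,s\}\gtrsim 2^j$ when $t,s\lesssim r$ (and one gets extra decay from $(1+2^jr/\max\{t,s\})^{-N'}$ when $t$ or $s$ exceeds $r$), so the $n(\frac1p-\frac12)$ loss is absorbed by $N'-\frac n2>0$ and by the $|B|^{1/p-1/2}$ factor, leaving a net gain $2^{-j(N'-n/2)}$, which after multiplication by $2^{jn/2}$ is still summable provided $N'>n/2$; the exponent $\alpha>0$ ensures the $s$-integral over $(t,r)$ converges. In case (2) the kernel $K^+_{\beta+i\gamma}$ forces $s<t$ and carries $(s/t)^{\beta+i\gamma}$; the modulus is $(s/t)^\beta$, so $\gamma$ plays no role in the size estimates and the supremum over $\gamma$ is automatic. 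Here the relevant constraint $\beta>n/p'$ appears when summing the $s$-integral: one needs $(s/t)^\beta$ to beat the factor $\max\{t,s\}^{-n(1/p-1/2)}=t^{-n(1/p-1/2)}$ combined with the measure and the $|B|^{1/p-1/2}$ normalisation after integrating $s\in(0,\min\{t,r\})$; the borderline bookkeeping forces $\beta>n(1-1/p)=n/p'$, and again one extracts a $j$-decay $2^{-j\min\{N'-n/2,\ \beta-n/p'\}}$ which dominates $2^{jn/2}$.

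I expect the main obstacle to be the careful case analysis in the $s$-integration according to the relative sizes of $t$, $s$ and $r$ (six sub-regimes once one also separates $t\lessgtr r$), and in particular verifying that the polynomial off-diagonal decay $(1+2^jr/\max\{t,s\})^{-N'}$ is genuinely available with $\max\{t,s\}$ (rather than $t$) in the denominator when $s>t$ — this is exactly what \eqref{kernel-cond-Lq} is designed to provide, but one must track it consistently. A secondary technical point is the passage from the $T^{1,2}$ norm on annular Carleson regions to an $L^2$ quantity: this uses the elementary bound $\|G\|_{T^{1,2}}\le |E|^{1/2}\|G\|_{T^{2,2}}$ for $G$ supported in the tent over a set $E$, a consequence of Cauchy–Schwarz in the outer $x$-integral of the $T^{1,2}$ functional, and it is here that one must be slightly careful that the cone $\mathcal R(S_j(B))$ over which $T_Ka$ is effectively supported has measure comparable to $|2^jB|$ at each height, so that Lemma \ref{lem:angle} (change of aperture) can be invoked to absorb the enlargement of balls. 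Once these are in place, summing over $j$ and combining with the near-diagonal $T^{2,2}$ bound completes the proof.
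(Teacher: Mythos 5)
Your proposal follows essentially the same route as the paper's proof: atomic decomposition of $T^{1,2}$, a Minkowski/Schur argument with the scalar kernel $\min(t/s,s/t)^{\min(\alpha,\beta)}$ for the near-diagonal part over $T(4B)$, and $L^2$ estimates on the annular Carleson regions using \eqref{kernel-cond-Lq} split according to the relative sizes of $t$, $s$ and $r$, with the thresholds $N'>\frac{n}{2}$ and $\beta>\frac{n}{p'}$ arising exactly where you locate them and with $\gamma$ harmless since $|(s/t)^{i\gamma}|=1$. One bookkeeping caution: the $L^2$ gain on the $j$-th annulus should be measured either as $2^{-jN'}$ relative to the atom normalisation $|B|^{-1/2}$ (and then multiplied by $|2^jB|^{1/2}/|B|^{1/2}\sim 2^{jn/2}$), or as $2^{-j(N'-n/2)}$ relative to $|2^jB|^{-1/2}$ (making that piece $2^{-j(N'-n/2)}$ times an atom for $2^{j+1}B$, with no further factor) --- writing ``$2^{-j(N'-n/2)}$ after multiplication by $2^{jn/2}$'' double-counts the volume factor and would spuriously require $N'>n$.
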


\begin{Cor} 
\label{Cor:SIO-tent}
Suppose $K$ satisfies (\ref{kernel-cond-Lq}) for some $N'>\frac{n}{2}$ and $p\in [1,2]$. 
Suppose 
$q \in [1,p]$, $\alpha \in (0,\infty)$, and $\beta \in (n(\frac{1}{q}-\frac{1}{p}),\infty)$.
If $\underset{\gamma \in \R}{\sup} \, \|T_{K^{+}_{i\gamma}}\|_{\calL(T^{p,2})}<\infty$,
then $T_{K^{-}_{\alpha}+K^{+}_{\beta}} \in \calL(T^{q,2}(\R^{n+1}_+;\C^N))$.
\end{Cor}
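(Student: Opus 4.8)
The plan is to prove the boundedness of $T_{K^-_\alpha+K^+_\beta}$ on $T^{q,2}$ by interpolation, treating the two kernels $K^-_\alpha$ and $K^+_\beta$ separately and combining Proposition \ref{prop:SIO-tent} (which supplies the $T^{1,2}$ bounds), the hypothesis (a $T^{p,2}$ bound for $K^+$ at purely imaginary exponents), and an elementary Schur-type estimate on $T^{2,2}$.

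For the kernel $K^-_\alpha$, I would first observe that, since $\{K(t,s)\}$ is uniformly bounded on $L^2(\R^n;\C^N)$ and $\alpha>0$, one has $\|K^-_\alpha(t,s)\|_{\calL(L^2)}\lesssim\min(t/s,s/t)^\alpha$; as $T^{2,2}(\R^{n+1}_+)$ coincides (up to a dimensional constant) with $L^2((0,\infty),\tfrac{dt}{t};L^2(\R^n))$, Schur's lemma --- exactly as applied to such kernels in the proof of Proposition \ref{lem:hardyPi} --- gives $T_{K^-_\alpha}\in\calL(T^{2,2})$. Together with $T_{K^-_\alpha}\in\calL(T^{1,2})$ from Proposition \ref{prop:SIO-tent}(1), and the fact that tent spaces interpolate by the complex method, this yields $T_{K^-_\alpha}\in\calL(T^{q,2})$ for every $q\in[1,2]$, hence for all $q\in[1,p]$.

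For the kernel $K^+_\beta$ I would use Stein's interpolation theorem \cite{Stein}. Fix $q\in[1,p)$ and $\beta>n(\tfrac1q-\tfrac1p)$, and set $\theta:=p'(\tfrac1q-\tfrac1p)\in(0,1]$, so that $[T^{p,2},T^{1,2}]_\theta=T^{q,2}$; if $\theta=1$ (that is, $q=1$) then $\beta>n/p'$ and the claim is Proposition \ref{prop:SIO-tent}(2), so I may assume $\theta\in(0,1)$ and set $\beta_1:=\beta/\theta$, noting that the hypothesis $\beta>n\theta/p'$ forces $\beta_1>n/p'$. I would then apply Stein interpolation to the family $z\mapsto T_{K^+_{\beta_1 z}}$ on the strip $\{0\le\Re z\le1\}$: since $|(s/t)^{\beta_1 z}|=(s/t)^{\beta_1\Re z}\le1$ for $0<s<t$ and $\Re z\ge0$, the Schur computation above shows this family is uniformly bounded on $T^{2,2}$ over the strip (which, with the holomorphy of $z\mapsto(s/t)^{\beta_1 z}$, provides admissibility); on $\Re z=0$ the hypothesis gives $\sup_\gamma\|T_{K^+_{i\gamma}}\|_{\calL(T^{p,2})}<\infty$, and on $\Re z=1$ Proposition \ref{prop:SIO-tent}(2) gives $\sup_\gamma\|T_{K^+_{\beta_1+i\gamma}}\|_{\calL(T^{1,2})}<\infty$. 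Interpolating at $z=\theta$ yields $T_{K^+_\beta}=T_{K^+_{\beta_1\theta}}\in\calL(T^{q,2})$. The remaining endpoint $q=p$ (where $\beta>0$) I would reach by a bootstrap: pick $r_1\in[1,p)$ close enough to $p$ that $\beta>n(\tfrac1{r_1}-\tfrac1p)$; the case just proved gives $T_{K^+_\beta}\in\calL(T^{r_1,2})$, the Schur estimate gives $T_{K^+_\beta}\in\calL(T^{2,2})$, and interpolating these puts $T_{K^+_\beta}\in\calL(T^{r,2})$ for all $r\in[r_1,2]$, in particular $r=p$ (when $p=2$ this last step is just the $T^{2,2}$ Schur bound). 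Adding the two contributions gives the corollary.

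The step I expect to be the main obstacle is making the Stein-interpolation argument fully rigorous: one must check that, for $F,G$ ranging over a dense subclass (say smooth, compactly supported functions on $\R^{n+1}_+$), the map $z\mapsto\langle T_{K^+_{\beta_1 z}}F,G\rangle$ is holomorphic in the open strip and bounded and continuous on its closure, with the vertical-line bounds controlled by the uniform $T^{2,2}$ operator norm --- which is precisely where the standing $L^2$-uniform boundedness of $\{K(t,s)\}$ is used --- and one must keep the exponent bookkeeping ($\theta=p'(\tfrac1q-\tfrac1p)$, $\beta_1=\beta/\theta$) consistent so that the threshold $n/p'$ in Proposition \ref{prop:SIO-tent}(2) is met exactly under the hypothesis $\beta>n(\tfrac1q-\tfrac1p)$. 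The remaining verifications are routine.
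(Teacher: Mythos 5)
Your overall strategy is exactly the paper's: the paper's one-line proof is precisely the Stein interpolation of the analytic family $z\mapsto T_{K^{+}_{\beta_1 z}}$ between the hypothesis on $T^{p,2}$ (at $\Re z=0$) and Proposition \ref{prop:SIO-tent}(2) on $T^{1,2}$ (at $\Re z=1$), with $\theta=p'(\tfrac1q-\tfrac1p)$; your bookkeeping $\beta_1=\beta/\theta>n/p'$ is the correct reading of the strict inequality, and your treatment of $K^-_\alpha$ (Schur on $T^{2,2}\cong L^2(\tfrac{dt}{t};L^2)$ plus Proposition \ref{prop:SIO-tent}(1) plus complex interpolation) and of the endpoint $q=p$ fills in details the paper leaves implicit.

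There is, however, one claim in your admissibility argument that is false as stated: the family $\{T_{K^{+}_{\beta_1 z}}\}$ is \emph{not} uniformly bounded on $T^{2,2}$ over the closed strip. The Schur bound for $\Eins_{s<t}(s/t)^{\beta_1\Re z}K(t,s)$ produces $\sup_t\int_0^t(s/t)^{\beta_1\Re z}\,\tfrac{ds}{s}=(\beta_1\Re z)^{-1}$, which blows up as $\Re z\to 0^+$ and diverges on the line $\Re z=0$ — precisely the line where your hypothesis is invoked. So uniform $T^{2,2}$ boundedness cannot be the source of admissibility. The repair is routine but should replace that claim: for $F,G\in C_c^\infty(\R^{n+1}_+)$ (dense in the relevant tent spaces), the supports in $s$ and $t$ are compact in $(0,\infty)$, so
$\langle T_{K^{+}_{\beta_1 z}}F,G\rangle=\iint\Eins_{s<t}(s/t)^{\beta_1 z}\langle K(t,s)F(s,\cdot),G(t,\cdot)\rangle\,\tfrac{ds}{s}\tfrac{dt}{t}$
converges absolutely and uniformly on the closed strip using only $|(s/t)^{\beta_1 z}|\le 1$ and the uniform $L^2$-boundedness of $K(t,s)$; analyticity in the open strip then follows from Morera and Fubini, and boundedness on the closed strip gives admissible growth. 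With that substitution your proof is complete and matches the paper's intended argument.
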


\begin{proof}[Proof of Corollary \ref{Cor:SIO-tent}]
This follows from Proposition \ref{prop:SIO-tent} by applying Stein's interpolation \cite[Theorem 1]{Stein} to the analytic family of operators
$\{T_{K^{-}_{\frac{n}{p'}z}} \;;\; Re(z) \in [0,1]\}$. We choose the spaces $T^{p,2}$ and $T^{1,2}$ as endpoints, and set $\frac{1}{q}=\frac{1-\theta}{p}+\theta$ for $\theta \in [0,1]$. This gives $\theta=p'(\frac{1}{q}-\frac{1}{p})$ and thus the condition $\beta>\frac{n}{p'}\theta =n(\frac{1}{q}-\frac{1}{p})$.
\end{proof}

We now turn to the proof of Proposition \ref{prop:SIO-tent}, which follows the one of \cite[Theorem 4.9]{AMR}.

\begin{proof}[Proof of Proposition \ref{prop:SIO-tent}]
Let $\alpha>0$, $\beta> \frac{n}{p'}$.
It suffices to show that
\[
	\|T_{K^{-}_{\alpha}+K^{+}_{\beta+i\gamma}}F\|_{T^{1,2}} \leq C
\]
uniformly for all atoms $F$ in $T^{1,2}$ and all $\gamma \in \R$.\\
Let $F$ be a $T^{1,2}$ atom associated with a ball $B \subseteq \R^n$ of radius $r>0$. Then
\[
	\iint_{T(B)} |F(s,x)|^2\,\frac{dxds}{s} \leq |B|^{-1},
\]
where $T(B)=(0,r) \times B$.
Set $\tilde{K} = K^{-}_{\alpha}+K^{+}_{\beta+i\gamma}$, $\tilde{F}:=T_{\tilde{K}}(F)$, and $\tilde{F}_1:=\tilde{F}\Eins_{T(4B)}, \; \tilde{F}_k:=\tilde{F}\Eins_{T(2^{k+1}B)\setminus T(2^kB)}, \; k \geq 2$.
We show that there exists $\delta>0$, independent of $\gamma$, such that
\[
	\iint |\tilde{F}_k(t,x)|^2 \,\frac{dxdt}{t} \lesssim 2^{-k\delta} |2^{k+1}B|^{-1}.
\]

Let $k=1$. Observe that for every $\eps>0$,
$
	\int_0^\infty \min\left(\frac{s}{t},\frac{t}{s}\right)^\eps \,\frac{ds}{s} \leq C,
$
uniformly in $t>0$. Using Minkowski's inequality and the assumption on $K$, we obtain
\begin{align*}
	\iint_{T(4B)} |\tilde{F}(t,x)|^2\,\frac{dxdt}{t}
	&\leq \int_0^{l(4B)} \int_{4B} (\int_0^\infty \min(\frac{s}{t},\frac{t}{s})^{\min(\alpha,\beta)}|K(t,s)F(s,\,.\,)(x)|\,\frac{ds}{s})^2\,\frac{dxdt}{t}\\
	&\leq \int_0^{l(4B)} (\int_0^\infty \min(\frac{s}{t},\frac{t}{s})^{\min(\alpha,\beta)} \|K(t,s)F(s,\,.\,)\|_{L^2(4B)} \,\frac{ds}{s})^2 \,\frac{dt}{t} \\
	& \lesssim \int_0^\infty (\int_0^\infty \min(\frac{s}{t},\frac{t}{s})^{\min(\alpha,\beta) }\|F(s,\,.\,)\|_{L^2(B)} \,\frac{ds}{s})^2 \,\frac{dt}{t} \\
	& \lesssim \int_0^\infty \|F(s,\,.\,)\|^2_{L^2(B)} \,\frac{ds}{s} \leq |B|^{-1}.
\end{align*}

For $k \geq 2$, we cover $T(2^{k+1}B)\setminus T(2^kB)$ with the  two parts $(0,2^{k-1}r) \times 2^{k+1}B\setminus 2^{k-1}B$ and $(2^{k-1}r,2^{k+1}r) \times 2^{k+1}B$.
Via Minkowski's inequality, we have
\begin{align*}
	&(\iint_{T(2^{k+1}B)\setminus T(2^kB)} |\tilde{F}_k(t,x)|^2 \,\frac{dxdt}{t})^{\frac{1}{2}} \\
	& \qquad \leq \int_0^r (\int_0^{2^{k-1}r} \min(\frac{s}{t},\frac{t}{s})^{\min(\alpha,\beta)}\|K(t,s)F(s,\,.\,)\|_{L^2(2^{k+1}B\setminus 2^{k-1}B)}^2 \,\frac{dt}{t})^{\frac{1}{2}} \,\frac{ds}{s} \\
	&\qquad  \quad + \int_0^r (\int_{2^{k-1}r}^{2^{k+1}r} (\frac{s}{t})^{\beta}\|K(t,s)F(s,\,.\,)\|_{L^2(2^{k+1}B)}^2 \,\frac{dt}{t})^{\frac{1}{2}} \,\frac{ds}{s} \\
	& \qquad =:I_1 + I_2.
\end{align*}

For $I_2$, the fact that $s<t$ and the assumed $L^p$-$L^2$ boundedness of $K(t,s)$ yield 
\begin{align*}
	I_2
	&\leq \int_0^r (\int_{2^{k-1}r}^{2^{k+1}r} (\frac{s}{t})^{\beta} \|K(t,s)F(s,\,.\,)\|_{L^2(\R^n)}^2 \,\frac{dt}{t})^{\frac{1}{2}} \,\frac{ds}{s} \\
	& \lesssim \int_0^r (\int_{2^{k-1}r}^{2^{k+1}r} ( (\frac{s}{t})^{\beta} t^{-n(\frac{1}{p}-\frac{1}{2})} \|F(s,\,.\,)\|_{L^p(B)})^2 \,\frac{dt}{t})^{\frac{1}{2}} \,\frac{ds}{s} \\
	& \lesssim \int_0^r (\frac{s}{2^k r})^\beta (2^kr)^{-n(\frac{1}{p}-\frac{1}{2})} r^{n(\frac{1}{p}-\frac{1}{2})} \|F(s,\,.\,)\|_{L^2(B)} \,\frac{ds}{s} \\
	& \lesssim 2^{-k(\beta+n(\frac{1}{p}-\frac{1}{2}))} (\int_0^r (\frac{s}{r})^{2\beta} \,\frac{ds}{s})^{\frac{1}{2}} (\int_0^r \|F(s,\,.\,)\|_{L^2(B)}^2 \,\frac{ds}{s})^{\frac{1}{2}} \\
	&\lesssim 2^{-k(\beta+n(\frac{1}{p}-\frac{1}{2})-\frac{n}{2})} |2^kB|^{-\frac{1}{2}}. 
\end{align*}
Since, by assumption, $\beta>n(1-\frac{1}{p})$, this yields the desired estimate for $I_2$.\\
We split the term $I_1$ into the two parts
\begin{align*}
	I_1
	&\leq \int_0^r (\int_0^s (\frac{t}{s})^{\alpha}\|K(t,s)F(s,\,.\,)\|_{L^2(2^{k+1}B \setminus 2^{k-1}B)}^2 \,\frac{dt}{t})^{\frac{1}{2}} \,\frac{ds}{s} \\
	& \quad + \int_0^r (\int_s^{2^{k-1}r}(\frac{s}{t})^{\beta}\|K(t,s)F(s,\,.\,)\|^{2} _{L^2(2^{k+1}B \setminus 2^{k-1}B)} \,\frac{dt}{t})^{\frac{1}{2}} \,\frac{ds}{s} \\
	&=: I_{1,1} + I_{1,2}.
\end{align*}
For $I_{1,1}$, we have $t<s$. The assumed $L^p$-$L^2$ off-diagonal bounds and the fact that $N'>n(\frac{1}{p}-\frac{1}{2})$ yield
\begin{align*}
	I_{1,1}
	&\lesssim \int_0^r (\int_0^s ((\frac{t}{s})^\alpha s^{-n(\frac{1}{p}-\frac{1}{2})} (1+\frac{\dist(B,2^{k-1}B)}{s})^{-N'} \|F(s,\,.\,)\|_{L^p(B)} )^2 \,\frac{dt}{t})^{\frac{1}{2}} \,\frac{ds}{s} \\
	& \lesssim \int_0^r \|F(s,\,.\,)\|_{L^2(B)} (\frac{s}{r})^{-n(\frac{1}{p}-\frac{1}{2})} (\frac{s}{2^kr})^{N'} (\int_0^s (\frac{t}{s})^{2\alpha} \,\frac{dt}{t})^{\frac{1}{2}} \,\frac{ds}{s} \\
	& \lesssim 2^{-kN'} (\int_0^r \|F(s,\,.\,)\|_{L^2(B)}^2 \,\frac{ds}{s})^{\frac{1}{2}} (\int_0^r (\frac{s}{r})^{2N'-2n(\frac{1}{p}-\frac{1}{2})} \,\frac{ds}{s})^{\frac{1}{2}} \\
	& \lesssim 2^{-kN'} |B|^{-\frac{1}{2}}
	\lesssim 2^{-k(N'-\frac{n}{2})} |2^kB|^{-\frac{1}{2}}.
\end{align*}
Since $N'>\frac{n}{2}$, this yields the assertion for $I_{1,1}$.\\
For $I_{1,2}$, we have $s<t$. According to our assumptions, there exists $\tilde{N}>0$ with $\frac{n}{2}<\tilde{N}<\max(N',\beta+n(\frac{1}{p}-\frac{1}{2}))$. Using the $L^p$-$L^2$ off-diagonal bounds, we get
\begin{align*}
	I_{1,2}
	& \lesssim \int_0^r (\int_s^{2^{k-1}r} ((\frac{s}{t})^\beta t^{-n(\frac{1}{p}-\frac{1}{2})} (1+\frac{2^kr}{t})^{-N'} \|F(s,\,.\,)\|_{L^p(B)})^2 \,\frac{dt}{t})^{\frac{1}{2}} \,\frac{ds}{s} \\
	& \lesssim 2^{-kN'} \int_0^r \|F(s,\,.\,)\|_{L^2(B)} (\int_s^r (\frac{s}{t})^{2\beta} (\frac{t}{r})^{2N'-2n(\frac{1}{p}-\frac{1}{2})} \,\frac{dt}{t})^{\frac{1}{2}} \,\frac{ds}{s} \\
	& \quad + 2^{-k\tilde{N}} \int_0^r \|F(s,\,.\,)\|_{L^2(B)} (\int_r^{2^{k-1}r} (\frac{s}{t})^{2\beta} (\frac{t}{r})^{2\tilde{N} -2n(\frac{1}{p}-\frac{1}{2})} \,\frac{dt}{t})^{\frac{1}{2}} \,\frac{ds}{s} \\
	& \lesssim 2^{-k\tilde{N}} \int_0^r (\frac{s}{r})^{\beta} \|F(s,\,.\,)\|_{L^2(B)} \,\frac{ds}{s} \\
	& \lesssim 2^{-k\tilde{N}} (\int_0^r (\frac{s}{r})^{2\beta} \,\frac{ds}{s})^{\frac{1}{2}} (\int_0^r \|F(s,\,.\,)\|_{L^2(B)}^2 \,\frac{ds}{s})^{\frac{1}{2}} 
	\lesssim 2^{-k(\tilde{N}-\frac{n}{2})} |2^kB|^{-\frac{1}{2}},
\end{align*}
where again we use the assumptions $N'>\frac{n}{2}$ and $\beta>n(1-\frac{1}{p})$.
\end{proof}

We conclude this section by pointing out that such estimates are much simpler in the context of vertical, rather than conical, square functions. In particular we have the following lemma (see \cite[Section 5]{jan-survey} for the relevant information regarding R-boundedness).
\begin{Lemma}
\label{lem:vertSchur}
Suppose $p \in (1,\infty)$ and $\varepsilon>0$.
If $\{K(t,s) \;;\; t,s>0\}$ is a R-bounded family of bounded operators on $L^p(\R^{n})$, then $T_{K^{+}_{\varepsilon}+K^{-}_{\varepsilon}} \in \mathcal{L}(L^{p}(\R^{n};L^{2}(\R_{+};\frac{dt}{t})))$.
\end{Lemma}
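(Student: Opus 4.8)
## Proof plan for Lemma 10.6 (the vertical Schur estimate)

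The plan is to reduce the claim to a single scalar Schur bound combined with the $R$-boundedness hypothesis, exploiting the fact that in the vertical setting the target space $L^p(\R^n;L^2(\R_+;\tfrac{dt}{t}))$ has no conical geometry to interact with. First I would set $H:=L^2(\R_+;\tfrac{dt}{t};\C^N)$ and observe that a function $F\in L^p(\R^n;H)$ can be expanded in an orthonormal basis $(h_k)_{k\in\N}$ of $L^2(\R_+;\tfrac{dt}{t})$ as $F(\cdot,s)=\sum_k F_k(\cdot)h_k(s)$ with $F_k\in L^p(\R^n;\C^N)$, and that by Khintchine--Kahane's inequality (as used in the lifting argument of Section \ref{sec:tech2}, cf.\ \cite[Section 2]{kuw}) the norm $\|F\|_{L^p(H)}$ is comparable to $\|(\sum_k|F_k|^2)^{1/2}\|_p$. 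In fact it is cleaner to work directly with the continuous square function: I would introduce a Rademacher-type randomisation, i.e.\ pass to $\|s\mapsto \epsilon(s)F(\cdot,s)\|$ after a suitable discretisation, so that the $L^2(\tfrac{dt}{t})$ norm in $s$ becomes a randomised sum to which the $R$-boundedness of $\{K(t,s)\}$ applies.

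The core estimate is the following. Write $\widetilde K(t,s):=K^+_\varepsilon(t,s)+K^-_\varepsilon(t,s)=\min(\tfrac st,\tfrac ts)^\varepsilon K(t,s)$, so that $\|\widetilde K(t,s)\|_{\calL(L^p)}\lesssim \min(\tfrac st,\tfrac ts)^\varepsilon$ times the ($R$-)bound. For fixed $F$, Minkowski's integral inequality in the outer $L^p(\R^n)$ norm gives
\begin{align*}
\|T_{\widetilde K}F\|_{L^p(\R^n;H)}
&= \Big\| \Big(\int_0^\infty \Big| \int_0^\infty \widetilde K(t,s)F(s,\cdot)\,\tfrac{ds}{s}\Big|^2\,\tfrac{dt}{t}\Big)^{1/2}\Big\|_p.
\end{align*}
Then I would dominate the inner double integral by Schur's lemma applied to the scalar kernel $k(t,s):=\min(\tfrac st,\tfrac ts)^\varepsilon$, which has $\int_0^\infty k(t,s)\,\tfrac{ds}{s}=C_\varepsilon<\infty$ uniformly, factoring $k=k^{1/2}\cdot k^{1/2}$ and using Cauchy--Schwarz to bound the $H$-norm of $T_{\widetilde K}F(\cdot)$ pointwise in $x$ by $C_\varepsilon^{1/2}\big(\int_0^\infty k(t,s)\,|K(t,s)F(s,\cdot)(x)|^2\,\tfrac{ds}{s}\big)^{1/2}$ integrated against $\tfrac{dt}{t}$. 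Taking $L^p(\R^n)$ norms and using Fubini reduces matters to controlling $\big\|\big(\int_0^\infty \sup_t |K(t,s)F(s,\cdot)|^2\,\tfrac{ds}{s}\big)^{1/2}\big\|_p$ — and this is exactly where $R$-boundedness enters: $R$-boundedness of $\{K(t,s)\}_{t,s>0}$ on $L^p(\R^n)$ is precisely the statement that such randomised/square-function expressions are controlled, giving $\lesssim \|(\int_0^\infty |F(s,\cdot)|^2\,\tfrac{ds}{s})^{1/2}\|_p = \|F\|_{L^p(\R^n;H)}$.

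The main obstacle I anticipate is making the interchange of the $R$-boundedness estimate with the continuous ($\tfrac{ds}{s}$, $\tfrac{dt}{t}$) integrations fully rigorous, since $R$-boundedness is phrased for finite families with Rademacher averages. The standard remedy — which I would follow — is to discretise dyadically in both $s$ and $t$ on a finite range, apply the finite-family $R$-bound together with Fubini/Kahane--Khintchine to pass between the Rademacher sums and the $L^2(\tfrac{dt}{t})$ square functions, and then let the range exhaust $(0,\infty)$, using the uniform Schur factor $\min(\tfrac st,\tfrac ts)^\varepsilon$ to guarantee convergence of the limiting integral and independence of the discretisation. Everything else — Minkowski, Schur's lemma for $k(t,s)=\min(\tfrac st,\tfrac ts)^\varepsilon$, Cauchy--Schwarz — is routine. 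I would remark that this is why the vertical case is ``much simpler than its conical counterpart'': there is no tent-space atom decomposition and no need for the off-diagonal decay in $x$ that drove the proof of Proposition \ref{prop:SIO-tent}.
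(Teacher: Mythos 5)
Your Schur/Cauchy--Schwarz skeleton is reasonable up to the point where $R$-boundedness is invoked, but the step that is supposed to carry the whole weight of the proof is wrong. After Cauchy--Schwarz with $k=k^{1/2}\cdot k^{1/2}$ and Fubini you are left with the double integral $\int_0^\infty\int_0^\infty k(t,s)\,|K(t,s)F(s,\cdot)(x)|^2\,\frac{dt}{t}\frac{ds}{s}$, not with $\int_0^\infty \sup_t |K(t,s)F(s,\cdot)|^2\,\frac{ds}{s}$; and, more importantly, $R$-boundedness does \emph{not} control expressions with a supremum over the operator family inside the square function. $R$-boundedness (equivalently, via Kalton--Weis, its continuous square-function formulation) says that for each fixed measurable selection $s\mapsto K(\tau(s),s)$ one has $\|(\int_0^\infty|K(\tau(s),s)F(s,\cdot)|^2\frac{ds}{s})^{1/2}\|_p\lesssim\|(\int_0^\infty|F(s,\cdot)|^2\frac{ds}{s})^{1/2}\|_p$ uniformly in $\tau$; it says nothing about the maximal operator $u\mapsto\sup_t|K(t,s)u|$. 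So ``this is exactly where $R$-boundedness enters'' is precisely where the argument breaks. The argument can be repaired without the sup: treat $(t,s)$ as a single parameter with the finite measure $k(t,s)\frac{dt}{t}\frac{ds}{s}$ (in $t$, for each $s$) and apply the Kalton--Weis $\gamma$-multiplier theorem to the family $\{K(t,s)\}$ acting on the function $(t,s,x)\mapsto F(s,x)$ — but note that this already \emph{is} the continuous-parameter multiplier theorem, so your proposed dyadic-discretisation detour through finite Rademacher sums is re-proving a quoted result rather than avoiding it.

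The paper's proof is shorter and sidesteps the Schur factorisation entirely. For $K^+_\varepsilon$, substitute $s=t\sigma$ so that $T_{K^+_\varepsilon}F(t,\cdot)=\int_0^1\sigma^{\varepsilon}K(t,t\sigma)F(t\sigma,\cdot)\,\frac{d\sigma}{\sigma}$; by Minkowski's inequality in $L^{p}(\R^{n};L^{2}(\R_{+},\frac{dt}{t}))$ it suffices to bound, uniformly in $\sigma\in(0,1)$, the norm of $(x,t)\mapsto K(t,t\sigma)F(x,t\sigma)$. For each fixed $\sigma$ the family $\{K(t,t\sigma)\,;\,t>0\}$ is $R$-bounded, so the Kalton--Weis $\gamma$-multiplier theorem gives $\|(x,t)\mapsto K(t,t\sigma)F(x,t\sigma)\|\lesssim\|(x,t)\mapsto F(x,t\sigma)\|=\|F\|$, the last equality by dilation invariance of $\frac{dt}{t}$; then $\int_0^1\sigma^{\varepsilon}\frac{d\sigma}{\sigma}<\infty$ finishes, and $K^-_\varepsilon$ is symmetric. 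You should either adopt this change of variables or replace your sup-reduction by a correct application of the $\gamma$-multiplier theorem to the double integral.
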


\begin{proof}
Let $F \in L^{p}(\R^{n};L^{2}(\R_{+};\frac{dt}{t}))$.
By Kalton-Weis' $\gamma$-multiplier theorem (see \cite[Theorem 5.2]{jan-survey}),
 we have the following:
 \begin{equation*}
\begin{split}
\|T_{K^{+}_{\varepsilon}}F\|_{L^{p}(\R^{n};L^{2}(\R_{+};\frac{dt}{t}))}
&\leq \int \limits _{0} ^{1} s^{\varepsilon} \|(x,t) \mapsto K(t,ts)F(x,ts)\|_{L^{p}(\R^{n};L^{2}(\R_{+};\frac{dt}{t}))} \frac{ds}{s}\\
&\lesssim \int \limits _{0} ^{1} s^{\varepsilon} \|(x,t) \mapsto F(x,ts)\|_{L^{p}(\R^{n};L^{2}(\R_{+};\frac{dt}{t}))} \frac{ds}{s}
\\&= \int \limits _{0} ^{1} s^{\varepsilon} \|(x,t) \mapsto F(x,t)\|_{L^{p}(\R^{n};L^{2}(\R_{+};\frac{dt}{t}))} \frac{ds}{s} \lesssim \|F\|_{L^{p}(\R^{n};L^{2}(\R_{+};\frac{dt}{t}))}.
\end{split}
\end{equation*}
The same reasoning applies to $T_{K^{-}_{\varepsilon}}$.
\end{proof}

\vspace{3ex}

{\flushleft{\sc Dorothee Frey}}\\
Australian National University, Mathematical Sciences Institute, John Dedman Building,
Acton ACT 0200, Australia.\\
{\tt dorothee.frey@anu.edu.au}\\

{\flushleft{\sc Alan McIntosh}}\\
Australian National University, Mathematical Sciences Institute, John Dedman Building,
Acton ACT 0200, Australia.\\
{\tt alan.mcintosh@anu.edu.au}\\

{\flushleft{\sc Pierre Portal}}\\
Australian National University, Mathematical Sciences Institute, John Dedman Building,
Acton ACT 0200, Australia.\\
{\tt pierre.portal@anu.edu.au}\\

\end{document}